\newtheorem{theo}{{\bfseries Theorem}}[section]
\newtheorem{prop}[theo]{{\bfseries Proposition}}
\newtheorem{lem}[theo]{{\bfseries Lemma}}
\newtheorem{cor}[theo]{{\bfseries Corollary}}
\newtheorem{df}[theo]{{\bfseries Definition}}
\newtheorem{ex}[theo]{{\bfseries Example}}
\newtheorem{ques}[theo]{{\bfseries Question}}
\newtheorem{add}[theo]{{\bfseries Addendum}}
\def \N {\mathbb N}
\def \Z {\mathbb Z}
\def \Q {\mathbb Q}
\def \O {\mathcal O}
\def \a {\alpha }
\def \b {\beta}
\def \ep {\epsilon}
\def \om {\omega}
\def \g {\gamma}
\def \r {\rho}
\def \s {\sigma}
\def \t {\tau}
\def \tto {\longrightarrow}
\numberwithin{equation}{section}
\begin{document}

\title[Rock, Paper, Scissors, Etc]{\bfseries  Rock, Paper, Scissors, Etc - \\ Topics in the Theory of Regular Tournaments}
\vspace{1cm}
\author{Ethan Akin}
\address{Mathematics Department \\
    The City College \\ 137 Street and Convent Avenue \\
       New York City, NY 10031, USA     }
\email{ethanakin@earthlink.net}

\date{June 2018, revised August, 2020}

{\footnotesize \begin{abstract} The classic Rock-Paper-Scissors game of size $3$ and its extension, Rock-Paper-Scissors-Lizard-Spock,
 are modeled by directed graphs called tournaments. They can be further extended to any odd size.
The extended games  are regular tournaments where each strategy beats and is beaten by exactly half of the
 alternatives.   We survey the properties of regular tournaments, which we will call games. In the process we describe a number of constructions for such games.
 These include games on groups of odd order
 and the associated games on coset spaces. We obtain a new lower bound for the number of games of size $2n+1$.

\end{abstract}}

\keywords{tournament, regular tournament, digraph, Eulerian digraph, group game, Cayley tournament,
symmetric game, reducible game, Steiner game, universal game, interchange graph, tournament double, lexicographic product}

\thanks{{\em 2010 Mathematical Subject Classification} 05C20, 05C25, 05C38, 05C45, 05C76}

 \maketitle

\tableofcontents

\section*{Introduction}\label{secintro}

The classic Rock-Paper-Scissors game of size $3$ was extended on the TV show \emph{The
Big Bang Theory} to Rock-Paper-Scissors-Lizard-Spock with size $5$. The variation was  originated by Sam Kass with Karen Bryla.
Here we consider  games of larger size. We have a set $I$ of alternative plays. For any pair of distinct
distinct plays, $i, j \in I$, one consistently beats the other. We write $i \to j$ if $i$ beats $j$. Such an arrangement is modeled by a type of
directed graph, or digraph\index{digraph}, called
 a \emph{tournament}\index{tournament}.

 The number of
$j \in I$ which are beaten by $i$ is called the \emph{score} of $i$, denoted $s_i$. The tournaments we will focus upon are those which are balanced
in that every $i \in I$ has the same score.  This requires that the number $|I|$ of elements of $I$ be odd and the score $s_i$ be equal to
$\frac{1}{2}(|I| - 1)$ for all $i$.  Thus, if $|I| = 2n + 1$, each $i$ beats $n$ alternatives and is beaten by the $n$ others. Such a tournament
is called \emph{regular}\index{tournament!regular}.  We will use the term \emph{game}\index{game} to refer to a regular tournament.

The theory of digraphs is described in \cite{CZ} and \cite{HNC}. An elaborate exposition of recent work is given in  \cite{BG}.
A lovely survey of the theory of tournaments is
given in \cite{HM}.  Many computational problems for tournaments are considered in \cite{M}. We will  consider here some of the special properties and
constructions for games, i.e. for regular tournaments.  I would like to express my thanks to my colleague W. Patrick Hooper for his helpful insights and
enjoyable conversations as these matters developed.

Here is an outline of the work.
\vspace{.5cm}

\textbf{Section \ref{secrelations}:} We begin with the elementary definitions and results about relations and digraphs.
For a finite set $I$ with cardinality
$|I|$,  a \emph{relation }\index{relation}$\Pi$ on $I$ is a subset of $I \times I$ with
$\Pi^{-1} = \{ (j,i) : (i,j) \in I \}$ the reverse relation\index{reverse}. A \emph{path}
 $[i_1,\dots,i_n] \in I$ is a sequence\index{path} with $(i_1,i_2),\dots,(i_{n-1},i_n) \in \Pi$.
 A \emph{cycle}\index{cycle} $\langle i_1,\dots, i_n \rangle$ is a
sequence of distinct elements such that $[i_1, \dots, i_n, i_1]$ is a -closed- path.

For $J \subset I$, the \emph{restriction}\index{restriction} of the relation $\Pi$ to $J$ is the relation $\Pi|J = \Pi \cap (J \times J)$.

A relation $\Pi$ is a \emph{digraph}\index{digraph} when $\Pi \cap \Pi^{-1} = \emptyset$. For a digraph we write $i \to j$ when $(i,j) \in \Pi$.
 We let  $\Pi(i)$ denote the set of outputs of $i$, i.e. $\{ j : (i,j) \in \Pi \}$ so that  $\Pi^{-1}(i)$ denotes the set of inputs of $i$.
A digraph $\Pi$ on $I$ is called \emph{Eulerian}\index{digraph!Eulerian} when for each $i \in I$ the number of inputs equals the number of outputs.
When this number is the same
for all $i \in I$ then $\Pi$ is called regular\index{digraph!regular}. Observe that $\emptyset \subset I \times I$ is an Eulerian digraph.

If $\Pi$ is an Eulerian subgraph of $\Gamma$, then $\Gamma$ is Eulerian if and only if $\Gamma \setminus \Pi$ is Eulerian.
Hence, the disjoint union of Eulerian subgraphs is Eulerian. Since a cycle is an Eulerian subgraph, it
follows, as was observed by Euler, that a digraph is Eulerian if and only if it can be written
as a disjoint union of cycles. Notice that disjoint subgraphs can have vertices in common.

If $\Pi$ and $\Gamma$ are digraphs on $I$ and $J$, then a
\emph{morphism}\index{morphism}\index{digraph!morphism} $\r : \Pi \tto \Gamma$ is a map $\r : I \tto J$ such that
for $i, j \in I$ with $\r(i) \not= \r(j)$, $i \to j$ in $\Pi$ if and only if $\r(i) \to \r(j)$ in $\Gamma$.  If $\r$ is a bijective morphism,
then $\r^{-1} : \Gamma \tto \Pi$ is a morphism as well and we call $\r$ an isomorphism.  If, in addition, $\Gamma = \Pi$, then $\r$ is an
automorphism and we let $Aut(\Pi)$ denote the group of automorphisms of $\Pi$. We write $\bar \r$\index{$\bar \r$}
for the product $\r \times \r : I \times I \tto J \times J$. If $\r$ is a bijection, then it is an isomorphism when $\bar \r(\Pi) = \Gamma$.

Since $i \to j$ implies $j \nrightarrow i$  it follows that every
automorphism of a tournament has odd order. Hence, the order $|Aut(\Pi)|$
is odd.
\vspace{.5cm}

\textbf{Section \ref{secgames}:} We begin
our study of games. For a  tournament $\Pi$ on $I$ of size $2n + 1$, the following are equivalent:
\begin{itemize}
\item $\Pi$  is Eulerian.
\item $\Pi$  is  regular.
\item For every $i \in I$ the score $s_i = n$.
\end{itemize}
We will call a regular tournament a \emph{game}\index{game}.  Up to isomorphism, the games of size $3$ and of size $5$ are unique.

If $J \subset I$ with $|J| = 2k + 1$ and $\Pi$ is a game on $I$ then the restriction $\Pi|J$ is a \emph{subgame}\index{subgame} if the tournament
$\Pi|J$ is Eulerian. Equivalently, it is a subgame when $|\Pi(i) \cap J| = k$ for all $i \in J$.

Given a game $\Pi$ on $J$ with $|J| = 2n - 1$ and
 $K \subset J$ with $|K| = n$ we can choose two new vertices $u, v$ and build the \emph{extension}\index{extension} of $\Pi$ via $K$ and $u \to v$
 to obtain a game $\Gamma$ on $I = J \cup \{ u,v \}$ with $u \to v$ and $K = \Gamma(v)$.  On the other hand, if $\Gamma$ is a game on $I$ of
 size $2n + 1$ and $J \subset I$ of size $2n - 1$ is such that the restriction $\Gamma|J$ is a subgame, then with $\{ u, v \} = I \setminus J$
 and $u \to v$ the game $\Gamma$ is the extension of $\Gamma|J$ via $\Gamma(v)$. When a game $\Gamma$ of size $2n + 1$ has a subgame of
 size $2n - 1$ then we call $\Gamma$ a \emph{reducible game}\index{game!reducible}\index{reducible}. A game is reducible via $u \to v$ if and
 only if there does not exist $i \in I$ such that $i \to u$ and $i \to v$ or an $i \in I$ such that $u \to i$ and $v \to i$. The game is
 \emph{completely reducible}\index{completely reducible}\index{reducible!completely}\index{game!completely reducible} when there
 is an increasing sequence $I_1 \subset I_2 \subset \dots \subset I_n = I$ with $|I_k| = 2k + 1$ and $\Gamma|I_k$ a subgame for $k = 1, \dots, n$.

It is obvious that if $\Pi$ is a digraph on $I$, then it is a subgraph of some tournament on $I$.
 If $\Pi$ is Eulerian,
then it is a subgraph of some game on $I$.
\vspace{.5cm}

\textbf{Section \ref{secgroupgames}:} A game $\Gamma$ on $I$ is \emph{point-symmetric} \index{point-symmetric} when the group $Aut(\Gamma)$ acts
transitively on the set of vertices $I$.  As the definition suggests, such games are constructed using groups.
Let $G$ be a finite group with odd order, $|G| = 2n + 1$. Let $e$ denote the
identity element.

A \emph{game subset}\index{game subset}
$A$ is a subset of $G \setminus \{ e \} $ of size $n$ with $A$ disjoint from $A^{-1} = \{ a^{-1} : a \in A \}$.  Equivalently, $A \subset G$ such that
$G \setminus \{ e \} $ is the disjoint union of $A$ and $A^{-1}$. Since no element of $G$ has
order $2$ the $n$ pairs $\{ \{ a,a^{-1}\} : a \in G \setminus \{ e \} \}$ partition the set $G \setminus \{ e \} $. A game subset is obtained
by choosing one element from each pair. Thus, there are $2^n$ game subsets.

Given a game subset $A$, we define on $G$ the game $\Gamma[A] = \{ (i,j) \in G \times G : i^{-1}j \in A \}$. We call such a game
a \emph{group game}\index{group game}. Because the digraph is the Cayley graph of the group $G$ associated with the set of generators $A$, such a
game is also called a \emph{Cayley tournament}\index{Cayley tournament}.

 If $\ell_k$ is the left translation
by $k \in G$, i.e. $\ell_k(i) = ki$, then it is clear that $\ell_k$ is an automorphism of $\Gamma[A]$. Thus, $G$, identified  with the group of left
translations, is a subgroup of $Aut(\Gamma[A])$. Conversely, if $\Gamma$ is a game on $G$ such that all the left translations are contained in
$Aut(\Gamma)$, then  $\Gamma = \Gamma[A]$ with $A = \Gamma(e)$. It follows that the group games are point-symmetric.

If $A \subset G$ is a game subset, then $A^{-1}$ is a game subset with $\Gamma[A^{-1}]$ equal to the reverse game $\Gamma[A]^{-1}$.
If $G$ is abelian, then the map $i \to i^{-1}$ is an isomorphism from a group game onto its reverse game.

If $\r$ is a permutation of $G$ with $\r(e) = e$ and $A$  a game subset of $G$, then $B = \r(A)$ is a game subset with
$\r : \Gamma[A] \tto \Gamma[B]$  an isomorphism if and only if $i^{-1}j \in A$ implies $\r(i)^{-1}\r(j) \in B$. In particular, this holds if
$\r$ is a group automorphism of $G$ since in that case $\r(i)^{-1}\r(j) = \r(i^{-1}j)$.
We let $G^*$ denote the automorphism group of the group $G$.

When  $Aut(\Gamma[A])$ consists exactly of the group $G$ of left translations, $\Gamma[A]$ is called a \emph{tournament regular representation}
\index{tournament regular representation} of $G$, a \emph{TRR}\index{TRR}.  If $\Gamma[A]$ is a TRR and $B$ is a game subset
with $\Gamma[B]$ isomorphic to $\Gamma[A]$, then there is a unique $\xi \in G^*$
such that $B = \xi(A)$ and $\rho = \xi$ is the unique
isomorphism $\rho : \Gamma[A] \to \Gamma[B]$ such that $\rho(e) = e$. In particular,
the  set $\{ \xi(A) : \xi \in G^* \}$ is the set of
game subsets $B$ such that $\Gamma[B]$ is isomorphic to $\Gamma[A]$. Thus, there are exactly $|G^*|$
such game subsets.

In the case when $G$ is cyclic it can be taken to be $\Z_{2n + 1}$, the additive group of integers mod $2n + 1$. The group game $\Gamma[A]$ on
$\Z_{2n + 1}$ is also called a \emph{rotational tournament}\index{rotational tournament} with symbol $A$.

The group automorphisms of $\Z_{2n + 1}$
are multiplications by the units $\Z_{2n + 1}^*$ of the ring $\Z_{2n + 1}$ and so the automorphism group has order $\phi(2n + 1)$ which counts the
numbers between $1$ and $2n + 1$ which are relatively prime to $2n + 1$.

The set
$[1,n] = \{ 1, 2, \dots, n \}$ is a game subset with $\Gamma[[1,n]]$ a TRR for $\Z_{2n +1}$. For $n > 2$
the number of game subsets ( $ = 2^n$) is greater than $\phi(2n + 1)$ and so there exist game subsets $B$ for $\Z_{2n + 1}$ with
$\Gamma[B]$ not isomorphic to $\Gamma[[1,n]]$.

A Fermat prime $p$ is a prime of the form $2^k + 1$ of which only five are known. Only when $2n + 1$ is a square-free product of
Fermat primes is $\phi(2n + 1)$ a power of $2$.  Otherwise, $\Z_{2n+1}^*$ contains a multiplicative subgroup of odd order.

For an odd order group $G$, if $H \subset G^*$ is a subgroup of odd order, then there exists a game subset $A$ of $G$ such that
$\r(A) = A$ for all $\r \in H$. In that case, $\Gamma[A]$ is a group game such that $Aut(\Gamma[A])$ contains, in addition to the translations
$\ell_k$,  the automorphisms $\r \in H$.

We close the section by showing that the only group games which are reducible are those isomorphic to  $\Gamma[[1,n]]$ on $\Z_{2n + 1}$ for some
$n$.
\vspace{.5cm}

\textbf{Section \ref{secinvert}:} Suppose that $\Pi$ and $\Gamma$ are tournaments on a set $I$ and that $\r$ is a permutation of $I$.
We define  $\Delta(\r,\Pi,\Gamma) = \{ (i,j) \in \Pi : (\r(j),\r(i)) \in \Gamma \} $ so that $\Delta(\r,\Pi,\Gamma)$ is the subgraph of $\Pi$ on
which $\bar \r$ reverses direction. We write $\Delta(\Pi,\Gamma)$ for $\Delta(1_I,\Pi,\Gamma)$.

We say that $\r$ \emph{preserves scores}\index{preserves scores} when for all $i \in I, \ |\Pi(i)| = |\Gamma(\r(i))|$. The permutation $\r$
preserves scores if and only if $\Delta(\r,\Pi,\Gamma)$ is Eulerian. If $\Pi$ and $\Gamma$ are
games, then any permutation preserves scores. In particular, if $\Pi$ is a game, then $\Gamma$ is a game if and only if
$\Delta(\r,\Pi,\Gamma)$ is Eulerian.

If $\Delta$ is any subgraph of $\Pi$, we define $\Pi/\Delta$ to be $\Pi$ with $\Delta$ reversed, so that
$\Pi/\Delta = (\Pi \setminus \Delta) \cup \Delta^{-1}$. If $\Delta =  \Delta(\r,\Pi,\Gamma)$, then $\r$ is an isomorphism from
$\Pi/\Delta$ to $\Gamma$. Thus, if $\Pi$ and $\Gamma$ are games on $I$, then $\Gamma$ can be obtained from $\Pi$ by reversing the
Eulerian subgraph $\Delta(\Pi,\Gamma)$. Since an Eulerian graph is a disjoint union of cycles, it follows that $\Gamma$ can be obtained by
successively reversing a sequence of cycles. Furthermore, reversing a cycle can be accomplished by reversing a sequence of 3-cycles.
Thus, we can obtain the game $\Gamma$ from the game $\Pi$ by reversing a sequence of 3-cycles.

A \emph{decomposition}\index{decomposition} for an Eulerian digraph $\Delta$ is a collection of disjoint cycles which covers $\Delta$. It is a
\emph{maximum decomposition}\index{maximum decomposition}
when it is a decomposition of maximum cardinality. We call this maximum cardinality the \emph{span}\index{span} of $\Delta$, denoting it
by $\s(\Delta)$\index{$\s(\Pi)$}. We call $\b(\Delta) = |\Delta| - 2 \s(\Pi)$\index{$\b(\Pi)$} the
\emph{balance invariant}\index{balance invariant} of $\Delta$.

Assume that $\Pi$ and $\Gamma$ are distinct games on $I$. If $\Pi'$ is obtained from $\Pi$ by reversing a 3-cycle, then
$|\b(\Delta(\Pi',\Gamma)) - \b(\Delta(\Pi,\Gamma))| = 1.$ Furthermore, there exists a 3-cycle in $\Pi$ such that
$\b(\Delta(\Pi',\Gamma)) = \b(\Delta(\Pi,\Gamma)) - 1.$. It follows that $\b(\Delta(\Pi,\Gamma))$
is the minimum number of 3-cycles
which must be reversed in order to obtain $\Gamma$ from $\Pi$.

If $\Pi$ is a game which admits a decomposition by 3-cycles, then such a decomposition is clearly a maximum decomposition. We call such a
game a \emph{Steiner game}\index{Steiner game}.  It is a classical result that a set $I$ with $|I| = 2n + 1$ carries some Steiner game if and
only if $n$ is not congruent to $-1$ mod $3$.
\vspace{.5cm}

\textbf{Section \ref{secinterchange}:} If $\Pi$ is a game on $I$ with $|I| = 2n + 1$, then $\Delta \mapsto \Pi/\Delta$ is a one-to-one
correspondence between the Eulerian subgraphs of $\Pi$ -including the empty subgraph- and the set of games on $I$. Thus, the number
of Eulerian subgraphs is the same for all games of size $2n + 1$. The number of 3-cycles contained in $\Pi$ is also the same for all
games of size $2n + 1$.

Define the \emph{interchange graph}\index{interchange graph} to be the -undirected- graph with vertices the games on $I$  and with $\Pi$ and
$\Gamma$  connected by an edge when each is obtained from the other by reversing a 3-cycle. Thus, the interchange graph is a regular, connected
graph.

The distance between two games, $\Pi$ and $\Gamma$, is the length of a path with shortest distance between them.  Such a shortest length
path is called a \emph{geodesic}\index{geodesic}. The distance $d(\Pi,\Gamma)$ is $\b(\Delta(\Pi,\Gamma))$. If $d(\Pi,\Gamma) = k$ then
there are at least $k!$ distinct geodesics between $\Pi$ and $\Gamma$. There exist examples where there are
more than $k!$ geodesics between such games.

In particular, $d(\Pi,\Pi^{-1}) =  \b(\Pi)$.  If $\Pi$ is a game of size $2n - 1$ and $\Gamma$ is an extension of $\Pi$, then
$\b(\Gamma) \leq \b(\Pi) + 2n - 1$. By induction it follows that if $\Gamma$ is a completely reducible game of size $2n + 1$, then
$\b(\Gamma) \leq n^2$. If $\Gamma$ is the group game $\Gamma[[1,n]]$ on $\Z_{2n + 1}$, then $\b(\Gamma) = n^2$.
I conjecture that for any game $\Gamma$ of size $2n + 1$ $\b(\Gamma) \leq n^2$, and, more generally, that the diameter of the interchange graph is $n^2$ for
$I$ with $|I| = 2n + 1$.

If $\Pi$ is a Steiner game, then there is a decomposition by 3-cycles and so the span $\s(\Pi) = n(2n+1)/3$. Thus,

$$ d(\Pi,\Pi^{-1}) \ = \ \b(\Pi) \ = \ |\Pi| - 2 \s(\Pi) \ = \ n(2n+1)/3.$$

In particular, it follows that for $n > 1$ then game $\Gamma[[1,n]]$ is never Steiner.
\vspace{.5cm}

\textbf{Section \ref{secdoublelex}:} If $\Pi$ is a tournament of size $n$ on $I$ we define the \emph{double}\index{double} $2\Pi$ to
be the game of size $2n + 1$ on $\{ 0 \} \cup I \times \{ -1, +1 \}$ with $2\Pi(0) = I \times \{ -1 \}, (2\Pi)^{-1}(0) =  I \times \{ +1 \}$. Let
$i\pm$ denote $(i,\pm 1)$. For $2\Pi$  $i- \to i+$ for all $i \in I$ and if $i \to j$ in $\Pi$ then
$$i- \to j-,\ i+ \to j+, \ j- \to i+, \  j+ \to i-.  $$
We let $\Pi_{\pm}$ denote the restriction of $2\Pi$ to $I \times \{ \pm 1 \}$. Each is clearly isomorphic to $\Pi$.

Any double is completely reducible and so the only group games which could be isomorphic to a double are the isomorphs of $\Gamma[[1,n]]$
on $\Z_{2n + 1}$. The game $\Gamma[[1,n]]$ is indeed isomorphic to the double on its restriction to $[1,n]$.

If every $i \in I$ has both inputs and outputs in $I$, then any automorphism of $2\Pi$ fixes $0$ and leaves $\Pi_-$ and $\Pi_+$ invariant.
This induces an isomorphism between $Aut(\Pi)$ and $Aut(2\Pi)$.
Using this one can construct examples of games which are \emph{rigid}\index{rigid}, i.e. which have trivial
automorphism groups and also games which are not isomorphic to their reverse games.

If $\Pi$ is itself a game, we can construct other examples by reversing subgames.  For example, if $\Pi$ is a Steiner game
then $2\Pi/\Pi_+$ is a Steiner game.

Another construction is the \emph{lexicographic product}\index{lexicographic product} of two digraphs.
Let $\Gamma$ be a digraph on a set $I$
and $\Pi$ be a digraph on a
set $J$.
Define $\Gamma \ltimes \Pi$\index{$\Gamma \ltimes \Pi$} on the set $I \times J$ so that for $p, q \in I \times J$
\begin{displaymath}
p \to q \quad \Longleftrightarrow \quad \begin{cases} p_1 \to q_1 \ \text{in} \ \Gamma,
\quad \text{or}\\ p_1 = q_1 \ \text{and} \ p_2 \to q_2 \ \text{in} \ \Pi. \end{cases}
\end{displaymath}
The map $p \to p_1$ is a surjective morphism from $\Gamma \ltimes \Pi$ to $\Gamma$.

If $\Pi$ and $\Gamma$ are games, then $\Gamma \ltimes \Pi$ is a game. The automorphism group
$Aut(\Gamma \ltimes \Pi)$ is the semi-direct product $ Aut(\Gamma)\ltimes [Aut(\Pi)]^I$ where
$Aut(\Gamma)$ acts on the right by composition on the set of maps from $I$ to $Aut(\Pi)$ regarded as
the product group $[Aut(\Pi)]^I$.

 With $\Gamma_1$ the game of size $3$, we let $\Gamma_{k} =
\Gamma_{k-1} \ltimes \Gamma_1$ so that $\Gamma_k$ is a game on a set of size $3^k$.
From the above computation of the automorphism group it follows that $|Aut(\Gamma_k)| \ = \ (3)^{(3^k - 1)/2}$.
It is known that the order of the automorphism group of a tournament of size $p$ is at most $3^{(p - 1)/2}$, which is $3^n$ when $p = 2n + 1$.
So $\Gamma_k$ is a game with the automorphism group as large as possible.

Finally, if  $\Gamma$ and $\Pi$ are Steiner games, then $\Gamma \ltimes \Pi$ is Steiner.
\vspace{.5cm}

\textbf{Section \ref{secpointed}:} We call a game $\Pi$ on a set $I$ a \emph{pointed game}\index{pointed game}\index{game!pointed} when a particular vertex,
labeled $0$ is singled out. We let $I_+ = \Pi^{-1}(0), I_- = \Pi(0)$ and let $\Pi_{\pm}$ be the tournament which is the restriction of $\Pi$ to
$I_{\pm}$. If $|I| = 2n + 1$ and $\Gamma_+, \Gamma_-$ are arbitrary tournaments on sets of size $n$, then there exists a pointed game $\Pi$ with
$\Pi_+$ isomorphic to $\Gamma_+$ and $\Pi_-$ isomorphic to $\Gamma_-$.
\vspace{.5cm}

\textbf{Section \ref{secinterchangeagain}:}  Given $I$ with $|I| = 2n + 1$, we fix $0 \in I$ and let $I_0 = I \setminus \{ 0 \}$.
The map $\Pi \mapsto J = \Pi(0)$ associates to every game a size $n$ subset of $I_0$. The games which map to $J$ are all the
pointed games with $I_+ = I_0 \setminus J, I_- = J$. The set of such games forms a convex subset, in the suitable sense, of the interchange
graph.  From this we obtain
the lower bound ${2n \choose n} \cdot 2^{n(n - 1)}$ for
the number of games on a set $I$ with $|I| = 2n + 1$. Dividing by $(2n + 1)!$ we obtain a lower bound for the number of isomorphism classes of
games of size $2n + 1$.
\vspace{.5cm}

\textbf{Section \ref{sechomogeneous}:} Assume that $H$ is a subgroup of a group $G$ of odd order. A subset $A$ of $G$ is a \emph{game subset
for the pair $(G,H)$}\index{game subset for the pair $(G,H)$} if $A$ is a game subset for $G$ such that $i \in A \cap G \setminus H$ implies that
the double coset $HiH$ is contained in $A$. Furthermore, $A_0 = A \cap H$ is a game subset for $H$. The double cosets partition $G$ and
$i \not\in H$ implies that $HiH$ is disjoint from $H(i^{-1})H$. If we choose one double coset
from each such pair and choose a game subset $A_0$ for $H$, then
the union is a game subset for the pair.

Let $A$ be a
game subset for $(G,H)$. For the coset space\index{coset space} consisting of the left cosets, $G/H = \{ iH : i \in G \}$ define
$A/H = \{ iH : iH \subset A \}$. The set $\Gamma[A/H] = \{ (iH, jH) : i^{-1}jH \in A/H \}$\index{$\Gamma[A/H]$} is a game on $G/H$
which we will call a \emph{coset space game}\index{coset space game}\index{game!coset space}.
For each $k \in G$ the bijection $\ell_k$ on $G/H$ given by $iH \mapsto kiH$ is an automorphism of $\Gamma[A/H]$ and so
there is a group homomorphism from $G$ to $Aut(\Gamma[A/H])$. The projection $\pi: G \tto G/H$ is a morphism
from $\Gamma[A]$ onto $\Gamma[A/H]$.
Furthermore, there is an isomorphism from the game $\Gamma[A]$  to the lexicographic product $\Gamma[A/H] \ltimes \Gamma[A_0]$.
In particular, $Aut(\Gamma[A])$ is isomorphic to the semi-direct product $Aut(\Gamma[A/H]) \ltimes Aut(\Gamma[A_0])^{G/H}$.

If $H$ is a normal subgroup of $G$, so that $\pi : G \tto G/H$ is
a group homomorphism onto the quotient group, then a subset $A$ of $G$ is a game subset for $(G,H)$ if and only if there exist $B$ a game subset for
$G/H$ and $A_0$ a game subset of $H$ so that $A = A_0 \cup \pi^{-1}(B)$. In that case, the games $\Gamma[A/H]$ and $\Gamma[B]$ are equal.

On the other hand, assume that $G$ is a group of odd order acting on a game $\Pi$ on $I$.
For $a \in I$ the evaluation map $\Phi_a : G \to I$ is defined by $\Phi_a(g) = g \cdot a$.
$Iso_a = \{ g : g \cdot a = a \} = \Phi_a^{-1}( \{ a \})$\index{$Iso_i$} is a subgroup of $G$
called the \emph{isotropy subgroup}\index{isotropy subgroup} of $a$.
Let $Ga = \Phi_a(G) \subset I$\index{$Gi$} denote the $G$ orbit of $a$ and let
$\Pi_a = \Pi \cap (Ga \times Ga)$ be the restriction of $\Pi$ to $Ga$. Of course,
$G$ acts transitively on $I$ exactly when $Ga = I$ in which case $\Pi_a = \Pi$.

Let $H = Iso_a = \Phi_a^{-1}( \{ a \})$. Choose $A_0$ a game subset for $H$ and let $A = A_0 \cup \Phi_a^{-1}(\Pi(a))$.
The set $A \subset G$ is a game subset for $(G,H)$.
The restriction $\Pi_a = \Pi|Ga$ of $\Pi$ to $Ga$ is a subgame of $\Pi$. The map $\Phi_a$ is a
morphism from $\Gamma[A]$ to $\Pi$ and it factors through the canonical projection $\pi$ to define the bijection $\theta_a : G/H \tto Ga$ which is
an isomorphism from $\Gamma[A/H] \tto \Pi_a$.

Applied with $G = Aut(\Gamma)$ we see that the restriction of $\Gamma$ to an $Aut(\Gamma)$ orbit is a subgame which is isomorphic to a coset space game.
In particular,  a tournament is point-symmetric if and only if it is isomorphic to a coset space game.

 The lexicographic product of two group games is isomorphic to a group game and the
 lexicographic product of two coset space games is isomorphic to a coset space game.
\vspace{.5cm}

\textbf{Section \ref{secseven}:} Every game of size $7$ is isomorphic to one of the following three examples.

\textbf{Type I-} The group game $\Gamma_I = \Gamma[[1,2,3]]$ on $\Z_7$ has $Aut(\Gamma[[1,2,3]]) = \Z_7$
acting via translation and is reducible via each pair $i, i+3$.
The collection $\{ m_a( [1,2,3]) : a \in \Z_7^* \}$ are the $6 = \phi(7)$ game
subsets of $\Z_7$ whose games are isomorphic to
$\Gamma[[1,2,3]]$.

The group game $\Gamma_I$ is isomorphic to the
double $2\Pi$ with $\Pi$ the restriction to $[1,2,3]$.

{\bfseries Type II}- For the group game  $\Gamma_{II} = \Gamma[[1,2,4]]$ on  $\Z_7$ is invariant with respect to the multiplicative action of
$H = \{ 1, 2, 4 \}$, the non-trivial, odd order subgroup of $\Z_7^*$. The two game subsets not of Type I are
 $[1,2,4]$ and $[6,5,3] = m_6([1,2,4]) = -[1,2,4]$

The game $\Gamma_{II}$ is not reducible. With $\Pi$ a 3-cycle, $\Gamma_{II}$ is isomorphic to the double $2 \Pi$ with $\Pi_+$ reversed.

{\bfseries Type III}- $\Gamma_{III}$ is the double of $2 \Pi$ with $\Pi$ a 3-cycle. So the automorphism group is isomorphic to that of $\Pi$
and so is cyclic of order $3$. Every automorphism fixes $0$.

The game $\Gamma_{III}$ is  reducible but not via any pair which contains $0$.

Since $\Pi$ is isomorphic to its reversed game, it follows that $\Gamma_{III}$ is isomorphic to its reversed game as well.

The games of Type II and III are Steiner games.
\vspace{.5cm}

\textbf{Section \ref{seciso}:} We consider various isomorphism examples.

There exist non-isomorphic tournaments with
isomorphic doubles.

Every game of size greater than $3$ admits non-isomorphic extensions.

There exist reducible games which can be reduced
in different ways to get non-isomorphic games.
\vspace{.5cm}

\textbf{Section \ref{secnine}:} With $9 = 2 \cdot 4 + 1$ there are $2^4 = 16$ game subsets. We look at the group games.

Consider the cyclic group $G = \Z_9$.

The Type I games come from the $6 = \phi(9)$ subsets $\{ m_a(A) : a \in \Z_9^* \}$ with $A = [1,4]$ or, equivalently,
$A = \{ 1, 3, 5, 7 \}$.

The Type II games are the $6$ subsets $\{ m_a(A) : a \in \Z_9^* \}$ with $A = \{ 1, 5, 6, 7 \}$. In this case, as for Type I, the
automorphism group consists only of the translations by elements of $\Z_9$. These group games are not reducible.

Thus, the Type I and Type II games are non-isomorphic tournament regular representations of the group $\Z_9$.

The Type III games account for the $4$ remaining game subsets. With $H = \{0, 3, 6 \}$ the subgroup of $G$, there are
four game subsets for the pair $(G,H)$. Each game is isomorphic to $\Gamma_3 \ltimes \Gamma_3$ with automorphism group
$\Z_3 \ltimes (\Z_3)^{\Z_3}$.

If, instead, the group is the product group $G = \Z_3 \times \Z_3$ then it is a $2$ dimensional vector space over the field $\Z_3$. The four
one-dimensional subspaces are four subgroups $H$ of order $3$.  For each such $H$ there are four game
subsets for the pair $(G,H)$. This accounts for the $16$ game subsets. Each game is isomorphic to $\Gamma_3 \ltimes \Gamma_3$ with automorphism group
$\Z_3 \ltimes (\Z_3)^{\Z_3}$. In particular, the group $\Z_3 \times \Z_3$ does not admit a tournament regular representation.
\vspace{.5cm}

\textbf{Section \ref{unitour}:}  We here consider infinite tournaments. A countable tournament $\Gamma$ is called \emph{universal} if for every
countable tournament $\Pi$ on $S$, $S_0$ a finite subset of $S$ and $\r : \Pi|S_0 \tto \Gamma$ an embedding (=  an injective morphism),
there exists an embedding $\t : \Pi \tto \Gamma$ which extends $\r$. In the language of \cite{C} these are the
tournaments which are generic for a family of all finite tournaments.

Countable universal tournaments exist and are unique up to isomorphism. They are symmetric tournaments and so by using
the automorphism group, we construct a countable group which acts transitively on the universal tournament.  This yields a countable group
game which contains the universal tournament as a subgame and so contains copies of every finite tournament.
 \vspace{1cm}

\section{Relations and Digraphs}\label{secrelations}

Until Section \ref{unitour} we restrict ourselves to finite sets. For a finite set $I$ we will let $|I|$  denote
the cardinality of $I$. The symmetric group on $I$, that is,
the group of permutations on $I$, is denoted $S(I)$.

Following \cite{A93} we call a subset $\Pi$ of $I \times I$ a \emph{relation}\index{relation} on a  $I$ with $\Pi^{-1} = \{ (i,j) : (j,i) \in \Pi \}$
the \emph{reverse relation}\index{reverse relation}\index{relation!reverse}.
A pair $(i,j) \in \Pi$ is an \emph{edge}\index{edge} in $\Pi$.
For $i \in I$, $\Pi(i) = \{ j : (i,j) \in \Pi \}$ is the set of \emph{outputs}\index{outputs} of $i$,
so that $\Pi^{-1}(i) = \{ j : (j,i) \in \Pi \}$ is the set of \emph{inputs}\index{inputs} of $i$. Thus, a function on $I$ is a relation $\Pi$ such
that each $\Pi(i)$ is a singleton set, e.g. the identity map\index{identity map} $1_I$ is the diagonal $ \{ (i,i) : i \in I \}$. We call $i$ a \emph{vertex}\index{vertex}
 of $\Pi$ when
$\Pi(i) \cup \Pi^{-1}(i)$ is nonempty. Thus, $i \in I$ is a vertex of $\Pi$ when it has at least one input or output.

For $J \subset I$, the \emph{restriction}\index{restriction} of $\Pi$ to $J$ is the relation $\Pi|J = \Pi \cap (J \times J)$ on $J$.

 Given relations $\Pi, \Gamma$ on $I$ the
\emph{composition}\index{composition} $\Pi \circ \Gamma = \{ (i,j) :$ there exists $k $ such that
$(i,k) \in \Gamma, (k,j) \in \Pi \}$. Composition is associative and
we inductively define $\Pi^{n + 1} = \Pi^n \circ \Pi = \Pi \circ \Pi^n$, for $n \geq 0$, with $\Pi^1 = \Pi$ and $\Pi^0 = 1_I$ and let
$\Pi^{-n} = (\Pi^{-1})^n$.  We define
$\O \Pi = \bigcup_{n=1}^{\infty}  \Pi^n$\index{$\O \Pi$}. Observe that $ \O (\Pi^{-1}) = (\O \Pi)^{-1}$ and so we may omit the parentheses.

 A $\Pi$ \emph{path}\index{path} from $i_0$ to $i_n$ (or simply a path when $\Pi$ is understood) $[i_0,\dots,i_n] $ is a
 sequence of elements of $I$ with $n \geq 1$ such that
$(i_k,i_{k+1}) \in \Pi$ for $k = 0,\dots, n-1$. The length of the path is $n$. It is a \emph{closed path}\index{path!closed} when $i_n = i_0$.
A path is \emph{simple}\index{path!simple} when the vertices
$i_0,\dots,i_n$ are distinct and \emph{edge-simple}\index{path!edge-simple} when the edges $(i_0,i_1), \dots, (i_{n-1},i_{n})$  are distinct. Clearly,
a simple path is edge-simple, but an edge-simple path may cross itself. An\emph{ $n$ cycle}\index{cycle}, denoted $\langle i_1, \dots, i_n \rangle$, is
a closed path $[i_n, i_1, \dots, i_n]$ such that the vertices $i_1, \dots, i_n$ are distinct, i.e.  $[i_1, \dots, i_n]$ is a simple path, and so
the closed path $[i_n, i_1, \dots, i_n]$ is edge-simple.  A path \emph{spans}\index{path!spanning} $I$ when every $i \in I$ occurs on the path.

Depending on context we will regard a path or a cycle as a sequence of vertices or as a subgraph, i.e. use $[i_0,\dots,i_n]$ for
$\{ (i_k,i_{k+1}) : k = 0,\dots, n-1 \} \subset  \Pi$, and similarly, $\langle i_1, \dots, i_n \rangle = [i_n, i_1, \dots, i_n] \subset \Pi$.

Notice that $(i,j) \in \Pi^n$ exactly when there is a path from $i$ to $j$ of length $n$.

A relation $\Pi$ on $I$ is \emph{reflexive}\index{reflexive relation}\index{relation!reflexive} when $1_I \subset \Pi$.
It is is \emph{symmetric}\index{symmetric relation}\index{relation!symmetric} when $\Pi^{-1} = \Pi$. It is
 \emph{transitive}\index{transitive relation}\index{relation!transitive} when $\Pi \circ \Pi \subset \Pi$ or, equivalently, when
 $\Pi = \O \Pi$. For any relation $\Pi$, $\O \Pi$ is the smallest transitive relation which contains $\Pi$. Observe that,
 \begin{equation}\label{eq00iden}
 \O (1_I \cup \Pi) \ = \ 1_I \cup \O \Pi.
 \end{equation}
 If $\Pi$ is symmetric or transitive, then the reflexive relation $1_I \cup \Pi$ satisfies the corresponding property.

We call $i \in I$ a \emph{recurrent vertex}\index{vertex!recurrent} when $(i,i) \in \O \Pi$. The
relation $\O \Pi \cap \O \Pi^{-1}$ is an equivalence relation on the set
of recurrent vertices. Of course, every $i \in I$ is recurrent for $1_I \cup \Pi$. Since
$\O (1_I \cup \Pi) \cap \O (1_I \cup \Pi)^{-1} = 1_I \cup (\O \Pi \cap \O \Pi^{-1})$,
 the recurrent point equivalence classes for $\O \Pi \cap \O \Pi^{-1}$ are equivalence classes for
$\O (1_I \cup \Pi) \cap \O (1_I \cup \Pi)^{-1}$ and the remaining $\O (1_I \cup \Pi) \cap \O (1_I \cup \Pi)^{-1}$
equivalence classes are singleton sets.

A non-empty subset $J \subset I$ is \emph{strongly connected in $I$}\index{strongly connected}
when $J \times J \subset \O \Pi $ and so
$J \times J \subset \O \Pi \cap \O \Pi^{-1}$. That is, a subset is strongly connected if and only if
it is contained in an $\O \Pi \cap \O \Pi^{-1}$ equivalence class.
Thus, the $\O \Pi \cap \O \Pi^{-1}$ equivalence classes are the maximal strongly connected subsets.
We call $\Pi$ \emph{strong}\index{strong}\index{relation!strong} when the entire
set of vertices of $\Pi$ is a strongly connected set and so the set of vertices comprises a single  $\O \Pi \cap \O \Pi^{-1}$ equivalence class.

\begin{lem}\label{lem00aa} Assume that $\Pi$ is a relation on $I$ and that $J$ is a nonempty subset of $I$.
If the restriction  $\Pi|J$ is strong, then $J$ is strongly connected and so is contained in an $\O \Pi \cap \O \Pi^{-1}$ equivalence class. Conversely, if
$J$ is  an $\O \Pi \cap \O \Pi^{-1}$ equivalence class, then the restriction $\Pi|J$ is strong. \end{lem}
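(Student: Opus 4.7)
The forward implication is essentially immediate. Any $\Pi|J$-path is automatically a $\Pi$-path, so $\O(\Pi|J) \subseteq \O\Pi$ and $\O(\Pi|J)^{-1} \subseteq \O\Pi^{-1}$. If $\Pi|J$ is strong, then $J \times J \subseteq \O(\Pi|J) \cap \O(\Pi|J)^{-1}$, hence $J \times J \subseteq \O\Pi \cap \O\Pi^{-1}$. By the characterization of strongly connected sets recalled just before the lemma, this places $J$ inside a single equivalence class of $\O\Pi \cap \O\Pi^{-1}$.

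The converse is the substantive half. Suppose $J$ is an $\O\Pi \cap \O\Pi^{-1}$ equivalence class. The heart of the argument is a no-escape claim: any $\Pi$-path whose endpoints lie in $J$ stays inside $J$. Given a $\Pi$-path $[i_0,\dots,i_n]$ with $i_0, i_n \in J$ and any intermediate index $k$, the prefix $[i_0,\dots,i_k]$ gives $(i_0,i_k) \in \O\Pi$ and the suffix $[i_k,\dots,i_n]$ gives $(i_k,i_n) \in \O\Pi$; composing the latter with a $\Pi$-path from $i_n$ back to $i_0$, available because $i_0$ and $i_n$ lie in the same equivalence class, yields $(i_k,i_0) \in \O\Pi$ as well. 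Thus $i_k$ belongs to the equivalence class of $i_0$, so $i_k \in J$. Applying this to any pair $i,j \in J$ together with a $\Pi$-path realising $(i,j) \in \O\Pi$ (which exists because $J \times J \subseteq \O\Pi$) produces a $\Pi|J$-path from $i$ to $j$, giving $J \times J \subseteq \O(\Pi|J)$. The first edge of such a path also exhibits $i$ as a vertex of $\Pi|J$; the singleton case $J = \{ i \}$ is handled by the recurrence $(i,i) \in \O\Pi$, which under no-escape forces the loop $(i,i) \in \Pi|J$. Hence the vertex set of $\Pi|J$ equals $J$ and is strongly connected, so $\Pi|J$ is strong.

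The only step with real content is the no-escape claim, and it is exactly there that the hypothesis that $J$ is a whole equivalence class (rather than merely a strongly connected subset) actually bites, namely in the closing concatenation $i_k \to \dots \to i_n \to \dots \to i_0$ that produces $(i_k, i_0) \in \O\Pi$. Everything else is bookkeeping with $\O$ and the basic definitions.
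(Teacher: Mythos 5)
Your proof is correct and follows essentially the same route as the paper: the paper takes a closed $\Pi$-path from $i$ back to $i$ through $j$ and observes that every intermediate vertex is $\O \Pi \cap \O \Pi^{-1}$-equivalent to $i$ and hence lies in $J$, which is exactly your no-escape claim in slightly different packaging. Your additional care with the singleton equivalence class and with the vertex set of $\Pi|J$ fills in details the paper leaves implicit, but does not change the argument.
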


\begin{proof} The first result is obvious.

Now assume that $J$ is  an $\O \Pi \cap \O \Pi^{-1}$ equivalence class.
If two points $i, j \in J$ then there is a closed path from $i$ to $i$ which passes through $j$. All of the
points on the closed path are $\O \Pi \cap \O \Pi^{-1}$ equivalent to $i$ and $j$ and so are contained in $J$. Hence, the closed path for $\Pi$ is
a closed path for $\Pi|J$.

\end{proof} \vspace{.5cm}

{\bfseries Remark:} Notice that if $i, j$ are distinct elements of an $\O \Pi \cap \O \Pi^{-1}$ equivalence class, the set $J = \{ i, j \}$ is strongly
connected. On the other hand, the restriction $\Pi|J$ is strong only if $(i,j)$ and $(j,i)$ are both elements of $\Pi$. \vspace{.5cm}

A subset $J \subset I$ is an \emph{invariant set}\index{invariant set} for a relation $\Pi$ on $I$ when $\Pi(J) \subset J$ and so $\O \Pi(J) \subset J$.
A subset $J$ is invariant for $\Pi$ if and only if the complement $I \setminus J$ is invariant for $\Pi^{-1}$. For any subset $J$, the set
$J \cup \O \Pi(J)$ is the smallest invariant set which contains $J$. It is clear that $\Pi$ is strong if and only if $I$
contains no proper invariant subset. In particular, if $i,j$ are vertices of $\Pi$ and there is no path from $j$ to $i$ then $\{ j \} \cup \O \Pi(j)$
is an invariant set which contains $j$ but not $i$.

We will call a relation $\Pi$ on $I$ a \emph{digraph}\index{digraph}\index{graph!directed} when  $\Pi \cap \Pi^{-1} = \emptyset$.  In particular,
we have $\Pi \cap 1_I = \emptyset$. That is, we are interpreting $\Pi$ as a
 directed graph with every pair of distinct elements of $I$ connected by at most one oriented edge and
 no element of $I$ is connected by an edge to itself. We write $i \to j$ when $(i,j)$ is an edge of the digraph.

 We will call a relation $\Pi$ on $I$ an \emph{undirected graph}\index{undirected graph}\index{graph!undirected} when $\Pi \cap 1_I = \emptyset$ and
 $\Pi = \Pi^{-1}$. That is, we are interpreting $\Pi$ as a
 graph with every pair of distinct elements of $I$ connected by at most one unoriented edge and
 no element of $I$ is connected by an edge to itself.

 A digraph or undirected graph $\Pi$ on $I$ is called \emph{bipartite}\index{bipartite}\index{graph!bipartite} when $I$ is the union of
 disjoint sets $J,K$ and $\Pi \subset (J \times K) \cup (K \times J)$. That is, elements of $J$ are connected by an edge only to elements of $K$ and
 vice-versa.

  For a graph on $I$ the number of vertices is the
 \emph{size} of the graph. \index{size}\index{graph!size}


 A subset of a digraph $\Pi$ is a digraph and we will call it a \emph{subgraph}\index{subgraph}
  of $\Pi$. Observe that disjoint subgraphs may have
 vertices in common. We will call two subgraphs \emph{separated}\index{separated subgraphs} when they have no vertices in common. Of course,
 separated subgraphs are disjoint.  We call $\Pi$ \emph{connected}\index{digraph!connected}
 if it cannot be written as the disjoint union of two separated proper subgraphs.


 A digraph $\Pi$ on $I$ is called a round robin tournament, or simply a \emph{tournament}\index{tournament}, when it is complete on $I$, i.e.
 when every pair of distinct elements of $I$ is connected by an edge.
Thus, a digraph $\Pi$ is a tournament when
 $\Pi \cup \Pi^{-1} \cup 1_I = I \times I$. Clearly, if $\Pi$ is a tournament on $I$
 and $J$ is a subset of $I$, then the restriction $\Pi|J$ is  a tournament on $J$.

 For a tournament on $I$ the cardinality $|I|$ is the
size of the tournament. By convention, $\Pi = \emptyset$ on a singleton set $I$ is the trivial tournament of size $1$.

 For a tournament $\Pi$ we think of $i \to j$ to mean $i$ beats $j$. Hence, the \emph{score}\index{score} for $i$ is the
 cardinality of the output set, $|\Pi(i)|$.  The \emph{score vector}\index{score vector}
  $(s_1, \dots, s_p)$ for a tournament of size $p$, consists of
 the scores of the elements, listed in non-decreasing order. The score vector for a tournament was introduced and characterized by Landau
 \cite{L3} as a tool for his study of animal behavior. It is clear that the sum of the scores is the number of edges $p(p - 1)/2$.

 Clearly, any digraph on $I$ can be extended to occur as a subgraph of some tournament on $I$.

 The following is a sharpening by Moon, \cite{M} Theorem 3, of a result of Harary and Moser, see \cite{HM} Theorem 7.

\begin{prop}\label{prop06} If $\Pi$ is a strong tournament on $I$ with $|I| = p > 1$ and $i \in I$,
then for every $\ell$ with $3 \leq \ell \leq p$
there exists a $\ell$-cycle  in $\Pi$ passing through $i$.
\end{prop}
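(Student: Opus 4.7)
The plan is to prove the statement by induction on $\ell$, with the base case $\ell = 3$ and the inductive step extending an $\ell$-cycle through $i$ to an $(\ell+1)$-cycle through $i$. Throughout, I will invoke the fact that strong connectivity means no proper nonempty subset of $I$ is invariant under $\Pi$.

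\textbf{Base case $\ell = 3$.} Let $A = \Pi(i)$ and $B = \Pi^{-1}(i)$. Both are nonempty: if $A = \emptyset$ then $\{i\}$ is invariant under $\Pi^{-1}$, and if $B = \emptyset$ then $\{i\}$ is invariant under $\Pi$, in either case contradicting $p > 1$ and strongness. I claim there exist $a \in A$, $b \in B$ with $a \to b$. If not, then since $\Pi$ is a tournament every edge between $A$ and $B$ goes from $B$ to $A$; combined with $i \to A$ this shows $\{i\} \cup A$ has no outgoing edges to $B$, hence is an invariant proper subset (as $B \neq \emptyset$), contradicting strongness. The 3-cycle $\langle i, a, b \rangle$ then passes through $i$.

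\textbf{Inductive step.} Suppose $3 \leq \ell < p$ and $\langle i_1, i_2, \dots, i_\ell \rangle$ is an $\ell$-cycle with $i_1 = i$. Write $V = \{i_1, \dots, i_\ell\}$ and $W = I \setminus V$; note $W \neq \emptyset$ since $\ell < p$. Split into two cases according to how $W$ interacts with $V$.

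\emph{Case 1.} Some $w \in W$ has both an in-neighbor and an out-neighbor in $V$, i.e.\ some $i_j \to w$ and $w \to i_k$. Going once around the cyclic sequence $i_1, \dots, i_\ell, i_1$ and labelling each $i_m$ by $+$ when $w \to i_m$ and $-$ when $i_m \to w$, both signs appear, so there must be consecutive indices (mod $\ell$) with $i_j \to w$ and $w \to i_{j+1}$. Inserting $w$ gives the $(\ell+1)$-cycle $\langle i_1, \dots, i_j, w, i_{j+1}, \dots, i_\ell \rangle$ through $i$.

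\emph{Case 2.} No such $w$ exists, so every $w \in W$ is either in $W_+ = \{w : w \to i_m \text{ for all } m\}$ or in $W_- = \{w : i_m \to w \text{ for all } m\}$. Strongness forces both sets to be nonempty: if $W_- = \emptyset$, then $W \subset W_+$, making $W$ invariant under $\Pi$, and if $W_+ = \emptyset$, then $V$ is invariant under $\Pi$. A second application of strongness produces an edge $w_- \to w_+$ with $w_- \in W_-$, $w_+ \in W_+$: otherwise all edges between $W_-$ and $W_+$ would go from $W_+$ to $W_-$, making $V \cup W_-$ invariant under $\Pi$ (with $W_+$ nonempty outside), a contradiction. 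Then
$$ i_1 \to i_2 \to \cdots \to i_{\ell - 1} \to w_- \to w_+ \to i_1 $$
is an $(\ell+1)$-cycle through $i$, using $i_{\ell-1} \to w_-$ (as $w_- \in W_-$) and $w_+ \to i_1$ (as $w_+ \in W_+$), with all $\ell + 1$ vertices distinct.

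The main obstacle is Case 2 of the inductive step: one must apply strong connectivity twice — once to guarantee the dichotomy $W = W_+ \sqcup W_-$ is nontrivial, and once to produce a crossing edge $w_- \to w_+$ — and then have the insight to drop $i_\ell$ and splice in both new vertices rather than attempting to insert a single vertex.
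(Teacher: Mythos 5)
Your proof is correct and follows essentially the same route as the paper: the identical base case via strongness, insertion of an external vertex at a sign change around the cycle in Case 1, and in Case 2 the same trick of dropping one cycle vertex and splicing in a dominated--dominating pair $w_- \to w_+$ (the paper omits $i_2$ where you omit $i_\ell$, which is actually the safer choice for keeping $i = i_1$ on the cycle). One small slip to fix: in Case 2 your two invariance conclusions are transposed --- if $W_- = \emptyset$ then every external vertex beats all of $V$, so it is $V$ (not $W$) that is invariant under $\Pi$; if $W_+ = \emptyset$ then all of $V$ beats every external vertex, so it is $W$ that is invariant. The dichotomy and the rest of the argument are unaffected.
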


\begin{proof} If $\Pi(i)$ or $\Pi^{-1}(i)$ is empty then there is no path from $i$ to $i$ and so the tournament is not strong.
If for no $j_1 \in \Pi(i)$ and $j_2 \in \Pi^{-1}(i)$ is it true that $j_1 \to j_2$ then  $\Pi(i)$ is a proper
invariant subset and so the
tournament is not strong.  Hence, there is
a 3-cycle through $i$.

Now suppose that $\langle i_1,\dots,i_r \rangle$ is a cycle through $i$ with $r < p$.  We show that we can enlarge the cycle to one of length $r + 1$.
\vspace{.5cm}

\textbf{Case 1} (There exists $j$ not on the cycle but such that both $\Pi(j)$ and $\Pi^{-1}(j)$ meet the cycle):
By relabeling we may assume that $i_1 \in \Pi^{-1}(j)$.
Let $s$ be the largest integer such that $i_1,\dots,i_s \in \Pi^{-1}(j)$. By hypothesis, $s < r$ and $i_{s+1} \in \Pi(j)$.
Hence, $\langle i_1,\dots,i_s,j,i_{s+1},\dots,i_r \rangle$ is a $r+1$ cycle.
\vspace{.5cm}

\textbf{Case 2 }(For every $j$ not on the cycle either  $\Pi(j)$ or $\Pi^{-1}(j)$ does not meet the cycle): Observe that if $\Pi(j)$ does not
meet the cycle, then $i_1,\dots,i_r \in \Pi^{-1}(j)$. Let $A = \{ j : i_1,\dots,i_r \in \Pi^{-1}(j) \}$ and
$B  = \{ j : i_1,\dots,i_r \in \Pi(j) \}$.
By assumption, $r < p$ and $I \setminus \{i_1,\dots,i_r \} = A \cup B$. There must exist $u \in A$ and $v \in B$ with
$u \to v$. Otherwise, $A$ and $\{i_1, \dots, i_r \} \cup A$ are invariant subsets and at least one is a proper subset. Thus, the tournament
would not be strong.  Hence, $\langle i_1, u ,v, i_3, \dots,i_r \rangle$
(omitting $i_2$) is a $r + 1$ cycle which contains $i$.

\end{proof} \vspace{.5cm}

 A \emph{Hamiltonian cycle} \index{Hamiltonian cycle}\index{cycle!Hamiltonian} in a digraph is
a cycle which passes through every vertex.  Proposition \ref{prop06} implies that a tournament admits a Hamiltonian cycle if it is strong.
The converse is obviously true.

The opposite extreme of a strong tournament is an \emph{order}\index{order}. A relation $\Pi$ on $I$ is an order (to be precise, a strict, total order)
when $\Pi$ is a transitive tournament. For example, for $[1,p] = \{ 1, 2, \dots, p \}$ we let $i \to j$ when $i < j$ to define the \emph{standard order}
\index{standard order}\index{order!standard} on $[1,p] $.

\begin{prop}\label{prop06order} For $\Pi$ is a tournament on $I$, with $|I| = p$, the following conditions are equivalent.
\begin{itemize}
\item[(i)] $\Pi$ is an order.
\item[(ii)] $\Pi$ contains no cycles.
\item[(iii)] $\Pi$ contains no $3$-cycles.
\item[(iv)] No vertex of $\Pi$ is recurrent.
\item[(v)] Every equivalence class of $\O (1_I \cup \Pi) \cap \O (1_I \cup \Pi)^{-1}$ is a singleton, i.e. $1_I = \O (1_I \cup \Pi) \cap \O (1_I \cup \Pi)^{-1}$.
\item[(vi)] The score vector of $\Pi$ is $(0,1,\dots,p-1)$.
\item[(vii)] There is a bijection $k \mapsto i_k$  from $[1,p]$ to $I$ which induces an isomorphism from the standard order onto $\Pi$, that is,
$i_k \to i_{\ell}$ if and only if $k < \ell$.
\end{itemize}
\end{prop}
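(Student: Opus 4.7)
The plan is to prove the cycle (i)$\Rightarrow$(ii)$\Rightarrow$(iii)$\Rightarrow$(iv)$\Rightarrow$(v)$\Rightarrow$(i), establishing the equivalence of the first five conditions, and then close the argument with (i)$\Rightarrow$(vii)$\Rightarrow$(vi)$\Rightarrow$(i). For the easy links: (i)$\Rightarrow$(ii) by applying transitivity along a cycle $\langle i_1,\dots,i_n\rangle$ to produce $(i_1,i_n)\in \Pi$, which combined with $(i_n,i_1)$ from the cycle violates $\Pi \cap \Pi^{-1} = \emptyset$; (ii)$\Rightarrow$(iii) is immediate; (iv)$\Rightarrow$(v) uses the identity $\O(1_I\cup\Pi)\cap\O(1_I\cup\Pi)^{-1} = 1_I \cup (\O\Pi\cap \O\Pi^{-1})$ already noted in the text, since absence of recurrent vertices forces $\O\Pi\cap\O\Pi^{-1}=\emptyset$ (any $(i,j)$ there would compose into a closed path at $i$); and (v)$\Rightarrow$(i) goes by contrapositive: if $\Pi$ fails to be transitive there exist $i \to j \to k$ with $(i,k)\notin\Pi$, so the digraph and tournament properties force $k\to i$, and the resulting 3-cycle places $i,j,k$ in a common non-singleton equivalence class.

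The main step is (iii)$\Rightarrow$(iv), also by contrapositive. Assuming some vertex $i$ is recurrent, I would choose a closed $\Pi$-path from $i$ to itself of minimal length; minimality forces it to be a simple cycle $\langle i_1,\dots,i_n\rangle$ with $n\ge 3$, the case $n=2$ being ruled out by the digraph condition. Then I would examine the tournament edge between the non-adjacent vertices $i_1$ and $i_3$: if $i_3\to i_1$ we obtain the 3-cycle $\langle i_1,i_2,i_3\rangle$; if $i_1\to i_3$ we obtain the strictly shorter cycle $\langle i_1,i_3,i_4,\dots,i_n\rangle$, whose vertices remain distinct since we have only deleted $i_2$ from a simple cycle. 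Iterating shrinks the cycle until its length is exactly $3$, contradicting (iii).

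To close with conditions (vi) and (vii): (i)$\Rightarrow$(vii) follows by induction on $p$, since a transitive tournament on a finite set is a strict total order and therefore has a unique minimum $i_1$; removing $i_1$ and applying the inductive hypothesis to the transitive tournament $\Pi|(I\setminus\{i_1\})$ produces the desired order-preserving bijection. For (vii)$\Rightarrow$(vi), the bijection $k\mapsto i_k$ gives $s_{i_k} = |\{k+1,\dots,p\}| = p-k$, so the sorted score vector is $(0,1,\dots,p-1)$. For (vi)$\Rightarrow$(i), induct on $p$: the vertex $v$ with score $0$ is beaten by every other vertex, so deleting $v$ reduces every remaining score by exactly $1$, leaving $\Pi|(I\setminus\{v\})$ with score vector $(0,1,\dots,p-2)$, transitive by the inductive hypothesis; transitivity of $\Pi$ then follows since in any $a\to b\to c$ the vertex $v$ cannot occupy the $a$ or $b$ slot (it has no outputs), and if $c=v$ then $a\to v$ holds automatically.

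I expect the only non-routine piece to be the shortening argument in (iii)$\Rightarrow$(iv), and even that is a standard move exploiting completeness of the tournament on non-consecutive cycle vertices; all other implications amount to careful bookkeeping. A minor point to verify is that the shortened cycle in that step genuinely remains simple, which it does since only one vertex is removed from a cycle whose vertices were already distinct.
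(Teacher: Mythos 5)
Your proof is correct and follows essentially the same route as the paper: the equivalence of (i)--(v) rests on the observation that a failure of transitivity (or a shortest cycle) produces a $3$-cycle, and the links to (vi) and (vii) are handled by induction on $p$, removing an arbitrary vertex for (i)$\Rightarrow$(vii) and the score-$0$ vertex for (vi). The only cosmetic difference is that you close the loop via (vi)$\Rightarrow$(i) where the paper goes (vi)$\Rightarrow$(vii), and you spell out the cycle-shortening details that the paper leaves as "easy to check."
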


\begin{proof} Observe that $i \to j \to k$ and $i \not\to k$ implies $\langle i, j, k \rangle$ is a $3$-cycle. The equivalences of (i)-(v) are then easy to check.
It is obvious that (vii) implies (i) and (vi).  For the converse directions we use induction on $p$.

If $\Pi$ is an order, and $i \in I$ then
$\Pi|(I \setminus \{ i \})$ is an order. Choose $i$ the maximum element in the ordering.
By induction hypothesis there is a numbering $k \mapsto i_k$ of $I \setminus \{ i \}$ as in (vii)
inducing an isomorphism from the standard order on $[1,p-1]$ onto $\Pi|(I \setminus \{ i \})$.
Extend by letting $i_p = i$.

If $\Pi$ has score vector $(0,1,\dots, p-1)$, then let
$i \in I$ with score $0$. Every $k \to i$ for $k \not= i$ and so $\Pi|(I \setminus \{ i \})$ has score vector $(0,\dots, p-2)$. By induction hypothesis
we have $k \mapsto i_k$ as in (vii) for $\Pi|(I \setminus \{ i \})$. As before, extend by letting $i_p = i$.

\end{proof}

{\bfseries Remark:} For any tournament $\Pi$ it is clear that $\O \Pi$ induces an order on the set of $\O (1_I \cup \Pi) \cap \O (1_I \cup \Pi)^{-1}$
equivalence classes, with $[i] \to [j]$ for distinct classes $[i],  [j]$ when  $(i,j) \in \O \Pi$. Furthermore, the map $j \mapsto [j]$ is a
surjective morphism from $\Pi$ onto this order.
\vspace{.5cm}

A digraph $\Pi$ on $I$ is \emph{Eulerian}\index{digraph!Eulerian}\index{Eulerian}
 when for all $i \in I$, $|\Pi(i)| = |\Pi^{-1}(i)|$. That is, each element of $I$ has the same number of
inputs and outputs.
In \cite{HNC} such a digraph is called an \emph{isograph}. In general,
for an Eulerian digraph the cardinality $|\Pi(i)| = |\Pi^{-1}(i)|$ may vary with $i$. When it does not, when $k = |\Pi(i)| = |\Pi^{-1}(i)|$ is the
same for all $i$, the digraph is called \emph{regular}\index{digraph!regular}\index{regular} or $k$-regular.

By convention, we will regard the empty digraph as Eulerian.  Otherwise, for an Eulerian digraph $\Pi$ on $I$, we will assume that every $i \in I$ is a
vertex, i.e. $|\Pi(i)| = |\Pi^{-1}(i)| > 0$ for all $i \in I$.

\begin{prop}\label{prop06euler} Let $\Pi$ be a digraph on $I$.
\begin{enumerate}
\item[(a)] If there exist $k_+, k_-$ such that $|\Pi(i)| = k_+$ and $|\Pi^{-1}(i)| = k_-$ for all $i \in I$, then
 $k_+ = k_-$ and  $\Pi$ is $k = k_+ = k_-$ regular.

\item[(b)]If $\Pi$ is a tournament, then the following are equivalent
\begin{itemize}
\item[(i)] $\Pi$ is Eulerian.
\item[(ii)] $\Pi$ is regular.
\item[(iii)] $|\Pi(i_1)| = |\Pi(i_2)|$ for all $i_1, i_2 \in I$.
\end{itemize}
When these conditions hold, $|I|$ is odd and $\Pi$ is $k$-regular with $k = \frac{|I|-1}{2}$.

\end{enumerate}\end{prop}

\begin{proof} (a) Because $|I|\cdot k_+$ and $|I|\cdot k_-$ are each equal to the number of edges, it follows that $k_+ = k_-$.

(b) For a tournament $\Pi$, $|I| = |\Pi(i)| + |\Pi^{-1}(i)| + 1$ for all $i \in I$. So for an Eulerian tournament,
$|I|$ is odd and if $|I| = 2n + 1$ then $|\Pi(i)| = |\Pi^{-1}(i)| = n$. So (i) $\Leftrightarrow$ (ii). Obviously, (ii) $\Rightarrow$ (iii).
If (iii) holds so that $|\Pi(i)| = k_+$ for all $i$ then $|\Pi^{-1}(i)| = |I| - k_+ - 1$ for all $i$ and
so (a) implies  $\Pi$ is regular, proving (iii) $\Rightarrow$ (ii).

\end{proof} \vspace{.5cm}

Henceforth, we will call a regular tournament a \emph{game}\index{game}, for short,
 as these are the tournaments which generalize the Rock-Paper-Scissors game. Clearly, a game of size $2n + 1$ is a tournament
 with score vector given by $s_r = n$ for $r = 1, \dots, 2n+1$.

  The \emph{trivial game}\index{game!trivial}  has size $1$.
  That is, $I$ is a singleton and the unique digraph is the empty subset of $I \times I$. 

A cycle is obviously Eulerian since $|\Pi(i)| = 1 = |\Pi^{-1}(i)|$ for every $i$ on the cycle.

Notice that if a digraph $\Pi$ is a tournament or is Eulerian, then $\Pi^{-1}$ is a digraph satisfying the corresponding property. Thus, the reverse of
a game is a game.

\begin{lem}\label{lem01} If $\Pi_1$ is an Eulerian subgraph of a digraph $\Pi$, then $\Pi$ is Eulerian
if and only if $\Pi \setminus \Pi_1$ is Eulerian. In particular, the
union of disjoint Eulerian  graphs on $I$ is Eulerian. \end{lem}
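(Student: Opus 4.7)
The plan is to reduce the lemma to a straightforward vertex-by-vertex counting argument using the fact that, for any subgraph $\Pi_1$ of $\Pi$ and any vertex $i$, the input and output sets split additively: $|\Pi(i)| = |\Pi_1(i)| + |(\Pi \setminus \Pi_1)(i)|$ and likewise with inverses.

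First I would fix an arbitrary $i \in I$ and write the key identity
\begin{equation*}
|\Pi(i)| - |\Pi^{-1}(i)| \ = \ \bigl(|\Pi_1(i)| - |\Pi_1^{-1}(i)|\bigr) \ + \ \bigl(|(\Pi \setminus \Pi_1)(i)| - |(\Pi \setminus \Pi_1)^{-1}(i)|\bigr).
\end{equation*}
Since $\Pi_1$ is Eulerian, the first parenthesized difference vanishes for every $i$. Therefore the left side vanishes for every $i$ if and only if the second parenthesized difference vanishes for every $i$; that is, $\Pi$ is Eulerian if and only if $\Pi \setminus \Pi_1$ is Eulerian. This is the main equivalence.

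For the ``in particular'' clause, I would induct on the number of disjoint Eulerian subgraphs. For two disjoint Eulerian graphs $\Pi_1$ and $\Pi_2$ on $I$, set $\Pi = \Pi_1 \cup \Pi_2$. Then $\Pi_1$ is an Eulerian subgraph of $\Pi$ and, because $\Pi_1 \cap \Pi_2 = \emptyset$, we have $\Pi \setminus \Pi_1 = \Pi_2$, which is Eulerian by hypothesis. The equivalence just established then gives that $\Pi$ is Eulerian. The induction step is identical: pull one Eulerian summand off of a disjoint union of $n+1$ Eulerian graphs and apply the inductive hypothesis to the remaining $n$.

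There is no real obstacle here — the argument is purely bookkeeping with the degree condition. The one point to be a little careful about is the convention in force in this paper that ``disjoint'' means disjoint as edge sets (the text explicitly notes that disjoint subgraphs may share vertices), so the additive splitting $|\Pi(i)| = |\Pi_1(i)| + |(\Pi \setminus \Pi_1)(i)|$ is valid at every vertex regardless of whether $i$ is a vertex of both $\Pi_1$ and $\Pi \setminus \Pi_1$ (in which case one simply allows one of the terms to be zero).
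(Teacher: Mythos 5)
Your proof is correct and is essentially the same as the paper's: both rest on the additive splitting $|\Pi(i)| = |\Pi_1(i)| + |(\Pi\setminus\Pi_1)(i)|$ (and its inverse analogue) at each vertex, with the Eulerian hypothesis on $\Pi_1$ cancelling the corresponding terms. Your explicit induction for the ``in particular'' clause is a minor elaboration the paper leaves implicit.
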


\begin{proof} If $\Gamma = \Pi \setminus \Pi_1$ then $\Pi(i)$ is the disjoint union of $\Pi_1(i)$ and $\Gamma(i)$. Hence,
$|\Pi(i)| = |\Pi_1(i)| + |\Gamma(i)|$. Similarly, $|\Pi^{-1}(i)| = |\Pi^{-1}_1(i)| + |\Gamma^{-1}(i)|$. By assumption,
$|\Pi_1(i)| = |\Pi_1^{-1}(i)|$. So $|\Pi(i)| = |\Pi^{-1}(i)|$ if and only if $|\Gamma(i)| = |\Gamma^{-1}(i)|$.

\end{proof} \vspace{.5cm}

The following observation is essentially due to Euler, see, e.g. \cite{HNC} Theorem 12.5.

 \begin{theo}\label{theo02}  Any  nonempty Eulerian digraph can be written as a disjoint union of cycles.
 \end{theo}

 \begin{proof} Let $\Pi$ be a nonempty  Eulerian digraph on $I$. By assumption
  $\Pi(i)$ and $\Pi^{-1}(i)$ are nonempty for every $i \in I$. Beginning with any vertex we can build a
 simple path $[i_1, \dots, i_k ]$ and continue until $i_p \in \Pi(i_k)$ for some $p < k$ and so, necessarily, $p < k - 1$. Then
 $\langle i_p,\dots,i_k\rangle$   is a cycle in $\Pi$.

By Lemma \ref{lem01} $\Pi \setminus \langle i_p,\dots,i_k\rangle$   is an Eulerian digraph and so, if it is nonempty, it contains a
 cycle disjoint from $\langle i_p, \dots, i_k\rangle$.

 Continue inductively to exhaust $\Pi$.

 \end{proof} \vspace{.5cm}

 {\bfseries Remark:} The decomposition of an Eulerian digraph into disjoint cycles is usually not unique.
 \vspace{.5cm}

The following is essentially Theorem 7.4 of \cite{CZ}.

 \begin{theo}\label{theo02a} Assume that  $[i_0, \dots, i_k] $ with $k > 1$ is a closed edge-simple path for a digraph $\Pi$ (and so $i_k = i_0$).
Regarded as a subgraph of $\Pi$, $[i_0, \dots, i_k] $ is a strong, Eulerian digraph
 and so is a disjoint union of cycles. Conversely, if  $\Pi$ is a connected, Eulerian digraph, then it admits
 a spanning, closed, edge-simple path. In particular, a connected, Eulerian digraph is strong.  \end{theo}

  \begin{proof} Of course, if the vertices $i_1, \dots, i_{k-1}$ are distinct, then $[i_0, \dots, i_k] $ is a single $k$ cycle (and conversely).
  However, while we are assuming the edges are distinct, the vertices need not be. Nonetheless, the input edge $(i_{r-1},i_{r})$ for $i_r$
  is balanced by the output edge $(i_r,i_{r+1})$. Since the edges are distinct, each vertex has the same number of inputs and outputs.

  Since $[i_0, \dots, i_k] $  is Eulerian, it is a disjoint union of cycles by Theorem \ref{theo02}.

  Conversely, assume that $\Pi$ is a connected Eulerian digraph and so is a disjoint union of cycles $C_1, \dots, C_{n}$.
  We prove the existence of the required spanning path by induction on $n$.  If $\Pi$ consists of a single cycle, the
  result is obvious.

  Define the reflexive, symmetric relation
  $R$ on $[1, n] = \{ 1, \dots, n\}$ by $(p,q) \in R$ when $C_p$ and $C_q$ have a vertex in common. Thus, $\O R$ is an equivalence relation on
  $[1, n]$. If $\Pi_1$ is the union of cycles in an $\O R$ equivalence class, then $\Pi_1$ and $\Pi \setminus \Pi_1$ have no vertices
  in common.  Since $\Pi$ is assumed to be connected, it must be the union of a single $\O R$ equivalence class. Let $k$ be the smallest
  positive integer such that $[1, n] \times [1, n] \subset R^k$ and so for some $p, q \in [1, n]$ $(p,q) \in R^k \setminus R^{k-1}$. For any
  $q_1 \not= q$ there is an $R$ path from $p$ to $q_1$ with length at most $k$ and  $q$ does not lie on such a path. It follows that
  the Eulerian digraph $\Gamma = \bigcup_{r \not= q} C_r$ is connected and is the union of $n-1$ disjoint cycles. By induction hypothesis
  there exists $[i_0, \dots, i_{\ell}] $,  a closed edge-simple path which spans $\Gamma$. The cycle $C_q = \langle j_1,\dots,j_r \rangle$
  has a vertex in common with  $\Gamma$. By relabeling we may assume $i_{\ell} = i_0 = j_r$. Then $[i_0, \dots, i_{\ell}, j_1,\dots,j_r] $
  is a closed, edge-simple path which spans $\Pi$.

  \end{proof} \vspace{.5cm}

Given a map $\r : I \tto J$ we let $\bar \r$ \index{$\bar \r$} denote the product map $\r \times \r : I \times I \tto J \times J$.

\begin{df}\label{def03} Let $\Pi$ and $\Gamma$ be digraphs on $I$ and $J$, respectively. A
\emph{morphism}\index{morphism}\index{digraph!morphism} $\r : \Pi \tto \Gamma$ is a
map $\r : I \tto J$ such that $(\bar \r)^{-1}(\Gamma) = \Pi \setminus (\bar \r)^{-1}(1_J)$.  That is, for $i_1, i_2 \in I$ with
$\r(i_1) \not= \r(i_2)$ $\r(i_1) \to \r(i_2)$ if and only if $i_1 \to i_2$. In particular, if $\r$ is injective, then it is a morphism
if and only if $(\bar \r)^{-1}(\Gamma) = \Pi$ and if it is bijective then it is a morphism if and only if $\bar \r(\Pi) = \Gamma$.\end{df}

Clearly, if $\r$ is a bijective morphism then $\r^{-1}$ is a morphism and so $\r$ is an isomorphism\index{isomorphism}.
Two digraphs are isomorphic when each can be obtained from the
other by relabeling the vertices.

An \emph{automorphism}\index{automorphism} of $\Pi$ is an
isomorphism with $\Pi = \Gamma$. We let $Aut(\Pi)$\index{$Aut(\Pi)$} denote the automorphism group of $\Pi$.

We call $\r$ an \emph{embedding} \index{embedding} when it is an injective morphism.

If $J \subset I$, then the inclusion map from $J$ to $I$ induces an embedding from the restriction $\Pi|J$ to $\Pi$.
On the other hand, if $\r : \Pi \tto \Gamma$ is an embedding and $I_1 = \r(I) \subset J$, then $\r : I \tto I_1$ induces
an isomorphism from $\Pi$ onto the restriction $\Gamma|I_1$.

An automorphism $\r$ of a digraph $\Pi$ is a permutation of the vertices of $\Pi$ and so is a product of disjoint permutation cycles.
Observe that if $i \to \r(i)$ then $\rho(i) \to \rho^2(i)$. So if $\rho$
includes the permutation cycle $(i_1,\dots,i_k)$ and $i_1 \to i_2$ then
$i_2 \to i_3$, ... ,$i_k \to i_1$. Thus,  $\langle i_1,\dots,i_k \rangle$ is a
$k$-cycle in the digraph. Otherwise, $i_2 \to i_1$ and so
$i_1 \to i_k$,...,$i_3 \to i_2$. In that case  $\langle i_k,i_{k-1},\dots,i_1 \rangle$ is a
$k$-cycle in $\Pi$.

Since $i \to j$ implies that $j \not\to i$, an automorphism of a tournament can contain no
transposition.  In fact, it contains no even permutation cycle.

\begin{prop}\label{prop04} If $\rho$ is an automorphism of a
tournament $\Pi$, then $\rho$ is a permutation of odd order.
If a pair $\{ i, j \}$ is $\rho$ invariant then $\rho$ fixes each element of the pair. \end{prop}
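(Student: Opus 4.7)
The plan is to deduce both parts from a single basic observation: an automorphism of a tournament cannot swap two distinct vertices. Indeed, if $\r(i)=j$ and $\r(j)=i$ with $i \ne j$, then because $\Pi$ is a tournament exactly one of $i \to j$ or $j \to i$ holds; applying the morphism $\r$ to that edge sends it to the reversed edge, which contradicts $\Pi \cap \Pi^{-1} = \emptyset$.

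For the first assertion, I would write $\r$, viewed as an element of $S(I)$, as a product of pairwise disjoint cycles, and show that each such cycle has odd length. Suppose for contradiction that $\r$ contains a cycle $(i_1, i_2, \dots, i_{2k})$ of even length $2k \geq 2$. Then $\r^k$ is again an automorphism of $\Pi$ (since $Aut(\Pi)$ is a group), and its action on the set $\{i_1,\dots,i_{2k}\}$ is a product of $k$ disjoint transpositions: in particular $\r^k$ swaps $i_1$ with $i_{k+1}$. By the basic observation, this is impossible. Hence every cycle of $\r$ has odd length, and the order of $\r$, being the least common multiple of the cycle lengths, is odd.

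For the second assertion, suppose $\{i,j\}$ is $\r$-invariant. If $i = j$ the conclusion is vacuous, so assume $i \ne j$. Then $\r$ restricted to $\{i,j\}$ is a permutation of a two-element set, hence is either the identity or the transposition swapping $i$ and $j$. The latter is excluded by the basic observation, so $\r(i) = i$ and $\r(j) = j$.

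The only real obstacle is recognizing that one must pass to the power $\r^k$ to rule out even cycles (rather than just transpositions); but once that step is in place, the argument is immediate from the tournament axiom. No further machinery is required.
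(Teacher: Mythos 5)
Your proposal is correct and follows essentially the same route as the paper: both reduce everything to the observation that an automorphism of a tournament cannot transpose two vertices, and both rule out even order by passing to a suitable power of $\r$ (the paper applies $\r^k$ when $\r^{2k}=1$ but $\r^k\neq 1$, while you phrase the same step via the cycle decomposition). The treatment of the invariant pair is identical.
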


\begin{proof} If $\rho^{2k}$ is the identity but $\rho^k$ is not, then
for some vertex $i_1,$ $ \rho^k(i_1) = i_2  \not= i_1$.
Since $\rho^k(i_2) = i_1$, the pair $(i_1, i_2)$ is a transposition for the
automorphism $\rho^k$ which we have seen cannot happen.

If $\{ i, j \}$ is invariant for a permutation then the restriction to $\{ i, j \}$ is
either the identity or a transposition and the latter is impossible for
an automorphism.

\end{proof}


 \vspace{1cm}

\section{Games}\label{secgames}

Recall that when $\Gamma$ is a regular tournament on $I$, we call $\Gamma$  a \emph{game}\index{game} of size $|I| = 2n + 1$.

A subset $J$ of cardinality $|J| = 2k+1$ forms a \emph{subgame}\index{subgame} when the restriction $\Gamma|J$ is Eulerian, i.e.
for each $i \in J$, $\Gamma(i) \cap J$ has cardinality $k$.

A $3$-cycle is a subgame of size $3$. If three vertices do not form a $3$-cycle, then they form a $3$-order, i.e. an order on the three vertices with score
vector $(0,1,2)$.

Of course, any $3$-cycle, i.e. any game of size $3$, is isomorphic to the original
Rock-Paper-Scissors game.  The same uniqueness holds for $5$.

\begin{theo}\label{theo05} Up to isomorphism there is one game $\Gamma$ of size $5$. \end{theo}

\begin{proof} Choose any vertex and label it $0$. The pair of outpoints
$\Gamma(0)$ form an edge which we label $1$ and $2$ with $1 \to 2$.
The input pair $\Gamma^{-1}(0)$ we label $3$ and $4$ with $3 \to 4$. The remaining directions are now determined.
We began with $3, 4 \to 0 \to 1,2 $
\begin{itemize}
\item $ 0,1  \to 2 \quad \Rightarrow \quad 2 \to 3, 4$.
\item $3 \to 0,4  \quad \Rightarrow \quad 1, 2 \to 3$.
\item $2,3 \to 4 \quad \Rightarrow \quad 4 \to 0,1$.
\item $1 \to 2,3\quad \Rightarrow \quad 0,4 \to 1$.
\end{itemize}
We can diagram the result.
\begin{equation}\label{cd1}
\begin{tikzcd}
4  \arrow[dd] && 3 \arrow[ll,""{name=U,below}] \\
& |[alias=Z]|0\arrow[Rightarrow,from=U,to=Z]  \\
1 \arrow[rr,""{name=D,above}] \arrow[Rightarrow,from=Z,to=D]&& 2 \arrow[uu]
\end{tikzcd}
\end{equation}

\end{proof} \vspace{.5cm}

There is a general construction which builds a game of size $2n + 1$ from one of size $2n - 1$.

Let $\Pi$ be a game of size $2n - 1$ on the set of vertices $J$ and $K \subset J$
with $|K| = n$. Let $u,v$ be two additional vertices and let
$I = J \cup \{ u,v \}$.  Define $\Gamma$ with $\Gamma|J = \Pi$ and so that $u \to v$ and $i \to u$ for all
$i \in K$.

Thus, $|\Gamma^{-1}(u)| = n$ and  $|\Gamma(i)| = n$ for each $i \in K$. In order that $\Gamma$ be a game it is necessary that
$u \to j$ for all $j \in J \setminus K$ and
$v \to i$ for all $i \in K$. So $|\Gamma(v)| = n$ requires $j \to v$ for all $j \in J \setminus K$. Conversely,
these  conditions imply that $\Gamma$ is a game.

 We then call
$\Gamma$ an \emph{extension}\index{extension}\index{extension!via $u \to v$ and $K$} of $\Pi$ via $u \to v$ and $K$.

Now assume that $\Gamma$ is a game of size $2n + 1$ on the set of vertices $I$ and $u,v \in I$
such that the restriction $\Pi = \Gamma|J$ with $J = I \setminus \{ u,v \}$ is a subgame of $\Gamma$.   Assume that $u \to v$,
and let
$K = \Gamma^{-1}(u)$ so that $|K| = n$.
We see that $\Gamma$ is the extension of $\Pi$ via  $u \to v$ and
$K $.

We say that $\Gamma$ is \emph{reducible}\index{game!reducible}\index{reducible}  via $\{ u,v \}$ when $\Gamma$ restricts to a
subgame on $I \setminus \{ u, v \}$.


Notice that the game of size $3$ is reducible to the trivial game. The game $\Gamma$ of size $5$ is reducible via any pair $\{ u,v \}$
such that $\Gamma|(I \setminus \{ u, v \})$ is a $3$-cycle, e.g. via $4 \to 1$ in (\ref{cd1}).

\begin{prop}\label{prop07} Let $\Gamma$ be a game of size $2n + 1$ on the set of vertices $I$.
\begin{enumerate}
\item[(a)] For all $u, v \in I$, $\Gamma(u) = \Gamma(v)$ or $\Gamma^{-1}(u) = \Gamma^{-1}(v)$ implies $u = v$.
\item[(b)] Every edge is contained in at least one $3$-cycle.
\item[(c)] Every edge is contained in at most $n$ $3$-cycles.
\item[(d)] For $u \to v \in I$ the following conditions are equivalent:
\begin{itemize}
\item[(i)] $\Gamma$ is reducible via $\{ u, v \}$.
\item[(ii)] The restriction $\Gamma|(I \setminus \{ u, v \})$ is a subgame of $\Gamma$.
\item[(iii)] The edge between $u$ and $v$ is contained in $n$ $3$-cycles.
\item[(iv)] $|\Gamma^{-1}(u) \cap \Gamma(v)| = n$.
\item[(v)] $\Gamma^{-1}(u) = \Gamma(v)$.
\item[(vi)] $\Gamma(u) \setminus \{ v \} = \Gamma^{-1}(v) \setminus \{ u \}$.
\item[(vii)] There does not exist $i \in I \setminus \{ u,v \}$ such that $i \to u$ and $i \to v$.
 \item[(viii)] There does not exist $i \in I \setminus \{ u,v \}$ such that $u \to i$ and $v \to i$.
 \item[(ix)] $\Gamma^{-1}(u) \cap \Gamma^{-1}(v) = \emptyset$.
 \item[(x)] $\Gamma(u) \cap \Gamma(v) = \emptyset$.
 \item[(xi)] $\Gamma(u) \cup \Gamma(v) \cup \{ u,v \} = I$.
 \item[(xii)] $\Gamma^{-1}(u) \cup \Gamma^{-1}(v) \cup \{ u,v \} = I$.
 \end{itemize}

\item[(e)] If $\Gamma$ is the extension of $\Pi$ via $u \to v$ and $K$, then the
reversed game $\Gamma^{-1}$ is the extension
of $\Pi^{-1}$ via $v \to u$ and $K$.
\item[(f)] For all $u \in I$ there is at most one $v$ such that $u \to v$ and
$\Gamma$ is reducible via $\{ u, v \}$, and
 there is at most one $v$ such that $v \to u$ and  $\Gamma$ is reducible via $\{ u, v \}$.
\end{enumerate}
\end{prop}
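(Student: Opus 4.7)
The plan is to push everything back to the regularity identity $|\Gamma(i)| = |\Gamma^{-1}(i)| = n$ applied to the partition $I = \{i\} \cup \Gamma(i) \cup \Gamma^{-1}(i)$, and then do combinatorics on the four cells cut out by a pair $\{u,v\}$. For (a), if $u \neq v$ then they are joined by an edge, say $u \to v$; then $v \in \Gamma(u)$ while $v \notin \Gamma(v)$, so $\Gamma(u) \neq \Gamma(v)$, and the $\Gamma^{-1}$ version is analogous. For (b) and (c), a $3$-cycle through $u \to v$ has third vertex in $\Gamma(v) \cap \Gamma^{-1}(u)$; both are size-$n$ subsets of $I \setminus \{u,v\}$ (which has size $2n - 1$), so by inclusion-exclusion the intersection has size in $[1,n]$, yielding (b) and (c). The intersection equals $n$ exactly when $\Gamma(v) = \Gamma^{-1}(u)$; this observation will drive (viii).

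For (d), assume without loss of generality that $u \to v$ and set $J = I \setminus \{u,v\}$. Introduce the four cell sizes $\alpha = |\Gamma^{-1}(u) \cap \Gamma^{-1}(v) \cap J|$, $\beta = |\Gamma(u) \cap \Gamma(v) \cap J|$, $\gamma = |\Gamma(u) \cap \Gamma^{-1}(v) \cap J|$, and $\delta = |\Gamma^{-1}(u) \cap \Gamma(v) \cap J|$. Regularity, together with careful accounting for the single edge $u \to v$, gives the marginals $\alpha + \delta = n$, $\beta + \gamma = n - 1$, $\alpha + \gamma = n - 1$, $\beta + \delta = n$, which collapse to $\alpha = \beta$ and $\delta = \gamma + 1 = n - \alpha$. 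Reading off: (iv) says $\alpha = 0$, (v) says $\beta = 0$, (vi) says $\alpha = 0$, (vii) says $\beta = 0$, (iii) says $\delta = n$ i.e.\ $\alpha = 0$, and (ii) says every $i \in J$ has exactly one of $u,v$ as output, which is $\alpha = \beta = 0$. Since $\alpha = \beta$, conditions (ii)--(vii) are all equivalent, and (i) $\Leftrightarrow$ (ii) by the definition of reducibility given in the text.

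For (viii), if $\Gamma^{-1}(u) = \Gamma(v)$ then $u \to v$ is forced (otherwise $v \to u$ would place $u \in \Gamma(v) = \Gamma^{-1}(u)$, violating irreflexivity), and the two size-$n$ subsets of $J$ being equal gives $\delta = n$, hence reducibility; the converse is the same inclusion run backwards. Part (e) is a direct unwinding: reversing every edge of the extension recipe for $\Pi$ via $u \to v$ and $K$ produces precisely the extension recipe for $\Pi^{-1}$ via $v \to u$ and the same $K$. Part (f) then follows from (viii) combined with (a): if $u \to v_1$ and $u \to v_2$ both yield reducibility, then $\Gamma(v_1) = \Gamma^{-1}(u) = \Gamma(v_2)$, so $v_1 = v_2$ by (a); the case of incoming edges is symmetric.

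The only real obstacle is bookkeeping: once one sees that the four counts $\alpha, \beta, \gamma, \delta$ collapse via regularity to a one-parameter family with $\alpha = \beta$ and $\delta - \gamma = 1$, every equivalence in (d) reads off immediately and the remaining parts fall out by appealing to (a), (viii), and the definition of extension.
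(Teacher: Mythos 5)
Your proof is correct, and the underlying engine is the same as the paper's: everything reduces to the regularity identity $|\Gamma(i)|=|\Gamma^{-1}(i)|=n$ plus counting. Parts (a), (b), (c), (e) match the paper's arguments essentially verbatim (for (b),(c) the paper also observes that the third vertex of a $3$-cycle through $u\to v$ lies in $\Gamma(v)\cap\Gamma^{-1}(u)$, a set of size between $1$ and $n$). Where you genuinely diverge is in the organization of (d): the paper proves the equivalences piecemeal — (i)$\Leftrightarrow$(iii) via the characterization of reducibility, (i)$\Leftrightarrow$(iv),(v) by noting that a vertex dominating or dominated by both $u$ and $v$ spoils the subgame condition — and in particular the equivalence of (iv) with (v) is only obtained indirectly through (ii), with the converse direction left somewhat terse. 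Your four-cell bookkeeping ($\alpha+\delta=n$, $\beta+\gamma=\alpha+\gamma=n-1$, $\beta+\delta=n$, hence $\alpha=\beta$ and $\delta=\gamma+1=n-\alpha$) derives all seven conditions as restatements of $\alpha=0$ in one pass, makes the hidden symmetry (iv)$\Leftrightarrow$(v) transparent, and gives (viii) and (f) for free. The only blemish is the degenerate case $u=v$ in (viii) (possible only for the trivial game), which the paper also ignores; otherwise this is a complete and arguably cleaner write-up.
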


\begin{proof} Assume $u \to v$ in $\Gamma$.

(a)  $v \in \Gamma(u) \setminus \Gamma(v)$ and $u \in \Gamma^{-1}(v) \setminus \Gamma^{-1}(u)$.

(b),(c)  $\langle u, v, w \rangle$ is a $3$-cycle if an only if $w \in \Gamma(v) \cap \Gamma^{-1}(u)$.
Because $|\Gamma(v)| = |\Gamma^{-1}(u)| = n$ and neither of these sets meets $\{ u, v \}$ it follows that
$1 \leq |\Gamma(v) \cap \Gamma^{-1}(u)| \leq n$.

(d) (iii)$\Leftrightarrow$ (iv) $\Leftrightarrow$ (v): The edge $u \to v$ is contained in $n$ $3$-cycles if and only if
 $|\Gamma(v) \cap \Gamma^{-1}(u)| = n$ and so if and only if
$\Gamma(v) = \Gamma^{-1}(u)$.

 (i) $\Leftrightarrow$ (ii): This is the definition of reducibility.

(i) $\Rightarrow$ (iii),(iv),(v): If $\Gamma$ is reducible via $u \to v$  and $K$, then $ K = \Gamma(v) \cap \Gamma^{-1}(u)$.
 It follows that $|\Gamma(v) \cap \Gamma^{-1}(u)| = n$ and so
(iii) and (v) hold as well.

(v) $\Leftrightarrow$ (vi), (v) $\Leftrightarrow$ (vii), (v) $\Leftrightarrow$ (viii): These are obvious.

(v) $\Rightarrow$ (ii): Together condition (v) and its equivalent (vi)
imply that $|\Gamma(i) \setminus \{ u,v \}| = n-1$ for all $i \in I \setminus \{ u,v \}$ and so $\Gamma|(I \setminus \{ u,v \})$ is a subgame.


(vii) $\Leftrightarrow$ (ix) and (viii) $\Leftrightarrow$ (x): Obvious.

(x) $\Leftrightarrow$ (xi), (ix) $\Leftrightarrow$ (xii): Since $|\Gamma(u) \setminus \{ v \}| = n-1$ and $|\Gamma(v)| = n$, the union of
$\Gamma(u) \setminus \{ v \}$ and $\Gamma(v)$ is $I \setminus \{ u, v\}$ if and only if they are disjoint, proving (x) $\Leftrightarrow$ (xi).
The equivalence (ix) $\Leftrightarrow$ (xii) is proved similarly.

(e): In the extension
of $\Pi^{-1}$ via $v \to u$ and $K$ all of the arrows of $\Gamma$ have been reversed.

(f): If $\Gamma$ is reducible via $ u \to v_1 $ and via $u \to v_2$, then by (d)(v)
$\Gamma(v_1) = \Gamma^{-1}(u) = \Gamma(v_2)$ and so by (a) $v_1 = v_2$. Similarly, for
$v_1 \to u$ and $v_2 \to u$.

\end{proof} \vspace{.5cm}

\begin{cor}\label{cor07a} Any non-trivial game $\Gamma$ is a strong digraph. \end{cor}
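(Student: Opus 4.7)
The plan is to argue by contradiction, using the characterization (noted just before Lemma \ref{lem00aa}) that a relation is strong on its vertex set if and only if that set contains no proper nonempty invariant subset. Suppose $\Gamma$ is a non-trivial game on $I$ with $|I| = 2n + 1 \geq 3$ and that $\Gamma$ is not strong. Since $\Gamma$ is a tournament with $|I| \geq 3$, every element of $I$ is a vertex, so the failure of strongness produces a proper nonempty invariant subset $J \subset I$, i.e., $\Gamma(J) \subset J$ with $\emptyset \neq J \neq I$.

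First I would exploit the tournament property to pin down the direction of every edge crossing between $J$ and its complement. For $k \in I \setminus J$ and $\ell \in J$, exactly one of $\ell \to k$ or $k \to \ell$ holds; the former would give $k \in \Gamma(\ell) \subset J$, contradicting $k \notin J$. Hence $k \to \ell$ for every such pair, so all crossing edges are directed into $J$.

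Next I would extract a contradiction from two incompatible bounds on $|J|$ produced by regularity. For any $\ell \in J$, invariance forces $\Gamma(\ell) \subset J \setminus \{\ell\}$, and regularity gives $n = |\Gamma(\ell)| \leq |J| - 1$, so $|J| \geq n+1$. For any $k \in I \setminus J$ (which exists because $J$ is proper), the previous step shows $J \subset \Gamma(k)$, so $|J| \leq |\Gamma(k)| = n$. These two inequalities are incompatible, contradicting the assumption that $\Gamma$ fails to be strong. I do not anticipate a real obstacle: the only subtlety is the translation between "not strong" and "proper invariant subset exists", which is essentially handed to us by the text, and the remainder is a one-line double count that relies solely on the fact that every vertex has out-degree exactly $n$.
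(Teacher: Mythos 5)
Your proof is correct. The reduction from ``not strong'' to ``there exists a proper nonempty invariant subset $J$'' is exactly what the text supplies just before Lemma \ref{lem00aa}, the observation that all crossing edges point into $J$ is forced by the tournament property together with invariance, and the two counts $|J| \geq n+1$ (from $\Gamma(\ell) \subset J \setminus \{\ell\}$ for $\ell \in J$) and $|J| \leq n$ (from $J \subset \Gamma(k)$ for any $k \notin J$) are incompatible, as claimed.

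This is, however, a genuinely different argument from the one in the paper. The paper deduces the corollary from Proposition \ref{prop07}(b): every edge of a game lies in a 3-cycle, so for any ordered pair $u, v$ either $u \to v$ or $v \to u$ sits in a 3-cycle $\langle u, w, v\rangle$, giving a path of length $1$ or $2$ from $u$ to $v$. That route is shorter given the earlier proposition and yields the quantitatively stronger conclusion that a non-trivial game has diameter at most $2$. Your route bypasses the 3-cycle machinery entirely and rests only on the equality of all scores with $n$; it is the classical ``no dominant set'' counting argument and is self-contained modulo the invariant-set characterization of strongness. Both are sound; the paper's buys an explicit bound on path lengths, yours buys independence from Proposition \ref{prop07}.
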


\begin{proof} Let $u, v \in I$. If $u \not\to v$, then $v \to u$ and the edge is contained
in a $3$-cycle $\langle u, w, v \rangle$. So there is a path of length $1$ or $2$ from $u$ to $v$.

\end{proof} \vspace{.5cm}

\begin{cor}\label{cor07b} For $\Pi$  a game  on  $J$ and $K \subset J$
with $|J| = 2n - 1, |K| = n$, assume that $\Gamma$ is an extension of $\Pi$ via $u \to v$ and $K$. Let $i, j \in J$ with $i \to j$.
\begin{itemize}
\item[(a)] $\Gamma$ is reducible via $i \to u$ if and only if $i \in K$ and $J \setminus K = \Pi^{-1}(i)$, so that
$K = \{ i \} \cup \Pi(i)$.
\item[(b)] $\Gamma$ is reducible via $v \to i$ if and only if $i \in K$ and $J \setminus K = \Pi(i)$, so that
$K = \{ i \} \cup \Pi^{-1}(i)$.
\item[(c)] $\Gamma$ is not reducible via $u \to i$ or via $i \to v$.
\item[(d)] $\Gamma$ is reducible via $i \to j$ if and only if $\Pi$ is reducible via $i \to j$ and
$K \cap \{ i, j \}$ is a singleton set.
\end{itemize}
\end{cor}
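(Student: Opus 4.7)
The plan is to carry out a case analysis based on the type of each endpoint of the candidate edge, using the reducibility criterion from Proposition \ref{prop07}(d) — namely, the edge $a \to b$ gives a reduction exactly when $\Gamma(a) \cap \Gamma(b) = \emptyset$, or equivalently $\Gamma^{-1}(a) = \Gamma(b)$ by (viii). So the first step is to read off from the extension construction the explicit output sets: for $i \in K$, $\Gamma(i) = \Pi(i) \cup \{u\}$ and $\Gamma^{-1}(i) = \Pi^{-1}(i) \cup \{v\}$; for $j \in J \setminus K$, $\Gamma(j) = \Pi(j) \cup \{v\}$ and $\Gamma^{-1}(j) = \Pi^{-1}(j) \cup \{u\}$; and $\Gamma(u) = \{v\} \cup (J \setminus K)$, $\Gamma^{-1}(u) = K$, $\Gamma(v) = K$, $\Gamma^{-1}(v) = \{u\} \cup (J \setminus K)$. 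With this table, each part becomes a direct verification.

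For (a), the edge $i \to u$ forces $i \in K$, and reducibility demands $\Gamma^{-1}(i) = \Gamma(u)$, i.e.\ $\Pi^{-1}(i) \cup \{v\} = \{v\} \cup (J \setminus K)$, which is equivalent to $J \setminus K = \Pi^{-1}(i)$; cardinalities match ($n-1$ on each side) and the complementary identity $K = \{i\} \cup \Pi(i)$ follows from the decomposition $J = \{i\} \sqcup \Pi(i) \sqcup \Pi^{-1}(i)$. Part (b) is symmetric: $v \to i$ forces $i \in K$, and $\Gamma^{-1}(v) = \Gamma(i)$ reduces to $J \setminus K = \Pi(i)$, hence $K = \{i\} \cup \Pi^{-1}(i)$. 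For (c), an edge $u \to i$ with $i \in J$ must have $i \in J \setminus K$, and the reducibility condition $\Gamma^{-1}(u) = \Gamma(i)$ becomes $K = \Pi(i) \cup \{v\}$, which is impossible because $v \notin J \supset K$; the edge $i \to v$ is ruled out symmetrically since it would require $u \in K$.

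For (d), consider $i \to j$ with $i, j \in J$ and split on how $\{i,j\}$ meets $K$. If both are in $K$, then $u \in \Gamma(i) \cap \Gamma(j)$, blocking reducibility; if both lie in $J \setminus K$, then $v \in \Gamma(i) \cap \Gamma(j)$, again blocking reducibility. When $|K \cap \{i,j\}| = 1$, the two ``tag'' vertices ($u$ and $v$) attach to opposite sides, so $\Gamma(i) \cap \Gamma(j) = \Pi(i) \cap \Pi(j)$; applying Proposition \ref{prop07}(d) in both games, this vanishes exactly when $\Pi$ is reducible via $i \to j$.

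There is no real obstacle beyond bookkeeping: the only mild subtlety is keeping straight that $u$ is paired with vertices of $K$ as outputs-of-them (hence appears in $\Gamma(i)$ for $i \in K$) while $v$ is paired with $K$ as their joint input, and symmetrically for $J \setminus K$. Once the output-set table above is written down, every case reduces to a one-line intersection computation.
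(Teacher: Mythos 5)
Your proof is correct and follows essentially the same route as the paper, which simply cites Proposition \ref{prop07}(d) for parts (a), (b), (d) and Proposition \ref{prop07}(f) for part (c); your explicit table of output sets for $\Gamma$ is exactly the verification the paper leaves to the reader. The only minor difference is that for (c) you check non-reducibility directly via the criterion $\Gamma^{-1}(a)=\Gamma(b)$ instead of invoking the uniqueness statement of Proposition \ref{prop07}(f), but both are immediate.
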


\begin{proof} The conditions of (a) say that no $j \to i$ and $j \to u$ and so Proposition \ref{prop07} (d)(vii)
implies (a). Similarly, (viii) implies  (b). Either (vii) or (viii) implies (d).
Finally, (c) follows from Proposition \ref{prop07} (f).

\end{proof} \vspace{.5cm}

For a tournament $\Pi$ on $I$,  Fisher et al in \cite{FLMR} call a pair $u,v \in I$  a \emph{domination pair}\index{domination pair}
if for every $i \in I \setminus \{ u,v \}$
either $u \to i$ or $v \to i$.  That is, $\Pi(u) \cup \Pi(v) \cup \{ u,v \} = I$. The
authors define the \emph{domination graph}\index{domination graph}
$dom(\Pi)$\index{$dom(\Pi)$} to be the subgraph consisting of those $(u,v) \in \Pi$ such that $\{ u,v \} $ is a domination pair and they provide a
partial characterization of the possible domination graphs for a general tournament.

For a tournament $\Pi$ on $I$, a pair $u,v \in I$ is called a \emph{mixed pair}\index{mixed pair}
if for every $i \in I \setminus \{ u,v \}$
either $u \to i \to v$ or $v \to i \to u$. Clearly, a mixed pair is a domination pair. When $\Pi$ is a game, i.e. a regular tournament, Proposition
\ref{prop07}(d) implies that a domination pair is a mixed pair and that such pairs $\{ u,v \}$ are exactly the pairs with respect to which
$\Gamma$ is reducible. Using the description of the mixed pair graphs given in \cite{BCL}, Cho et al \cite{CKL}
 give the characterization of the domination graph for a regular
tournament which we sketch below.

Thus, for a game $\Pi$ the domination graph is given by
\begin{equation}\label{eqred01}
dom(\Pi) \ = \ \{ \ (u,v) \in \Pi : \  \Pi \ \text{is reducible via} \ u \to v \ \}.
\end{equation}
Of course, if $\Pi$ is not reducible then $dom(\Pi)$ is empty.

\begin{prop}\label{prop07c} Let $\Pi$ be a game on $I$ with $|I| = 2n + 1$.
\begin{enumerate}

\item[(a)] If $i \in I$ then $(i,j) \in dom(\Pi)$ for at most one $j \in I$ and $(j,i) \in dom(\Pi)$ for at most one $j \in I$.

\item[(b)] Let $[i_0,\dots,i_m]$ be a path in $\Pi$ with $i_0 \not= i_m$ and let $J = I \setminus \{i_0, \dots, i_m \}$. The path $[i_0,\dots,i_m]$
is contained in $dom(\Pi)$ if and only if the following conditions hold:
\begin{itemize}
\item[(i)] If $p,q \in [0,m]$, then  $i_p \to i_q$ in $\Pi$ if and only if $q - p$ is odd and positive or is even and negative.
\item[(ii)] If $j \in J$, and $j \to i_p$ then $j \to i_q$ if and only if $q - p$ is even.
\end{itemize}
 When these conditions hold, the path is simple. In addition, $(i_m,i_0) \in dom(\Pi)$ -so that $\langle i_0,\dots,i_m \rangle$ is a cycle in
 $dom(\Pi)$- if and only if $J = \emptyset$, or, equivalently, $m = 2n$. In that case, $dom(\Pi) = \langle i_0,\dots,i_m \rangle$
  and the cycle is a Hamiltonian cycle in $\Pi$.

  \item[(c)] If $dom(\Pi)$ is not empty and does not consist of a single Hamiltonian cycle on $\Pi$, then it is a disjoint union of
  separated, simple, non-closed
  paths.
  \end{enumerate}
  \end{prop}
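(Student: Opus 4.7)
The plan for part (a) is immediate: this is just Proposition \ref{prop07}(f) reformulated in terms of $r\Pi$.

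For part (b), the central tool is Proposition \ref{prop07}(d)(viii), which gives that $(u,v) \in r\Pi$ is equivalent to $u \to v$ together with $\Pi^{-1}(u) = \Pi(v)$. This yields a \emph{flip rule}: for any third vertex $w \notin \{u,v\}$, the direction between $w$ and $u$ is the opposite of the direction between $w$ and $v$. For the ``only if'' direction, I would apply this rule along each edge $(i_p, i_{p+1})$ of the path. Fixing a vertex $v$, the direction between $v$ and $i_p$ alternates as $p$ varies, whenever $v \notin \{i_p, i_{p+1}\}$. Taking $v = i_q$ on the path (and handling the boundary cases $p \in \{q-1, q, q+1\}$ directly via the path edges themselves) yields condition (i); taking $v = j \in J$ yields condition (ii). Simplicity is then immediate from (i), since any two $i_p, i_q$ with $p \neq q$ are forced to admit a direction between them, hence to be distinct in the tournament.

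For the ``if'' direction of (b), assume (i) and (ii). Simplicity again follows from (i). To verify each $(i_p, i_{p+1}) \in r\Pi$, I use the criterion $\Pi(i_p) \cap \Pi(i_{p+1}) = \emptyset$ from Proposition \ref{prop07}(d)(vii): the parity shift from $p$ to $p+1$ in (i) excludes every path vertex $i_q$ ($q \neq p, p+1$) from this intersection, and the parity condition in (ii) does the same for every $j \in J$. For the cycle statement, suppose $J = \emptyset$; then $m = 2n$ is even, and (i) gives $\Pi^{-1}(i_m) = \{i_p : p \text{ odd}\} = \Pi(i_0)$, so $(i_m, i_0) \in r\Pi$. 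Conversely, $(i_m, i_0) \in r\Pi$ requires $i_m \to i_0$, forcing $m$ even by (i); then for any $j \in J$, condition (ii) implies $j \to i_0 \Leftrightarrow j \to i_m$ (same parity of the indices $0$ and $m$), whereas $\Pi^{-1}(i_m) = \Pi(i_0)$ demands the opposite, ruling out the existence of such $j$.

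Part (c) follows from (a) and (b). By (a), every vertex of $r\Pi$ has in-degree and out-degree at most one, so $r\Pi$ decomposes (as a digraph) into vertex-disjoint simple paths and simple cycles. By the cycle statement of (b), any cycle in $r\Pi$ must be Hamiltonian and so uses every vertex, forcing it to be the sole component of $r\Pi$. Hence if $r\Pi$ is neither empty nor a single Hamiltonian cycle, all its components are simple non-closed paths, and they are pairwise separated by (a). The main obstacle throughout is the parity bookkeeping in the flip-rule argument --- in particular handling the boundary indices where the rule cannot be applied directly --- but the clean characterization $\Pi^{-1}(u) = \Pi(v)$ from (d)(viii) keeps the computation under control.
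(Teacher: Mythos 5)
Your proposal is correct and follows essentially the same route as the paper: part (a) is Proposition \ref{prop07}(f), part (b) is the same parity/induction argument driven by the flip rule of Proposition \ref{prop07}(d), and part (c) combines (a) with the Hamiltonian-cycle dichotomy of (b). One small repair: when verifying that each edge $(i_p,i_{p+1})$ lies in $r\Pi$, check criterion (vi), $\Pi^{-1}(i_p)\cap\Pi^{-1}(i_{p+1})=\emptyset$, rather than (vii), since condition (ii) constrains a vertex $j\in J$ only under the hypothesis $j\to i_p$ and so does not directly exclude a $j$ receiving edges from every path vertex from $\Pi(i_p)\cap\Pi(i_{p+1})$; the two criteria are equivalent for games by Proposition \ref{prop07}(d), so nothing else changes.
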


\begin{proof} (a) is a restatement of Proposition \ref{prop07} (f).

(b) Assume that $[i_0,\dots,i_m]$ is a path in $dom(\Pi)$. For $q > p$ we prove by induction on $q - p$
that $i_p \to i_q$ if and only if $q - p$ is odd. If $i_p \to i_{q-1}$ then since
$\Pi$ is reducible via $i_{q-1} \to i_q$ if follows from Proposition \ref{prop07}(d)(vii) and (viii) that $i_q \to i_p$.  Similarly,  $i_{q-1} \to i_p$
implies $i_p \to i_q$. Condition (i) follows and condition (ii) similarly follows from Proposition \ref{prop07}(d)(vii) and (viii).

On the other hand,
these conditions imply that $i_p \to i_{p+1}$ for $p = 0,\dots,m-1$ and that $\Pi$ is reducible via each $i_p \to i_{p+1}$ by
Proposition \ref{prop07}(d) again. That is they imply that $[i_0,\dots,i_m]$ is a path in $dom(\Pi)$.

Now assume that $[i_0,\dots,i_m]$ is a path in $dom(\Pi)$. The path is simple by (a).

If $m$ is odd, then $i_0 \to i_m$ and so $(i_m,i_0) \not\in \Pi$. If $m$ is even and $j \in J$, then either $i_0, i_m \to j$ or $j \to i_0, i_m$ and
so $\Pi$ is not reducible via $i_m \to i_0$, On the other hand if   $J$ is empty, then $m$ is even and condition (i) implies $\Pi$ is reducible
via $i_m \to i_0$. So if $J$ is empty, $\langle i_0,\dots,i_m \rangle \subset dom(\Pi)$
 is a Hamiltonian cycle in $\Pi$ and then
 (a) implies that every edge of $dom(\Pi)$ lies on the cycle.

(c)  If $dom(\Pi)$ is not empty and does not consist of a single Hamiltonian cycle, then by (b) it contains no cycle.
By (a) any edge in $dom(\Pi)$ can be extended uniquely
to a maximal path in $dom(\Pi)$ and none of the remaining edges of $dom(\Pi)$ has a vertex on the path. Proceed by
exhaustion to obtain $dom(\Pi)$ as a
union of these separate maximal paths.

\end{proof} \vspace{.5cm}

The following result is Theorem 5.2 of \cite{CH}.

\begin{theo}\label{theo08} Let $\Gamma$ be a game of size $2n + 1$ on the set of vertices $I$. Each vertex $i \in I$ is
contained in exactly $n(n+1)/2$ $3$-cycles. The entire game contains $(2n+1)n(n+1)/6$ $3$-cycles. \end{theo}

\begin{proof} There are $n$ vertices in the output set $\Gamma(i)$. Each of these has $n$ output edges and these are all distinct for
a total of $n^2$ outputs from these vertices.  Between these vertices there are $n(n-1)/2$  edges each of which is one of the $n^2$
output edges from a vertex of $\Gamma(i)$. The remaining $n(n+1)/2 = n^2 -[n(n-1)/2]$ edges terminate at a vertex of $\Gamma^{-1}(i)$ and these
are the $3$-cycles which contain $i$.  Multiplying by the number $2n +1$ of vertices $i$ we obtain the total number of $3$-cycles in $\Gamma$ after
we divide by 3 to correct for the triple counting.

\end{proof} \vspace{.5cm}

In general,
 for a tournament of size $p$ with score vector $s = (s_1,\dots, s_p)$ the total number $N_s$ of $3$-cycles is
given by the formula
\begin{equation}\label{eqnewx}
N_s \ = \  p(p-1)(2p-1)/12 \ \ - \  \ \frac{1}{2}\sum_{i=1}^p \ s_i^2.
 \end{equation}
See \cite{HNC} Corollary 11.10b or \cite{HM} Corollary 6b.

It follows from Theorem \ref{theo08} that in a game with $n > 1$, at least one edge is contained in more than one cycle.
For if every edge were contained in exactly one cycle then the number of cycles would be the number of edges divided by 3,
i.e. $n(2n + 1)/3$, but if $n > 1$, then $(n+1)/2 > 1$.

In \cite{A1} Alpach shows that every edge in a game of size $2n + 1$ is contained in some $k$-cycle for all $k = 3,\dots, 2n+1$, complementing Moon's
vertex result Proposition \ref{prop06}.

\begin{theo}\label{theo08a} Let $\Pi$ be an Eulerian digraph on a set $I$ with $|I|$ odd.  There exists a game $\Gamma$ on $I$ which
contains $\Pi$ as a subgraph. \end{theo}

\begin{proof}  Let $2n + 1 = |I|$ and let $k_i = |\Pi(i)| = |\Pi^{-1}(i)|$ for $i \in I$. We can clearly extend $\Pi$ to some tournament on $I$.

For a tournament $\Gamma$ on $I$ let $s_i = |\Gamma(i)|$, the score of $i$. Let
\begin{equation}\label{eqt08a1}
I_+ = \{ i \in I : s_i > n \}, \ I_0 = \{ i \in I : s_i = n \}, \ I_- = \{ i \in I : s_i < n \}.
\end{equation}
Call $\sum \{ s_i - n : i \in I_+ \cup I_0 \}$ the \emph{deviation} of $\Gamma$. Clearly the deviation is non-negative
and since the total sum of the scores is
$n(2n+1)$ it follows that $\Gamma$ is a game, i.e. $s_i = n$ for all $i$, if and only if the deviation is zero.

If $\Pi \subset \Gamma$ then for all $i \in I$,
then
\begin{equation}\label{eqt08a2}
|(\Gamma \setminus \Pi)(i)| = s_i - k_i, \qquad |(\Gamma \setminus \Pi)^{-1}(i)| = 2n - s_i - k_i.
\end{equation}

Thus, $i \in I$ lies in $I_+$ exactly when the number of its $\Gamma \setminus \Pi$ outputs is greater than the number of its
$\Gamma \setminus \Pi$ inputs.

Now assume  $\Pi \subset \Gamma$ and the deviation of $\Gamma$ is positive. We will show that
there exists a tournament $\Gamma'$ containing $\Pi$ and with smaller
deviation. Hence, the tournaments which contain $\Pi$ with minimum deviation are the games which contain $\Pi$.

I claim there exists a $\Gamma \setminus \Pi$ path from a vertex in $I_+$ to a vertex in $I_-$. If not, then
$\hat I = I_+ \cup \O (\Gamma \setminus \Pi)(I_+)$
is disjoint from $I_-$ and so is contained in $I_+ \cup I_0$. The restriction $(\Gamma \setminus \Pi)|\hat I$
contains every edge of $\Gamma \setminus \Pi$ from a vertex in $\hat I \supset I_+$. Since $I_+$
is assumed to be nonempty it follows that the total number of outputs for $(\Gamma \setminus \Pi)|\hat I$ is strictly greater than
than the total number of inputs.
But these sums are both equal to the number of edges in $(\Gamma \setminus \Pi)|\hat I$. The contradiction establishes the existence of the required path.

Truncate the path to begin at the last occurrence of a vertex in $I_+$ and then terminate at the first occurrence of a vertex in $I_-$. Then eliminate any
intermediate repeated vertices, by removing the piece between the repeats. Thus we obtain a simple $\Gamma \setminus \Pi$ path
$[i_1,i_2,\dots,i_k]$ with $i_1 \in I_+, i_k \in I_-$ and $i_p \in I_0$ for $1 < p < k$.

From $\Gamma$ define $\Gamma'$ by reversing the edges of $[i_1,i_2,\dots,i_k] \subset \Gamma \setminus \Pi$.
This decreases the score of $i_1$ by $1$, increases the score of
$i_k$ by $1$ and leaves every other score unchanged. Thus, the deviation of $\Gamma'$ is one less than that of $\Gamma$.
Since the path lies in $\Gamma \setminus \Pi$ it follows that $\Pi \subset \Gamma'$.

\end{proof}

\vspace{1cm}

\section{Group Games}\label{secgroupgames}

A digraph $\Pi$ on $I$ is called \emph{point-symmetric} \index{point-symmetric} \index{symmetric!point} when its automorphism group acts transitively on
the set, $I$, of vertices.  That is, for $i, j \in I$ there exists $\r \in Aut(\Pi)$ such that $\r(i) = j$. From Proposition \ref{prop06euler}(a),
it is clear that a point-symmetric digraph is regular and so a point-symmetric tournament is a game. These are constructed using digraphs on groups.

Let $\Z_{2n + 1}$ denote the additive group of integers mod $2n + 1$. We label the congruence
classes as $0, 1, \dots, 2n$. So $-k = 2n + 1 - k$ for $k \in [1,2n] = \{ 1, \dots, 2n \}$. We let $\Z_{2n + 1}^*$ denote
the multiplicative group of units in the ring of integers mod $2n + 1$, i.e. the congruence classes of the integers relatively prime to $2n + 1$.

In general, let $G$ be a group of order $2n + 1$. Notice that if $G$ is abelian and $2n + 1$ is square-free, then $G$ is cyclic and
so is isomorphic to $\Z_{2n + 1}$. The smallest non-abelian group of odd order is the semi-direct product $\Z_3 \ltimes \Z_7$ with
$\Z_3$ regarded as a subgroup of $\Z_7^*$ acting on $\Z_7$ by multiplication. With order less than 20 the groups of odd order are cyclic
except for $\Z_3 \times \Z_3$ of order 9.

For $G$  a group of order $2n + 1$  there is no element of order 2.
With $e$ the identity element of $G$,  the set $G \setminus \{ e \}$ is
 partitioned by the set of $n$ pairs $\{ i, i^{-1} \}$. We will call $A$ a \emph{graph subset}\index{graph subset} of $G$ when
it is a nonempty subset of $G$ with $A$ disjoint from $A^{-1} = \{ a^{-1} : a \in A \}$. A \emph{game subset}\index{game subset}
 $A$ is a graph subset of cardinality $n$  so that
$G$ is the disjoint union  $A \cup A^{-1} \cup \{ e \}$. Thus,  a game subset $A$ is obtained by choosing one element from each of the
pairs $\{ i, i^{-1} \}$.  It follows that there $2^n$ game subsets.

If $A$ is a graph subset, then the associated digraph on $G$ is $\Gamma[A] = \{ (i,j) : i^{-1}j  \in A \}$\index{$\Gamma[A]$}. For $k \in G$ define the left
translation map $\ell_k$ on $G$ by $\ell_k(i) = ki$. Each $\ell_k$\index{$\ell_k$} is an automorphism of $\Gamma[A]$. That is, $\Gamma[A]$ left-invariant,
i.e. $\bar{\ell_k}(\Gamma[A]) = \Gamma[A]$.  Thus,
$\Gamma[A](i) = \ell_i(A) = iA$ for $i \in G$, while
$\Gamma[A]^{-1}(i) = \ell_i(A^{-1}) = iA^{-1}$. It follows that $\Gamma[A]$ is point-symmetric and so is regular. If $A$ is a game subset, then
$\Gamma[A]$ is a tournament and so is a game. We call a game of this sort a \emph{group game}\index{group game}\index{game!group}.

It is easy to see that $\Gamma[A]$ is connected when $A$ generates the group and that,
otherwise, the left cosets of the subgroup generated by $A$ decompose the graph into separate pieces.  When $A$ generates $G$
the digraph $\Gamma[A]$ is the Cayley graph\index{Cayley graph}  of the group $G$ with respect to the set $A$ of generators. For this reason,
what we are calling a group game is called a \emph{Cayley tournament} \index{Cayley tournament} in \cite{BI} and \cite{G}. In the special case
$G = \Z_{2n+1}$ with group subset $A$, the game $\Gamma[A]$ is called a \emph{rotational tournament}\index{rotational tournament} with symbol $A$, see
\cite{A2} and \cite{A3}.

Notice that if $A$ is a graph subset (or a game subset), then $A^{-1}$ is a graph subset (resp. a game subset)
and $\Gamma[A^{-1}]$ is the reversed digraph $\Gamma[A]^{-1}$.

If $H$ is a subgroup of $G$ and $A$ is a graph subset (or a game subset) for $G$, then $A \cap H$ is a graph subset (resp. a game subset)
for $H$ and $\Gamma[A \cap H]$ is the restriction $\Gamma[A]|H$.

We could define a right invariant graph associated with $A$ by $\{ (i,j) : ji^{-1}  \in A \}$.  We do not bother, because it is clear that the
map $i \to i^{-1}$ is an isomorphism from $\Gamma[A]$ onto the right invariant game associated with $A^{-1}$. In the abelian case the right invariant
game for $A$ is the same as $\Gamma[A]$ and so when $G$ is abelian, the map $i \to i^{-1}$ is an isomorphism from $\Gamma[A]$ to the reversed
game $\Gamma[A^{-1}]$.

\begin{lem}\label{lem09} Let $A$ and $B$ be graph subsets of $G$ and let  $\r \in S(G)$, i.e. $\r$ is  a permutation of $G$.
The following are equivalent
\begin{itemize}
\item[(i)] $\r$ is an isomorphism from $\Gamma[A]$ to $\Gamma[B]$.

\item[(ii)]For all  $ i, j \in G \qquad i^{-1}j \in A \quad \Longleftrightarrow \quad \r(i)^{-1}\r(j) \in B.$

\item[(iii)] For all $i \in G, \r(iA) \ = \ \r(i)B.$
\end{itemize}

In particular, if $\r$ is an isomorphism from $\Gamma[A]$ to $\Gamma[B]$, then  $\r(e) = e$ implies $\r(A) = B$.
\end{lem}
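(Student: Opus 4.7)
The proof is essentially an unpacking of definitions, so I would organize it as a chain of equivalences (i) $\Leftrightarrow$ (ii) $\Leftrightarrow$ (iii) together with the concluding remark about $\rho(e)=e$.

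First I would set up the preliminary observation that $e \notin A$ and $e \notin B$: since $A$ is a graph subset, $A \cap A^{-1} = \emptyset$, and if $e \in A$ then $e = e^{-1} \in A^{-1}$, a contradiction. This is the little fact that makes the three conditions behave well on the diagonal. In particular, for any $i \in G$, the relation $i^{-1}j \in A$ forces $j \neq i$, matching the defining condition $\Pi \cap 1_G = \emptyset$ of a digraph.

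For (i) $\Leftrightarrow$ (ii) I would invoke Definition~\ref{def03}: a bijection $\rho$ is an isomorphism of digraphs iff $\bar\rho(\Gamma[A]) = \Gamma[B]$, i.e., for all $i,j \in G$, $(i,j) \in \Gamma[A]$ iff $(\rho(i),\rho(j)) \in \Gamma[B]$. Since $(i,j) \in \Gamma[A]$ unfolds to $i^{-1}j \in A$ and similarly on the $B$ side, this is exactly (ii). The diagonal case $i=j$ is harmless by the preliminary observation, so no case distinction is needed.

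For (ii) $\Leftrightarrow$ (iii) I would rewrite $iA = \{j \in G : i^{-1}j \in A\}$, which is the same as the output set $\Gamma[A](i)$, and similarly $\rho(i)B = \Gamma[B](\rho(i))$. Assuming (ii), given $j \in iA$ we have $i^{-1}j \in A$, hence $\rho(i)^{-1}\rho(j) \in B$, i.e.\ $\rho(j) \in \rho(i)B$, yielding $\rho(iA) \subseteq \rho(i)B$; the reverse inclusion follows by the converse half of (ii) applied to $j = \rho^{-1}(\rho(i)b)$ for $b \in B$ (this uses that $\rho$ is a bijection). For (iii) $\Rightarrow$ (ii), fix $i,j$: the condition $i^{-1}j \in A$ is $j \in iA$, which by (iii) is equivalent to $\rho(j) \in \rho(iA) = \rho(i)B$, i.e.\ $\rho(i)^{-1}\rho(j) \in B$ (again using injectivity of $\rho$ for one direction).

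Finally, for the concluding remark, I would simply set $i = e$ in (iii): this gives $\rho(eA) = \rho(e)B$, and under the hypothesis $\rho(e) = e$ this collapses to $\rho(A) = B$. There is no real obstacle in this lemma; the only thing to be careful about is ensuring that bijectivity of $\rho$ is used in the correct direction when translating between the ``pointwise'' formulation (ii) and the ``output-set'' formulation (iii), and that $e \notin A,B$ so that the diagonal contributes nothing spurious.
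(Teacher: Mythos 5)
Your proof is correct and follows the same route as the paper, which simply notes that (i) $\Leftrightarrow$ (ii) and (ii) $\Leftrightarrow$ (iii) are immediate from the definitions and then sets $i = e$ in (iii); you have merely spelled out the details (including the harmless diagonal and the use of bijectivity) that the paper leaves to the reader.
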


\begin{proof} It is obvious that (i) $\Leftrightarrow$ (ii) and (ii) $\Leftrightarrow$ (iii). From (iii),
$\rho(e) = e$ implies $\r(A) = B$.

\end{proof}

{\bfseries Remark:} If $A$ is a game subset with $\r$  a permutation such that $\r(e) = e$ and $B = \r(A)$, then
$$\r(A^{-1}) = \r(G \setminus (A \cup \{e \})) = G \setminus (B \cup \{e \}).$$
Hence, $i^{-1} \in A \ \Leftrightarrow \ \r(i)^{-1} \in B$, and so $B^{-1} = \r(A^{-1})$, if and only if $B$ is a game subset.
\vspace{.5cm}

For a map $\Phi : G \times I \tto I$ we write
\begin{equation}\label{eqaction1}
\begin{split}
g i \ = \  \Phi(g,i) \ = \  \Phi^g(i) \ = \ \Phi_i(g), \hspace{2cm}\\
\Phi^{\#}  \quad \text{is defined by} \ g \mapsto \Phi^g. \hspace{2.5cm} \end{split}
\end{equation}
$\Phi$ is an action of the group $G$ on the set $I$ when $\Phi^{\#}$ is a homomorphism from $G$ to the permutation group $S(I)$.
\index{$\Phi^g$}\index{$\Phi_i$}\index{$\Phi^{\#}$} The \emph{orbit}\index{orbit} of $i$ is $Gi = \Phi_i(G)$\index{$Gi$}.
The orbit relation $\{ (i,j) : j \in Gi \}$ is an equivalence
relation and so the orbits partition the set $I$.

The action is called \emph{free}\index{free action}\index{action!free}
 when $gi = i$ for some $i \in I$ only when $g = e$, the identity element of $G$.
 The action is called \emph{effective}\index{effective action}\index{action!effective}
 when $gi = i$ for all $i \in I$ only when $g = e$. That is, the homomorphism $\Phi^{\#} : G \tto S(I)$ is injective.
 Of course, a free action is effective.
 The action is called \emph{transitive}\index{transitive action}\index{action!transitive} if for some $i \in I$,  $I = Gi$.
 In that case, there is only one orbit and so $I = Gi$ for all $i \in I$.

 Here are some useful facts about such actions.

\begin{prop}\label{prop10} Let  $\Phi : G \times I \tto I$ be an action of a finite group $G$ on a finite set $I$.
\begin{enumerate}
\item[(a)] The group $G$ acts freely on $I$ if and only if for all $i \in I$ the map $\Phi_i : G \tto I$ is injective.
In particular, if  $G$ acts freely on $I$ then $|G|$ divides $|I|$.

\item[(b)] Any two of the following three conditions implies the third.
\begin{itemize}
\item The group $G$ acts freely on $I$.
\item The group $G$ acts  transitively on $I$.
\item The cardinalities $|G|$ and $|I|$ are equal.
\end{itemize}

\item[(c)] If $G$ is abelian and acts effectively and transitively on $I$, then it acts freely on $I$ and $|G| = |I|$.

\item[(d)] Assume that $|I|= p$ is prime. If $G$ acts transitively on $I$, then $G$ contains a cyclic subgroup $H$  which
acts transitively on $I$ and which has order a power of $p$. If, in addition, the action of $G$ is effective, then $|H| = |I|$.

\item[(e)] Assume that $|G| = |I|= k$ and that the group $G$ is cyclic with generator $g$. The group acts freely on $I$
if and only if, the permutation $\Phi^g$ on $I$  consists of a single permutation k-cycle. If $k$ is prime
and the action is non-trivial then it is free.

\item[(f)] Assume that $|G| = 2n+1$ and that $G$ acts freely on $I$. If $A \subset I$
with $|A| = n$, then $g(A) = A$ if and only if $g = e$.

\item[(g)] If $|G|$ and $|I|$ are odd then the number of $G$ orbits in $I$ is odd.
\end{enumerate}
 \end{prop}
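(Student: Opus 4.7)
The plan is to treat each part with a short orbit--stabilizer style argument, using repeatedly that under a free action the map $t \mapsto ti$ is a bijection from $T$ onto the orbit $Ti$.

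For (a), injectivity of $t \mapsto ti$ is just the cancellation $ti = si \iff s^{-1}t\, i = i$, which is equivalent to freeness; the divisibility then follows from partitioning $I$ into $T$-orbits, each of cardinality $|T|$. For (b), any two of the three listed conditions force $t \mapsto ti$ to be a bijection and hence yield the third: free plus transitive makes it a bijection directly; free with $|T|=|I|$ upgrades an injection between equal finite sets to surjectivity, hence transitivity; transitive with $|T|=|I|$ upgrades a surjection to injectivity, hence freeness. Part (g) is the same counting trick in disguise: each orbit has size dividing $|T|$ and so is odd, and an odd total $|I|$ forces an odd number of summands. For (c), commutativity gives $t(si) = s(ti) = si$ whenever $ti=i$, so the stabilizer of $i$ also stabilizes $si$; by transitivity it stabilizes every point, and effectiveness then forces it to be trivial, so the action is free and (b) supplies $|T|=|I|$.

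The main step is (d). I would first reduce to the effective case by quotienting by the kernel $N$ of the action: a cyclic transitive subgroup of $T/N$ lifts through any preimage of a generator to a cyclic subgroup of $T$ whose elements act on $I$ in exactly the same way, hence remains transitive. In the effective case, orbit--stabilizer forces $p = |I|$ to divide $|T|$, so by Cauchy's theorem there is $t \in T$ of order $p$; under effectiveness the induced permutation of $I$ has the same order $p$, its cycle lengths divide the prime $p$, and nontriviality forces it to be a single $p$-cycle. Thus $H = \langle t \rangle$ is the desired transitive cyclic subgroup, and in the effective case $|H| = p = |I|$. Part (e) is the same observation seen from the cyclic side: the $\langle t \rangle$-orbit of $i$ has size equal to the least $j \geq 1$ with $t^j i = i$, and under freeness this equals $k = |t|$, so the orbit is all of $I$ and $t$ is a single $k$-cycle; the converse direction and the prime case are immediate.

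For (f), any $t$-invariant subset $A$ decomposes into $\langle t \rangle$-orbits, all of common size $|t|$ by freeness, so $|t|$ divides $|A| = n$. But $|t|$ also divides $|T| = 2n+1$, so $|t|$ divides $(2n+1) - 2n = 1$ and $t = e$. The only delicate point in the whole proposition is the kernel reduction in (d), where one must be careful that the order of an element of $T$ and the order of the permutation it induces can differ unless one passes to the effective quotient; everything else is routine bookkeeping with orbit sizes.
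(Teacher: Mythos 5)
Your treatment of (a), (b), (c), (e), (f) and (g) is correct and essentially identical to the paper's arguments; the only genuine divergence is in (d), which is also the only part requiring a real idea. The paper does not reduce to the effective case: it observes via orbit--stabilizer that the isotropy subgroup $Iso_i$ has prime index $p=|I|$ in $T$, then proves a standalone group-theoretic lemma (via the First Sylow Theorem) that any subgroup of prime index admits a cyclic subgroup $H$ whose image in the coset space is all of $T/Iso_i$; such an $H$ is automatically transitive, and in the effective case the paper feeds this abelian transitive $H$ back into part (c) to get freeness and $|H|=|I|$. You instead pass to the effective quotient $T/N$, invoke Cauchy's theorem to get an element of order $p$ there, note that effectiveness plus primality of $p=|I|$ forces its permutation to be a single $p$-cycle, and lift a generator back to a cyclic transitive subgroup of $T$. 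Both routes are sound. Yours replaces the Sylow existence theorem by the weaker Cauchy theorem and gets $|H|=|I|$ directly from the order of the chosen element rather than through (c); the price is the kernel-lifting step, and you correctly flag that this is the one delicate point, since without passing to the quotient a Cauchy element of order $p$ in $T$ could lie in the kernel and act trivially. The paper's lemma sidesteps that issue by working with $Iso_i$ rather than the kernel, handling the non-effective case in one stroke, but your version is self-contained and somewhat more elementary.
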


\begin{proof} (a): If $gi = i$ for some $g$ not the identity then the map $\Phi_i$ is not injective.
 If $g_1i = g_2i$ with $g_1 \not= g_2$ then $g_1^{-1}g_2i = i$. In any case, $I$ is the disjoint union of the orbits.
 When the action is free, each of these has cardinality $|G|$. So $|I|$ is a multiple of $|G|$.

 (b): Assume first that $|G| = |I|$. Since the cardinalities are finite and equal, the map $\Phi_i : G \to I$
  is injective if and only if it is surjective.
 On the other hand, if the map is bijective, then $|G| = |I|$.

 (c): Assume that $gi = i$. If $j \in I$, then because the action is transitive, there exists $h \in G$ such that $hi = j$.
 Because the group is abelian,
 $gj = ghi = hgi = hi = j$. That is, $gj = j$ for all $j$ and so $g = e$  because the action is effective. Then (b) implies that
 $|G|$ = $|I|$.

 (d): Assume that $p = |I|$, which is assumed to be prime. Fix $i \in I$ and let $Iso_i = \Phi_i^{-1}(\{ i \}) = \{ g : gi = i \}$.  This is a subgroup of
 $G$ called the \emph{isotropy subgroup}\index{isotropy subgroup} of $i$. Since $gi = hi$ if and only if $g^{-1}h \in Iso_i$, it follows that
 $\Phi_i$ factors to define an injection from $G/Iso_i$, the set of left cosets $\{ gIso_i : g \in G \}$, into $I$. Since the action is
 transitive, the induced map is a bijection and so the subgroup $Iso_i$ has index $p$.   We need the following bit of group theory.\vspace{.5cm}

 \begin{lem}\label{lemgroup} If $G$ is a finite group and $J$ is a subgroup with prime index $p$, then there exists a cyclic subgroup $H$ of $G$
 with order a power of $p$
 such that the restriction to $H$ of the quotient map $G \to G/J$ is surjective.\end{lem}

\begin{proof} Assume that  $|J| = p^e a$ with $e \geq 0$ and $a$ relatively prime to $p$. Since $|G/J| = p$,  $|G| = p^{e+1} a$.
By the First Sylow Theorem \cite{H} Theorem 4.2.1, there exists a subgroup $P$ of $T$ with $|P| = p^{e+1}$.  It follows that $P$ is not
contained in $J$. Let $t \in P \setminus J$. Since $P$ is a $p$-group, $t$ has order a power of $p$. Let $H$ be the cyclic group generated by
$t$. Hence, $|H/(H \cap J)|$ is a positive power of $p$ and the restriction of the quotient map factors to an injection from $H/(H \cap J)$ into
$G/J$. Since $|G/J| = p$, this map is a bijection and so the quotient map takes $H$ onto $G/J$.
\end{proof} \vspace{.5cm}

Now apply the Lemma with $J = Iso_i$. If $j \in I$, there exists $g \in G$ such that $gi = j$. There exist $h \in H$ and $s \in Iso_i$ such that
$g = hs$. Hence, $hi = hsi = gi = j$.  That is, $H$ acts transitively.

If, in addition, the action of $G$ is effective, then the abelian group $H$ acts transitively and effectively and so by (c) the action is free with
$|H| = |I|$.

 (e): If the action is free and $i \in I$, then $(i \ gi \dots \ g^{k-1}i)$ is a permutation k-cycle.
 Conversely, if this is a k-cycle, then
 the action is transitive and so is free by (b). In any case, the generator $g$ acts
 as a permutation whose order divides the
 order of $g$ which is $k$. The order of the permutation is the least common multiple
 of the orders of the permutation cycles contained therein.
 If $k$ is a prime, then either all cycles have order 1 and the action is trivial or
 there is a k-cycle and the action is free.

 (f):  Let $H$ be the subgroup generated  by $g$. The order $|H|$ divides $2n + 1$.
 On the other hand if  $A$ is invariant and the
 action is free then $H$ acts freely on $A$. So by (a), $|H|$ divides $|A| = n$.
 Since $n$ and $2n + 1$ are relatively prime,  $|H| = 1$ and so $g = e$.

(g): For $i \in I$, the map $\Phi_i$ factors to a bijection of the quotient $G/Iso_i$ onto the orbit $Gi$.
Hence, $|Gi|$ is odd for every $i$. Thus, the orbits partition the odd cardinality set $I$ into
sets of odd cardinality and so there must be an odd number of them.

\end{proof} \vspace{.5cm}

In \cite{S1a} Sabidussi observes the following.

\begin{theo}\label{theo11} (a) Let $\Pi$ be a digraph on the set of elements of a group $G$. The group $G$ acting freely on itself by left
translation\index{left translation action}\index{action!left translation}  is a subgroup of $Aut(\Pi)$ if and only
if there exists a -necessarily unique- graph subset $A$ of $G$ such that $\Pi$ equals $\Gamma[A]$
the associated digraph.

(b) The left translation action of $G$ induces a free action on the collection of
translates of game subsets, i.e. for a game subset $A$, $iA = jA$ implies $i = j$.

\end{theo}

\begin{proof} (a): It is clear from the definition of $\Gamma[A]$ that each left translation is an automorphism. On the other hand,
if the translations are automorphisms, then let $A = \Pi(e)$. It is clear that $i \to j$
if and only if $e = \ell_{i^{-1}}(i) \to \ell_{i^{-1}}(j) = i^{-1}j$.
That is, $i \to j$ if and only if $i^{-1}j \in A$.  Since $\Pi$ is a digraph, $e \to i$
implies that $i \nrightarrow e$ and so $e \nrightarrow i^{-1}$. Thus,
$A$ is a graph subset and $\Pi = \Gamma[A]$.

 (b): This follows from Proposition \ref{prop10} (f) or from Proposition \ref{prop07} (a)  applied to $\Gamma[A]$.

\end{proof} \vspace{.5cm}

From (a) we obtain the following result of Turner \cite{T}.

\begin{cor}\label{cor11aa} Let $\Pi$ be a digraph on $I$ with $|I| = 2n + 1$ prime. If $\Pi$ is point-symmetric, i.e. $Aut(\Pi)$ acts transitively on $I$, then
there exists a graph subset $A$ of $\Z_{2n + 1}$ such that $\Pi$ is isomorphic to $\Gamma[A]$ on $\Z_{2n + 1}$. \end{cor}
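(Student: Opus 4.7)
The plan is to reduce the statement to Theorem \ref{theo11}(a) by producing a cyclic subgroup of $Aut(\Pi)$ that acts freely and transitively on $I$, and then transporting the digraph structure to $\Z_{2n+1}$.

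First I would let $T = Aut(\Pi)$. Since $T \subset S(I)$, the action of $T$ on $I$ is automatically effective. By hypothesis $T$ acts transitively on $I$, and $|I| = 2n+1$ is prime, so Proposition \ref{prop10}(d) applies and yields a cyclic subgroup $H \subset T$ which acts transitively on $I$, with $|H| = |I| = 2n+1$. By Proposition \ref{prop10}(b), because $|H| = |I|$ and $H$ acts transitively, the action of $H$ on $I$ is in fact free.

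Next I would transport the digraph along an orbit bijection. Fix $i_0 \in I$ and define $\phi : H \to I$ by $\phi(h) = h \cdot i_0$; freeness and transitivity of the $H$-action make $\phi$ a bijection. Let $\Pi'$ be the digraph on $H$ defined by $\bar\phi^{-1}(\Pi)$, so that $\phi : \Pi' \to \Pi$ is an isomorphism. For any $h_0 \in H$ and any $h \in H$, I would check that $\phi^{-1}(h_0 \cdot \phi(h)) = \phi^{-1}(h_0 h \cdot i_0) = h_0 h$, so the transported action of $H$ on itself is precisely left translation. Since $H \subset Aut(\Pi)$, the left translation action of $H$ on itself is contained in $Aut(\Pi')$.

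Now Theorem \ref{theo11}(a) applies to $\Pi'$ on the group $H$: there exists a graph subset $A \subset H$ with $\Pi' = \Gamma[A]$. Because $H$ is a cyclic group of odd order $2n+1$, any choice of generator yields a group isomorphism $\psi : H \to \Z_{2n+1}$, and then $\psi(A)$ is a graph subset of $\Z_{2n+1}$ with $\psi$ inducing an isomorphism from $\Gamma[A]$ onto $\Gamma[\psi(A)]$ (a group automorphism always carries graph subsets to graph subsets compatibly, as was noted just before Theorem \ref{theo12}). Composing $\phi^{-1}$ with $\psi$ then gives an isomorphism from $\Pi$ onto $\Gamma[\psi(A)]$ on $\Z_{2n+1}$, which is what is required.

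There is no real obstacle here: the entire argument is the assembly of Proposition \ref{prop10}(d) (the only non-trivial ingredient, resting on the Sylow-theoretic Lemma \ref{lemgroup}) with Theorem \ref{theo11}(a). The primality of $2n+1$ is used only to invoke Proposition \ref{prop10}(d); everything else is bookkeeping with the orbit bijection.
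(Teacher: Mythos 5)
Your proof is correct and follows essentially the same route as the paper: both extract a cyclic subgroup $H\subset Aut(\Pi)$ of order $|I|$ acting freely and transitively via Proposition \ref{prop10}(d) (the paper cites (c) where you cite (b) for freeness, an immaterial difference), identify $I$ with $H$ via the evaluation map and $H$ with $\Z_{2n+1}$, and then invoke Theorem \ref{theo11}(a). The only difference is that you write out the transport of structure more explicitly than the paper does.
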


\begin{proof} By definition $Aut(\Pi)$ is a subgroup of $S(I)$ and so acts effectively on $I$. By Proposition \ref{prop10} (d) and (c) there exists
a cyclic subgroup $H$ of $Aut(\Pi)$ which acts freely and transitively on $I$ and with $|H| = |I|$. We can identify $H$ with $\Z_{2n +1}$ since it
is cyclic and also with $I$ via $\Phi_i(h) = hi$ for any fixed $i \in I$. Thus, we can regard $\Pi$ as a digraph on $\Z_{2n +1}$ and the translation action
is identified with a subgroup of $Aut(\Pi)$. With these identifications, $\Pi$ becomes a digraph of the form $\Gamma[A]$ by Theorem \ref{theo11} (a).

\end{proof} \vspace{.5cm}

For any group $G$ we will regard $G$ as a subgroup of $S(G)$ by identifying $i \in G$ with
the left translation $\ell_i$\index{$\ell_i$}.
For any graph subset $A$ of a group $G$ we will use this identification to regard $G$ as a subgroup of $Aut(\Gamma[A])$. In particular, a group digraph
is point-symmetric.

For a group $G$ let $G^*$\index{$G^*$} denote the automorphism group of $G$.  That is,
$\xi \in G^*$ if and only if $\xi : G \to G$ is a group isomorphism.
For the ring of integers mod $2n + 1$ the group of units $\Z_{2n+1}^*$ consists of those $i \not= 0$ which
are relatively prime to $2n + 1$ of which there are $\phi(2n+1)$ elements
(defining the Euler $\phi$-function). For $a\in \Z_{2n+1}$ we let $m_a$ denote
multiplication by $a$ so that $m_a(i) = ai$.  If $\r : \Z_{2n+1} \to \Z_{2n+1}$ is an additive group homomorphism and
$a = \r(1)$ then since $i = 1 + 1 \dots  + 1$ ($i$ times) it follows that $\r(i) = ai = m_a(i)$. In particular, identifying
$a \in \Z_{2n+1}^*$ with $m_a$ in the automorphism group of $\Z_{2n+1}$ we regard $\Z_{2n+1}^*$ as the automorphism group of
$\Z_{2n+1}$.

The \emph{adjoint action}\index{adjoint action}\index{action!adjoint} of $G$ on itself is given by the homomorphism $G \to G^*$ associating
to $g \in G$ the inner automorphism $Ad_g \in G^*$\index{$Ad_g$} defined by $Ad_g(h) = g h g^{-1}$. We write $Ad_G(h) = \{ Ad_g(h) : g \in G \}$
\index{$Ad_G(h)$} for the orbit of $h \in G$ under the adjoint action.
Of course the action is trivial when the group is abelian.

With $\r_g(h) = hg$, the right translation action \index{right translation action}\index{action!right translation} of $G$ on itself is
given by $g \mapsto \r_{g^{-1}}$.

\begin{lem}\label{lem11norma} Let $G$ be a group in which every non-identity element has finite, odd order and let $g,h \in G$.
\begin{enumerate}
\item[(a)]  If $g^2 = h^2$, then $g = h$.
\item[(b)]  If $g h g^{-1} = h^{-1}$, then $h = e$.
\end{enumerate}
\end{lem}

\begin{proof} (a) The cyclic group generated by $g$ has odd order and so is generated by $g^2$ as well. Hence, $h$ and $g$ generate cyclic group generated by
$h^2 = g^2$. In particular, $h$ and $g$ commute. Hence, $e = g^2 h^{-2} = (g h^{-1})^2$. It follows that $g h^{-1} = e$.

(b) If $g h g^{-1} = h^{-1}$, then $hgh = g$ and so $(hg)^2 = g^2$.  By (a) $hg = g$ and so $h = e$.

\end{proof} \vspace{.5cm}

A game subset $A$ for $G$ is a \emph{normal game subset}\index{normal game subset}\index{game subset!normal} if $A = g A g^{-1}$ for all $g \in G$, i.e.
$A$ is invariant under the adjoint action. Of course, if $G$ is abelian, every game subset is normal.

\begin{prop}\label{prop11normb} (a) If  $A$ is a game subset for a group $G$ with associated game $\Gamma[A]$, then the following are equivalent.
\begin{itemize}
\item[(i)] The game subset $A$ is normal.
\item[(ii)] If $h \in A$,  then the orbit $Ad_G(h)$ is contained in $A$.
\item[(iii)] If $i \to j$, then $g i g^{-1} \to g j g^{-1} $ for all $g \in G$.
\item[(iv)] If $i \to j$, then $i g \to j g $ for all $g \in G$.
\end{itemize}

If $A$ is normal, then $i \to j$ implies $j^{-1} \to i^{-1}$, i.e. the map $i \to i^{-1}$ is an isomorphism from $\Gamma[A]$ to the reverse game.

(b) The orbits under the adjoint action partition $G \setminus \{ e \}$ by an even number of subsets. If this number is $2m$ then there
are $2^m$ normal game subsets.
\end{prop}

 \begin{proof} (a) (i) $\Leftrightarrow$ (ii) is obvious.

 From (iii) with $i = e$ we obtain (i). On the other hand, given (i) $i^{-1} j \in A$
 implies $Ad_g(i^{-1}j) = Ad_g(i)^{-1}Ad_g(j) \in A$. Thus, (i) $\Leftrightarrow$ (iii).

 (i) $\Leftrightarrow$ (iv) because $\Gamma[A]$ is always invariant under left translation.

 If $i^{-1}j \in A$ and $A$ is normal we apply $Ad_i$ to obtain $(j^{-1})^{-1} i^{-1} = j i^{-1} \in A$ and so $(i^{-1})^{-1}j^{-1} \in A^{-1}$.

 (b) Lemma \ref{lem11norma}(b) implies that the orbits of $i$ and $i^{-1}$ are disjoint unless $i = e$. We obtain a normal game
 subset by choosing one orbit from each pair $Ad_G(i), Ad_G(i^{-1})$ with $i \not= e$.

\end{proof} \vspace{.5cm}
%

\begin{theo}\label{theo12} Let $A$ be a graph subset of $G$ with $\Gamma[A]$ the associated digraph.

(a) If $\xi \in G^*$, then $\xi(A)$ is a graph subset and $\xi$ is an isomorphism
from $\Gamma[A]$ to $\Gamma[\xi(A)]$.

(b) The group $Aut(\Gamma[A])$ is commutative if and only if $G$ is commutative and $Aut(\Gamma[A]) = G$, i.e. the left translations are the only
automorphisms of $\Gamma[A]$.

(c) Assume that $Aut(\Gamma[A]) = G$. If $B$ is a graph subset
with $\Gamma[B]$ isomorphic to $\Gamma[A]$, then there is a unique $\xi \in G^*$
such that $B = \xi(A)$ and $\rho = \xi$ is the unique
isomorphism $\rho : \Gamma[A] \to \Gamma[B]$ such that $\rho(e) = e$. In particular,
the  set $\{ \xi(A) : \xi \in G^* \}$ is the set of
graph subsets $B$ such that $\Gamma[B]$ is isomorphic to $\Gamma[A]$. Thus, there are exactly $|G^*|$
such graph subsets.

(d) If $A \subset \Z_{2n+1}$ is a graph subset such that $Aut(\Gamma[A]) = \Z_{2n+1}$ then the
 set $\{ m_a(A) : a \in \Z_{2n+1}^* \}$ is the set of
graph subsets $B$ such that $\Gamma[B]$ is isomorphic to $\Gamma[A]$. Thus, there are exactly $\phi(2n+1)$
such game subsets.
\end{theo}

\begin{proof} (a):  Since $\xi$ is a bijection with
$\xi(e) = e$ and $\xi(i^{-1}) = \xi(i)^{-1}$, it follows that $\xi(A) \cap\xi(A)^{-1} = \xi(A \cap A^{-1}) = \emptyset$ and so
$\xi(A)$ is a graph subset.

We have $i^{-1}j \in A$ if and only if
$\xi(i)^{-1}\xi(j) = \xi(i^{-1} j) \in \xi(A)$.  Hence, $\xi$ is an isomorphism
from $\Gamma[A]$ to $\Gamma[\xi(A)]$.

(b): As is well-known, if $\rho$ is a permutation of $G$ which commutes with left translations, then $\rho$ is a right translation, because
$\rho(i) = \rho(\ell_i(e)) = \ell_i \rho(e) = i \rho(e)$. If $Aut$ is commutative, then the subgroup $G$ is commutative and every element is
a translation, i.e. $Aut = G$.

(c): By composing with a translation we can assume that $\r : \Gamma[A] \tto \Gamma[B]$
satisfies $\r(e) = e$. Consider the
translation $\ell_{\r(i)}$ on $\Gamma[B]$. $\r^{-1} \circ \ell_{\r(i)} \circ \r$
is an automorphism of $\Gamma[A]$ and so is a left translation.
Since $e \in A$ is mapped to $i$, it is $\ell_i$. That is, $\r \circ \ell_i = \ell_{\r(i)} \circ \r$. Applied to $j$, this says
$\r(ij) = \r(i)\r(j)$. That is, $\r \in G^*$.

If $\hat \r : \Gamma[A] \tto \Gamma[B]$ is an isomorphism fixing $e$ then
$\r^{-1} \circ \hat \r$ is an automorphism of $\Gamma[A]$
which fixes $e$. However, the identity is the only such automorphism.  Hence, $ \hat \r = \r$.

(d): This is just (c) with $G = \Z_{2n+1}$. Observe that $|G^*| = \phi(2n+1)$.

\end{proof} \vspace{.5cm}

In \cite{BI} Babai and Imrich call a tournament $\Pi$ on a group $G$ a \emph{tournament regular representation}\index{tournament regular representation}
or TRR \index{TRR} when the automorphism group $Aut(\Pi)$ is equal to the  group $G$ of left translations on $G$. From Theorem \ref{theo11} it
follows that $\Pi$ is isomorphic to a group game $\Gamma[A]$ with $A$ a game subset of $G$. In \cite{BI} and \cite{G} it is proved that every
group $G$ of odd order except $\Z_3 \times \Z_3$ and $\Z_3 \times \Z_3 \times \Z_3$ admits a TRR.

In $\Z_{2n+1}$ we let $[1,n] = \{ 1, \dots, n \}$ and $Odd_n = \{ 2k - 1: k = 1, \dots, n \}$.

\begin{theo}\label{theo13} The set $[1,n]$ is a game subset of $\Z_{2n+1}$
with $Aut(\Gamma[[1,n]])$ $ = \Z_{2n+1}$. That is, $\Gamma[[1,n]]$ is a TRR for $\Z_{2n+1}$. \end{theo}

\begin{proof} Since $2n + 1 - k = n + (n + 1 - k)$, $-[1,n] = [n+1,2n]$ and so $[1,n]$ is a game subset.

If $\rho \in Aut(\Gamma[[1,n]]$ with $\r(0) = i$ we may compose with the translation $\ell_{-i}$ to obtain an
automorphism which fixes $0$. It suffices to show that if $\r$ is an automorphism
which fixes $0$ then $\r $ is the identity.

Let $A = [1,n]$ and $\Gamma = \Gamma[A]$. By Lemma \ref{lem09}  $\r(A + i) = A + \r(i)$
and since $\r$ fixes $0$, $\r(A) = A$.

For $p = 1, \dots, n $, $A \cap (A + p) = \{ i : p < i \leq n \}$ which contains $n - p$ elements.

Thus, $p$ is the unique element $i$ of $A$ such that $|A \cap (A + i)| = n - p$.
 Since $\r(A \cap (A + i)) = A \cap (A + \r(i))$,
it follows that $\r $ fixes every element of $[1,n]$. $-(A \cap A + i) = -A \cap (-A + (-i))$.
Hence, $- p = 2n + 1 - p$ is the
unique element $i$ of $-A$ such that $|-A \cap (-A + i)| = n - p$.
Thus, $\rho$ fixes every element of $\Z_{2n+1}$.

\end{proof} \vspace{.5cm}

Notice that $n \in \Z_{2n+1}^*$ and $m_n(2k - 1) = n - (k - 1)$ mod $2n + 1$. Hence,
$m_n(Odd_n) = [1,n]$. Thus, $Odd_n$ is a game subset of $\Z_{2n+1}$ with $\Gamma[Odd_n]$ isomorphic to $\Gamma[[1,n]]$ via $m_n$.

Observe that for $A = [1,n]$ the tournament which is the restriction $\Gamma[A]|A$ is the standard order on $[1,n]$.

\begin{prop}\label{prop13a} If $\Pi$ is a game of size $2n + 1$, then the domination graph $dom(\Pi)$ is a cycle if and only if
$\Pi$ is isomorphic to the group game $\Gamma[[1,n]]$ on $\Z_{2n +1}$. \end{prop}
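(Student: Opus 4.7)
The plan is to prove both directions by reducing to a direct combinatorial computation. For the easy direction, take $\Pi = \Gamma[[1,n]]$ on $\Z_{2n+1}$ and compute $r\Pi$ explicitly. Since $i \to j$ in $\Pi$ iff $j - i \in [1,n]$, Proposition \ref{prop07}(d)(viii) says that an edge $i \to j$ lies in $r\Pi$ iff additionally $\Gamma(i) = \Gamma^{-1}(j)$, that is, $i + [1,n] = j - [1,n]$ in $\Z_{2n+1}$, which forces $j - i = n$. Thus $r\Pi = \{(i,i+n) : i \in \Z_{2n+1}\}$, and since $\gcd(n, 2n+1) = \gcd(n,1) = 1$, iterating the shift $i \mapsto i+n$ visits every vertex, so $r\Pi$ is a single Hamiltonian cycle.

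For the converse, assume $r\Pi$ is a cycle. By Proposition \ref{prop07c}(b) such a cycle is necessarily Hamiltonian, so $r\Pi = \langle i_0, i_1, \dots, i_{2n}\rangle$. Relabel $i_k \leftrightarrow k$ so that the game $\Pi$ is transported to a tournament on $\Z_{2n+1}$. Condition (i) of Proposition \ref{prop07c}(b) then says: for $p,q \in \{0,1,\dots,2n\}$ we have $p \to q$ iff the integer $q - p$ is odd positive or even negative. I will translate this into a statement mod $2n+1$. Since $2n+1$ is odd, adding $2n+1$ to a negative integer flips its parity; therefore an integer $q - p \in \{-2n,\dots,-1\}$ is even iff its residue mod $2n+1$ lies in $\{1,3,\dots,2n-1\} = Odd_n$, while an integer $q-p \in \{1,\dots,2n\}$ is odd iff it already lies in $Odd_n$. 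The four cases merge into a single clean statement: $p \to q$ iff $(q-p) \bmod (2n+1) \in Odd_n$. Thus, under the relabeling, $\Pi$ coincides with $\Gamma[Odd_n]$ on $\Z_{2n+1}$. Since multiplication by $n$ is a group automorphism of $\Z_{2n+1}$ carrying $Odd_n$ to $[1,n]$ (as noted just before the statement), Theorem \ref{theo12}(a) gives an isomorphism $\Gamma[Odd_n] \cong \Gamma[[1,n]]$, completing the proof.

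The only real obstacle is the parity bookkeeping in the converse direction: one has to be careful that the seemingly asymmetric description "odd positive or even negative" from Proposition \ref{prop07c}(b), given for integer differences along a linear path, becomes the uniform condition "residue lies in $Odd_n$" once the path closes up into a Hamiltonian cycle and one works mod $2n+1$. This hinges on $2n+1$ being odd so that reduction mod $2n+1$ swaps parities on negatives; everything else is bookkeeping plus one appeal to Theorem \ref{theo12}(a).
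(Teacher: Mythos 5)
Your proof is correct and follows essentially the same route as the paper, which likewise deduces from Proposition \ref{prop07c} that a cycle in $r\Pi$ must be Hamiltonian and that the relabeled tournament is exactly $\Gamma[Odd_n] \cong \Gamma[[1,n]]$; your parity translation is precisely the ``easy to check'' step the paper leaves implicit. One slip in your forward direction: Proposition \ref{prop07}(d)(viii) gives the criterion $\Gamma^{-1}(i) = \Gamma(j)$, i.e. $i - [1,n] = j + [1,n]$, not $\Gamma(i) = \Gamma^{-1}(j)$ (the latter is never satisfiable, since $j \in \Gamma(i)$ while $j \notin \Gamma^{-1}(j)$, and the set equation you wrote, $i+[1,n]=j-[1,n]$, would force $j-i=n+1$); the corrected equation is what forces $j - i = n$ and yields the Hamiltonian cycle $\langle 0, n, 2n, \dots \rangle$ you describe.
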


\begin{proof} Using Proposition  \ref{prop07c}(b) it is easy to check that $\Pi$ is a game with $dom(\Pi) = \langle i_0, \dots, i_{2n} \rangle$ if and
only if $p \mapsto i_p$ is an isomorphism from $\Gamma[Odd_n]$ to $\Pi$.

\end{proof} \vspace{.5cm}

Recall that there are $2^n$ game subsets of $\Z_{2n+1}$. With $n = 1,2$, we have
$2^n = \phi(2n+1)$, which must be true since in those cases there is, up to
isomorphism, only one game.

\begin{cor}\label{cor14} If $n > 2$ then there exists a game subset $A \subset \Z_{2n+1}$
such that $\Gamma[A]$ is not isomorphic to
$\Gamma[[1,n]]$. \end{cor}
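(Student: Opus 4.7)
The plan is a simple counting argument. By Theorem \ref{theo13}, we know $Aut(\Gamma[[1,n]]) = \Z_{2n+1}$, so hypothesis of Theorem \ref{theo12}(d) is satisfied. Hence the number of game subsets $B \subset \Z_{2n+1}$ with $\Gamma[B]$ isomorphic to $\Gamma[[1,n]]$ is exactly $\phi(2n+1)$. On the other hand, as noted just before the statement, the total number of game subsets of $\Z_{2n+1}$ is $2^n$. So to produce a game subset $A$ whose associated game is not isomorphic to $\Gamma[[1,n]]$, it suffices to verify the strict inequality
\begin{equation*}
2^n \ > \ \phi(2n+1) \qquad \text{for } n > 2.
\end{equation*}

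For this, I would use the trivial bound $\phi(2n+1) \leq 2n$ (since $\phi(m) \leq m-1$ for $m \geq 2$) and compare with $2^n$. For $n = 3$ one has $2^3 = 8 > 6 = \phi(7)$; for $n = 4$, $2^4 = 16 > 6 = \phi(9)$; and for $n \geq 5$ the inequality $2^n > 2n \geq \phi(2n+1)$ is standard (a straightforward induction: $2^{n+1} = 2 \cdot 2^n > 2 \cdot 2n \geq 2(n+1)$ once $n \geq 2$).

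There is no real obstacle here: the work was done in Theorem \ref{theo12}(d) and Theorem \ref{theo13}, which together pin down exactly how many game subsets produce a game isomorphic to $\Gamma[[1,n]]$. The corollary then reduces to the elementary numerical comparison above, and the conclusion follows since $2^n$ game subsets cannot all map, under the $\Z_{2n+1}^*$-action by multiplication, into a single isomorphism class of size $\phi(2n+1) < 2^n$.
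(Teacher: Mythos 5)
Your proof is correct and follows essentially the same route as the paper: invoke Theorem \ref{theo13} so that Theorem \ref{theo12}(d) pins the isomorphism class of $\Gamma[[1,n]]$ to exactly $\phi(2n+1)$ game subsets, then compare with the $2^n$ total via $\phi(2n+1) \leq 2n < 2^n$ for $n > 2$. Nothing is missing.
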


\begin{proof} By Theorem \ref{theo13} and Theorem \ref{theo12}(d) there are
$\phi(2n + 1)$ game subsets $A$ such that $\Gamma[A]$ is  isomorphic to
$\Gamma[[1,n]]$. On the other hand, there are $2^n$ game subsets. For $n > 2, \ \phi(2n + 1) \leq 2n  < 2^n$.

\end{proof} \vspace{.5cm}

\begin{theo}\label{theo15} For a finite group $G$ of  order $2n + 1$, the following are equivalent.
\begin{itemize}
\item[(i)] $|G^*|$ is a power of $2$.
\item[(ii)] $|G^*|$ divides $2^n$.
\item[(iii)] The order of every element of $G^*$ is a power of $2$.
\item[(iv)] For every game subset $A \subset G$, and $\xi \in G^*$, $\xi(A) = A$ implies $\xi = 1_G$.
\item[(v)] The group $G^*$ acts freely on the set of game subsets of $G$.
\end{itemize}
\end{theo}

\begin{proof}
(ii) $\Rightarrow$ (i):  Obvious.

(i) $\Rightarrow$ (iii): The order of each element divides the order of the group.

(iii) $\Rightarrow$ (iv): If $\xi(A) = A$ then $\xi$ is an automorphism of $\Gamma[A]$.
Since the order of $\xi$ is a power of $2$, the
order must be $1$ by  Proposition \ref{prop04}, i.e. $\xi$ is the identity $1_G$.

(iv) $\Rightarrow$ (v):  This is the definition of a free action.

(v) $\Rightarrow$ (ii): By Proposition \ref{prop10} (a), $|G^*|$ divides the
cardinality of the set of game subsets which is $2^n$.

\end{proof} \vspace{.5cm}

A prime of the form $2^m + 1$ is called a \emph{Fermat prime}\index{Fermat prime}. For such a prime,
$m$ itself must be a power of two. To see this, observe that if $a > 1$ is an odd divisor of $m$ then
$x + 1$ divides $x^a + 1$ and the quotient has integer coefficients. Hence, with $x = 2^{m/a}$ we see that
$2^{m/a} + 1$ divides $2^m + 1$.   It follows that that
$F_k = 2^{2^k} + 1$ are the only possibilities.  Fermat discovered that for
$k = 0, 1, 2, 3, 4$ these are indeed primes: 3, 5, 17, 257, 65537. See, e. g. \cite{HW} Section 2.5.
However, these are the only Fermat primes which are known to exist.

\begin{theo}\label{theo16} For the odd number $2n + 1 > 1$ the following conditions are equivalent.
\begin{itemize}
\item[(i)] $\phi(2n + 1)$ is a power of $2$.
\item[(ii)] $\phi(2n + 1)$ divides $2^n$.
\item[(iii)] The order of every element of $\Z_{2n+1}^*$ is a power of $2$.
\item[(iv)] For every game subset $A \subset \Z_{2n+1}$, and $a \in \Z_{2n+1}^*$, $m_a(A) = A$ implies $a = 1$.
\item[(v)] The group $\Z_{2n+1}^*$ acts freely on the set of game subsets of $\Z_{2n+1}$.
\item[(vi)] $2n + 1$ is a square-free product of Fermat primes.
\end{itemize}
\end{theo}

\begin{proof} The equivalence of (i)-(v) is the special case of Theorem \ref{theo15} with $G = \Z_{2n+1}$.

(i) $\Leftrightarrow$ (vi): Write the prime factorization of $2n + 1$ as
$\Pi_{a=1}^{k} p_a^{e_a}$ with $e_a \geq 1$. Each $p_a$ is an
odd prime and $\phi(2n + 1) = \Pi_{a=1}^{k} p_a^{e_a-1}(p_a - 1)$. If $e_a > 1$
then $p_a|\phi(2n + 1)$. Hence, $\phi(2n + 1)$
can be a power of $2$ only when $e_a = 1$ for all $a$, i.e. $2n + 1$ is square-free. In that case,
$\phi(2n + 1) = \Pi_{a=1}^{k}(p_a - 1)$ and so $\phi(2n + 1)$ is a power of $2$
if and only if each $p_a - 1$ is a power of $2$, i.e. each $p_a$ is a Fermat prime.

\end{proof} \vspace{.5cm}

If $2n + 1$ is not a square-free product of Fermat primes, then $\Z_{2n+1}^*$  does not act freely on the
game subsets and so there exists a game subset
$A \subset \Z_{2n+1}$ and $a \in \Z_{2n+1}^*$ with $a \not= 1$ such that $m_a \in Aut(\Gamma[A])$. However, we can obtain a sharper result.

\begin{theo}\label{theo17} If $G$ is a finite group and $H$ is a non-trivial subgroup of $G^*$ with both $G$ and $H$ of odd order, then
there exists a game subset $A \subset G$ such that $H \subset Aut(\Gamma[A])$. \end{theo}

\begin{proof}  The group $H$ acts on $G$ and for $i \in G$ we let $H(i)$ denote $\{ \xi(i) : \xi \in H \}$, the $H$ orbit of $i$ in $G$. Distinct orbits
are disjoint. The identity element $e$ is a fixed point, i.e. $\{ e \} = H(e)$. If $\xi \in G^*$ and $i \not= e$ in $G$, then
$\xi(i) = i^{-1} $ implies that $\xi$ has even order. To see this note that $i \in G$ has odd order and so $i \not= i^{-1}$. Furthermore,
$\xi^2(i) = \xi(i^{-1}) = \xi(i)^{-1} = i$. Thus, $\xi^k(i) = i$ for $k$ even while $\xi^k(i) = i^{-1}$ for $k$ odd. Hence, $\xi$ has even order. In particular,
$\xi$ cannot be an element of $H$. Thus, for $i \not= e$ the orbits $H(i)$ and $H(i^{-1})$ are disjoint. Furthermore, $j \to j^{-1}$ is a bijection
from $H(i)$ to $H(i^{-1})$ and so $|H(i)| = |H(i^{-1})|$.

Let $i_1,\dots,i_m \in G \setminus \{ e \} $ so that $\{ H(i_1) \cup H(i_1^{-1}), \dots, H(i_m) \cup H(i_m^{-1}) \} $ is a partition of $ G \setminus \{ e \}$.
Define $A = H(i_1) \cup \dots \cup H(i_m)$ to obtain a game subset which is invariant with respect to the $H$ action. By using $H(i_k^{-1})$ instead
 of $H(i_k)$ for $k$ in an arbitrary subset of $\{1, \dots, m \}$ we obtain the $2^m$
game subsets which are $H$ invariant.

\end{proof} \vspace{.5cm}

{\bfseries Remark:} Conversely, if the subgroup $H$ of $G^*$ is contained in \\ $Aut(\Gamma[A])$ then since, $\xi(e) = e$ for $\xi \in H$, it follows that
$\xi(A) = A$. Hence, if $i \in A$, then the orbit $H(i)$ is a subset of $A$ and $H(i^{-1}) \subset A^{-1}$. It follows that the above construction
yields all game subsets $A$ such that $H \subset Aut(\Gamma[A])$. \vspace{.5cm}

In particular, if $p$ is an odd prime which divides $|G^*|$ then there exists $\xi \in G^*$ of order $p$ by the first Sylow Theorem. Theorem
\ref{theo17} implies that there exist game subsets $A \subset G$ such that $\xi \in Aut(\Gamma[A])$. With $H$ the order $p$ cyclic group
generated by $\xi$, each orbit $H(i)$ has cardinality $p$ or $1$. Since $\xi$ is not the identity, not all points are fixed and so at least
some orbits have cardinality $p$. If all have cardinality $p$, then $H$ acts freely on $G \setminus \{ e \}$ and on $A$. It then follows
that $p$ divides $|A| = n$. That is, $p$ is a common factor of $n$ and $|G^*|$ (which is $\phi(2n+1)$ when $G = \Z_{2n+1}$).

With $G = \Z_{2n+1}$, if $p$ is a prime such that $p^2|2n+1$, then $p|\phi(2n+1)$, but $p \nmid n$ since $n$ and $2n+1$ are relatively
prime.

With $2n+1 = 21$, $n = 10$ and $\phi(2n+1) = 12$ with no odd common factor.
With $2n+1 = 35$, $n = 17$ and $\phi(2n+1) = 24$ which are relatively prime. Nonetheless in both cases, there exist game subsets invariant with
respect to $m_3$.

We will see below that if $2n + 1$ is composite, then there exists a game subset $A$ such
that
 the inclusion $\Z_{2n+1} \subset Aut(\Gamma [A])$ is proper and so $Aut(\Gamma [A])$ is non-abelian. It follows that
the only possible numbers $n$ for which it may happen that
$\Z_{2n+1} = Aut(\Gamma [A])$, or, equivalently, that $Aut(\Gamma[A])$ is abelian, for every game subset $A \subset \Z_{2n+1}$, are those
with $2n + 1$ a Fermat prime. These are the only cases where it may happen that $\Gamma[A]$ is a TRR for every game subset $A$. This is trivially true
for $3$ and $5$. We do not know the answer for $17$ or the other known Fermat primes.

A digraph $\Pi$ on $I$ is called \emph{symmetric}\index{symmetric} when $Aut(\Pi)$ acts transitively on the edges of $\Pi$, i.e. the induced
action on $I \times I$ is transitive on $\Pi$ itself. When $|\Pi(i)| > 0$ for all $i$ then a symmetric digraph is point-symmetric.
 Notice that if $\Pi = \{ (1,2) \} $ on $\{1, 2 \}$ then $\Pi$ is trivially symmetric but is
not point-symmetric. In particular, a symmetric tournament is point-symmetric and so is a game. While symmetry is a demanding condition, there do exist
symmetric games.

\begin{ex}\label{ex17aa} The quadratic residue games \end{ex}

Let $p$ be a prime number congruent to $-1$ mod $4$ so that $p = 2n + 1$ with $n$  odd and let $k$ be an odd number so that
$p^k = 2m + 1$ with $m$ odd. Notice that $(2n_1 + 1)(2n_2 + 1) = 2n_3 + 1$ with $n_3 \equiv n_1 + n_2 $ mod $2$.

We use some elementary results from the theory of finite fields\index{finite field}, see, e.g. \cite{MM}. By Theorem 1.2.5 of \cite{MM} there
exists a finite field $F$ of order $p^k$ which is unique up to isomorphism. The additive group $G$ of the field is a $\Z_p$ vector space
of dimension $k$ and so we can identify it with  $(\Z_p)^k$. Let $F'$ be the multiplicative group of nonzero elements of $F$ so that $F'$ has
order $2m$.

For $a \in F$ the quadratic equation $x^2 = a$ has at most two roots in $F$. Since the characteristic of the field is odd, $a \not= -a$ if $a \not= 0$.
Hence, the map $sq$ on $F$ given by $sq(a) =  a^2$ restricts to a two-to-one map on $F'$ and so its image on $F'$ is a
subgroup $H$ of $F'$ with order $m$. If $i \in H$ then
the order $o$ of $i$ divides $m$ and so is odd. Hence, the order of $-i$ is $2o$ and so $-i \not\in H$. It follows that $H$ is a game subset of
the additive group $G$ and we let $\Gamma = \Gamma[H]$.

\begin{theo}\label{theo17quad} The group game $\Gamma$ on $G$ is symmetric and there is a group game $\Pi$ on $\Z_m$ such that the restriction of
$\Gamma$ to every $\Gamma(i)$ and to every $\Gamma^{-1}(i)$ is isomorphic to $\Pi$. \end{theo}

\begin{proof} The set $H$ itself is invariant under the  group $H$ acting by multiplication.
Hence, as in the proof of Theorem \ref{theo17}, the multiplications by elements
 of $H$ are automorphisms of $\Gamma$. If we let $\Pi$ denote the restriction of $\Gamma$ to $H = \Gamma(0)$, then $\Pi$ is
 a tournament on the multiplicative group
$H$ and the translation maps by elements of $H$ on $H$ (by multiplication) are automorphisms of $\Pi$ because they are
restrictions of the automorphisms of $\Gamma$. It follows from Theorem \ref{theo11} (a) that $\Pi$ is a group game on $H$.
By Theorem 1.2.8 of \cite{MM} the multiplicative group $F'$ is cyclic and so the subgroup $H$ is cyclic. Thus, $H$ is isomorphic to the
additive group $\Z_m$ and we can identify $\Pi$ with a group game on $\Z_m$.

Recall that $\Gamma[-H]$ is the reverse game $\Gamma[H]^{-1}$. The map $i \mapsto -i$ is an isomorphism from
$\Gamma[H]$ to $\Gamma[-H]$ which maps $-H$ to $H$. So it maps $\Gamma|(-H) = \Gamma|(\Gamma^{-1}(0))$ isomorphically
to $(\Gamma|H)^{-1} = \Pi^{-1}$. But $\Pi$ is a group game on a commutative group and so it is isomorphic to its reversed game.
Thus, $\Gamma|(\Gamma^{-1}(0))$ is
isomorphic to $\Pi$.

Translation in $G$ (by addition) is an automorphism of $\Gamma$ and so $\ell_i$ restricts to an isomorphism from $\Gamma|(\Gamma(0)) $
to $\Gamma|(\Gamma(i))$ and from $\Gamma|(\Gamma^{-1}(0))$ to $\Gamma|(\Gamma^{-1}(i))$. Hence, all of these restrictions are isomorphic to $\Pi$.

The multiplication action of $H \subset Aut(\Gamma)$ is transitive on the edges $\{ (0,h) : h \in H \} $ in $\Gamma$. Translation by $-i$
maps the edge $(i,j)$ to $(0,j-i)$ with $j-i \in H$. It follows that the action of $Aut(\Gamma)$ on the edges of $\Gamma$ is transitive and so
$\Gamma$ is symmetric.

\end{proof} \vspace{.5cm}

{\bfseries Remark:} If $\rho$ is a nontrivial field automorphism of $F$, then $\rho$ commutes with $sq$ and so leaves $H$ invariant. Hence, $\rho$ is an
automorphism of the group game $\Gamma[H]$. Observe that since $\rho$ is the identity on the subfield $\Z_p$, $\rho$ is neither a translation by an
element of $G$ nor multiplication by an element of $F'$. \vspace{.5cm}

A tournament $\Pi$ on $I$ is called \emph{doubly regular}\index{doubly regular}\index{tournament!doubly regular}
when there exist positive integers $n,k$ such that
\begin{equation}\label{eqdr1}
|\Pi(i)| = n, \quad \text{and} \quad |\Pi(i) \cap \Pi(j)| = k
\end{equation}
for all $(i,j) \in \Pi$. By Proposition \ref{prop06euler} (b) a doubly regular tournament is regular with $|I| = 2n + 1$ and so is a
game of size $2n + 1$. In addition, Proposition \ref{prop06euler} (b) implies that for all $i \in I$
the restriction $\Pi|\Pi(i)$ is a game with $n = 2k +1$
and so $|I| = 4k +3$.

Clearly, a symmetric tournament is doubly regular.\vspace{.5cm}

\begin{lem}\label{lem17dr} Let $\Pi$ be a tournament on $I$ with $(i,j) \in I$.
  \begin{equation}\label{eqdoub}
 |\Pi(i)| \ = \ |\Pi(j)| \quad \Longrightarrow \quad |\Pi^{-1}(i) \cap \Pi(j)| \ = \ |\Pi(i) \cap \Pi^{-1}(j)| + 1.
\end{equation}

If $|\Pi(i)| = |\Pi(j)| = 2k +1$ then the following
are equivalent.
\begin{itemize}
\item[(a)] $ |\Pi(i) \cap \Pi(j)| \ = \ k.$
\item[(b)] $ |\Pi(i) \cap \Pi^{-1}(j)| \ = \ k.$
\item[(c)] $ |\Pi^{-1}(i) \cap \Pi^{-1}(j)| \ = \ k.$
\item[(d)] $ |\Pi^{-1}(i) \cap \Pi(j)| \ = \ k+1.$
\end{itemize}
\end{lem}

\begin{proof} Define
\begin{equation}\label{eqdr2}\begin{split}
 A = \Pi(i) \cap \Pi(j), \quad B = \Pi(i) \cap \Pi^{-1}(j), \hspace{.5cm} \\
 C = \Pi^{-1}(i) \cap \Pi^{-1}(j), \quad D = \Pi(i)^{-1} \cap \Pi(j).
 \end{split}
 \end{equation}

 Clearly, we obtain the disjoint unions:
 \begin{equation}\label{eqdr3}\begin{split}
 A \cup D \ = \ \Pi(j), \quad A \cup B \cup \{ j \} = \Pi(i), \hspace{.5cm} \\
  C \cup D \ = \ \Pi^{-1}(i), \quad C \cup B \cup \{ i \} = \Pi^{-1}(j),
   \end{split}
 \end{equation}

So (\ref{eqdoub}) is clear and  (a) $\Leftrightarrow$ (d), (b); (c) $\Leftrightarrow$ (b),(d).

 \end{proof} \vspace{.5cm}

 From the lemma it follows that if the game $\Pi$ is doubly regular then the reverse game $\Pi^{-1}$ is doubly regular.

 Notice that for $(i,j) \in \Pi$, $|\Pi^{-1}(i) \cap \Pi(j)|$ is the number of $3$-cycles through the edge $(i,j)$. A. Kotzigs call a tournament
 $\Pi$ on $I$ \emph{homogeneous}\index{homogeneous}\index{tournament!homogeneous} when for some positive integer $t$ there are exactly $t$
 $3$-cycles through each edge, see \cite{RB}. For a homogeneous game of size $2n + 1$, $t$ times the number of edges is $3$ times the number of $3$-cycles. So
 from Theorem \ref{theo08} it follows that $ t \cdot n(2n+1) = 3 \cdot \frac{(2n+1)n(n+1)}{6}$ and so $t = \frac{n+1}{2}$. Hence, $n$ is odd and with
 $t = k +1$, $n = 2k +1$. It follows from Lemma \ref{lem17dr} that a game is homogeneous if and only if it is doubly regular. Reid and Brown
 prove that a homogeneous tournament is necessarily regular and so is doubly regular, Theorem 1 of \cite{RB}. Using Hadamard matrices
 they construct doubly regular tournaments for an infinite family of sizes. It apparently remains an open
 question whether there exist doubly regular tournaments of size $4k + 3$ for every $k$.

Finally, we  observe that the only group games which are reducible are those on  $\Z_{2n+1}$  which are isomorphic to $\Gamma[Odd_n]$ or, equivalently,
isomorphic to $\Gamma[[1,n]]$.

\begin{theo}\label{theo28} (a) If $A \subset \Z_{2n+1}$ is a game subset with
$\Gamma[A]$ reducible, then $A = m_a(Odd_n)$ for a unique
$a \in  \Z_{2n+1}^*$. Conversely, for $a \in \Z_{2n+1}^*$, $a$ is the unique element of $\Z_{2n+1}$
 such that $\Gamma[m_a(Odd_n)]$ is reducible via $0 \to a$.

 (b) If a group  $G$ is not cyclic, then for no game subset $A $ of $G$ is $\Gamma[A]$ reducible.

 \end{theo}

\begin{proof} Assume $A$ is a game subset for a group $G$.  If $\Gamma[A]$ is reducible via $u \to v$, then it is reducible via $e \to u^{-1}v$.

 Notice that $e \to a$ if and only if $a\in A$. The following are equivalent.
 \begin{itemize}
 \item $\Gamma[A]$ is reducible via $e \to a$.
 \item $ a\in A$ and $A \cap aA = \emptyset$.
 \item $A^{-1} = aA$.
 \end{itemize}
 Observe that $c \in A \cap aA$ if and only if $e \to c$ and $a \to c$.  By Proposition \ref{prop07} (d), the set of such $c$ is
 empty if and only if $\Gamma[A]$ is reducible  via $\{ e, a \}$.
 If $a \in A$ then $e \not\in aA$ and so $aA = A^{-1}$ if and only if $A$ and $aA$ are
 disjoint.

 From Proposition \ref{prop07} (f), it follows that $\{ a \in A : \Gamma[A]$ is reducible via $e\to a \}$
 consists of at most one element.

 (a): In this case $G = \Z_{2n+1}$ and so $\Gamma[A]$ is reducible via $0 \to a$ if and only if $-A = a + A$.

 The game $\Gamma[Odd_n]$ is reducible via $0 \to 1$ because $Odd_n + 1 = -Odd_n$.
 For $a \in  \Z_{2n+1}^*$, $m_a$ is an isomorphism
 from $\Gamma[Odd_n]$ to $\Gamma[m_a(Odd_n)]$ and so $\Gamma[m_a(Odd_n)]$ is reducible via $0 \to a$.

  Now assume that $\Gamma[A]$ is reducible via $0 \to a$. If for some $i$, both $i$ and $i + a$ are elements of $ A$, then $i + a \to 0, a$ and
 and if $-i, -i - a \in A $, then $-i \to 0, a$.  So Proposition \ref{prop07} (d) implies that it cannot happen that $i, i + a \in A$ or
 $i, i + a \in -A$.

If $a \in  \Z_{2n+1}^*$ then $a$ is a generator of the additive group $\Z_{2n+1}$ and so $m_a$ is a permutation
 of $\Z_{2n+1}$ mapping $0,1,2,\dots,2n$ to
 $0,a,2a,\dots, 2na$. Since $a \in A$, $(2k - 1)a \in A, 2ka \in -A$ for $a = 1,\dots,n$. That is, $A = m_a(Odd_n)$.

 If $a \not\in Z_{2n+1}^*$ then the order of $a$ in $\Z_{2n+1}$ is a proper divisor of $2n + 1$ and the cyclic subgroup generated by $a$ is a proper
 subgroup of $\Z_{2n+1}$. The result follows by the proof in part (b).

 (b): Now assume that $G$ is a general group of odd order,  and that the cyclic subgroup $H$ generated by $a$ is a proper subgroup of $G$.
This applies to all $a \in G$ when $G$ is not cyclic. Let $2k + 1 = |H|$ and $2n + 1 = |G|$.

 In general, if $\Gamma[A]$ is reducible via $e \to a$ then $aA \cap A = \emptyset$. Consequently, $b \in A$ implies $ab \not\in A$.
 So for any $b \in G$, no successive members of the sequence $b, ab, a^2b, \dots$ lie in $A$. Since
 $ \{ b, ab, a^2b, \dots \} = Hb$ has cardinality $2k+1 $, it follows that $|A \cap Hb|$ is at most $k$.
 The number of right cosets $Hb$ is $(2n + 1)/(2k + 1)$. Since $k < n$, this would imply that $n > [(2n + 1)/(2k + 1)] \cdot k \geq |A|$ which
 contradicts $|A| = n$.

\end{proof} \vspace{.5cm}

Notice that $[1,n] + n = [n+1,2n] \subset \Z_{2n+1}$ so $\Gamma[[1,n]]$ is reducible
via $0 \to n$. Thus, we see again that $[1,n] = m_n(Odd_n)$.
\vspace{1cm}

\section{Inverting Cycles}\label{secinvert}

While we are most interested in games, the results of this section are equally applicable for more general tournaments and so we will
consider the more general case.

Assume that $\Pi$ and $\Gamma$ are tournaments on the set $I$. Let $\r \in S(I)$, i.e. a permutation on $I$.
Recall that $\bar \r = \r \times \r$ is the
product permutation on $I \times I$.

We define
\begin{equation}\label{eq4}
\Delta(\r,\Pi,\Gamma) \ = \ \Pi \cap (\bar \r)^{-1}(\Gamma^{-1}) \ = \ \{ (i,j) \in \Pi : (\r(j),\r(i)) \in \Gamma \}.
\end{equation}
That is, $\Delta(\r,\Pi,\Gamma)$\index{$\Delta(\r,\Pi,\Gamma)$} consists of those edges of $\Pi$ which are reversed by $\bar \r$.
For the special case when
$\r = 1_I$ we will write
\begin{equation}\label{eq4a}
\Delta(\Pi,\Gamma) \ = \ \Delta(1_I, \Pi,\Gamma) \ = \ \{ (i,j) \in \Pi : (j, i) \in \Gamma \} \ = \  \Delta(\Gamma,\Pi)^{-1}.
\end{equation}
\index{$\Delta(\Pi,\Gamma)$}

If $\r_{12} \in S(I)$ is an isomorphism from $\Gamma_1$ to $\Gamma_2$ and $\r \in S(I)$,  then
\begin{equation}\label{eq5}
\Delta(\r,\Pi,\Gamma_1) \ = \ \Delta(\r_{12} \circ \r,\Pi,\Gamma_2).
\end{equation}
\vspace{.5cm}

We will say that $\r \in S(I)$ \emph{preserves scores}\index{preserves scores}, or is a
\emph{score-preserving permutation}\index{score-preserving permutation}, from $\Pi$ to $\Gamma$ if $|\Gamma(\r(i))| = |\Pi(i)|$ for all $i \in I$.
It then follows that $|\Gamma^{-1}(\r(i))| = |I|  - 1 - |\Gamma(\r(i))| =  |I|  - 1 - |\Pi(i)| =  |\Pi^{-1}(i)|$. We will say that
$\Pi$ and $\Gamma$ have the same scores when the identity on $I$ preserves scores from $\Pi$ to $\Gamma$, i.e.
$|\Gamma(i)| = |\Pi(i)|$ for all $i \in I$. Between two games on $I$ any permutation preserves scores.

\begin{prop}\label{prop24} Assume that $\Pi$ and $\Gamma$ are tournaments on the set $I$ and $\r \in S(I)$.
The permutation $\r$ preserves scores from $\Pi$ to $\Gamma$ if and only if $\Delta(\r,\Pi,\Gamma)$ is Eulerian. \end{prop}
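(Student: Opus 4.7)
The plan is to carry out a direct bookkeeping argument at each vertex $i \in I$, partitioning the other vertices into four types according to the direction of the edge in $\Pi$ and the direction of the image edge in $\Gamma$ under $\bar\rho$. Since $\Pi$ is a tournament, for each $j \neq i$ exactly one of $i \to j$ or $j \to i$ holds in $\Pi$, and since $\rho$ is a bijection and $\Gamma$ is a tournament, exactly one of $\rho(i) \to \rho(j)$ or $\rho(j) \to \rho(i)$ holds in $\Gamma$. This gives four mutually exclusive and exhaustive cases.

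First I would fix $i$ and let $a, b, c, d$ denote the cardinalities of these four sets: $a$ counts $j$ with $i \to j$ in $\Pi$ and $\rho(i) \to \rho(j)$ in $\Gamma$; $b$ counts $j$ with $i \to j$ in $\Pi$ and $\rho(j) \to \rho(i)$ in $\Gamma$; $c$ counts $j$ with $j \to i$ in $\Pi$ and $\rho(i) \to \rho(j)$ in $\Gamma$; and $d$ counts $j$ with $j \to i$ in $\Pi$ and $\rho(j) \to \rho(i)$ in $\Gamma$. Next I would read off four identities from the definitions: $|\Pi(i)| = a+b$, $|\Pi^{-1}(i)| = c+d$, $|\Gamma(\rho(i))| = a+c$, and $|\Gamma^{-1}(\rho(i))| = b+d$. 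Then, directly from the definition of $\Delta = \Delta(\rho,\Pi,\Gamma)$, I would observe that $|\Delta(i)| = b$ (the outputs of $i$ that $\bar\rho$ reverses) and $|\Delta^{-1}(i)| = c$ (the inputs of $i$ that $\bar\rho$ reverses).

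The conclusion is then immediate: $|\Delta(i)| = |\Delta^{-1}(i)|$ holds iff $b = c$, which by adding $a$ to both sides is equivalent to $|\Pi(i)| = |\Gamma(\rho(i))|$. Quantifying over $i \in I$ gives the biconditional: $\Delta$ is Eulerian iff $\rho$ preserves scores from $\Pi$ to $\Gamma$.

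There is no real obstacle here; the only point to be careful about is to correctly identify $|\Delta(i)|$ and $|\Delta^{-1}(i)|$ from the definition in \eqref{eq4}, namely that $(i,j) \in \Delta$ requires $(i,j) \in \Pi$ (so $\Delta(i) \subset \Pi(i)$) together with $(\rho(j),\rho(i)) \in \Gamma$ (so the $\bar\rho$-image is reversed). Once that is pinned down, the identity $|\Pi(i)| - |\Gamma(\rho(i))| = b - c = |\Delta(i)| - |\Delta^{-1}(i)|$ is the whole content of the proposition.
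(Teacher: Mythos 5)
Your proof is correct and is essentially the paper's argument: the paper decomposes $\Pi(i)$ as the disjoint union $(\Pi\setminus\Delta)(i)\cup\Delta(i)$ and $\Gamma(\r(i))$ as $\r[(\Pi\setminus\Delta)(i)]\cup\r[\Delta^{-1}(i)]$, which is exactly your $a+b$ versus $a+c$ count, yielding the same identity $|\Pi(i)|-|\Gamma(\r(i))|=|\Delta(i)|-|\Delta^{-1}(i)|$. Your four-way partition just makes the bookkeeping slightly more explicit (the class of size $d$ is never needed).
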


\begin{proof}
Let $\Delta = \Delta(\r,\Pi,\Gamma)$. Thus, $\Delta$ is a subgraph of $\Pi$ and $\bar \r(\Delta^{-1})$ is a
subgraph of $\Gamma$.

The permutation $\r$ is an isomorphism from $\Pi \setminus \Delta$ to $\Gamma \setminus \bar \r(\Delta^{-1})$ and
is an isomorphism from $\Delta^{-1}$ to $\bar \r(\Delta^{-1}) \subset \Gamma$.

\begin{align}\label{eq7}
\begin{split}
\Pi(i)  \ &= \ (\Pi \setminus \Delta)(i) \cup \Delta(i),  \\
\Gamma(\r(i))  \ &= \ \r[(\Pi \setminus \Delta)(i)] \cup \r[\Delta^{-1}(i)].
\end{split}
\end{align}
The unions are disjoint and the permutation $\r$ preserves cardinality. So it follows that $|\Pi(i) | = |\Gamma(\r(i))|$ if and only if
$|\Delta(i)| = |\Delta^{-1}(i)|$.

\end{proof} \vspace{.5cm}

If $\Delta$ is a subgraph of $\Pi$ then we define \emph{$\Pi$ with $\Delta$ reversed} by
\begin{equation}\label{eq8}
\Pi/\Delta \ = \ (\Pi \setminus \Delta) \ \cup \ \Delta^{-1}.
\end{equation}

If $\Delta_1$ and $\Delta_2$ are subgraphs of $\Pi$ then clearly
\begin{equation}\label{eq9}
\Delta_1 \cap \Delta_2 \ = \ \emptyset \quad \Longrightarrow \quad \Pi/(\Delta_1 \cup \Delta_2) \ = \ (\Pi/\Delta_1)/ \Delta_2.
\end{equation}
Notice that if, as above, $\Delta_1$ and $\Delta_2$ are disjoint, then $\Delta_2$ is a subgraph of  $\Pi/\Delta_1$. Recall that disjoint
subgraphs may have vertices in common.

\begin{prop}\label{prop25} Assume that  $\Pi$ and $\Gamma$ are tournaments on $I$. If $\Delta$ is a subgraph of $\Pi$,
then $\Pi/\Delta = \Gamma$ if and only if $\Delta = \Delta(\Pi,\Gamma)$. In particular,
\begin{equation}\label{eq10}
\Delta \ = \  \Delta(\Pi,\Pi/\Delta)
\end{equation}

The tournament $\Pi/\Delta$ has the same scores as  $\Pi$
 if and only if $\Delta$ is Eulerian.

\end{prop}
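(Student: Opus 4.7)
The plan is to prove the first biconditional by a direct double inclusion argument at the level of edges, then derive the identity $\Delta = \Delta(\Pi,\Pi/\Delta)$ as an immediate corollary, and finally obtain the score statement by feeding this identity into Proposition \ref{prop24}.

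For the forward direction of the biconditional, I assume $\Pi/\Delta = \Gamma$ and verify $\Delta = \Delta(\Pi,\Gamma)$ by showing each inclusion. If $(i,j) \in \Delta$, then $(i,j) \in \Pi$ because $\Delta \subseteq \Pi$, and $(j,i) \in \Delta^{-1} \subseteq \Pi/\Delta = \Gamma$, so $(i,j) \in \Delta(\Pi,\Gamma)$ by the defining equation (\ref{eq4a}). Conversely, if $(i,j) \in \Delta(\Pi,\Gamma)$, then $(i,j) \in \Pi$ and $(j,i) \in \Gamma = (\Pi \setminus \Delta) \cup \Delta^{-1}$; since $\Pi$ is a digraph, $(i,j) \in \Pi$ forces $(j,i) \notin \Pi$, so $(j,i) \notin \Pi \setminus \Delta$, leaving $(j,i) \in \Delta^{-1}$, i.e. $(i,j) \in \Delta$.

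For the reverse direction, assume $\Delta = \Delta(\Pi,\Gamma)$ and show $\Pi/\Delta = \Gamma$. Containment $\Pi/\Delta \subseteq \Gamma$ splits into two cases: an edge $(i,j) \in \Pi \setminus \Delta = \Pi \setminus \Delta(\Pi,\Gamma)$ satisfies $(j,i) \notin \Gamma$ by definition of $\Delta(\Pi,\Gamma)$, so since $\Gamma$ is a tournament on $I$ one of the two orientations must lie in $\Gamma$ and it must be $(i,j)$; an edge $(i,j) \in \Delta^{-1}$ satisfies $(j,i) \in \Delta = \Delta(\Pi,\Gamma)$, so by definition $(i,j) \in \Gamma$. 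For $\Gamma \subseteq \Pi/\Delta$, given $(i,j) \in \Gamma$, use that $\Pi$ is a tournament: either $(i,j) \in \Pi$, in which case $(i,j) \in \Delta$ would force $(j,i) \in \Gamma$ contradicting that $\Gamma$ is a digraph, so $(i,j) \in \Pi \setminus \Delta \subseteq \Pi/\Delta$; or $(j,i) \in \Pi$, in which case $(j,i) \in \Delta(\Pi,\Gamma) = \Delta$, giving $(i,j) \in \Delta^{-1} \subseteq \Pi/\Delta$. Setting $\Gamma = \Pi/\Delta$ in the equivalence yields (\ref{eq10}).

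For the score statement, I invoke Proposition \ref{prop24} with $\r = 1_I$, $\Pi$ fixed, and $\Gamma = \Pi/\Delta$: the identity preserves scores from $\Pi$ to $\Pi/\Delta$ (that is, $\Pi$ and $\Pi/\Delta$ have the same scores) if and only if $\Delta(1_I,\Pi,\Pi/\Delta) = \Delta(\Pi,\Pi/\Delta)$ is Eulerian, and by (\ref{eq10}) this subgraph equals $\Delta$.

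The only subtle point, and therefore the main thing to manage carefully, is the bookkeeping using $\Pi \cap \Pi^{-1} = \emptyset$ and the tournament property of $\Gamma$ on the same vertex set $I$; these are what convert ``$(j,i)$ is not in one place'' into ``$(i,j)$ is in another''. Everything else is pure symbol manipulation from the definitions, and no new combinatorial input is needed beyond Proposition \ref{prop24}.
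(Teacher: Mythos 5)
Your proof is correct and follows the same route as the paper: the paper dismisses the first biconditional as obvious and derives the score statement from Proposition \ref{prop24}, which is exactly what you do, just with the "obvious" edge-chasing written out in full. The double-inclusion bookkeeping using $\Pi \cap \Pi^{-1} = \emptyset$ and the tournament property is carried out correctly, and the application of Proposition \ref{prop24} with $\r = 1_I$ via (\ref{eq10}) is precisely the intended argument.
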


\begin{proof} The first part is obvious. The rest follows from Proposition \ref{prop24}.

\end{proof} \vspace{.5cm}

\begin{prop}\label{prop27}  Let $\Pi$ and $\Gamma$ be tournaments on $I$ and $\r \in S(I)$. If $\Delta$ is a subgraph of $\Pi$, then
$\r$ is an isomorphism from $\Pi/\Delta$ to $\Gamma$ if and only if $\Delta = \Delta(\r, \Pi, \Gamma)$. \end{prop}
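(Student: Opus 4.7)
The plan is to reduce the claim to Proposition \ref{prop25} by transporting the tournament $\Gamma$ back to $I$ via $\r$. Because $\r$ is a bijection, $\bar\r$ is a bijection on $I \times I$, and so $\bar\r^{-1}(\Gamma)$ is again a tournament on $I$: the identities $\bar\r^{-1}(\Gamma) \cup \bar\r^{-1}(\Gamma^{-1}) \cup 1_I = I \times I$ and $\bar\r^{-1}(\Gamma) \cap \bar\r^{-1}(\Gamma)^{-1} = \emptyset$ follow from the corresponding statements for $\Gamma$. Moreover, by the last sentence of Definition \ref{def03}, the bijection $\r : \Pi/\Delta \tto \Gamma$ is an isomorphism exactly when $\bar\r(\Pi/\Delta) = \Gamma$, which is equivalent to
\[
\Pi/\Delta \ = \ \bar\r^{-1}(\Gamma).
\]

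Next I would apply Proposition \ref{prop25} to the pair of tournaments $\Pi$ and $\bar\r^{-1}(\Gamma)$ on $I$. Since $\Delta \subset \Pi$, Proposition \ref{prop25} says the displayed equation above holds if and only if
\[
\Delta \ = \ \Delta(\Pi, \bar\r^{-1}(\Gamma)).
\]

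Finally I would unpack the right-hand side using Equation (\ref{eq4a}):
\[
\Delta(\Pi, \bar\r^{-1}(\Gamma)) \ = \ \{ (i,j) \in \Pi : (j,i) \in \bar\r^{-1}(\Gamma) \} \ = \ \{ (i,j) \in \Pi : (\r(j),\r(i)) \in \Gamma \} \ = \ \Delta(\r,\Pi,\Gamma),
\]
where the middle equality is just the definition of $\bar\r^{-1}$ applied to the pair $(j,i)$. Combining the two equivalences yields the stated biconditional.

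There is no real obstacle here: the only subtlety is recognizing that ``$\r$ is an isomorphism $\Pi/\Delta \to \Gamma$'' is the same assertion as ``$\Pi/\Delta$ equals the pullback tournament $\bar\r^{-1}(\Gamma)$,'' after which Proposition \ref{prop25} and a one-line rewriting of $\Delta(\Pi,\bar\r^{-1}(\Gamma))$ finish the argument. The specialization $\r = 1_I$ in Equation (\ref{eq10}) is of course recovered as an immediate corollary.
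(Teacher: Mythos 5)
Your proof is correct and takes essentially the same route as the paper: both arguments reduce the general case to the $\r = 1_I$ case of Proposition \ref{prop25}, yours by pulling $\Gamma$ back to the tournament $\bar\r^{-1}(\Gamma)$ on $I$ and identifying $\Delta(\Pi,\bar\r^{-1}(\Gamma))$ with $\Delta(\r,\Pi,\Gamma)$, the paper by pushing forward with the composition identity (\ref{eq5}) together with (\ref{eq10}). Your version has the minor virtue of delivering both directions in a single chain of equivalences, whereas the paper dispatches the forward implication as ``clear.''
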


\begin{proof} If $\Delta = \Delta(\r, \Pi, \Gamma)$, then it is clear that $\r$ is an isomorphism from $\Pi/\Delta$ to $\Gamma$.

On the other hand, if $\r$ is an isomorphism from $\Pi/\Delta$ to $\Gamma$, then (\ref{eq5}) and (\ref{eq10}) imply that.
\begin{equation}\label{eq11}
\Delta \ = \  \Delta(\Pi,\Pi/\Delta) \ = \ \Delta(\r \circ 1_I, \Pi, \Gamma) \ = \ \Delta(\r, \Pi, \Gamma).
\end{equation}

\end{proof} \vspace{.5cm}

 \begin{cor}\label{cor25aa}If  $\Pi$ is a game on $I$, $\Delta \subset \Pi$ is Eulerian and $u, v \in I$, then any two of the
 following conditions implies the third.
 \begin{enumerate}

 \item[(i)] The game $\Pi$ is reducible via $\{ u, v \}$.

  \item[(ii)] The game $\Pi/\Delta$ is reducible via $\{ u, v \}$

   \item[(iii)] The graph  $\Delta|(I \setminus \{ u, v \})$ is Eulerian.

 \end{enumerate}
 \end{cor}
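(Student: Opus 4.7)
The plan is to reduce everything to Proposition \ref{prop25} by restricting to $J = I \setminus \{u,v\}$. First I would set $\Pi' = \Pi/\Delta$ and observe that, because $\Pi$ is a game and $\Delta$ is Eulerian, Proposition \ref{prop25} tells us $\Pi'$ has the same constant score as $\Pi$, so $\Pi'$ is again a game on $I$. Both $\Pi|J$ and $\Pi'|J$ are then tournaments on the odd-sized set $J$ of cardinality $|I|-2$.

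The key identity I would prove next is
\begin{equation*}
\Pi'|J \ = \ (\Pi|J)/(\Delta|J),
\end{equation*}
which follows by a direct unpacking: intersecting $\Pi' = (\Pi \setminus \Delta) \cup \Delta^{-1}$ with $J \times J$ gives $((\Pi|J) \setminus (\Delta|J)) \cup (\Delta|J)^{-1}$, since intersection with $J \times J$ commutes with reversal, union, and set difference. This is the one step where I need to be careful, but it is a routine set-theoretic check.

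Now I would apply Proposition \ref{prop25} on the set $J$ to the tournaments $\Pi|J$ and $\Pi'|J$: they have the same scores if and only if $\Delta|J$ is Eulerian. Recall that, by definition, condition (i) says $\Pi|J$ is a subgame, i.e.\ the tournament $\Pi|J$ is Eulerian, which, given $|J| = 2n - 1$, is the same as saying each vertex has score $n-1$ in $\Pi|J$; and similarly (ii) says $\Pi'|J$ has constant score $n - 1$.

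Finally I would close the argument by the two-out-of-three observation. If (i) and (ii) both hold, then $\Pi|J$ and $\Pi'|J$ both have every score equal to $n-1$, hence equal scores vertex-by-vertex, so $\Delta|J$ is Eulerian, giving (iii). If (i) and (iii) hold, then $\Pi|J$ has constant score $n-1$ and $\Delta|J$ is Eulerian, so $\Pi'|J$ has the same scores as $\Pi|J$, giving (ii). The case (ii) and (iii) $\Longrightarrow$ (i) is symmetric. I do not expect a real obstacle here; the only mildly non-obvious step is verifying that restriction commutes with the reversal operation $\Pi \mapsto \Pi/\Delta$, and once that is in hand the corollary is just Proposition \ref{prop25} applied on $J$.
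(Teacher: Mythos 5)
Your proof is correct and follows essentially the same route as the paper: both hinge on the identity $(\Pi/\Delta)|J = (\Pi|J)/(\Delta|J)$ for $J = I \setminus \{u,v\}$ and then invoke Proposition \ref{prop25} on $J$. The only cosmetic difference is that you argue all three two-out-of-three cases directly via scores, whereas the paper derives the third case by swapping $\Pi$ with $\Pi/\Delta$ and $\Delta$ with $\Delta^{-1}$.
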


 \begin{proof} It is clear that
 \begin{equation}\label{eq10aa}
 (\Pi/\Delta)|(I \setminus \{ u, v \}) \ = \ (\Pi|(I \setminus \{ u, v \}))/ (\Delta|(I \setminus \{ u, v \}))
 \end{equation}

Condition (i) is equivalent to the assumption that $\Pi|(I \setminus \{ u, v \})$ is a game.

By Equation (\ref{eq10aa}), condition (ii) is equivalent to the
assumption that $(\Pi|(I \setminus \{ u, v \}))/ (\Delta|(I \setminus \{ u, v \}))$
 is a game.

 So assuming (i), (ii) $\Leftrightarrow$ (iii) by Proposition \ref{prop25}.  Similarly, assuming (ii), (i) $\Leftrightarrow$ (iii) by
 applying the previous result, replacing $\Pi$ by $\Pi/\Delta$ and $\Delta$ by $\Delta^{-1}$.

\end{proof} \vspace{.5cm}


 It follows from Proposition \ref{prop25} that if $\Pi$ and $\Gamma$ are any two tournaments on $I$ with the same scores, then $\Gamma$ is equal to
 $\Pi$ with $\Delta$ reversed for $\Delta$ the Eulerian subgraph $\Delta(\Pi,\Gamma)$.

 \begin{cor}\label{cor25a} If $\Pi$ is a tournament on $I$, then the number of tournaments on $I$ with the same scores as $\Pi$ is exactly
 the number of Eulerian subgraphs of $\Pi$.  In particular, the number of Eulerian subgraphs of a tournament depends only on the score vector.
 \end{cor}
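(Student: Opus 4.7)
The plan is to read off the statement from Proposition \ref{prop25} as a bijection. Define the map
$$ \Phi : \{\text{Eulerian subgraphs } \Delta \subseteq \Pi \} \ \tto \ \{\text{tournaments } \Gamma \text{ on } I \text{ with the same scores as } \Pi \}, \qquad \Delta \mapsto \Pi/\Delta. $$
That $\Phi$ lands in the target set is exactly the second half of Proposition \ref{prop25}: when $\Delta$ is Eulerian, $\Pi/\Delta$ has the same scores as $\Pi$. For injectivity I would use identity (\ref{eq10}), $\Delta = \Delta(\Pi,\Pi/\Delta)$, so that $\Phi(\Delta_1) = \Phi(\Delta_2)$ forces $\Delta_1 = \Delta_2$. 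For surjectivity, given any tournament $\Gamma$ on $I$ with the same scores as $\Pi$, set $\Delta = \Delta(\Pi,\Gamma)$; by Proposition \ref{prop24} applied with $\rho = 1_I$ (the identity preserves scores by hypothesis) the subgraph $\Delta$ is Eulerian, and $\Pi/\Delta = \Gamma$ by Proposition \ref{prop25}. Thus $\Phi$ is a bijection, proving the first assertion.

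For the ``in particular'' clause, I would argue by transport. Let $\Pi'$ be a tournament on a set $I'$ with the same score vector as $\Pi$ on $I$. Since the score vector is the pointwise score function listed in non-decreasing order, there is a bijection $\sigma : I' \tto I$ such that the image tournament $\bar\sigma(\Pi')$ on $I$ has the same pointwise scores as $\Pi$. The map $\Delta \mapsto \bar\sigma(\Delta)$ is a bijection between subgraphs of $\Pi'$ and subgraphs of $\bar\sigma(\Pi')$ which preserves the Eulerian condition (since $\bar\sigma$ preserves in-degrees and out-degrees at every vertex). Applying the first part of the corollary twice, once to $\Pi$ and once to $\bar\sigma(\Pi')$ on the common set $I$, both counts of Eulerian subgraphs equal the number of tournaments on $I$ with this common pointwise score function. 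Hence the number of Eulerian subgraphs of $\Pi'$ equals the number of Eulerian subgraphs of $\Pi$.

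There is essentially no hard step here: the content is already packaged in Propositions \ref{prop24} and \ref{prop25}. The only mild care needed is the distinction in the last paragraph between the score vector (a sorted tuple, an isomorphism invariant) and the pointwise score function on a labelled set, which is what Proposition \ref{prop25} manipulates. Relabelling by $\sigma$ reconciles the two notions, after which the bijection $\Phi$ immediately gives the invariance.
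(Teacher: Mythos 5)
Your proof is correct and is essentially the paper's argument: the paper exhibits the same bijection via $\Gamma \mapsto \Delta(\Pi,\Gamma)$, which is just the inverse of your map $\Delta \mapsto \Pi/\Delta$, relying on the same two propositions. Your explicit relabelling argument for the ``in particular'' clause is a welcome bit of extra care that the paper leaves implicit.
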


\begin{proof} If $\Gamma$ is a tournament on $I$, then by Proposition \ref{prop25} $\Gamma = \Pi/\Delta$ with $\Delta = \Delta(\Pi,\Gamma)$ and
by Proposition \ref{prop24} $\Gamma$ has the same scores if and only if $\Delta$ is Eulerian. Hence, the set of tournaments $\Gamma$ with the
same scores is in a one-to-one correspondence with the set of Eulerian subgraphs of $\Pi$ via $\Gamma \leftrightarrow \Delta(\Pi,\Gamma)$.

Now suppose that $\Pi_1$ and $\Pi_2$ are tournaments on $I_1$ and $I_2$ with the same score vector.  There exists a bijection
$\r : I_1 \to I_2$ which preserves scores, i.e. $|\Pi_1(i)| = |\Pi_2(\r(i))|$ for all $i \in I_1$. Define $\Gamma$ on $I_1$ so that
$\r : \Gamma \to \Pi_2$ is an isomorphism. Clearly, $\Pi_1$ and $\Gamma$ have the same scores and so have the same number of Eulerian subgraphs.
Since $\r : \Gamma \to \Pi_2$ is an isomorphism, $\Gamma$ and $\Pi_2$ have the same number of Eulerian subgraphs.
 \end{proof} \vspace{.5cm}

 The following is a theorem of Ryser \cite{R2}, see also
 \cite{M} Theorem 35 and  \cite{BL}.

 \begin{theo}\label{theo26} Assume that  $\Pi$ and $\Gamma$ are tournaments with the same score on $I$. There exists a finite sequence
 $\Pi_1, \dots, \Pi_k$ of tournaments on $I$ with $\Pi_1 = \Pi$ and $\Pi_k = \Gamma$ and such that for $p = 1,\dots, k-1$,
 $\Pi_{p+1}$ is $\Pi_p$ with some $3$-cycle reversed. In particular, all of these tournaments have the same score.

 If $\Gamma$ is obtained from $\Pi$ be reversing a single cycle of length $\ell$ then  a sequence can be chosen
 with $k = \ell - 2$. \end{theo}

 \begin{proof} In one step we can get from $\Pi$ to $\Gamma$ with $\Gamma$ equal to $\Pi$ with the Eulerian subgraph
  $\Delta(\Pi,\Gamma)$ reversed.

  By Theorem \ref{theo02},  $\Delta(\Pi,\Gamma)$ is a disjoint union of cycles. From (\ref{eq9}) and induction
  it follows that there exists a finite sequence
 $\Pi_1, \dots, \Pi_k$ of games on $I$ with $\Pi_1 = \Pi$ and $\Pi_k = \Gamma$ and such that for $p = 1,\dots, k-1$
 $\Pi_{p+1}$ is $\Pi_p$ with some cycle reversed.

 Thus, we are reduced to the case when $\Gamma = \Pi/\Delta$ and $\Delta$ is a cycle $\langle i_1,\dots,i_{\ell} \rangle$ with $\ell \geq 3$.
 We obtain the result by induction on the length $\ell$ of the cycle. If $\ell = 3$, then we get from $\Pi$ to $\Gamma$ immediately
 by reversing a single $3$-cycle. Now assume that $\ell > 3$ and assume that the result holds for shorter cycles.
 \vspace{.5cm}

 {\bfseries Case 1} ($i_1 \to i_3$ in $\Pi$): In this case, $\langle i_1,i_3,\dots,i_{\ell} \rangle$ is an $\ell - 1$ cycle in $\Pi$ and so we can get
 from $\Pi$ to $\tilde \Gamma = \Pi/\langle i_1,i_3,\dots,i_{\ell} \rangle$ via a sequence of $\ell - 3$  $3$-cycles by inductive hypothesis.

 Observe that $i_3 \to i_1$ in $\tilde \Gamma$ and so $\langle i_1, i_2, i_3 \rangle$ is a $3$-cycle in $\tilde \Gamma$. Furthermore,
 $\Gamma = \Pi/ \langle i_1,\dots,i_{\ell}\rangle = \tilde \Gamma/\langle i_1, i_2, i_3 \rangle$ because
 the edge between $i_1$ and $i_3$ has been reversed twice.
 Thus, extending the $\tilde \Gamma$ sequence by one we obtain the sequence from $\Pi$ to $\Gamma$.
 \vspace{.5cm}

 {\bfseries Case 2} ($i_3 \to i_1$ in $\Pi$): We proceed as before reversing the order of operations.
 This time $\langle i_1, i_2, i_3 \rangle$ is a $3$-cycle
 in $\Pi$ and $\langle i_1,i_3,\dots,i_{\ell}\rangle $  is an $\ell - 1$ cycle in $\tilde \Gamma = \Pi/\langle i_1, i_2, i_3 \rangle$. Now
 $\Gamma = \tilde \Gamma/\langle i_1,i_3,\dots,i_{\ell} \rangle$ and so by inductive hypothesis we can get from $\tilde \Gamma$ to $\Gamma$ via
 a sequence of $\ell - 3$ $3$-cycles.

\end{proof} \vspace{.5cm}

A \emph{decomposition}\index{digraph!decomposition} for an Eulerian digraph $\Pi$  is a pairwise disjoint set of cycles with union $\Pi$.
A \emph{maximum decomposition}\index{maximum decomposition}  is a decomposition having maximum
cardinality. We call the cardinality of a maximum decomposition the \emph{span}\index{span} of $\Pi$, denoted $\s(\Pi)$\index{$\s(\Pi)$}.
The complement in $\Pi$ of
the union of any pairwise disjoint collection of cycles
in $\Pi$ is Eulerian by  Lemma \ref{lem01} and so by Theorem \ref{theo02} any such collection can be
extended to a decomposition. Thus, the span is the maximum
size of such a collection. Furthermore, such a collection has cardinality equal to the span if and only if it is a maximum decomposition.

We define the \emph{balance invariant}\index{balance invariant}\index{digraph!balance invariant} $\b(\Pi)$ of the Eulerian digraph $\Pi$ by
\begin{equation}\label{eq12x}
\b(\Pi) \ = \ |\Pi| - 2 \s(\Pi). \hspace{3cm}
\end{equation}
That is, $\b(\Pi)$\index{$\b(\Pi)$} is the number of edges minus twice the size of the maximum
decomposition.  Observe that if $\{ C_1, C_2, \dots, C_p \}$
is a maximum decomposition of $\Pi$, then
\begin{equation}\label{eq12xa}
\b(\Pi) \ = \ \sum_{r=1}^p \ \b(C_r), \hspace{3cm}
\end{equation}
because if $C$ is a cycle, $\b(C) = |C| - 2$. Also, it is clear that if $\{ C_1, C_2, \dots, C_p \}$ is a decomposition of $\Pi$ then
$\{ C_1^{-1}, C_2^{-1}, \dots, C_p^{-1} \}$ is a decomposition of  $\Pi^{-1}$ and so $\b(\Pi) = \b(\Pi^{-1})$.

The balance invariant and the following result were shown to me by my colleague Pat Hooper.

\begin{theo}\label{theo28x} Let $\Pi$ and $\Gamma$ be tournaments with the same scores on a set $I$.
\begin{enumerate}
\item[(a)] If a tournament $\Gamma'$ is obtained by reversing a $3$-cycle in $\Gamma$, then
\begin{equation}\label{eq13x}
|\b(\Delta(\Gamma',\Pi)) - \b(\Delta(\Gamma,\Pi))| \ = \ 1.
\end{equation}

\item[(b)] If $\Pi \not= \Gamma$, then there exists a game $\Gamma'$ obtained by reversing a $3$-cycle in $\Gamma$, such that
\begin{equation}\label{eq14x}
\b(\Delta(\Gamma',\Pi)) \ = \ \b(\Delta(\Gamma,\Pi)) - 1.
\end{equation}

\item[(c)] Assume that $C$ is a $3$-cycle contained in $\Delta(\Gamma,\Pi)$ but which is
not contained in any maximum decomposition of $\Delta(\Gamma,\Pi)$. If the tournament
$\Gamma'$ is obtained by reversing the $3$-cycle $C$ in $\Gamma$, then
\begin{equation}\label{eq14xx}
\b(\Delta(\Gamma',\Pi)) = \b(\Delta(\Gamma,\Pi)) + 1.
\end{equation}
\end{enumerate}
\end{theo}

\begin{proof} Let the reverse of the given $3$-cycle be $\langle i_1, i_2, i_3 \rangle$ so that $\langle i_1, i_2, i_3 \rangle$
is contained in $\Gamma'$. Let $\Delta = \Delta(\Gamma,\Pi)$ and
$\Delta' = \Delta(\Gamma',\Pi)$
\vspace{.5cm}

(a): {\bfseries Case 1}: The $3$-cycle is disjoint from $\Delta$ and so its reverse is contained in $\Delta'$.

Clearly, $|\Delta'| = |\Delta| + 3$. Given a maximum decomposition for $\Delta$ we can adjoin the $3$-cycle to obtain a decomposition
for $\Delta'$.  Hence,
\begin{equation}\label{eq15x}
\s(\Delta') \ \geq \ \s(\Delta) + 1. \hspace{3cm}
\end{equation}

Now take a maximum decomposition for $\Delta'$. If the $3$-cycle is one of the cycles of this decomposition then by removing it we
obtain a decomposition for $\Gamma$ and so see that $\s(\Delta) \ \geq \ \s(\Delta') - 1$ and so we obtain equality in (\ref{eq15x}).
Hence,
\begin{equation}\label{eq16x}
\b(\Delta') =   |\Delta'| - 2 \s(\Delta') =  |\Delta| + 3 - 2 (\s(\Delta) + 1) = \b(\Delta) + 1.
\end{equation}
Thus, (\ref{eq13x}) follows.

Next suppose that $(i_1,i_2)$ is contained in one cycle of the $\Delta'$ decomposition and that $(i_2,i_3)$ and $(i_3,i_1)$ are contained in another.
So we can write these cycles $\langle i_2, r_1, \dots, r_{k}, i_1 \rangle$ and $\langle i_1, s_1, \dots, s_{\ell}, i_2, i_3 \rangle$.
In particular, $i_1$ and $i_2$ are vertices of $\Delta$ as well as $\Delta'$. Thus, $[i_2, r_1, \dots, r_{k}, i_1]$ and
$[i_1, s_1, \dots, s_{\ell}, i_2]$ are simple paths in $\Delta$ with no edges in common. It follows from Theorem \ref{theo02a} that the
closed path $[i_1, s_1, \dots, s_{\ell}, i_2,  r_1, \dots, r_{k}, i_1]$ is an Eulerian subgraph of $\Delta$. Writing it as a disjoint union
of cycles and using the remaining cycles of the $\Delta'$ decomposition, we again obtain  $\s(\Delta) \ \geq \ \s(\Delta') - 1$
 and so again we obtain equality in (\ref{eq15x}) and (\ref{eq13x}) follows as before.

Now suppose that  $(i_1,i_2), (i_2,i_3)$ and $(i_3,i_1)$ are each contained in separate cycles of the $\Delta'$ decomposition.
We can write these cycles
$$\langle i_2, r_1, \dots, r_{k}, i_1 \rangle, \ \langle i_1, s_1, \dots, s_{\ell}, i_3 \rangle, \
\langle i_3, t_1, \dots, t_{p}, i_2 \rangle.$$
 So $i_1, i_2$ and $i_3$ are vertices of $\Delta$. Again we concatenate simple paths with no edges in common
to obtain the closed path
$$ [i_2, r_1, \dots, r_{k}, i_1, s_1, \dots, s_{\ell}, i_3, t_1, \dots, t_{p}, i_2].$$
 This time we obtain
$\s(\Delta) \ \geq \ \s(\Delta') - 2$. Thus, $ \s(\Delta) + 1 \leq \s(\Delta') \leq \s(\Delta) + 2.$  If $ \s(\Delta) + 1 = \s(\Delta')$
then (\ref{eq16x}) holds and (\ref{eq13x}) follows. If, instead $\s(\Delta') = \s(\Delta) + 2$ then we obtain
\begin{equation}\label{eq17x}
\b(\Delta') =   |\Delta'| - 2 \s(\Delta') =  |\Delta| + 3 - 2 (\s(\Delta) + 2) = \b(\Delta) - 1,
\end{equation}
which still implies (\ref{eq13x}).  This case does occur if the three vertices lie on a cycle in the maximum decomposition of $\Delta$. The length of
such a cycle would have to be at least 6.
\vspace{.5cm}

 {\bfseries Case 2}: The $3$-cycle is contained in $\Delta$ and so its reverse is disjoint from $\Delta'$.

 This follows from Case 1 by reversing the roles of $\Gamma$ and $\Gamma'$.
 \vspace{.5cm}

  {\bfseries Case 3}: A single edge of the $3$-cycle is contained in $\Delta$ and so the reverse of the other two edges is
  contained in $\Delta'$.

This time $|\Delta'| = |\Delta| + 1$.

We may suppose that the edge $(i_2,i_1) \in \Delta$. If $\langle i_1,r_1,\dots,r_k, i_2 \rangle$ is
a cycle of a maximum decomposition for $\Delta$ then $\langle i_1,r_1,\dots,r_k, i_2, i_3 \rangle$ is a cycle in $\Delta'$ and together
with the remaining cycles from the $\Delta$ decomposition yields a decomposition of $\Delta'$. Hence, $\s(\Delta') \geq \s(\Delta)$.

 Now take a maximum decomposition for $\Delta'$. If the edges $(i_2,i_3)$ and $(i_3,i_1)$ are contained in a single cycle,
 $\langle i_1,r_1,\dots, r_k, i_2, i_3 \rangle$, then reversing the above procedure we obtain a decomposition for $\Delta$
 containing $\langle i_1,r_1,\dots, r_k, i_2 \rangle$. Hence, $\s(\Delta) = \s(\Delta')$, and we get $\b(\Delta') = \b(\Delta) + 1$.

 Suppose instead that the edges $(i_2,i_3)$ and $(i_3,i_1)$ are contained separate cycles
 $\langle i_1,r_1,\dots,r_k, i_3 \rangle$ and $\langle i_3,s_1,\dots,s_{\ell}, i_2 \rangle$  of a $\Delta'$ maximum decomposition. It
 follows that
 $$[i_1,r_1,\dots,r_k, i_3,s_1,\dots,s_{\ell}, i_2, i_1]$$ is a closed path in $\Delta$ with distinct edges and so by Theorem \ref{theo02a} again
 it is a union of cycles. Adjoining the remaining $\Delta'$ cycles we see that $\s(\Delta) \geq \s(\Delta') - 1.$ That is,
 $\s(\Delta) \leq \s(\Delta') \leq \s(\Delta) + 1$.  If $\s(\Delta) =\s(\Delta')$ then, as before, $\b(\Delta') = \b(\Delta) + 1$.
 If $\s(\Delta') = \s(\Delta) + 1$, then $\b(\Delta') = \b(\Delta) - 1$.
  \vspace{.5cm}

  {\bfseries Case 4}: Two edges of the $3$-cycle are contained in $\Delta$ and so the reverse of the other edge is
  contained in $\Delta'$.

 This follows from Case 3 by reversing the roles of $\Gamma$ and $\Gamma'$.
 \vspace{.5cm}

(b) $\Delta$ is nonempty since $\Pi \not= \Gamma$.  Among the maximum decompositions for $\Delta$ we choose one
such that the shortest cycle has the smallest length possible. We label this shortest cycle $\langle i_1, i_2, \dots, i_k  \rangle$. Thus,
$k$ is assumed to be the minimum length of any cycle in $\Delta$ which occurs in a maximum decomposition.
 \vspace{.5cm}

\textbf{Claim:} No $(i_p, i_q)$ with $q \not= p \pm 1$ mod $k$ lies in $\Delta$.

\begin{proof} If it did, then by relabeling we can assume that
$p = 1$ and $2 < q < k$. In particular, $k \geq 4$. Then the cycle in the decomposition which contains $(i_1,i_q)$ is of the
form $\langle i_q, r_1,\dots, r_{\ell}, i_1 \rangle$ with $\ell \geq k - 2 \geq 2$. The union of $\langle i_1, i_2, \dots, i_k  \rangle$
and $\langle i_q, r_1,\dots, r_{\ell}, i_1 \rangle$ is the same
as the union of the cycles $\langle  r_1,\dots, r_{\ell}, i_1, i_2, \dots, i_q \rangle$ and $\langle i_1, i_q, i_{q+1},\dots, i_k \rangle$.
Replacing the initial two
cycles by these two we obtain a decomposition of $\Delta$ with cardinality $\s(\Delta)$ and so is a maximum decomposition.  Furthermore, it
 contains a cycle of length less than $k$, contradicting the minimality of $k$.
\end{proof} \vspace{.5cm}

{\bfseries Case 1}[$k = 3$]

Reverse this $3$-cycle to obtain $\Gamma'$. We see that $|\Delta'| = |\Delta| - 3$. The remaining cycles provide a decomposition of
$\Delta'$. Hence, $\s(\Delta') \geq \s(\Delta) - 1$. It follows that $\b(\Delta') \leq \b(\Delta) - 1$. From (\ref{eq13x}) equality holds.
 \vspace{.5cm}

{\bfseries Case 2}: [$k \geq 4$ and $(i_3,i_1) \in \Gamma$].

From the Claim $(i_3,i_1) \in \Gamma \setminus \Delta$.

Reverse the $3$-cycle $\langle i_1, i_2, i_3 \rangle$. This time $|\Delta'| = |\Delta| - 1$. Now $\langle i_3, \dots, i_k, i_1 \rangle$
is a cycle of $\Delta'$ which, together with the remaining $\Delta$ cycles, provides a decomposition for $\Delta'$. Thus, $\s(\Delta') \geq \s(\Delta)$
and so $\b(\Delta') \leq \b(\Delta) - 1$. Again, from (\ref{eq13x}) equality holds.

In general, by relabeling, Case 2 applies whenever for some $p = 1, \dots, k$, $(i_{p+2},i_p) \in \Gamma$ with $p+2$ reduced mod $k$.

In particular, if $k = 4$ and $(i_1, i_3) \in \Gamma$ then the cycle is $\langle i_3, i_4, i_1, i_2 \rangle$ with $(i_1, i_3) \in \Gamma$. Thus,
Case 2 applies when $k = 4$.
 \vspace{.5cm}

{\bfseries Case 3}: [$k \geq 5$, and
 $(i_p,i_{p+2}) \in \Gamma$ for $p = 1, \dots, k$ (reducing the indices mod $k$)].

Observe that, with respect to $\Gamma$, $i_5$ is an output for $i_3$ and $i_1$ is an input for $i_3$. So there will exist
$q$ with $5 \leq q \leq k$ such that $i_q$ is an output for $i_3$ and $i_{q+1}$ is an input for $i_3$. By the above remarks
$(i_3,i_q), (i_{q+1},i_3) \in \Gamma \setminus \Delta$.

Reverse the $3$-cycle $\langle i_3, i_q, i_{q+1} \rangle$. $|\Delta'| = |\Delta| + 1$. On the other hand, $\langle i_3, i_4, \dots, i_q \rangle$ and
$\langle i_{q+1},\dots,i_k, i_1, i_2, i_3 \rangle$ are disjoint cycles of $\Delta'$ which, together with the remaining $\Delta$ cycles, form
a decomposition of $\Delta'$. Hence, $\s(\Delta') \geq \s(\Delta) + 1$. Thus, $\b(\Delta') \leq \b(\Delta) - 1$. Again equality holds.
 \vspace{.5cm}

(c) Since $C$ is contained in $\Delta(\Gamma,\Pi)$ it follows that $\Delta(\Gamma',\Pi) = \Delta(\Gamma,\Pi) \setminus C$.
Hence, $|\Delta(\Gamma',\Pi)| = |\Delta(\Gamma,\Pi)| - 3$. On the other hand, the span $\s(\Delta(\Gamma',\Pi)) \leq \s(\Delta(\Gamma,\Pi)) - 2$.
For if we had a decomposition of size at least $\s(\Delta(\Gamma,\Pi)) - 1$ then adjoining $C$ we would obtain a decomposition
of $\Gamma$, containing $C$ and of size at least $\s(\Delta(\Gamma,\Pi))$ which would have to be a maximum decomposition, contrary to hypothesis.
It thus follows that $\b(\Delta(\Gamma',\Pi)) \geq \b(\Delta(\Gamma,\Pi)) + 1$ and so equality holds by part (a).

\end{proof} \vspace{.5cm}

\textbf{Remark:} For a game $\Pi$ it follows from the Claim in (b) that the shortest cycle which occurs in a maximum decomposition for
$\Pi = \Delta(\Pi,\Pi^{-1})$
has length 3.
On the other hand, it can happen that an Eulerian digraph $\Delta$ contains a unique $3$-cycle and it does not occur in a maximum decomposition.
For example, let
\begin{equation}\label{eq16xxy}
\begin{split}
\Delta \ = \ \langle 0, 1, \dots, 8 \rangle \cup \langle 6, 3, 0 \rangle \ = \\
\langle 0, 1, 2, 3 \rangle \cup \langle 3, 4, 5, 6 \rangle \cup \langle 6, 7, 8, 0  \rangle.
\end{split}
\end{equation}

From Theorem \ref{theo28x} we obtain

\begin{cor}\label{cor29x} If $\Pi$ and $\Gamma$ are two tournaments on $I$ with the same scores,
then the minimum number of $3$-cycle reversal steps needed to get from $\Gamma$ to $\Pi$ is $\b(\Delta(\Gamma,\Pi))$.\end{cor}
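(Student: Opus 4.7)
The plan is to deduce the corollary directly from Theorem \ref{theo28x}, noting first that the minimum is well-defined because Theorem \ref{theo26} already guarantees that some sequence of 3-cycle reversals takes $\Gamma$ to $\Pi$.

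For the lower bound, I would argue as follows. Let $\Gamma = \Gamma_0, \Gamma_1, \dots, \Gamma_k = \Pi$ be any sequence of tournaments with each $\Gamma_{p+1}$ obtained from $\Gamma_p$ by reversing a 3-cycle. Since reversing a 3-cycle (an Eulerian subgraph) preserves scores by Proposition \ref{prop25}, each $\Gamma_p$ has the same scores as $\Gamma$, hence as $\Pi$, so Theorem \ref{theo28x}(a) applies at every step and gives $|\b(\Delta(\Gamma_{p+1},\Pi)) - \b(\Delta(\Gamma_p,\Pi))| = 1$. Therefore the integer-valued sequence $p \mapsto \b(\Delta(\Gamma_p,\Pi))$ changes by $\pm 1$ at each step, begins at $\b(\Delta(\Gamma,\Pi))$, and ends at $\b(\Delta(\Pi,\Pi)) = \b(\emptyset) = 0$. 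This forces $k \geq \b(\Delta(\Gamma,\Pi))$.

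For the upper bound I would exhibit an optimal sequence by greedy descent. Set $\Gamma_0 = \Gamma$; so long as $\Gamma_p \not= \Pi$, Theorem \ref{theo28x}(b) supplies a 3-cycle of $\Gamma_p$ whose reversal produces $\Gamma_{p+1}$ with $\b(\Delta(\Gamma_{p+1},\Pi)) = \b(\Delta(\Gamma_p,\Pi)) - 1$. The balance strictly decreases, so after exactly $\b(\Delta(\Gamma,\Pi))$ steps it reaches $0$, at which point $\Delta(\Gamma_k,\Pi) = \emptyset$ and therefore $\Gamma_k = \Pi$.

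The only non-mechanical point is the equivalence $\b(\Delta) = 0$ if and only if $\Delta = \emptyset$ for an Eulerian $\Delta$. This follows from identity (\ref{eq12xa}): for any maximum decomposition $\{C_1, \dots, C_p\}$ one has $\b(\Delta) = \sum_{r} (|C_r| - 2)$, each summand is at least $1$ since every cycle in a digraph has length at least $3$, and so the sum vanishes only when the decomposition is empty. With this observation in hand the deduction is routine; Theorem \ref{theo28x} does all the work, so no substantial obstacle remains.
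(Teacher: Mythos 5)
Your argument is correct and is exactly the deduction the paper intends: the corollary is stated with no written proof (just a box), as an immediate consequence of Theorem \ref{theo28x}, and your lower bound via part (a), greedy descent via part (b), and the observation that $\b(\Delta)=0$ forces $\Delta=\emptyset$ are precisely the steps needed to make that deduction explicit.
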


$\Box$ \vspace{.5cm}

We also have

\begin{cor}\label{cor29y} If $\Pi$ and $\Gamma$ are two tournaments on $I$ with the same scores,
and $\Pi$ is obtained from $\Gamma$ be reversing $k$ $3$-cycles, then $|\Delta(\Gamma,\Pi)|$ and $\b(\Delta(\Gamma,\Pi))$ are both congruent to
$k$ modulo $2$.\end{cor}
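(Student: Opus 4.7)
The plan is to prove both congruences simultaneously by induction on $k$, tracking how the two quantities $|\Delta(\cdot,\Pi)|$ and $\b(\Delta(\cdot,\Pi))$ change under a single 3-cycle reversal. Write the sequence of intermediate tournaments as $\Gamma = \Gamma_0, \Gamma_1, \dots, \Gamma_k = \Pi$, where $\Gamma_{p+1}$ is obtained from $\Gamma_p$ by reversing some 3-cycle. The base case $p = k$ is trivial: $\Delta(\Pi,\Pi) = \emptyset$, so $|\Delta(\Pi,\Pi)| = 0 = \b(\Delta(\Pi,\Pi))$. Thus it suffices to show that each single reversal flips the parity of \emph{both} $|\Delta(\cdot,\Pi)|$ and $\b(\Delta(\cdot,\Pi))$.

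For the balance invariant, the parity flip is immediate from Theorem \ref{theo28x}(a), which asserts exactly that $|\b(\Delta(\Gamma',\Pi)) - \b(\Delta(\Gamma,\Pi))| = 1$ whenever $\Gamma'$ is obtained from $\Gamma$ by reversing a single 3-cycle.

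For $|\Delta(\cdot,\Pi)|$, I would argue directly from the definition. Suppose $\Gamma'$ is obtained from $\Gamma$ by reversing the 3-cycle $C$, so that $\Gamma'$ and $\Gamma$ differ exactly on the three edges of $C$. For any edge $\{a,b\}$, since $\Pi$ is a tournament, exactly one of $(a,b)$ and $(b,a)$ lies in $\Pi$. Hence the oriented edge of $C$ between $a$ and $b$ belongs to $\Delta(\Gamma,\Pi)$ if and only if its reverse belongs to $\Delta(\Gamma',\Pi)$, and these two conditions are mutually exclusive. Therefore each of the three edges of $C$ contributes to exactly one of $\Delta(\Gamma,\Pi)$ and $\Delta(\Gamma',\Pi)$, while edges outside $C$ contribute identically to both. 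If $m$ of the three edges of $C$ lie in $\Delta(\Gamma,\Pi)$, then
\begin{equation*}
|\Delta(\Gamma',\Pi)| - |\Delta(\Gamma,\Pi)| \ = \ (3-m) - m \ = \ 3 - 2m,
\end{equation*}
which is odd. This recovers, and sharpens, the case-by-case arithmetic $\pm 3, \pm 1$ visible in the proof of Theorem \ref{theo28x}(a).

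Combining the two, after each of the $k$ reversals both $|\Delta(\cdot,\Pi)|$ and $\b(\Delta(\cdot,\Pi))$ flip parity, so running the sequence backwards from $\Gamma_k = \Pi$ to $\Gamma_0 = \Gamma$ shows that $|\Delta(\Gamma,\Pi)| \equiv k \pmod 2$ and $\b(\Delta(\Gamma,\Pi)) \equiv k \pmod 2$. There is no real obstacle here; the only thing needing care is the short observation that a tournament's edge orientation is forced, which is what makes each $C$-edge toggle its membership in $\Delta(\cdot,\Pi)$ under reversal and thereby forces the odd change in cardinality.
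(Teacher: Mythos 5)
Your proof is correct and follows essentially the same route as the paper's: the paper likewise observes that reversing a 3-cycle changes $|\Delta(\Gamma,\Pi)|$ by $\pm 1$ or $\pm 3$ (your $3-2m$) and that $|\Delta|$ and $\b(\Delta)$ always agree mod $2$ since $\b = |\Delta| - 2\s$, while also citing Theorem \ref{theo28x} as an alternative for the $\b$ parity, exactly as you do. One wording slip: the edge $(a,b)$ of $C$ lies in $\Delta(\Gamma,\Pi)$ if and only if its reverse does \emph{not} lie in $\Delta(\Gamma',\Pi)$ (exactly one of the two holds), not ``if and only if'' as you wrote; your subsequent count $(3-m)-m$ uses the correct version.
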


\begin{proof} Obviously, $|\Delta(\Gamma,\Pi)|$ is congruent to $\b(\Delta(\Gamma,\Pi))$ mod 2 and it is easy to check directly that reversing
a $3$-cycle changes $|\Delta(\Gamma,\Pi)|$ by adding or subtracting either 1 or 3. Alternatively, the congruence result follows from (\ref{eq13x}).

\end{proof} \vspace{.5cm}

A tournament $\Gamma$ has a decomposition into cycles if and only if it is Eulerian and so is a game.
If the decomposition consists entirely of $3$-cycles then
the decomposition is clearly maximum.

If $I$ is a set of size $p$, then a set of \emph{Steiner triples}\index{Steiner triples} for $I$ is a set of
three element subsets, called triples, such that each pair of elements is contained in exactly one triple.
That is, the $p(p-1)/2$ pairs are partitioned into $p(p-1)/6$ triples. For each triple $\{i,j,k \}$ there are two
possible orientations defining $3$-cycles $\langle i, j, k \rangle$ or $\langle k, j, i \rangle$. Choosing the
orientations arbitrarily leads to a game and so we obtain $2^{p(p-1)/6}$ games on
$I$. Since these are games, the size $p$ must be odd with $p(p-1)$ divisible by 6. Thus, $p$ must be
congruent to 1 or 3 modulo 6. Equivalently,
if $p = 2n + 1$, then $n$ is congruent to 0 or 1 mod 3 and the $n(2n +1)$ edges are partitioned into $n(2n+1)/3$ triples.
The question of which sizes $p$ admit sets of such triples was raised in 1833 and was solved in
1847 by Rev. T. P. Kirkman \cite{K} who showed that these congruence conditions are sufficient as well as necessary.
Explicit constructions are given in \cite{B},\cite{Sk} and \cite{Sp}. The latter shows that if $n$ is congruent to
2 modulo 3, then there exist games of size $2n + 1$ with a decomposition consisting of  $[n(2n+1)- 4]/3$ $3$-cycles and one 4-cycle.

We will call a game a \emph{Steiner game}\index{Steiner game}\index{game!Steiner} when it admits a decomposition consisting of $3$-cycles.

\vspace{1cm}

\section{Interchange Graphs}\label{secinterchange}

Recall that an undirected graph on a finite set $X$ is represented by a symmetric relation $R$ on $X$,
disjoint from the diagonal.  That is, $R = R^{-1}$ and
$R \cap 1_X = \emptyset$. There is an edge between $x$ and $y$ when $(x,y), (y,x) \in R$. The distance $d(x,y)$
between vertices $x, y$ of the graph is the smallest
$n$ such that there is a path $[x = x_0 , \dots, x_n = y]$.  We regard $x = x_0 = x$ as a path of length zero so that $d(x,x) = 0$.
A path $[x = x_0 , \dots, x_n = y]$ which achieves this minimum length is called a \emph{geodesic}\index{geodesic}
connecting $x$ and $y$.  It then follows
for $0 \leq i \leq j \leq n$ that $[x_i, \dots, x_j]$ is a geodesic connecting $x_i$ and $x_j$ and the distance $d(x_i,x_j) = j - i$.
We define the distance  $d(x,y)$ to be infinite if no such path exists. Thus, the
graph is connected when every distance is finite. If $d(x,y)$ is finite,  then $z \in X$ lies on a geodesic between $x$ and $y$ if and only if
$d(x,y) = d(x,z) + d(z,y)$.

Given a set $I$ with $|I| = p$ and a score vector $s$, Brualdi and Li \cite{BL} define the undirected
\emph{interchange graph}\index{interchange graph} $R$ on the set of tournaments on $I$ with
score vector $s$. The tournaments $\Gamma$ and $\Pi$ are connected by an
edge in the graph when each can be obtained from the other by reversing a single $3$-cycle.  That is, when $\Delta(\Gamma,\Pi) = \Delta(\Pi,\Gamma)^{-1}$
is a $3$-cycle. By Theorem  \ref{theo26} the graph is connected. Since the number of $3$-cycles in a tournament depends only on the score vector, see
(\ref{eqnewx}), the interchange graph is regular with $|R(\Gamma)|$ the number of $3$-cycles in $\Gamma$.

By Corollary \ref{cor29y}, the following conditions are equivalent for tournaments $\Gamma$ and $\Pi$ with the same scores.
\begin{itemize}
\item There exists a path of even length from $\Gamma$ to $\Pi$
\item Every path from $\Gamma$ to $\Pi$ has even length.
\item The distance $d(\Gamma,\Pi)$ is even.
\item $|\Delta(\Gamma,\Pi)|$ is even.
\item $\b(\Delta(\Gamma,\Pi))$ is even.
\end{itemize}

We then say that $\Gamma$ and $\Pi$ have the \emph{same parity}\index{parity}. Clearly, an edge in the graph always connects tournaments of opposite parity.
Using the two parity classes, we see , as was observed by Brualdi and Li, that the graph is  a \emph{bipartite graph}\index{bipartite graph}.

The score vector partitions $I$ into score value subsets. The group of score-preserving permutations is just the product of the permutation groups
of each score value subset. In particular, every score-preserving permutation is a product of disjoint score-preserving cycles and of score-preserving
transpositions. If $\r$ is a score-preserving permutation then $\bar{\r}(\Gamma)$ is another tournament with the same scores.
Recall that $\bar{\r} = \r \times \r$ on $I \times I$.  Thus, the
score-preserving permutations act on the interchange graph.

\begin{theo}\label{theoint00} If $\r$ is a permutation preserving the score vector of a tournament $\Gamma$ then the tournaments
$\Gamma$ and $\bar{\r}(\Gamma)$ have the same parity if and only if $\r$ is an even permutation,
i.e. a product of an even number of transpositions. \end{theo}

\begin{proof} It suffices to show that if $\r$ is a score-preserving transposition then $\Gamma$ and $\bar{\Gamma}$ have opposite parity.

Suppose that the transposition interchanges vertices $i$ and $j$ with $i \to j$ in $\Gamma$. Let $\Delta = \Delta(\Gamma,\bar{\r}(\Gamma))$.
The edge $(i,j) \in \Delta$ and if $r$ is a another vertex which is an input for one of $i$ and $j$ and an output for the other,
the the edges between $r$ and both $i$ and $j$ lie in $\Delta$ and these are the only edges in $\Delta$. Thus, $\Delta$ is the union of the
$3$-cycles  $\langle i, j, s \rangle $ and the 3-orders with  $i \to r \to j$. The inputs to $j$ in $\Delta$ are $i$ and the 3-order vertices $r$.
The outputs of $j$ in $\Delta$ are the cycle vertices $s$. Because $\Delta$ is Eulerian,
it follows that if $(i,j)$ is contained in $k+1$ $3$-cycles then there are $k$ 3-order
vertices $r$. Each decomposition for $\Delta$ consists of one $3$-cycle and $k$ 4-cycles of the form $\langle j, s, i, r \rangle$.
Hence, $\s(\Delta) = k +1$, $|\Delta| = 4k + 3$ and
$\b(\Delta) = 2k + 1$.  Since this is odd, $\Gamma$ and $\bar{\r}(\Gamma)$ have opposite parity.

\end{proof}


It is easy to check that reversing a $3$-cycle takes a strong tournament to a strong tournament.  In fact, whether the tournaments associated with
$s$ are strong or not is detectable by inequalities on the terms of $s$, see, e.g. Theorem 11.13 of \cite{HNC} or Theorem 9 of \cite{HM}.

Chen, Chang and Wang \cite{CCW} studied the interchange graph of the tournaments on $\{0, \dots, p-1 \}$ with score vector
$s = (1,1,2,3,\dots, p-3, p-2, p-2)$. From (\ref{eqnewx}) it follows that each game contains exactly $p - 2$ $3$-cycles. They proved that the
interchange graph is a \emph{hypercube} of dimension $p - 2$.

A hypercube of dimension $\ell$ is a graph on the set $\{ 0, 1 \}^{\ell}$ with $\ell$-tuples $x$ and $y$ connected by an edge if they differ in exactly one
place. If $x$ and $y$ differ in exactly $k$ places then the distance $d(x,y) = k$ and so the diameter of the hypercube is $\ell$.
There are exactly $k!$ geodesics connecting $x$ to $y$ when $d(x,y) = k$, each obtained
by choosing an ordering of the places on which the switches are successively made. In general, this is a lower bound for the number of geodesics.

\begin{theo}\label{theoint01} If $\Gamma$ and $\Pi$ are two tournaments with the same scores and $d(\Gamma, \Pi) = k$
then there are at least $k!$ geodesics
connecting $\Gamma$ and $\Pi$. \end{theo}

\begin{proof} What we must show is that there are $k$ distinct tournaments $\Gamma'$ such that $d(\Gamma, \Gamma') = 1$ and $d(\Gamma',\Pi) = k-1$.
For then, inductively, there are $(k - 1)!$ geodesics from each $\Gamma'$ to  $\Pi$ and each of these extends via the edge $(\Gamma, \Gamma')$ to a
geodesic from $\Gamma$ to $\Pi$ for a total of $k!$ geodesics of the latter type.

First we consider the case when $\Delta(\Gamma, \Pi)$ consists of a single cycle $C$ of length $p$ which we will label
$\langle 0, \dots, p-1 \rangle$ and use the integers mod $p$ as the labels. Then the distance $k$ is $p - 2$.  We must find
at least $p - 2$ distinct $3$-cycles in $\Gamma$ such that the reversal of each of which leads to a tournament $\Gamma'$ with $d(\Gamma',\Pi) = k-1$.
This is trivial if
$p = 3$ and so we assume $p \geq 4$.

For each vertex $i$ of $C$ we associate a $+$ if $i-1 \to i+1$ in $\Gamma$ and a $-$ if $i+1 \to i-1$ in $\Gamma$.
If $i$ has a $-$ then $\langle i-1, i, i+1 \rangle$ is a $3$-cycle in $\Gamma$ and reversing it leads to $\Gamma'$  with
 $\Delta(\Gamma', \Pi)$ the $p - 1$ cycle $\langle 0, \dots, i-1, i+1, \dots, p-1 \rangle$, omitting $i$. We will call this
 the \emph{near cycle}\index{near cycle}\index{cycle!near} associated with the $-$ at $i$.

 If $p = 4$ then two vertices have $-$ labels and so there are $p - 2 = 2$ near cycles. So we may assume $p \geq 5$.

For a vertex $i$ a \emph{far cycle}\index{far cycle}\index{cycle!far}  is $\langle i, j, j+1 \rangle$ in $\Gamma$ where
$i \to j$ and $j+1 \to i$ with $j \not= i+1$ and $j+1 \not= i-1$.
So this far cycle intersects $C$ only at the edge $(j,j+1)$.

As in the proof of Theorem  \ref{theo28x} (b), reversing a far cycle leads to $\Gamma'$ with $\Delta(\Gamma', \Pi)$ the union of two
 cycles with only the vertex $i$ in common and with $d(\Gamma',\Pi) = p-3$.

 We show that there are at least $p-2$ distinct $3$-cycles which are either near or far. We will call the near or far
 cycles the \emph{special cycles}\index{special cycle}\index{cycle!special}
 for $\langle 0, \dots, p-1 \rangle$ in the tournament which is the restriction  $\Gamma_C = \Gamma|\{ 0, \dots, p-1 \}$.

 For a vertex $i$ we will call the $\pm$ labels for the vertices $i-1, i, i+1$ the \emph{pattern}\index{pattern} for $i$.

 \begin{itemize}
 \item[(i) ]  If the pattern for $i$ is $+, \pm, +$, then $i+2$ is an output for $i$ and $i-2$ is an input for $i$ and so there is
 some $j$ between them with $j$ an output for $i$ and $j+1$ an input, leading to a far cycle for $i$.
 \vspace{.25cm}

 \item[(ii) ] If the pattern for $i$ is $+, \pm, -$, then $i+2$ and $i-2$ are both inputs for $i$. So there is a far cycle for $i$
 unless every $j \not= i-1,i,i+1$ is an input for $i$. In that case the $\Gamma_C$ score of $i$ is $1$.
  \vspace{.25cm}

 \item[(iii) ] If the pattern for $i$ is $-, \pm, +$ then $i+2$ and $i-2$ are both outputs for $i$. So there is a far cycle for $i$
 unless every $j \not= i-1,i,i+1$ is an output for $i$. In that case the $\Gamma_C$ score of $i$ is $p-1$.
  \vspace{.25cm}

 \item[(iv) ]  If the pattern for $i$ is $-, \pm, -$ then it may happen that there is no far cycle for $i$.
 \end{itemize} \vspace{.25cm}

 If every vertex has a $-$ then there are $p$ near cycles. If every vertex has a $+$ then from type (i) we see that every vertex has at least one
 far cycle and so there are at least $p$ far cycles.

 Now we assume that both $+$ and $-$ labels appear.

 For our preliminary estimate we neglect the possibility of scores $1$ or $p-1$ and assume that the patterns of types (i), (ii) and (iii) each
 lead to a far cycle and that type (iv) never does. Thus, every $-$ adjacent to a $+$ leads to both a near and a far cycle and every $+$ adjacent
 to a $+$ leads to a far cycle.

 For a run of $-$'s, each $-$ leads to a near cycle and at each end there is also a far cycle. So the count of cycles
 is the length of the run plus 1 and plus 1 more if the length of the run is greater than 1.

For a run of $+$'s, each $+$ leads to a far cycle unless the length of the run is 1. Thus, the count of the cycles is the length of the
run minus 1 and plus 1 if length of the run is greater than 1.

Notice that because we are on a cycle, the number of $+$ runs is equal to the number of $-$ runs.

Adding these up we obtain as our preliminary estimate the sum of the lengths of the runs, which is $p$, plus the number of runs of either sort which
are longer than 1.

Now we must correct for the scores $1$ and $p-1$.

Assume first that there is at most one vertex with score $1$ and at most one with score $p-1$. For each of these vertices we assumed there was a far
cycle where there need not be one.  Thus, we correct our preliminary estimate by subtracting $2$.  Thus, in this case there are at least $p - 2$
special cycles.

Now suppose the vertex $i$ has score $1$. Every vertex other than $i-1, i, i+1$ is an input to $i$ and so has at least two outputs.
If $i$ is associated with $-$ then $i+1$ has two outputs and so the only possibility for another vertex with score $1$ is $i - 1$.
Since the pattern for $i$ is $+, -, -$, the pattern for $i -1$ must be $+, +, -$ if it has score 1.  If $i$ is associated with $+$ then
only $i+1$ can have score $1$ in which case it has pattern $+, -, -$.   Observe that in either case, there is a run of $+$'s and a run of $-$'s of
length greater than one.

Similarly, if the vertex $i$ has score $p-1$ then only $i-1$ or $i+1$ could have score $p-1$ and which possibility could occur
depends on whether $i$ is associated with $-$ or $+$. When there are two vertices with score $p-1$ then again
there is a run of $+$'s and a run of $-$'s of length greater than one.

So if either there is more than one score $1$ vertex or more than on score $p-1$ vertex or both, our preliminary estimate is
at least $p + 2$ for the two long runs. We subtract at most $4$ to correct for the $4$ far cycles from types (ii) and (iii).
Thus again we have at least $p - 2$ special cycles.

Finally, notice that if the restriction $\Gamma_C = \Gamma|\{ 0, \dots, p-1 \}$ happens to have score vector $(1,1,\dots,p-1,p-1)$ then there
are only $p-2$ cycles in the restriction and these are the $p-2$ special cycles.

Now we return to the general case and suppose that $C = C_1$ with $C_1, C_2, \dots, C_{\ell}$ a maximum decomposition of
$\Delta(\Gamma,\Pi)$. Define $\widehat{\Gamma}$ by reversing $C$ in $\Gamma$. Thus, $\Delta(\widehat{\Gamma},\Pi) = \Delta(\Gamma,\Pi) \setminus C$
and $C_2,\dots,C_{\ell}$ is a maximum decomposition for $\Delta(\widehat{\Gamma},\Pi)$. Hence, $d(\Gamma, \widehat{\Gamma}) = p-2$
and $d(\widehat{\Gamma},\Pi) = d(\Gamma,\Pi) - (p-2)$. If we reverse a special cycle of $C$ then we obtain $\Gamma'$ with
$d(\Gamma,\Gamma') = 1$ and $d(\Gamma',\widehat{\Gamma}) = p-3$. Hence, $d(\Gamma',\Pi) = d(\Gamma, \Pi) - 1$.

Since we can rearrange the $C_r$'s it follows that by reversing any special cycle in any of the $C_r$ leads to a $\Gamma'$ of the required sort.
Furthermore, for the cycle $C_r$ there are at least $|C_r| - 2$ special cycles and the sum $\sum_{r=1}^{\ell} \ |C_r| - 2 =
\b(\Delta(\Gamma,\Pi)) = d(\Gamma,\Pi)$ which is what we want.

There is, however, a final problem. A $3$-cycle may occur as a special cycle for two different $C_r$'s leading to double counting.

To cure this, we choose our maximum decomposition with care.

Recall from the Claim in the proof of Theorem  \ref{theo28x} (b) that
if we choose the decomposition so that $C = C_1$ is the cycle of shortest length which occurs
in any maximum decomposition, then for vertices $i,j$ of $C_1$ with $i \to j$ the edge $(i,j)$ does not occur in $\Delta(\Gamma,\Pi) \setminus C_1$.
That is, the tournament $\Gamma_C$ is disjoint from $\Delta(\Gamma,\Pi) \setminus C_1$ and so $C_1$ and $\Delta(\Gamma,\Pi) \setminus C_1$
have at most a single vertex in common. In particular, no special cycle of
$C_1$ intersects $\Delta(\Gamma,\Pi) \setminus C_1$.

Inductively, we choose the decomposition
so that $C_r$ is the shortest cycle which occurs in any maximum decomposition of $\Delta(\Gamma,\Pi) \setminus (C_1 \cup \dots \cup C_{r-1})$. We thus
obtain a maximum decomposition such that no special cycle of $C_r$ intersects any $C_s$ for $s > r$. Thus, for this decomposition
the $d(\Gamma,\Pi)$ special cycles are all distinct.

\end{proof}  \vspace{.5cm}

From the above proof we see that for the cycle $C = \langle  0, \dots, p-1 \rangle$ if the tournament $\Gamma|\{ 0, \dots, p-1 \}$
has no vertices of score $1$ or $p-1$ then there are at least $p$ special cycles. In particular, this applies if $p = 2k + 1$ and
$\Gamma|\{ 0, \dots, 2k \}$ is a game.

\begin{ex}\label{exint02} With $k \geq 3$, let $A$ be a game subset of $\Z_{2k+1}$. Assume that
$\Gamma|\{ 0, \dots, 2k \}$ is the group game $\Gamma[A]$ and assume that
$1 \in A$ so that the cycle $C = \langle  0, \dots, 2k \rangle$ is contained in $\Gamma[A]$. Since $\Z_{2k+1}$ acts transitively on
$\Gamma[A]$ and preserves the cycle $C$, every vertex has the same number of near and far cycles. In particular, every vertex
is associated with $+$ if and only if $2 \in A$.\end{ex}

{\bfseries Example (a)} [$ A = Odd_k = \{ 1,3,\dots, 2k-1 \}$] The vertex $0$ is associated with $-$ and so has a near cycle. The far cycles
associated with $0$ are $\langle 0, j, j+1 \rangle$ for $j$ an odd number with $3 \leq j \leq 2k-3$.  That is, there are $k - 2$ far
cycles for a total of $k - 1$ cycles for each vertex. Thus, the number of special cycles is $(2k + 1)(k - 1)$.

{\bfseries Example (b)} [$A = \{1, 2, 4, \dots, 2k-2\}$] The vertex $0$ is associated with $+$ and so  has no near cycle. The far cycles
associated with $0$ are $\langle 0, p, p+1 \rangle$ for $p$ an even number with $2 \leq p \leq 2k-2$. That is, there are
$k - 1$ far cycles and so the number of special cycles is again $(2k + 1)(k - 1)$.

{\bfseries Example (c)} [$A = [1, k]$] The vertex $0$ is associated with $+$ and so  has no near cycle. The unique far cycle
associated with $0$ is $\langle 0, k, k+1 \rangle$ and so the number of special cycles is $2k + 1$. Notice that the games in (a) and (c) are
isomorphic via the multiplication map $m_k$.  However, $m_k$ maps the cycle $C$ to a different Hamiltonian cycle.

{\bfseries Example (d)} [$A = \{1, k+1, \dots, 2k-1 \}$] The vertex $0$ is associated with $-$ and so  has a near cycle. There are no far cycles
and so the number of special cycles is again $2k + 1$.

$\Box$  \vspace{.5cm}

\begin{theo}\label{theoint03} Let $C = \langle 0, \dots, p-1 \rangle$ be a cycle in a tournament $\Gamma$ with $p \geq 4$.
If $p = 2k + 1$, then the number of special cycles for $C$ is at most $(2k + 1)(k - 1)$. If $p = 2k$, then the
number of special cycles for $C$ is at most $k(2k - 3)$ and the inequality is strict unless $k \equiv 2$ mod $4$. \end{theo}

\begin{proof} We use the notation of the proof of Theorem \ref{theoint01}. In the following table we describe for each
pattern for a vertex $i$ the largest possible number of far cycles associated with a vertex. A far cycle is of the
form $\langle i, j, j+1 \rangle$ where $j \not= i-1,i, i+1$ and so $i \to j, j+1 \to i$.
\vspace{.5cm}

$\begin{array}{cccc}
\text{Vertex Pattern} &  \text{Far Cycle Max} & \text{if \ } p \ = \ 2k & \text{if \ } p \ = \ 2k + 1 \\ [0.5ex]
 + \ \pm \ +  &  [(p - 3)/2] & k - 2 & k - 1\\
 + \ \pm \ -  &  [(p - 4)/2] & k - 2 & k - 2\\
 - \ \pm \ +  &  [(p - 4)/2] & k - 2 & k - 2\\
 - \ \pm \ -  &  [(p - 5)/2] & k - 3 & k - 2
 \end{array}$
 \vspace{.5cm}

 It follows that if either every vertex is associated with a $+$ or every vertex is associated with a $-$,
 then the upper bound of the number of cycles is $2k(k - 2)$ when $p = 2k$ and is $(2k + 1)(k - 1)$ when $p = 2k +1$.
  Notice that this case cannot occur when $2k = 4$.

 Now assume that both $+$'s and $-$'s occur.

 Let $p = 2k + 1$. For a run of $+$'s every vertex may have $k - 1$ far cycles except at the ends and so the number of cycles
 is $(k - 1)$ times the length of the run minus $1$ and minus $1$ more if the length of the run is greater than 1.  For a run of $-$'s
 the bound on number of cycles (near and far) is $(k - 1)$ times the length of the run plus $1$ and minus $1$
 if the length of the run is greater than 1. The number of $+$ runs equals the number of $-$ runs and so
  the total upper bound on the number of cycles is $(2k + 1)(k - 1)$ minus the number of runs of length greater than one.
  Since $p$ is odd, there is at least
  one run of length greater than one.

 Let $p = 2k$. For a run of $+$'s every vertex may have $k - 2$ far cycles unless the run is a singleton in which case the maximum is
 $k - 3$. Thus, the bound is $(k - 2)$ times the length of the run minus $1$ and plus $1$ if the length of the run is greater than 1.
 For a run of $-$'s the bound on number of cycles (near and far) is $(k - 2)$ times the length of the run plus $1$ and plus $1$ more
 if the length of the run is greater than 1. Again the number of $+$ runs equals the number of $-$ runs and so
  the total upper bound on the number of cycles is $2k(k - 2)$ plus the number of runs of length greater than one. Since such a long run has
  at least two elements there are at most $k$ such runs.  Thus, the upper bound is $2k (k - 2) + k = k(2k - 3)$. Since the number of the
  two sorts of runs are equal, there are fewer than $k$ such runs if $k$ is odd. Thus, the bound is strict unless $k \equiv 0$ or $2$ mod $4$.
  With exactly $k$ long runs the pattern on the cycle consists of pairs of $+$'s alternating with pairs of $-$'s. Now suppose that $k \equiv 0$
  mod $4$. By relabeling, we may assume that associated with $0, 1, 2, 3$ are $-, - , +, +$. It follows that $0$ has pattern $+ - -$. Since
  $k$ is divisible by $4$ the vertex at $k$ will have the same pattern. In order to have $k - 2$ far cycles associated with vertex $0$,
   it must happen that the odd vertices (other than $2k - 1$) are outputs of $0$ and the even vertices are inputs of $0$.
   In particular, $k \to 0$. Similarly, in order to have $k - 2$ far cycles associated with vertex $k$,
   it must happen that the odd vertices (other than $k - 1$) are outputs of $k$ and the even vertices are inputs of $k$. In particular,
   $0 \to k$.  But for a tournament, it cannot happen that both $0 \to k$ and $k \to 0$. Hence, the inequality is strict when
   $k \equiv 0$ mod $4$.

 \end{proof}

\begin{ex}\label{exint04} The inequality is achieved when $k \equiv 2$ mod $4$. \end{ex}

Define the digraph on $\Z$ by
\begin{equation}\label{int01}
\begin{cases} \hspace{3cm} t \to t+1 \\
t \to t + 2i, \ t + (2i+1) \to t \quad  \text{for all} \ \ i \geq 1 \ \text{and} \ t \equiv 0, 1  \text{\ mod  }4 \\
t + 2i \to t, \ t \to t + (2i+1)\quad  \text{for all} \ \ i \geq 1 \ \text{and} \ t \equiv 2, 3  \text{\ mod  }4 \end{cases}
\end{equation}
If $k > 2$ and $k \equiv 2$ mod $4$, then this induces a tournament on $\Z_{2k}$ for which translation by $4$ is an automorphism.
Assume that for the cycle $\langle 0, \dots, p-1 \rangle$ with $p = 2k$ $\Gamma|\{ 0, \dots, p-1 \}$
 is this tournament. We need only examine the vertices $0, 1, 2, 3$. It is easy to see that that there are $2k (k - 2) + k = k(2k - 3)$ special cycles.

$\Box$  \vspace{.5cm}

Now we focus on games on $I$ with $|I| = 2n + 1$. In that case, the score vector is $(n, n, \dots, n)$ and the entire permutation group on $I$ acts
on the interchange graph. The orbit of a game $\Gamma$ under the action consists of those games which are isomorphic to $\Gamma$. Those permutations
which map $\Gamma$ to itself are precisely the automorphisms of $\Gamma$.
%

\begin{lem}\label{lemint05} Let $\Pi$ be a game of size $2n - 1$ on the set of vertices $J$ and $K \subset J$ with $|J| = 2n - 1$ and $|K| = n$.
If $\Gamma$ is the extension of $\Pi$ via $u \to v$ and $K$, so that $\Gamma$ is a game  on $I = J \cup \{ u,v \}$,
then
\begin{equation}\label{int02}
\b(\Gamma) \ \leq \ \b(\Pi) + 2n -1.
\end{equation}
\end{lem}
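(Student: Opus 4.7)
The plan is to convert the desired inequality into the equivalent statement about spans, namely $\s(\Gamma) \geq \s(\Pi) + n$, and then to construct an explicit decomposition of $\Gamma$ that witnesses this bound. A direct count gives
\[
|\Gamma| - |\Pi| \ = \ (2n+1)n - (2n-1)(n-1) \ = \ 4n - 1,
\]
so once $\s(\Gamma) \geq \s(\Pi) + n$ is established, we obtain
\[
\b(\Gamma) \ = \ |\Gamma| - 2\s(\Gamma) \ \leq \ |\Pi| + (4n-1) - 2\s(\Pi) - 2n \ = \ \b(\Pi) + 2n - 1,
\]
which is the claim.

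To build the decomposition, I would start with a maximum decomposition of $\Pi$ into $\s(\Pi)$ cycles; these use only edges inside $J$ and are disjoint from every edge incident to $u$ or $v$. So it suffices to partition the $4n-1$ ``new'' edges of $\Gamma \setminus \Pi$ into $n$ cycles. Recall the structure: $u \to v$; $i \to u$ and $v \to i$ for each $i \in K$ (giving $2n$ edges); and $u \to j$ and $j \to v$ for each $j \in J \setminus K$ (giving $2(n-1)$ edges). Pick any $i_0 \in K$ and form the $3$-cycle $\langle u, v, i_0 \rangle$, using the edges $u \to v$, $v \to i_0$, $i_0 \to u$.

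Since $|K \setminus \{i_0\}| = n-1 = |J \setminus K|$, choose any bijection between these two sets and, for each pair $(i,j)$ in this bijection with $i \in K \setminus \{i_0\}$ and $j \in J \setminus K$, form the $4$-cycle $\langle u, j, v, i \rangle$ using the edges $u \to j$, $j \to v$, $v \to i$, $i \to u$. These $n-1$ four-cycles are pairwise edge-disjoint and together exhaust precisely the $4(n-1)$ remaining new edges incident to $u$ or $v$. Adjoining the $3$-cycle gives $n$ new edge-disjoint cycles, which together with the chosen maximum decomposition of $\Pi$ form a decomposition of $\Gamma$ of size $\s(\Pi) + n$.

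There is essentially no obstacle here beyond verifying the edge bookkeeping for $\Gamma$; the construction is forced by the ``bipartite'' structure of the new edges between $\{u,v\}$ and the two halves $K$ and $J \setminus K$. The only mild subtlety is recognizing that one element of $K$ must be singled out to absorb the lone edge $u \to v$ into a $3$-cycle, which is exactly what accounts for the asymmetry $n$ vs.\ $n-1$ in the counts of $K$ and $J \setminus K$.
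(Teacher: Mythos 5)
Your proof is correct. The key identity $|\Gamma|-|\Pi| = 4n-1$ and the reduction to $\s(\Gamma) \geq \s(\Pi)+n$ are right, and the explicit cycles you exhibit — the $3$-cycle $\langle u,v,i_0\rangle$ and the $n-1$ four-cycles $\langle u,j,v,i\rangle$ over a bijection between $K\setminus\{i_0\}$ and $J\setminus K$ — are genuine disjoint cycles of $\Gamma$ exhausting $\Gamma\setminus\Pi$, and they are disjoint from any decomposition of $\Pi$ since the latter uses only edges inside $J\times J$. The paper reaches the same count $\s(\Pi)+n$ by a different decomposition: it reroutes, for each $r$, the $\Pi$-edge between $a_r\in K$ and $b_r\in J\setminus K$ through $u$ or $v$ (lengthening the $\Pi$-cycle containing it) and uses the freed edge to form a $3$-cycle $\langle a_r,b_r,v\rangle$ or $\langle b_r,a_r,u\rangle$, together with the $3$-cycle $\langle a_0,u,v\rangle$, so its $n$ new cycles are all $3$-cycles. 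Your version is cleaner in one respect: you never touch the cycles of the maximum decomposition of $\Pi$, whereas the paper's rerouting can send two edges of the same $\Pi$-cycle through the same new vertex, producing a closed edge-simple walk rather than a cycle, which then has to be re-decomposed (a point the paper passes over lightly; the cycle count only increases, so its bound survives). What the paper's construction buys is used only in a subsequent remark — that equality in the lemma forces the specific constructed decomposition to be maximum, and that it consists of lengthened $\Pi$-cycles plus $3$-cycles — which your decomposition would replace with a different but equally valid family. For the inequality itself the two arguments are interchangeable, and yours requires less bookkeeping.
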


\begin{proof} Let $K = \{ a_0,\dots,a_{n-1} \}, J \setminus K = \{b_1, \dots, b_{n-1} \}$.
Given a decomposition for $\Pi$ we build a decomposition for $\Gamma$.

For $r = 1, \dots, n-1$ if $a_r \to b_r$ in $\Pi$, then replace the edge $(a_r,b_r)$
 by $(a_r,u), (u,b_r)$ to get a cycle in $\Gamma$ with the length increased by $1$. In addition, define the
$3$-cycle $D_r = \langle a_r, b_r, v \rangle$. If, instead, $b_r \to a_r$ in $\Pi$, then replace
the edge $(b_r,a_r)$ by $(b_r,v), (v,a_r)$ to get a cycle in $\Gamma$ with the length increased by $1$. In addition, define the
$3$-cycle $D_r = \langle b_r, a_r, u \rangle$.  Finally, define the $3$-cycle $\langle a_0, u, v \rangle$.

Notice that a single cycle $C$ may contain edges $(a_r,b_r)$ or $(b_r,a_r)$ for more than one $r$. Thus, instead of two or more cycles each
extended in length by $1$ we have a single cycle with the length extended by two or more.

If the original decomposition for $\Pi$ was maximum with cardinality $k$ then  $\b(\Pi) = (n - 1)(2n - 1) - 2k$. The decomposition
we have constructed consists of the $k$ extended cycles and $n$ new $3$-cycles. Thus, the span of $\Gamma$ is at least $k + n$.
Hence, $\b(\Gamma) \leq n(2n + 1) - 2(k + n) = (n - 1)(2n - 1) - 2k + (2n - 1)$.

\end{proof}

{\bfseries Remark:} Equality holds in (\ref{int02}) if and only if the span of
$\Gamma$ is equal to $k + n$ and so if and only if the decomposition for $\Gamma$
constructed above from a maximum decomposition of $\Pi$ is maximum for $\Gamma$.
\vspace{.5cm}

\begin{theo}\label{theoint06} Let $A$ be the game subset $[1,n] \subset \Z_{2n+1}$. For the associated group game $\Gamma[A]$,
\begin{equation}\label{int03}
d(\Gamma[A],\Gamma[A]^{-1}) \ = \ \b(\Gamma[A]) \ = \ n^2.
\end{equation}
\end{theo}

\begin{proof} On the cycle $C = \langle 0, 1, \dots, 2n \rangle$ define the group of 1-chains on $C$ to be the free abelian
group generated by the edges of $C$ so that an element is a formal sum
$\xi = \sum_{r=0}^{2n-1} \ m_r (r, r+1)$ with $m_r \in \Z$. The group 0-chains is the free abelian group on the vertices and
the boundary map from the 1-chains to the 0-chains is given by $\partial (r, r+1) = 1(r+1) - 1(r)$. Clearly, the boundary of a 1-chain
is $0$ if and only if the chain is a constant multiple of $\sum_{r=0}^{2n-1} \ 1 (r, r+1)$.

Let $\Gamma$ be any tournament on $\Z_{2n+1}$. Each edge of $\Gamma$ can be written uniquely as $(t, t+s)$ with
$t = 0, \dots, 2n, s = 1, \dots, 2n$ and with addition mod $2n+1$. Define the associated chain $\xi(t,t+s) = \sum_{r=0}^{s-1} 1(t+r,t+r+1)$.
If $Q$ is a subgraph of $\Gamma$, then the chain $\xi(Q)$ is the sum of $\xi$ applied to the edges of $Q$.  Observe first that the
coefficients of $\xi(Q)$ are all non-negative. Next, if $Q_1$ and $Q_2$ are disjoint subgraphs, then
$\xi(Q_1 \cup Q_2) = \xi(Q_1) + \xi(Q_2)$. Furthermore, if $Q$ is a cycle, then the boundary of $\xi(Q)$ is $0$ and so
$\xi(Q) = m(Q) \cdot \sum_{r=0}^{2n-1}  (r, r+1)$ with $m(Q) > 0$. The number $m(Q)$ is the number of times the cycle $Q$ wraps around $C$.
If $\Pi$ is an Eulerian subgraph, then since it is a disjoint union of cycles, it follows that there exist a positive integer $m = m(\Pi)$
such that $\xi(\Pi) = m(\Pi) \cdot \sum_{r=0}^{2n-1} \ (r, r+1)$. Furthermore, for any decomposition of $\Pi$ by disjoint cycles
$Q_1, \dots, Q_k$, $\xi(\Pi) = \xi(Q_1) + \dots + \xi(Q_k)$ and so $m(\Pi) = m(Q_1) + \dots + m(Q_k)$.
Since each $m(Q_t) \geq 1$, it follows that  $k \leq m(\Pi)$ and so the
span $\s(\Pi)$ is bounded by $m(\Pi)$.

In general, this estimate too crude to be of much use. For example, if $1 \to 0$ in $\Gamma$ then $\xi(1, 0) = \xi(1, 1 + (2n))$
covers the entire cycle except for $(0,1)$. But for $\Gamma[A]$ it gives us what we need.

To compute $m(\Gamma[A])$ we count the  edges $(r, r+s)$ such that $\xi(r, r+s)$ has a coefficient of $1$ on $(0,1)$.
These are $(0,1), (0,2),  \dots $ with $s$ translates of the edge $(0,s)$ hitting $(0,1)$ for $s = 1,\dots, n$. Hence,
$m(\Gamma[A]) = 1 + 2 + \dots + n = n(n+1)/2$. Thus, $\b(\Gamma[A]) \geq n(2n+1) - n(n+1) = n^2$.

Now we prove that $\b(\Gamma[A]) \leq  n^2$ by induction on $n$. This is trivial for $n = 1$.

Now let $\Pi = \Gamma[[1,n-1]]$ on $J = \Z_{2n-1}$ and let $K = [0,n-1]$ and let $\Gamma$ be the extension of $\Pi$
via $u \to v$ and $K$. Define a map by
\begin{equation}\label{int04}
\begin{cases}  i \mapsto i \quad \text{for} \ i = 0, \dots, n-1,\\
\hspace{1cm} u \mapsto n, \\
i \mapsto i+1 \ \text{for} \ i = n, \dots, 2n-2, \\
\hspace{1cm}v \mapsto 2n. \end{cases}
\end{equation}
This is an isomorphism from the extension $\Gamma$ of $\Gamma[[1,n-1]]$ onto $\Gamma[[1,n]]$.

By induction hypothesis and (\ref{int02})
\begin{equation}\label{int05}
\b(\Gamma[[1,n]]) \ \leq \ \b(\Gamma[[1,n-1]]) + 2n - 1 \ \leq \ (n - 1)^2 + 2n - 1 \ = \ n^2.
\end{equation}

Finally, since $\Gamma = \Delta(\Gamma,\Gamma^{-1})$ it follows that $\b(\Gamma)$ is the distance from a game $\Gamma$ to its inverse.

\end{proof}

{\bfseries Remark:} In this case equality holds in (\ref{int05}) and so by the Remark after Lemma \ref{lemint05} the decompositions
constructed from a maximum decomposition of $\Gamma[[1,n-1]]$ are maximum decompositions for $\Gamma[[1,n]]$.

The game $\Gamma = \Gamma[[1,n-1]]$ may admit smaller decompositions. For example, if $2n + 1$ is prime, then
$\{ \langle 0, j, 2j, \dots, 2nj \rangle : j = 1,\dots,n \}$ is a decomposition of $\Gamma$ by $n$ cycles, each of length $2n + 1$.
\vspace{.5cm}

I conjecture that for any game $\Gamma$ on $I$ with $|I| = 2n + 1$, the distance $d(\Gamma,\Gamma^{-1}) \leq n^2$. In fact, I suspect
that for any pair of games $\Gamma, \Pi$ on $I$, $d(\Gamma,\Pi) \leq n^2$, i.e. the diameter of the interchange graph is
$n^2$. Alon, McDiarmid and Molloy conjecture in \cite{AMM} that any $k$-regular digraph contains a set of $k(k+1)/2$ disjoint cycles.
Since a game on $I$ is $n$-regular their conjecture would imply that $d(\Gamma,\Gamma^{-1}) \leq n^2$. In addition, if
the Eulerian digraph $\Delta(\Gamma,\Pi)$ happens to be $k$-regular then it contains at most $k(2n + 1)$ edges and, if their conjecture is
true, its span is at least $k(k+1)/2$. So $ d(\Gamma,\Pi) = \b(\Delta(\Gamma,\Pi))$ is bounded by $k(2n + 1) - k(k + 1) = 2nk - k^2 = n^2 - (n - k)^2$.
The diameter conjecture would require $\b(\Delta) \leq n^2$ for every Eulerian graph on at most $2n +1$ vertices.

The diameter is certainly bounded by $n(2n - 1)$ because for any Eulerian graph $\Delta$ on $I$, $\b(\Delta) \leq |\Delta| \cdot \frac{2n - 1}{2n + 1}$.
This follows because $l_1 + \dots + l_{\s} = |\Delta|$ and each $l_i \leq 2n + 1$, where $\s$ is the span of $\Delta$ and $l_1, \dots, l_{\s}$ are
the lengths of the cycles in some maximum decomposition.  It follows that $\s \geq |\Delta|/(2n +1)$ and so $\b = |\Delta| - 2 \s$ is at most
$|\Delta| \cdot ( 1 - \frac{2}{2n + 1})$.

If $n \equiv 0, 1$ mod $3$, then there exist Steiner games $\Gamma$ on $I$ for which the maximum decomposition for
$\Gamma$ consists of $n(2n + 1)/3$ $3$-cycles. Thus, for a Steiner game any maximum decomposition consists of $n(2n + 1)/3$ cycles
and so they all must be $3$-cycles.
Hence, $d(\Gamma,\Gamma^{-1}) = \b(\Gamma) = n(2n + 1)/3$. Since
$d(\Gamma[[1,\dots,n]],\Gamma[[1,\dots,n]]^{-1}) = n^2$ it is clear that $\Gamma[[1,\dots,n]]$ is not a Steiner game.
Because the interchange graph is connected, there exists a Steiner game $\Gamma$ and a $3$-cycle $C$ in $\Gamma$
such that the game $\Gamma' = \Gamma/C$ with just $C$ reversed is not a Steiner game. It follows that $C$ cannot be an element of
any maximum decomposition for $\Gamma$ because from such a maximum decomposition we would obtain a $3$-cycle decomposition for $\Gamma'$.
From Theorem \ref{theo28x} (c) applied with $\Pi = \Gamma^{-1}$ we see that
\begin{equation}\label{int06}
d(\Gamma',\Gamma^{-1}) \ = \ 1 + d(\Gamma,\Gamma^{-1}) \ = \ 1 + n(2n + 1)/3.
\end{equation}
That is, in the interchange graph, $\Gamma'$ is farther away from $\Gamma^{-1}$ than is its inverse $\Gamma$.

From the proof of Theorem \ref{theoint06} we see that the game $\Gamma[[1,n]]$ on $\Z_{2n+1}$ is reducible to
$\Gamma[[1,n-1]]$ on $\Z_{2n-1}$.
Thus, $\Gamma[[1,n]]$ is completely reducible, where

\begin{df}\label{defint07} A game $\Gamma$ on a set  $I$ with $|I| = 2n + 1$  is
\emph{completely reducible}\index{completely reducible}\index{game!completely reducible} when
there is a sequence of subsets $I_1 \subset I_2 \dots \subset I_n = I$ with $|I_k| = 2k + 1$ such that
the restriction $\Gamma|I_k$ is a game on $I_k$. \end{df}
\vspace{.5cm}

\begin{prop}\label{propint08} If $\Gamma$ is a completely reducible game of size $2n + 1$ then
\begin{equation}\label{int07}
d(\Gamma,\Gamma^{-1}) \ = \\b(\Gamma) \ \leq \ n^2.
\end{equation}
\end{prop}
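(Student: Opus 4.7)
The plan is a straightforward induction on $n$, using Lemma \ref{lemint05} as the inductive step and two easy observations to tie things together.

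First, I would dispose of the equality $d(\Gamma,\Gamma^{-1}) = \b(\Gamma)$. Every edge of $\Gamma$ is reversed in $\Gamma^{-1}$, so $\Delta(\Gamma,\Gamma^{-1}) = \Gamma$. By Corollary \ref{cor29x}, the distance in the interchange graph from $\Gamma$ to $\Gamma^{-1}$ equals $\b(\Delta(\Gamma,\Gamma^{-1})) = \b(\Gamma)$. This identity needs nothing about reducibility. So the content of the proposition is the inequality $\b(\Gamma) \leq n^2$.

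Next I would set up the induction. For the base case $n = 1$, any game of size $3$ is a single $3$-cycle $C$, so $\b(C) = |C| - 2 = 1 = 1^2$. For the inductive step, suppose the bound holds for all completely reducible games of size $2n - 1$, and let $\Gamma$ be completely reducible of size $2n + 1$ with chain $I_1 \subset I_2 \subset \dots \subset I_n = I$. Then $\Gamma|I_{n-1}$ is itself completely reducible of size $2n - 1$, with witness chain $I_1 \subset \dots \subset I_{n-1}$, so the inductive hypothesis gives $\b(\Gamma|I_{n-1}) \leq (n-1)^2$.

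Finally, write $\{u,v\} = I \setminus I_{n-1}$. As noted in the discussion following the definition of extension, $\Gamma$ is the extension of the subgame $\Gamma|I_{n-1}$ via the edge between $u$ and $v$ and a suitable $K \subset I_{n-1}$. Lemma \ref{lemint05} therefore applies and yields
\begin{equation*}
\b(\Gamma) \ \leq \ \b(\Gamma|I_{n-1}) + (2n - 1) \ \leq \ (n-1)^2 + (2n - 1) \ = \ n^2,
\end{equation*}
which completes the induction. No step is expected to be an obstacle: the bookkeeping identity $\Delta(\Gamma,\Gamma^{-1}) = \Gamma$ is trivial, and all the real combinatorial work is already packaged inside Lemma \ref{lemint05}, which explicitly exhibits how a maximum decomposition of $\Pi$ lifts to a decomposition of $\Gamma$ by extending cycles through $u$ or $v$ and appending $n$ new $3$-cycles.
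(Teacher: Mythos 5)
Your proof is correct and follows exactly the route the paper takes: its entire proof of this proposition is "By induction on $n$ using Lemma \ref{lemint05}," and your write-up simply fills in the details (the identity $\Delta(\Gamma,\Gamma^{-1}) = \Gamma$ with Corollary \ref{cor29x}, the base case, and the observation that the witness chain restricts). Nothing to correct.
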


\begin{proof} By induction on $n$ using
Lemma \ref{lemint05}.

\end{proof}

\vspace{1cm}

\section{The Double Construction and \\ the Lexicographic Product}\label{secdoublelex}

If $\Pi$ is a tournament on a set $J$ with $|J| = n$, then we define
the \emph{double}\index{double} of $\Pi$ to be the game $2\Pi$\index{$2\Pi$} on
$I = \{ 0 \} \cup J \times \{ -1, +1 \}$. With
\begin{align}\label{eq12}
\begin{split}
2\Pi(0) \ &= \ J \times \{ -1 \}, \quad (2\Pi)^{-1}(0) \ = \ J \times \{ +1 \}, \\
2\Pi(i+) \ &= \ \Pi(i) \times \{ +1 \} \cup \Pi^{-1}(i) \times \{ -1 \} \cup \{ 0 \}, \\
2\Pi(i-) \ &= \ \Pi(i) \times \{ -1 \} \cup \Pi^{-1}(i) \times \{ +1 \} \cup \{ i+ \},
\end{split}
\end{align}
where we will write $i\pm$ for $(i, \pm 1)$.

That is, if $i \to j$ in $\Pi$, then in $2 \Pi$
\begin{align}\label{eq13}
\begin{split}
i- \to j-, \quad i+ \to j+, \\
j+ \to i-, \quad j- \to i+.
\end{split}
\end{align}

In addition, for every $i \in J$,
\begin{equation}\label{eq13aa}
i+ \to 0 \to i- \  \text{and} \  \ i- \to i+.
\end{equation}

In passing we note the following consequence of this construction.

\begin{prop}\label{prop29aa} If $\Pi$ is a digraph with $n$ vertices, then $\Pi$ is a subgraph of
a game of size $2n + 1$. \end{prop}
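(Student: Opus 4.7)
The plan is to chain together two constructions already available. First, as noted just before Proposition \ref{prop06}, any digraph on a finite set extends to a tournament on the same set: for every pair of distinct vertices of $\Pi$ not already joined by an edge, orient the edge arbitrarily. Applying this to $\Pi$ on its vertex set $J$ (with $|J| = n$) yields a tournament $\tilde\Pi$ on $J$ with $\Pi \subset \tilde\Pi$.

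Next, apply the double construction just introduced to $\tilde\Pi$. The result $2\tilde\Pi$ is a game on the set $\{0\} \cup J \times \{-1,+1\}$, which has cardinality $2n+1$. By the defining equations (\ref{eq12})--(\ref{eq13}), the map $i \mapsto i+$ from $J$ to $J \times \{+1\}$ is an isomorphism from $\tilde\Pi$ onto the restriction $(2\tilde\Pi)|(J \times \{+1\}) = \tilde\Pi_+$. Since $\Pi \subset \tilde\Pi$, the image of $\Pi$ under this bijection is a subgraph of $2\tilde\Pi$, so $\Pi$ is (up to relabeling vertices) a subgraph of a game of size $2n+1$.

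There is no real obstacle here; both ingredients are immediate from material already in the paper. It is worth remarking that, in contrast to Theorem \ref{theo08a}, this construction does not keep the vertex set fixed—it adds $n+1$ auxiliary vertices—but in exchange it works for arbitrary digraphs $\Pi$, with no Eulerian hypothesis and no parity condition on $|J|$.
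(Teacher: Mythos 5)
Your proof is correct and is essentially identical to the paper's: extend $\Pi$ to a tournament on the same $n$ vertices, then embed that tournament into its double $2\tilde\Pi$, a game of size $2n+1$. The only difference is that you spell out the embedding $i \mapsto i+$ explicitly, which the paper leaves implicit under "up to isomorphism."
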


\begin{proof} It is clear that $\Pi$ can be included as a subgraph of some tournament $\Pi_1$ on
the set $J$ of the vertices of $\Pi$.  So, up to isomorphism, $\Pi$ is a subgraph of the game $2 \Pi_1$.

\end{proof} \vspace{.5cm}

It is easy to check that $2\Pi$ is reducible via each pair $i- \to i+$. It reduces to the double of
the restriction of $\Pi|(J \setminus \{ i \})$. In fact, if $J_1 \subset J$ is nonempty and $\Pi_1$ is the restriction
$\Pi|J_1$ then $2 \Pi_1 = 2 \Pi|(\{ 0 \} \cup J_1 \times \{ -1, +1 \})$ is a subgame of $2 \Pi$.

Thus, a double is completely reducible. It follows from Proposition \ref{propint08} that $d(2\Pi,(2\Pi)^{-1}) \leq n^2$.
To see this directly, observe that if $i \to j$ in $\Pi$ then $\langle i-,j-,i+,j+ \rangle$ is a 4-cycle and
$\langle i+,0,i- \rangle$ is $3$-cycle, both in $2\Pi$. Thus, we obtain a decomposition of $2\Pi$ with
$\frac{n(n-1)}{2} + n =  \frac{n(n+1)}{2}$ cycles.

\begin{lem}\label{lem29aab} Let $\Pi$ be a tournament on  $J$  with $i \to j$ in $\Pi$.
\begin{itemize}
\item[(a)] $2\Pi$ is reducible via $i- \to u$ only for $u = i+$.

\item[(b)] $2\Pi$ is reducible via $0 \to j-$ if and only if $\Pi(j) = \emptyset$, i.e. $j$ has score $0$ in $\Pi$.

\item[(c)] $2\Pi$ is reducible via $i+ \to 0$ if and only if $\Pi^{-1}(i) = \emptyset$, i.e. $i$ has score $n-1$ in $\Pi$.

\item[(d)] $2\Pi$ is not reducible via $i- \to j-$ or via $i+ \to j+$.

\item[(e)] $2\Pi$ is reducible  via $j+ \to i-$ if and only if for all $k \in J \setminus \{i,j \}$
either $i, j \to k$ or $k \to i, j$.
\end{itemize}\end{lem}

\begin{proof} (a) By  Proposition \ref{prop07} (f), $i+$ is the only vertex $u$ of $2 \Pi$ such that $2\Pi$ is reducible via each pair $i- \to u$.

The remaining results all use Proposition \ref{prop07} (d).

(b), (c) If $j \to k$ then $j- \to k-$ and $0 \to k-$ and if $k \to i$ then $k+ \to i+$ and $k+ \to 0$. So reducibility fails in these cases.
The converse is easy to check.

(d) Reducibility fails because $0 \to i-, j-$ and $i+, j+ \to 0$.

(e) If, for example, $j \to k \to i$, then  $j+, i- \to k+$.

\end{proof} \vspace{.5cm}

\begin{prop}\label{prop29bbbb} If $\Pi$ is a game on $I$, then $dom(2\Pi) = \{ (i-,i+) : i \in I \}$. \end{prop}

\begin{proof} By Proposition \ref{prop07}  every edge of $\Pi$ is contained in a $3$-cycle of $\Pi$.
It follows from Lemma \ref{lem29aab} that $2\Pi$ is not reducible
via any pair other than some  $i- \to i+$.

\end{proof} \vspace{.5cm}

\begin{ex}\label{ex29ac} Let $\Pi$ be the tournament on $\{1, 2, 3, 4 \}$ given by $\langle 2, 3, 4 \rangle \ \cup \ (\{ 1 \} \times \{2, 3, 4 \})$.
Let $\Gamma = 2\Pi$ and $\bar \Gamma = \Gamma/\langle 2+, 3+, 4+ \rangle $.  The domination graphs are given by the paths:
\begin{align}\label{13ac}
\begin{split}
dom(\bar \Gamma) \ = \ \{ [1-, &1+, 0 ]\}, \ \text{and} \\  dom(\Gamma) \ = \ dom(\bar \Gamma)  \cup \{ [k-, &k+] : k = 2,3,4 \}.
\end{split}\end{align}
\end{ex}

$\Box$ \vspace{.5cm}

\begin{ex}\label{ex29ab} If $J = [1,n]$ and $i \to j$ if and only if $i < j$, then  $0 \mapsto 0, i- \mapsto i$
and $i+ \mapsto i + n$ is an isomorphism
from $2\Pi$ to $\Gamma[[1,n]]$. \end{ex}

Since the $\phi(2n+1)$ isomorphs of $\Gamma[[1,n]]$, or,
equivalently, the isomorphs of $\Gamma[Odd_n]$, are the
only reducible group games, see Theorem \ref{theo28},  they are the only group games which can be expressed as doubles.

$\Box$ \vspace{.5cm}

We let $\Pi_+$\index{$\Pi_+$} and $\Pi_-$\index{$\Pi_-$} denote the restrictions
of $2 \Pi$ to $J \times \{ +1 \}$ and to  $J \times \{ -1 \}$, respectively. Of course, each of these subgraphs is isomorphic to
$\Pi$.  In addition, we define $X(2 \Pi)$\index{$X(2 \Pi)$} to be
\begin{equation}\label{eq14}
\{ (j+,i-) : (i,j) \in \Pi \} \ \cup \  \{ (j-,i+) : (i,j) \in \Pi \}.
\end{equation}

If $\g : \Pi \tto \Pi_1$ is an isomorphism, then we define the isomorphism $2\g : 2\Pi \tto 2\Pi_1$ by
\begin{equation}\label{eq14aa}
2\g(0) \ = \ 0, \ \text{and} \quad   2\g(j\pm) \ = \ \g(j)\pm \ \ \text{for} \ j \in J.
\end{equation}
The map $\g \mapsto 2\g$ defines an injective group homomorphism  $2: Aut(\Pi) \to Aut(2\Pi)$.

\begin{prop}\label{prop30} Let $\Pi$ be a tournament on a set $J$ with $|J| = n$
\begin{itemize}
\item[(a)] If $\Pi_1$ is a tournament and $\r : 2 \Pi \tto 2 \Pi_1$ is an isomorphism
such that $\r(0) = 0$, then there exists a unique
isomorphism $\gamma : \Pi \tto \Pi_1$ such that $\rho = 2\gamma$.

\item[(b)] Assume that for $\Pi$
no $i \in J$ has score $0$ or $n-1$. If
 $\Pi_1$ is a tournament and $\r : 2 \Pi \tto 2 \Pi_1$ is an isomorphism
then $\r(0) = 0$.  In particular,
 $2: Aut(\Pi) \to Aut(2 \Pi)$ is an isomorphism.
 \end{itemize}\end{prop}

\begin{proof} (a) If $\r(0) = 0$, then $\r$ restricts to bijection  from  $(2 \Pi)(0) = J \times {-1}$
to $J_1 \times {-1}$, yielding an isomorphism from $\Pi_-$ to $\Pi_{1-}$. Thus, there is an
isomorphism $\gamma : \Pi \to \Pi_1$ such that $\r(i-) = \gamma(i)-$ for all $i \in J$.

For each $i \in J$, $2 \Pi_1$ is reducible via $\gamma(i)- = \r(i-) \to \r(i+)$.
On the other hand, $2 \Pi_1$ is
reducible via $\gamma(i)-  \to \gamma(i)+$. From Proposition \ref{prop07} (f)
it follows that $\r(i+) = \gamma(i)+$. Thus, $\r = 2\g$.

(b) Every vertex in $J$ has an input and an output, and so $2 \Pi$ is not
reducible via any pair which contains $0$. On the other hand,
$2 \Pi$ is reducible via $i- \to i+$ for all $i \in J$. Since  $2 \Pi_1$ is
reducible via $j- \to j+$ for all $j \in J_1$ it follows that $\r(0) = 0$.

\end{proof} \vspace{.5cm}

We will see below in Theorem \ref{theoiso03} that  $2: Aut(\Pi) \tto Aut(2\Pi)$ is almost always an isomorphism.

In the special case when $n$ is  odd and $\Pi$ is itself a game, we note that $\Pi_+$ and $\Pi_-$ are subgames of $2 \Pi$ and so
are Eulerian subgraphs. In addition, in this case, $X(2 \Pi)$, defined in (\ref{eq14}), is an Eulerian subgraph. Thus, we can obtain
additional examples, by reversing one or more of the three disjoint Eulerian subgraphs $\Pi_+$, $\Pi_-$ and $X(2 \Pi)$.

\begin{theo}\label{theo30aa} If $\Pi$ is a Steiner game, and $\Gamma = (2 \Pi)/\Delta$ with $\Delta$ equal to
$\Pi_+$, $\Pi_-$, $X(2 \Pi)$, or a union of any two of these, then $\Gamma$ is a Steiner game. \end{theo}

\begin{proof}  We will do the case with $\Delta = \Pi_+$ as the others are similar. Notice that if $i \to j$ in $\Pi$ then
$j- \to i+, j+ \to i-$ and also $j+ \to i+$ in $\Gamma$.  It is this coherence which is the basis of the construction.

Assume that $\langle k, j, i \rangle$ is one of the $3$-cycles in a maximum decomposition for $\Pi$. Associated to it we use the following
$3$-cycles in $\Gamma$
\begin{align}\label{cyc01}
\begin{split}
\langle i+, j+, k- \rangle,   &\langle i-, j+, k+ \rangle, \langle i+, j-, k+ \rangle, \\
\langle i+, 0, i-  \rangle, \  &\langle j+, 0, j-  \rangle, \ \langle k+, 0, k-  \rangle, \\
 &\langle k-, j-, i- \rangle.
\end{split}
\end{align}
Notice that if the vertex $i$ occurs in two $3$-cycles of the decomposition, the vertical edge $(i-,i+)$ occurs in the same $3$-cycle
$\langle i+, 0, i-  \rangle$ associated with both of the decomposition $3$-cycles.

Thus, we obtain a decomposition of $\Gamma$ by $3$-cycles.

\end{proof}

 Using the doubling construction we can build some interesting examples.

 Call a tournament $\Pi$ \emph{rigid}\index{rigid tournament}\index{tournament!rigid} when the identity is the
 only automorphism, i.e. the automorphism group is trivial.

 \begin{lem}\label{lemrigid} If $\Pi$ is a tournament with score vector $(s_1,\dots,s_p)$ and for every
 $k \in \N$, $s_i = k$ for at most two distinct vertices $i$, then $\Pi$ is rigid. \end{lem}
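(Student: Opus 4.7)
The plan is to let $\r$ be an arbitrary element of $Aut(\Pi)$ and show that $\r$ must be the identity permutation on the vertex set. The key observation is that any automorphism preserves the score of each vertex, i.e., $s_{\r(i)} = s_i$ for every $i$, since $|\Pi(\r(i))| = |\bar\r(\Pi(i))| = |\Pi(i)|$. Hence for each $i$, the ``score fiber'' $S_i = \{j : s_j = s_i\}$ is an $\r$-invariant subset of the vertex set.

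By hypothesis, $|S_i| \leq 2$ for every $i$. If $|S_i| = 1$, then $\r(i) \in S_i = \{i\}$ forces $\r(i) = i$. If $|S_i| = 2$, write $S_i = \{i,j\}$; then $\r$ restricts to a permutation of this two-element set, which is either the identity on $\{i,j\}$ or the transposition swapping $i$ and $j$. Here is where I invoke the earlier result Proposition \ref{prop04}: since $\{i,j\}$ is $\r$-invariant, $\r$ must fix each element of the pair (the transposition case is ruled out because any automorphism of a tournament has odd order and so contains no $2$-cycle).

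Combining the two cases, $\r(i) = i$ for every vertex $i$, so $\r = 1_I$ and $Aut(\Pi)$ is trivial, as desired. There is no real obstacle here; the proof is essentially a direct application of Proposition \ref{prop04} to the invariant pairs carved out by the score-fiber constraint.
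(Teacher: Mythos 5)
Your proof is correct and is essentially identical to the paper's: both observe that each score fiber is invariant under any automorphism, handle singleton fibers trivially, and dispose of two-element fibers by citing Proposition \ref{prop04} to rule out the transposition. No differences worth noting.
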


 \begin{proof} For any $k$, the set $\{ i : s_i = k \}$ is invariant for any automorphism $\r$ of $\Pi$.
 So $\r$ fixes the vertices with a unique score value. If $\{ i : s_i = k \}$ consists of exactly two vertices, then
 $\r$ fixes each of them by  Proposition \ref{prop04}.  Hence, $\r$ is the identity.

 \end{proof} \vspace{.5cm}

 For example,  if $\Pi$ has
 score vector $(1,1,2,\dots,p-3,p-2,p-2)$, then it is rigid.


 \begin{theo}\label{theo12rigid} Let $A$ be a game subset of a commutative group $G$ with $\Gamma[A]$ the associated group game.
If the tournament on $A$ obtained by restricting $\Gamma[A]$ is rigid, then $Aut(\Gamma[A]) = G$, i.e. the left translations are the only
automorphisms of $\Gamma[A]$. That is, $\Gamma[A]$ is a tournament regular representation of $G$. \end{theo}

\begin{proof}  Write $\Gamma$ for $\Gamma[A]$. It suffices to show that if $\r$ is an automorphism which fixes $e$,
then $\r$ is the identity. For such an automorphism
$A = \Gamma(e)$ and $A^{-1} = \Gamma^{-1}(e)$ are invariant sets and so $\r$ restricts to an automorphism of $\Gamma|A$ and of $\Gamma|A^{-1}$.
Since $\Gamma|A$ is rigid, $\r$ fixes every element of $A$. Since the group is commutative, $\Gamma|A^{-1}$ is the reverse game of $\Gamma|A$ and
so is rigid as well.  Hence, $\r$ fixes every element of $A^{-1}$ and so is the identity.

 \end{proof} \vspace{.5cm}

 For $G = \Z_{2n+1}$ the score vector of the restriction of $\Gamma[A]$ to $A$ is $(0,1,2, \dots,n-2,n-1)$ for $A = [1,n]$ and is
 $(1,1,2,3,\dots,n-3,n-2,n-2)$ for $A = [1,n-1] \cup \{n+1\}$ for $n \geq 4$. From Lemma \ref{lemrigid} and Theorem \ref{theo12rigid}
 it follows that $Aut(\Gamma[A]) = \Z_{2n+1}$ in each of
 these cases.  Note that the first example provides a reproof of  Theorem \ref{theo13}.

 \begin{ex}\label{ex31a} There exists a game $\Gamma_1$ of size $9$ which is rigid.
 There exists a game $\Gamma_2$ of size $13$ which is rigid and is not isomorphic to its reversed game.
 \end{ex}

\begin{proof}   Let $J_1 = \{ 1, 2, 3, 4 \}$ and define $\Pi_1$ to contain the 4-cycle $\langle 1, 2, 3, 4 \rangle$ and with $3 \to 1, 4 \to 2$.
The score vector is $(1,1,2,2)$ and so $\Pi_1$ is rigid. Since no score value is $0$ or $3$, it follows from
 Proposition \ref{prop30} that $\Gamma_1 = 2 \Pi_1$ is rigid.

 We saw above that $2 (\Pi^{-1})$ is isomorphic to $(2 \Pi)^{-1}$ for any tournament $\Pi$. It is easy to check that $\Pi_1$ is isomorphic to
 $\Pi_1^{-1}$ and so the game $\Gamma_1$ is isomorphic to its reversed game.

 Now let $\Pi_0$ be a game of size $5$ on $\{ 0, 1, 2, 3, 4 \}$. On $J_2 = \{ 0, 1, 2, 3, 4, 5 \}$ define $\Pi_2$ by
 \begin{align}\label{eq15}
\begin{split}
\Pi_2(4) \ = \ \Pi_0(4) \cup \{ 5 \}, \qquad &\Pi_2(5) \ = \ \{ 0, 1, 2, 3 \}, \\
\Pi_2(i) \ = \ \Pi_0(i) \qquad &\text{for} \ i = 0, 1, 2, 3.
  \end{split}
\end{align}

Thus, the score vector is $(2,2,2,2,3,4)$. Since $\Pi_2^{-1}$ has score vector $(3,3,3,3,2,1)$ it follows that
$\Pi_2$ is not isomorphic to $\Pi_2^{-1}$.

If $\r$ is an automorphism of
$\Pi_2$ then it must fix the vertices $5$ and $4$. Hence, $\Pi_2(5)$ is invariant and so
$\r$ restricts to an automorphism of $\Pi_0$. For the unique game of
size $5$, the automorphism group is $\Z_5$ acting
freely by translation. Since $\r$ fixes $4$, it is the identity.

Since the score values $0$ and $6$ do not occur for $\Pi_2$ it follows from Proposition \ref{prop30} again
that $\Gamma_2 = 2 \Pi_2$ is rigid and is not isomorphic to its
reversed game.

\end{proof} \vspace{.5cm}

Another important construction is the \emph{lexicographic product}\index{lexicographic product} of two digraphs.
The lexicographic product for undirected graphs is described in
\cite{S1} and \cite{S2}. For digraphs it was introduced in \cite{GM2}. It is also referred to as the
\emph{wreath product}\index{wreath product} of digraphs.

Let $\Gamma$ be a digraph on a set $I$
and $\Pi$ be a digraph on a
set $J$.
Define $\Gamma \ltimes \Pi$\index{$\Gamma \ltimes \Pi$} on the set $I \times J$ so that for $p, q \in I \times J$
\begin{equation}\label{eq18}
p \to q \quad \Longleftrightarrow \quad \begin{cases} p_1 \to q_1 \ \text{in} \ \Gamma,
\quad \text{or}\\ p_1 = q_1 \ \text{and} \ p_2 \to q_2 \ \text{in} \ \Pi. \end{cases}
\end{equation}

The map given by  $p \mapsto p_1$ is a surjective morphism from $\Gamma \ltimes \Pi$ to $\Gamma$.

Clearly, if both $\Pi$ and $\Gamma$ are Eulerian, or if both are tournaments, then $\Gamma \ltimes \Pi$ satisfies the corresponding
property. In particular, the lexicographic product of two games is a game. Also, the lexographic product of two orders is an order. Furthermore, it is clear that
\begin{equation}\label{eq18a}
(\Gamma \ltimes \Pi)^{-1} \ = \ (\Gamma^{-1}) \ltimes (\Pi^{-1}).
\end{equation}

For each $i \in I$, let $J_i = \{ p : p_1 = i \}$. On each $J_i$, $\Gamma \ltimes \Pi$ restricts to a
digraph $\Pi_i$, so labeled because it is clearly isomorphic to $\Pi$ via $p \mapsto p_2$.

We will call an edge $(p,q)$ \emph{vertical} \index{vertical edge}\index{edge!vertical} when $p_1 = q_1$ and
\emph{horizontal}\index{horizontal edge}\index{edge!horizontal} otherwise, i.e. when $p_1 \to q_1$. We will call a
subgraph $\Theta \subset \Gamma \ltimes \Pi$
vertical (or horizontal) when it contains only vertical (resp. only horizontal) edges. Thus, $\Theta$ is vertical if and only if it is contained in
$\bigcup_{i \in I} \Pi_i$ and it is horizontal if and only if it is disjoint from $\bigcup_{i \in I} \Pi_i$.

For $p \in I \times J$ some of the outputs $(\Gamma \ltimes \Pi)(p)$ are in the $\Pi_{p_1}$ subgame.
These are the vertical outputs\index{vertical outputs}. The remaining - horizontal - outputs \index{horizontal outputs}
 are the elements of the $J_i$'s
with $ i \in \Gamma(p_1)$. That is,
\begin{equation}\label{eq19}
(\Gamma \ltimes \Pi)(p) \ = \ \Pi_{p_1}(p) \cup (\bigcup \{ J_i : i \in \Gamma(p_1) \}).
\end{equation}

If $\r \in Aut(\Gamma)$ and $\gamma : I \tto Aut(\Pi)$ is a map, so that for each $i \in I$
$\gamma_i \in Aut(\Pi)$, then we define $\rho \ltimes \gamma \in Aut(\Gamma \ltimes \Pi)$ by
\begin{equation}\label{eq20}
(\rho \ltimes \gamma)(p) \ = \ (\rho(p_1),\gamma_{p_1}(p_2)).
\end{equation}

Any permutation $\r$ of $I$ induces an automorphism of the product group $G^I$ by $(\g \circ \r)_i = \g_{\r(i)}$.
This provides a right action of $S(I)$ by group homomorphisms on the product group $G^I$.

Suppose a group $T$ acts on the right  by group homomorphisms on a group $K$.  We define
the \emph{semi-direct product}\index{semi-direct product} $T \ltimes K$\index{$T \ltimes K$} to be $T \times K$ with the
multiplication $(t_1,k_1)\cdot(t_2,k_2) = (t_1t_2,(k_1 \cdot t_2)k_2)$
The group homomorphisms $i : K \to T \ltimes K,
j : T \to T \ltimes K$  are defined by $i(k) = (e_T,k), j(t) = (t,e_K)$ where $e_T, e_K$ are the identity elements
of $T$ and $K$,respectively, inject $T$ and $K$ as subgroups of $T \ltimes K$. The first coordinate projection
$p : T \ltimes K \to T$ is a group homomorphism with $p \circ j$ the identity on $T$ and with
$i(K)$ the kernel of $p$. Observe that when the action of $T$ on $K$ is non-trivial,
the semi-direct product is non-abelian, because, e.g. $j(t)i(k) = (t,k)$, but $i(k)j(t) = (t, k \cdot t)$.

A \emph{short exact sequence}\index{short exact sequence} is a diagram of group homomorphisms
\begin{equation}
\begin{tikzcd}
K \arrow[r,"i"] & G \arrow[r,"p"] & T,
\end{tikzcd}
\end{equation}
where $i$ is an injection onto the kernel of the surjection $p$. We then say that $G$ is an \emph{extension} of $K$ by $T$.

The short exact sequence \emph{splits} when there is a homomorphism
$j : T \tto K$ such that $p \circ j = 1_T$. In that case, $T$ acts on $K$ by $i(k \cdot t) = j(t)^{-1} i(k) j(t)$,
and $(t,k) \mapsto j(t)i(k)$ is an isomorphism of $T \ltimes K$ onto $G$.

 If $T$ is a subgroup of $S(I)$, then the  semi-direct product $T \ltimes G^I$ consists of the set
$T \times G^I$ with the group composition $(\r_1,\g_1)(\r_2,\g_2) = (\r_1 \circ \r_2, (\g_1 \circ \r_2)\g_2)$.
This is also called the \emph{wreath product} \index{wreath product} of the groups $T$ and $G$.

Thus, we see that $Aut(\Gamma \ltimes \Pi)$ contains $Aut(\Gamma) \ltimes Aut(\Pi)^I$. It is shown in \cite{AGM} that
this is the entire automorphism group. We provide a somewhat simpler proof.

\begin{theo}\label{theo31} If $\Gamma$ and $\Pi$ are tournaments on sets $I$ and $J$, respectively,
then the automorphism
group of $\Gamma \ltimes \Pi$ is the semi-direct product, i.e.
\begin{equation}\label{eq21}
Aut(\Gamma \ltimes \Pi) \ = \ Aut(\Gamma)\ltimes [Aut(\Pi)]^I.
\end{equation}
\end{theo}

\begin{proof} We show that if $\theta$ is an automorphism of $\Gamma \ltimes \Pi$ then there exist unique
$\rho \in Aut(\Gamma)$ and $\gamma : I \tto Aut(\Pi)$ such that $\theta = \rho \ltimes \gamma$.

Notice that $p \not\in \Pi_{p_1}(p)$ implies
for all $p$
\begin{equation}\label{eq21a}
 |\Pi_{p_1}(p)| < |J|. \hspace{3cm}
\end{equation}

For a fixed $p \in I \times J$, let $i = p_1$ and $\bar i = \theta(p)_1$. From (\ref{eq19}) and (\ref{eq21a})
we see that
\begin{equation}\label{eq21b}
|\Gamma(i)| \cdot |J| \leq |(\Gamma \ltimes \Pi)(p)| < (|\Gamma(i)| + 1) \cdot |J|
\end{equation}

Since $|(\Gamma \ltimes \Pi)(p)| = |(\Gamma \ltimes \Pi)(\theta(p))|$ it follows that
$|\Gamma(i)| = |\Gamma(\bar i)|$.

For the product game, assume that  $p \to q$ and consider the intersection of the output sets.
If $q$ is a vertical output, i.e. $p_1 = q_1 = i$ then
\begin{equation}\label{eq22}
(\Gamma \ltimes \Pi)(p) \cap (\Gamma \ltimes \Pi)(q) \ = \
[\Pi_{i}(p) \cap \Pi_{i}(q)] \cup [\bigcup \{ J_k : k \in \Gamma(i) \}].
\end{equation}
Thus, if $q$ is a vertical output, we have as in (\ref{eq21b})
\begin{equation}\label{eq22a}
|\Gamma(i)| \cdot |J| \leq  |(\Gamma \ltimes \Pi)(p) \cap (\Gamma \ltimes \Pi)(q)| < (|\Gamma(i)| + 1) \cdot |J|.
\end{equation}

On the other hand, if $p \to q$ with $p_1 \not= q_1$ and hence $i = p_1 \to q_1= j$, then all
of the vertical outputs of $q$ are outputs of $p$, but
$J_{i}$ contains no outputs of $q$. So in that case
\begin{equation}\label{eq23}
(\Gamma \ltimes \Pi)(p) \cap (\Gamma \ltimes \Pi)(q) \ = \
[ \Pi_{j}(q)] \cup [\bigcup \{ J_k : k \in \Gamma(i)\cap \Gamma(j) \}].
\end{equation}
Since $j \in \Gamma(i) \setminus \Gamma(j)$, in this case
\begin{equation}\label{eq23a}
|(\Gamma \ltimes \Pi)(p) \cap (\Gamma \ltimes \Pi)(q)| < |\Gamma(i)|  \cdot |J|.
\end{equation}

Since $\theta$ is an automorphism, it maps $(\Gamma \ltimes \Pi)(p)$ to $(\Gamma \ltimes \Pi)(\theta(p))$ and
commutes with intersection. Since $|\Gamma(i)| = |\Gamma(\bar i)|$
it now follows from (\ref{eq22a}) and (\ref{eq23a})
that $\theta$ maps the vertical outputs of $p$ to the
vertical outputs of $\theta(p)$.

 That is, $p \to q$ and $p_1 = q_1$ implies
$\theta(p) \to \theta(q)$ and $\theta(p)_1 = \theta(q)_1$.
 Since $\Pi$ is a tournament, it follows that
for every $p \not= q \in J_i$ either $p \to q$ or $q \to p$.
So we can define $\r$ on $I$ so that $\theta(p)_1 = \r(p_1)$ and
$\theta$ maps $J_i$ to $J_{\r(i)}$. Hence, we can define
$\gamma_i$ on $J$ so that $\theta(p) = (\r(p_1),\gamma_{p_1}(p_2))$. That is,
$\theta = \r \ltimes \gamma$ at least as
set maps. Since $\theta$ is a bijection, $\rho$ and all the $\gamma_i$ are
bijections. Uniqueness of $\r$ and the $\gamma_i$ is obvious.
Finally, it is clear that
each $\gamma_i$ and $\r$ preserve the output relation and so are themselves automorphisms.

Using the maps $i(\gamma) = 1_I \ltimes \gamma$ and $p(\r \ltimes \gamma) = \r$ we obtain the short exact
sequence
\begin{equation}\label{eq24}
[Aut(\Pi)]^I \ \longrightarrow \  Aut(\Gamma \ltimes \Pi) \ \longrightarrow \  Aut(\Gamma),
\end{equation}
which splits by using $j(\r) = \r \ltimes (1_J)^I$.


\end{proof} \vspace{.5cm}

It follows that
\begin{equation}\label{eq24a}
|Aut(\Gamma \ltimes \Pi)| \ = \ |Aut(\Gamma)|\cdot |Aut(\Pi)|^{|I|}.
\end{equation}

Now let $\Gamma_1$ be the game of size 3 so that $|Aut(\Gamma_1)| = 3$. Inductively for $k = 2, 3, \dots$ define $\Gamma_{k} =
\Gamma_{k-1} \ltimes \Gamma_1$ so that $\Gamma_k$ is a game on a set of size $3^k$.
So from (\ref{eq24a}) it follows that $|Aut(\Gamma_k)|$ is $3^r$ with $r = \sum_{t=1}^{k} 3^{k-t}$.  That is,
\begin{equation}\label{eq24b}
|Aut(\Gamma_k)| \ = \ (3)^{(3^k - 1)/2}.
\end{equation}

On the other hand, following \cite{GM1}, Dixon proved in \cite{D} that for a tournament of size $p$ the automorphism group has
order at most $(\sqrt{3})^{p - 1}$ with the inequality strict unless $p = 3^n$, see also \cite{AB}.

Thus, the order of the automorphism of a game on a set $I$ of size $2n + 1$ is at most $3^n$ and the above construction yields a game with the
largest possible automorphism group.

Each $\Gamma_k$ is a Steiner game.  In fact we have the following, which is essentially Theorem 1.2 of Chapter 8 of \cite{R1}.

\begin{theo}\label{theo31aa} If $\Gamma$ and $\Pi$ are Steiner games then the product $\Gamma \ltimes \Pi$ is a Steiner game. \end{theo}

\begin{proof} Assume that $\Gamma$ and $\Pi$ are decomposed into $3$-cycles. For $p, q, r \in  I \times J$ we define the Steiner $3$-cycles for the
product via four cases.
\begin{itemize}
\item $p_1 = q_1 = r_1$ and $\langle p_2, q_2, r_2 \rangle$ is one of the Steiner cycles for $\Pi$,
\item $\langle p_1, q_1, r_1 \rangle$ is one of the Steiner cycles for $\Gamma$ and $p_2 = q_2 = r_2$,
\item $\langle p_1, q_1, r_1 \rangle$ is one of the Steiner cycles for $\Gamma$ and $\langle p_2, q_2, r_2 \rangle$ is one of the Steiner cycles for $\Pi$.
\item $\langle p_1, q_1, r_1 \rangle$ is one of the Steiner cycles for $\Gamma$ and $\langle r_2, q_2, p_2 \rangle$ is one of the Steiner cycles for $\Pi$.
\end{itemize}

Notice that if $p_1 \to q_1$ in $\Gamma$ then either $p_2 = q_2$, $p_2 \to q_2$ or $q_2 \to p_2$.
\end{proof}

Let $\Theta$ be a tournament on $K$, $\Gamma$ be a  digraph on $I$ and
$\pi : \Theta \to \Gamma$ a morphism of digraphs with $\pi : K \to I$  surjective.
For $i \in I$,  define $K_i = \pi^{-1}(i)$ and let $\Pi_i$ be the restriction $\Theta|K_i$, which is a tournament on $K_i$. The morphism
lets us regard $\Theta$ as a generalization of the lexicographic product in that for $p, q \in K:$
\begin{equation}\label{eq18ab}
p \to q \quad \Longleftrightarrow \quad \begin{cases} \pi(p) \to \pi(q) \ \text{in} \ \Gamma,
\quad \text{or}\\ \pi(p) = \pi(q)  \ \text{and} \ p \to q \ \text{in} \ \Pi_{\pi(p)}. \end{cases}
\end{equation}

Clearly, if there exists a tournament $\Pi$ such that $\Pi_i$ is isomorphic to $\Pi$ for all $i \in I$ then
$\Theta$ is isomorphic to $\Gamma \ltimes \Pi$.

If for each $i \in I$, $\g_i$ is an automorphism of $\Pi_i$, then $\g \in \prod_{i\in I} \g_i$ is defined by
$g(p) = \g_i(p)$ for $p \in K_i$. Thus, as in the lexicographic case we obtain at least the inclusion of groups
\begin{equation}\label{eq18ax}
\prod_{i \in I} \ Aut(\Pi_i)  \ \subset \ Aut(\Theta).
\end{equation}

\begin{theo}\label{theo31ab}  Let $\Theta$ be a tournament on $K$, $\Gamma$ be a  digraph on $I$ and
$\pi : \Theta \tto \Gamma$ a morphism of digraphs with $\pi : K \tto I$  surjective.

(a) The digraph $\Gamma$ is a tournament on $I$.

(b) If $\Theta$ is a game, then for each $i \in I$, $\Pi_i$ is a subgame of $\Theta$.

(c) For the following three conditions, any two imply the third.
\begin{itemize}
\item[(i)] $\Theta$ is a game.
\item[(ii)] $\Gamma$ is a game.
\item[(iii)] $\Pi_i$ is a game for each $i \in I$ and for $i, j \in I$, $|\Pi_i| = |\Pi_j|$.
\end{itemize}
\end{theo}

\begin{proof} (a) If $i$ and $j$ are distinct elements of $I$ then
there exist $p, q \in K$ such that $\pi(p) = i, \pi(q) = j$. Since $\Theta$
is a tournament, either $p \to q$ or $q \to p$ which imply $i \to j$ or $j \to i$, respectively.

(b) From (\ref{eq18ab}) it follows that for $p \in K$ with $\pi(p) = i$
\begin{equation}\label{eq19ab}
\Theta(p) \ = \ \Pi_i(p) \cup (\bigcup \{ K_j : j \in \Gamma(i) \}).
\end{equation}
If $\pi(q) = i$, then $|\Theta(p)| = |\Theta(q)|$ and (\ref{eq19ab}) imply
that $|\Pi_i(p)| = |\Pi_i(q)|$. That is, the scores of all of the
elements of the tournament $\Pi_i$ are the same.  Hence, $\Pi_i$ is a game.

(c) Let $k_i = |K_i|$ for $i \in I$. By (b) either (i) or (iii) implies that each
$\Pi_i$ is a game. So (\ref{eq19ab}) implies for $p \in K$ with $\pi(p) = i$
\begin{equation}\label{eq19ac}
|\Theta(p)|  \ = \ \frac{k_i - 1}{2}  + \sum \{ k_j : j \in \Gamma(i) \}).
\end{equation}

Assume (iii) so that $k_i = k$ is independent of $i \in I$. Then (\ref{eq19ac}) becomes $|\Theta(p)|  \ = \ \frac{k - 1}{2}  + |\Gamma(i)|\cdot k$.
So $|\Theta(p)|$ is the same for all $p \in K$ if and only if $|\Gamma(i)|$ is
the same for all $i \in I$. That is, $\Theta$ is a game if and only if $\Gamma$ is a game.

Finally, we assume that $\Theta$ and $\Gamma$ are both games and prove (iii).
We know from (b) that each $\Pi_i$ is a game and so it suffices to show that
$k_i$ is independent of $i \in I$.

Since $\Theta$ and $\Gamma$ are games, $|K| = 2m + 1$ and $|I| = 2n + 1$ for some natural numbers $m,n$ and from (\ref{eq19ac})
 we have
 \begin{equation}\label{eq19ad}
 2m + 1 = k_i + \sum  \{ 2k_j : j \in \Gamma(i) \} \quad \text{and} \quad 2n + 1 = 1 + 2 |\Gamma(i)|
 \end{equation}
 for all $i \in I$.

We require a little linear algebra result.

\begin{lem}\label{lem31ac} Let $A = (a_{ij})$ be a square matrix with integer entries.  If $a_{ij}$ is even whenever $i \not= j$ and
$a_{ii}$ is odd for all $i$ then $A$ is nonsingular.\end{lem}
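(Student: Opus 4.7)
The plan is to reduce the problem modulo $2$. Working in the ring $\Z/2\Z$, the hypothesis on $A$ says exactly that every off-diagonal entry becomes $0$ and every diagonal entry becomes $1$. Thus the reduction $\bar A$ of $A$ modulo $2$ is the identity matrix in $M_n(\Z/2\Z)$.

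Next I would invoke the fact that the determinant, viewed as a polynomial in the matrix entries with integer coefficients, is compatible with the reduction homomorphism $\Z \tto \Z/2\Z$. That is, $\overline{\det(A)} = \det(\bar A) = \det(I) = 1$ in $\Z/2\Z$. Hence $\det(A)$ is an odd integer, and in particular $\det(A) \neq 0$, so $A$ is nonsingular.

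There is essentially no obstacle here; the only thing to be careful about is citing (or taking as obvious) the ring-homomorphism property of $\det$, since the Leibniz formula $\det(A) = \sum_{\sigma \in S_n} \operatorname{sgn}(\sigma) \prod_i a_{i,\sigma(i)}$ is a polynomial with integer coefficients and therefore commutes with reduction mod any integer. Given the elementary nature of the statement, I would present the proof in one short paragraph: reduce mod $2$, observe $\bar A = I$, conclude $\det(A)$ is odd.
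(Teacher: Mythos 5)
Your proof is correct and follows exactly the same route as the paper: reduce $A$ modulo $2$, observe the reduction is the identity matrix over $\Z_2$, and conclude $\det(A)$ is odd and hence nonzero. Nothing to add.
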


\begin{proof} Using the ring homomorphism $\Z \to \Z_2$ we reduce $A$ mod $2$ and get the identity matrix over the field  $\Z_2$. The determinant
is $1$ in $\Z_2$ and so the determinant of $A$ is congruent to $1$ mod $2$, i.e. it is odd and so is non-zero.

\end{proof}

Define the matrix $A$ on $I \times I$ by
\begin{equation}\label{eq19ae}
a_{ij} \ = \ \begin{cases} 2 \ \text{if} \ i \to j, \\  0 \ \text{if} \ j \to i, \\ 1 \ \text{if} \ i = j. \end{cases}
\end{equation}
Let $u, q$ be the $I \times 1$ matrices with $u_i = 1$ and $q_i = k_i$ for $i \in K$. From (\ref{eq19ad}) we have
$(2m + 1)u = Aq $ and $(2n + 1)u = Au$. Because the matrix $A$ is nonsingular, we obtain $q = \frac{2m + 1}{2n + 1}u$. Thus,
$q_i = k_i$ is independent of $i$.

\end{proof}

It is clear that if $\Gamma$ is a tournament on a set $I$ and for each $i \in I$ $\Pi_i$ is a tournament on a set $K_i$,
then we can let $K = \bigcup_{i \in I} \{ i \} \times K_i$ and $\pi : K \to I$ be the first coordinate projection. We can define the tournament
$\Theta$ on $K$ uniquely so that (\ref{eq18ab}) holds and thus so that $\pi$ is a surjective morphism. We will call this a
\emph{generalized lexicographic product}\index{generalized lexicographic product}\index{lexicographic product!generalized}.  If $\Gamma$ is a game on  $I$
and for each $i \in I$ $\Pi_i$ is a game on  $K_i$ with $|K_i|$ independent of $i$, then $\Theta$ is a game on $K$.

\begin{ex}\label{ex31ad} There exists a morphism $\pi : \Theta \tto \Gamma$ with $\Theta$ but not $\Gamma$ a game and there exists
a $3$-cycle $\Gamma_0 \subset \Gamma$ such that $\pi^{-1}(\Gamma_0)$ is not a subgame of $\Theta$. \end{ex}

\begin{proof} Let $\Gamma_3$ be the $3$-cycle game $\langle 0, 1, 2 \rangle$ and let $\Theta = \Gamma_3 \ltimes \Gamma_3$ which maps to
$\Gamma_3$ with $3$-cycle fibers $\Pi_i$ for $i = 0, 1, 2$. For $i = 1, 2$ map $\Pi_i$ to a single vertex $a_i$. This maps
$\Theta$ onto a tournament $\Gamma$ of size $5$ with score vector $(3,2,2,2,1)$. Select a vertex $a_0 \in \Pi_0$ to get
a $3$-cycle $\langle a_0, a_1, a_2 \rangle$ in $\Gamma$ whose preimage in $\Theta$ is a tournament of size $7$ which is not a game.

\end{proof}

Now assume that $\Theta$ is a subgraph of $\Gamma \ltimes \Pi$.  If $\Theta$ is vertical, then it is the  union of the separated
subgraphs $\{ \Theta \cap \Pi_i : i \in I \}$. So $\Theta$ is Eulerian if and only if
each $\Theta \cap \Pi_i$ is Eulerian. If $\Theta$ is horizontal, then it
is a disjoint union of horizontal cycles. Now assume that $\langle i_1,\dots,i_n \rangle$
is a cycle in $\Gamma$. If $\{ j_1,\dots,j_n \}$ is an arbitrary
sequence of length $n$ in $J$, then $\langle (i_1,j_1),\dots,(i_n,j_n) \rangle$ is a horizontal cycle in $\Gamma \ltimes \Pi$. Thus, there are
$|J|^n$ distinct cycles which project to $\langle i_1,\dots,i_n \rangle$ via $\pi : \Gamma \ltimes \Pi \to \Gamma$.
Using these we can construct explicit examples of large numbers of distinct games which are isomorphic.

Assume that $\langle i_1,\dots,i_n \rangle$ is a Hamiltonian cycle in the game $\Gamma$ so that $n = |I|$, and assume that $\Pi$ is a group
game with $e \in J$ the identity. For $i = i_k$ let $\gamma_i$ be the translation of $J$ taking $e$ to $j_k$ and let $\r$ be the identity on $I$.
Then $\rho \ltimes \gamma$ is an automorphism of $\Gamma \ltimes \Pi$ taking $\langle (i_1,e),\dots,(i_n,e) \rangle$
to $\langle (i_1,j_1),\dots,(i_n,j_n) \rangle$.  So it is an isomorphism from $\Gamma \ltimes \Pi/\langle (i_1,e),\dots,(i_n,e) \rangle$ to
$\Gamma \ltimes \Pi/\langle (i_1,j_1),\dots,(i_n,j_n) \rangle$. In $\Gamma \ltimes \Pi/\langle (i_1,j_1),\dots,(i_n,j_n) \rangle$ the edges of
the reversed cycle are the only edges reversed by $\pi$. Hence, the $|J|^{|I|}$ games obtained by reversing the distinct cycles are distinct
isomorphic games.

In fact, if $\Gamma$ is a game on $I$ with $|I| = 2n + 1$, then by using the permutations of $I$ we can construct $(2n + 1)!$ games. If two of these
permutations $\r_1, \r_2$ map to the same game then they differ by the automorphism
$(\r_2)^{-1}\r_1$ of $\Gamma$. It follows that $\Gamma$ is isomorphic to
exactly $(2n + 1)! \div |Aut(\Gamma)|$ distinct games. In particular, if $\Gamma$ is rigid so that $|Aut(\Gamma)| = 1$, then $\Gamma$ is isomorphic to
$(2n + 1)!$ distinct games. In general, by the result of Dixon from \cite{D} quoted above,
$|Aut(\Gamma)| \leq 3^n$. Consequently, any game of size $2n +1$ is isomorphic to at least $(2n + 1)! \div 3^n$ distinct games.

If $G$ is a subgroup of the permutation group $S(I)$ with order $|G|$ a power of $2$, then by Proposition \ref{prop04} the identity is the only
element of $G$ which is an automorphism of any tournament on $I$. It follows that by applying the elements of $G$ to any game we obtain $|G|$ distinct
games which are isomorphic.

We can use the above surjective morphism construction to build some illustrative examples of group actions on games.

\begin{ex}\label{ex31ae} If $G$ is a group of odd order and $n \geq |G|$, then there exists a game $\Theta$ on a set $K$ of size
$2n + 1$ on which $G$ acts.  The set $K$ contains two copies of $G$ on each of which $G$ acts by translation while the remaining
$2(n - |G|) + 1$ points are fixed points of the action. \end{ex}

 \begin{proof} With $m = n - |G|$ let $\Gamma$ be an arbitrary tournament on the set $I = \{0, 1, \dots, m \}$. Let $K_0 = G$ and
 $\Pi_0$ be a group game on $G$.  So $G$ acts be translation on $\Pi_0$. For $i = 1, \dots, m$ let $K_i = \{ i \}$ and let
 $\Pi_i$ be the trivial -empty- game on the singleton $K_i$. Let $\bar K =  \bigcup_{i \in I} \{ i \} \times K_i$ and let
 $\bar \Theta$ be the generalized lexicographic product. From (\ref{eq18ax}) we see that $G$ acts on the tournament $\bar \Theta$.
 $\bar K$ contains a copy of $G$ on which the action is by translation and $m$ fixed points.

 Let $\Theta$ be the double $2\bar \Theta$ on $K = \{ 0 \} \cup \bar K \times \{ -1, +1 \}$. By the injection
 $2 : Aut(\bar \Theta) \to Aut(2 \bar \Theta)$ we obtain the action of $G$ on $\Theta$.

\end{proof}\vspace{.5cm}

Notice that if $G$ is cyclic, then the translation by a generator is a permutation of $G$ with a cycle of length $|G|$. A fixed point imposes a bound
on the length of such a cycle.

\begin{prop}\label{prop31af} Let $\Gamma$ be a game on a set $I$ with $|I| = 2n + 1$. If $\r$ is an automorphism of $\Gamma$ which fixes some vertex, then
the length of any cycle in the associated permutation is at most $n$.\end{prop}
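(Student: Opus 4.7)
The plan is to exploit the fact that any automorphism which fixes a vertex $i_0$ preserves the output and input sets of that vertex, which split $I\setminus\{i_0\}$ into two invariant pieces each of size $n$.

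First I would observe that, since $\r$ is an automorphism of $\Gamma$ and $\r(i_0)=i_0$, the equality $\r(\Gamma(i_0)) = \Gamma(\r(i_0)) = \Gamma(i_0)$ holds, and likewise $\r(\Gamma^{-1}(i_0)) = \Gamma^{-1}(i_0)$. Thus both $\Gamma(i_0)$ and $\Gamma^{-1}(i_0)$ are $\r$-invariant. Because $\Gamma$ is a game of size $2n+1$, each of these two sets has cardinality $n$, and they are disjoint and together with $\{i_0\}$ partition $I$.

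Next I would use the standard fact that when $\r$ is written as a product of disjoint cycles, the support of each cycle is an $\r$-orbit, and every $\r$-orbit is contained in any $\r$-invariant subset it meets. The one-element cycle corresponding to $i_0$ has length $1$. Any other cycle of $\r$ has its support in $I\setminus\{i_0\}$, hence lies entirely in either $\Gamma(i_0)$ or $\Gamma^{-1}(i_0)$, since these invariant sets partition $I\setminus\{i_0\}$. A cycle contained in a set of size $n$ has length at most $n$, which gives the claimed bound.

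There is no real obstacle here; the argument is essentially the observation that fixing one vertex in a game forces the rest to split into two equal invariant halves. The only point worth stating carefully is the elementary lemma about orbits lying inside invariant sets, which can be disposed of in a single sentence. No use of the Moon/strong-tournament machinery, Proposition \ref{prop04}, or the reducibility theory from earlier sections is needed for this statement.
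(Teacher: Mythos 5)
Your proposal is correct and is essentially the paper's own argument: the paper iterates $\r^j$ on the relation $i_0 \to i_1$ to conclude the cycle lies in $\Gamma(i_0)$ (or $\Gamma^{-1}(i_0)$), which is just your observation that these two size-$n$ sets are $\r$-invariant and contain every orbit they meet. No difference worth noting.
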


 \begin{proof} If the vertex $i$ is fixed by $\r$ then each of the size $n$ sets $\Pi(i)$ and $\Pi^{-1}(i)$ is invariant.

\end{proof}\vspace{.5cm}

It can happen, however, that a large cyclic group acts effectively on a game.

\begin{ex}\label{ex31ag} There exists a game of size $29$ on which $\Z_{33}$ acts effectively.\end{ex}

 \begin{proof} Let $I = \{ 1, 2 \}$ with $\{ (1,2) \} = \Gamma$. Let $K_1 = \Z_3$ and $K_2 = \Z_{11}$ and let $\Pi_1, \Pi_2$ be group games.
 Again let $\bar \Theta$ be the generalized lexicographic product on $\bar K = (\{ 1 \} \times K_1)  \cup (\{ 2 \} \times K_2)$.
 Thus, $\bar \Theta$ is a tournament on $\bar K$ with $|\bar K| = 14$. Since $\Z_{33}$ is isomorphic to $\Z_3 \times \Z_{11}$,
 it acts effectively on
 $\bar \Theta$. Again let $\Theta = 2\bar \Theta$. If $\r$ is the generator of $\Z_{33}$, then the associated permutation fixes $0$ and
 contains two $3$-cycles and two 11-cycles.

 \end{proof} \vspace{.5cm}

 We conclude the section with an obvious remark.

 \begin{prop}\label{prop31ah} Let $G$ act on a game $\Gamma$ if $\Delta \subset \Gamma$ is an Eulerian subgraph which is $G$ invariant, then
 $G$ acts on the game $\Gamma/\Delta$ with the same action on the vertices. \end{prop}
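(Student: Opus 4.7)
The plan is to verify that each group element, viewed as a permutation of the vertex set, remains an automorphism of the modified digraph $\Gamma/\Delta$, and that $\Gamma/\Delta$ is indeed a game so that the action is an action on a game. Both facts follow almost immediately from the definitions together with earlier results in the paper.

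First I would unpack what a $G$-action on $\Gamma$ means: each $g \in G$ is a permutation of the vertex set $I$ such that $\bar g(\Gamma) = \Gamma$. The hypothesis that $\Delta$ is $G$-invariant means $\bar g(\Delta) = \Delta$ for every $g$. Since $\bar g$ commutes with the reverse-relation operation (i.e.\ $\bar g(\Delta^{-1}) = \bar g(\Delta)^{-1}$), invariance of $\Delta$ gives invariance of $\Delta^{-1}$ as well. Because $\bar g$ preserves set differences and unions, I would then compute
\begin{equation*}
\bar g(\Gamma/\Delta) \ = \ \bar g\bigl((\Gamma \setminus \Delta) \cup \Delta^{-1}\bigr) \ = \ (\bar g(\Gamma) \setminus \bar g(\Delta)) \cup \bar g(\Delta^{-1}) \ = \ (\Gamma \setminus \Delta) \cup \Delta^{-1} \ = \ \Gamma/\Delta.
\end{equation*}
This shows that each $g$ is an automorphism of the digraph $\Gamma/\Delta$, so we get a group homomorphism from $G$ into $Aut(\Gamma/\Delta)$ realized by the same permutations of $I$ as the original action.

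Next I would invoke Proposition \ref{prop25}: since $\Delta$ is Eulerian and $\Gamma$ is a game (in particular a tournament of constant score), $\Gamma/\Delta$ is a tournament with the same scores as $\Gamma$, hence is itself a game. Therefore the $G$-action on $\Gamma/\Delta$ constructed above is an action on a game, with the same underlying permutations of the vertices as the original action.

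There is no real obstacle here; the only thing to watch is a clean separation between the two assertions (that $g$ is still an automorphism, and that the target is still a game), and to note that the words ``the same action on the vertices'' are justified because the permutation $g : I \to I$ is literally unchanged—only the edge set of the digraph it acts on has been modified.
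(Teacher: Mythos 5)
Your proof is correct and is exactly the argument the paper intends: the paper states this as an "obvious remark" with no written proof, and your computation $\bar g(\Gamma/\Delta) = (\bar g(\Gamma)\setminus\bar g(\Delta)) \cup \bar g(\Delta)^{-1} = \Gamma/\Delta$ together with the appeal to Proposition \ref{prop25} (to see that $\Gamma/\Delta$ is again a game) supplies precisely the missing details.
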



\vspace{1cm}

\section{Bipartite Tournaments and Pointed Games}\label{secpointed}

Recall that a digraph $\Pi$ on $I$ is bipartite\index{bipartite} when $I$ is the disjoint union of
sets $J, K$ and $\Pi \subset (J \times K) \cup (K \times J)$. A cycle in a bipartite digraph has even
length and so a  4-cycle is the cycle of shortest length in a bipartite digraph. As an Eulerian digraph is a
disjoint union of cycles, it follows that a bipartite Eulerian digraph has even cardinality.

We will call $\Pi$ a \emph{bipartite tournament}\index{bipartite tournament} \index{tournament!bipartite} on the pair $\{ J,K \}$ of disjoint sets
when $\Pi \cup \Pi^{-1} = (J \times K) \cup (K \times J)$.  That is, each $i \in J$ has an edge connecting it to every element of $K$ and vice-versa.
Hence, $|\Pi| = |J| \cdot |K|$.  We say that two
bipartite tournaments $\Pi$ and $\Gamma$ on the pair $\{ J, K \}$ have the same scores when each vertex
$i \in J \cup K$ has the same number of outputs in $\Pi$ and $\Gamma$, and consequently the same number
of inputs. The following is the bipartite analogue of Proposition \ref{prop24} and Theorem \ref{theo26}.

 \begin{theo}\label{theo26bi} Assume that  $\Pi$ and $\Gamma$ are bipartite tournaments on the pair $\{ J,K \}$.
 \begin{itemize}
 \item[(a)] The difference graph $\Delta = \Delta(\Pi,\Gamma)$ is Eulerian if and only if $\Pi$ and $\Gamma$ have the same scores.

 \item[(b)] If $\Pi$ and $\Gamma$ have the same scores, then there exists a finite sequence
 $\Pi_1, \dots, \Pi_k$ of bipartite tournaments on $\{ J,K \}$ with $\Pi_1 = \Pi$ and $\Pi_k = \Gamma$ and such that for $p = 1,\dots, k-1$,
 $\Pi_{p+1}$ is $\Pi_p$ with some 4-cycle reversed. In particular, all of these bipartite tournaments have the same scores.

 If $\Delta$ a single cycle of length $2\ell$ then  a sequence can be chosen
 with $k = \ell - 1$.
 \end{itemize}
 \end{theo}

 \begin{proof} (a) is proved exactly as is Proposition \ref{prop24}.

 (b) As in the proof of Theorem \ref{theo26} we reduce to the case when $\Delta$ is  a single cycle whose necessarily even length we write as $2 \ell$.
 We proceed by induction beginning with $\ell = 2$ in which case $\Gamma$ is obtained by reversing $1 = \ell - 1$ 4-cycle.

 Assume $\ell > 2$. If  $\Delta$ consists of the cycle
cycle $\langle i_1,\dots,i_{2\ell} \rangle$ with $\ell \geq 3$, we use two cases as before.

 {\bfseries Case 1} ($i_1 \to i_4$ in $\Pi$): In this case, $\langle i_1,i_4,\dots,i_{2\ell} \rangle$ is an $2(\ell - 1)$ cycle in $\Pi$ and so we can get
 from $\Pi$ to $\tilde \Gamma = \Pi/\langle i_1,i_4,\dots,i_{2\ell} \rangle$ via a sequence of $\ell - 2$  4-cycles by inductive hypothesis. Then
 we go from $\tilde \Gamma$ to $\Gamma$ by reversing the 4-cycle
$\langle i_1, i_2, i_3, i_4 \rangle$  in $\tilde \Gamma$. Furthermore,
 $\Gamma = \Pi/ \langle i_1,\dots,i_{\ell}\rangle = \tilde \Gamma/\langle i_1, i_2, i_3, 1_4 \rangle$ because
 the edge between $i_1$ and $i_4$ has been reversed twice.

 {\bfseries Case 2} ($i_4 \to i_1$ in $\Pi$): We proceed as before reversing the order of operations.
 This time $\langle i_1, i_2, i_3, i_4 \rangle$ is a 4-cycle
 in $\Pi$ and $\langle i_1,i_4,\dots,i_{\ell}\rangle $  is an $2(\ell - 1)$ cycle in $\tilde \Gamma = \Pi/\langle i_1, i_2, i_3, i_4  \rangle$.
 Proceed as before.

\end{proof} \vspace{.5cm}

The following is now obvious.

\begin{cor}\label{cor27bi} If $\Pi$ is a bipartite tournament on the pair $\{ J,K \}$ then $\Gamma \leftrightarrow \Delta(\Pi,\Gamma)$ is a
bijective correspondence between the set of bipartite tournaments $\Gamma$ on the pair $\{ J,K \}$ with the same scores as $\Pi$ and the set
Eulerian subgraphs of $\Pi$. In particular, the cardinality of the set of Eulerian subgraphs depends only on the set of scores.  \end{cor}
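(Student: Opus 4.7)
The plan is to mirror the argument used for Corollary \ref{cor25a}, exploiting the bipartite analogue of Proposition \ref{prop25} together with Theorem \ref{theo26bi}(a). The key observation is that the bipartite structure $\Pi \subset (J \times K) \cup (K \times J)$ is preserved under reversal of any subgraph, since reversing an edge in $J \times K$ produces an edge in $K \times J$ and vice versa.

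First I would verify the bipartite analogue of Proposition \ref{prop25}: for any subgraph $\Delta \subset \Pi$, the digraph $\Pi/\Delta = (\Pi \setminus \Delta) \cup \Delta^{-1}$ is again a bipartite tournament on the pair $\{J,K\}$, and $\Pi/\Delta = \Gamma$ if and only if $\Delta = \Delta(\Pi,\Gamma)$. The first assertion is immediate from the bipartite structure, and the equivalence is a restatement of the definition of $\Delta(\Pi,\Gamma)$. In particular $\Delta = \Delta(\Pi, \Pi/\Delta)$ for every $\Delta \subset \Pi$.

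Next I would combine this with Theorem \ref{theo26bi}(a). Define $\Phi(\Gamma) = \Delta(\Pi,\Gamma)$ for bipartite tournaments $\Gamma$ on $\{J,K\}$ with the same scores as $\Pi$, and define $\Psi(\Delta) = \Pi/\Delta$ for Eulerian subgraphs $\Delta$ of $\Pi$. By Theorem \ref{theo26bi}(a), $\Phi(\Gamma)$ is Eulerian, so $\Phi$ lands in the set of Eulerian subgraphs of $\Pi$; conversely, the same theorem shows $\Psi(\Delta)$ has the same scores as $\Pi$ whenever $\Delta$ is Eulerian. The identities $\Psi \circ \Phi = \mathrm{id}$ and $\Phi \circ \Psi = \mathrm{id}$ follow from the bipartite analogue of Proposition \ref{prop25} recorded above, establishing the bijection.

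Finally, for the last sentence, I would observe that if $\Pi'$ is another bipartite tournament on $\{J,K\}$ with the same vertex-by-vertex scores as $\Pi$, then by the already proved bijection applied to either $\Pi$ or $\Pi'$, both $\Pi$ and $\Pi'$ are parameterized by --- and hence their Eulerian subgraphs are in bijection with --- the single set of bipartite tournaments on $\{J,K\}$ with that common score data. Consequently the number of Eulerian subgraphs depends only on the scores. I anticipate no serious obstacle: the whole argument is essentially formal once the bipartite analogue of Proposition \ref{prop25} is noted, and the only subtlety is being careful that ``same scores'' is interpreted as a vertex-indexed datum (not merely as an unordered score vector), which is exactly what Theorem \ref{theo26bi}(a) delivers.
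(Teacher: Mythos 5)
Your proposal is correct and follows exactly the route the paper intends: the paper states this corollary as "now obvious" after Theorem \ref{theo26bi}, relying on the same pairing $\Gamma \leftrightarrow \Delta(\Pi,\Gamma)$, $\Delta \mapsto \Pi/\Delta$ that proves Corollary \ref{cor25a} in the non-bipartite case. Your explicit check that $\Pi/\Delta$ remains a bipartite tournament on $\{J,K\}$, and your care in reading "same scores" as a vertex-indexed datum, are exactly the right details to record.
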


 $\Box$ \vspace{.5cm}

 We can form the interchange graph\index{interchange graph} on the set of bipartite tournaments
 on the pair $\{ J,K \}$, connecting $\Pi$ and $\Gamma$ by an undirected
 edge if $\Delta = \Delta(\Pi,\Gamma)$ is a single 4-cycle. In general, it is clear that the distance between $\Pi$ and $\Gamma$ is bounded by
 $\frac{|\Delta|}{2} - \s(\Delta)$ where, as before, the span $\s(\Delta)$ is the size of a maximum decomposition of $\Delta$ by disjoint cycles.

 Our brief consideration of bipartite digraphs is motivated by their application to the pointed games which we will now consider.

 A \emph{pointed game}\index{pointed game} \index{game!pointed} $\Pi$ on $I$ is a game with a chosen vertex which we label $0$.
 Let $I_+ = \Pi^{-1}(0)$ and $I_- = \Pi(0)$. With $I_+, I_-$ fixed we call $\Pi$ a pointed game on the pair $(I_+,I_-)$.
 We denote by $\Pi_+$ and $\Pi_-$ the tournaments which are the restrictions of $\Pi$ to
 $I_+$ and $I_-$ respectively. We let $\Xi$ denote $\Pi|(I_+ \cup I_-) \setminus (\Pi_+ \cup \Pi_-)$. It consists of those edges
 which connect elements of $I_+$ with those of $I_-$. Thus, $\Xi$ is a bipartite tournament
 on the pair $\{ I_+, I_- \}$.  The union $\Pi_+ \cup \Pi_- \cup \Xi$ consists of all edges of $\Pi$ except those which connect to $0$.

 Recall that a subset $A \subset I$ is invariant for a relation $\Pi$ on $I$ when $\Pi(A) \subset A$, in which case, $B = I \setminus A$ is
 invariant for $\Pi^{-1}$. We think of the pair $\{ B, A \}$ as a \emph{splitting}\index{splitting} for $\Pi$ when neither $A$ nor $B$ is empty.
 A splitting for  a relation $\Pi$ exists if and only $\Pi$ is not strong.  Since any game is strong, no game admits a splitting, but the tournaments
 $\Pi_{\pm}$ and the bipartite tournament $\Xi$ need not be strong.

 \begin{lem}\label{lemsplit}({\bfseries Splitting Lemma}) Let $\Pi$ be a pointed game on the pair $(I_+,I_-)$ and $A \subset I \setminus \{ 0 \}$.
 Let $A_{\pm} = A \cap I_{\pm}$.
 The following are equivalent.
 \begin{itemize}
 \item[(i)] $A$ is invariant for $\Xi^{-1}$.
 \item[(ii)] $A_{\pm}$ is invariant for $\Pi_{\pm}$  and, in addition, either \\ $|A_+| = |A_-|$ or $|A_+| = |A_-| - 1$.
 \end{itemize}
 \end{lem}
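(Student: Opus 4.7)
The plan is to translate conditions (i) and (ii) into edge-count equalities and exploit the fact that $\Pi$ is a game. Set $|I| = 2n+1$, so $|I_+| = |I_-| = n$, and let $p = |A_+|$ and $q = |A_-|$.

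First I would reformulate (i) in bipartite terms. Since $\Xi$ is a bipartite tournament on $\{I_+, I_-\}$, $A$ being invariant for $\Xi^{-1}$ is equivalent to saying no $\Xi$-edge goes from $(I_+ \cup I_-) \setminus A$ into $A$; because between each $u \in I_+$ and $v \in I_-$ there is exactly one $\Xi$-edge, this reads: every edge between $A_+$ and $I_- \setminus A_-$ is directed outward from $A_+$, and every edge between $A_-$ and $I_+ \setminus A_+$ is directed outward from $A_-$. Introduce the non-negative counts
\begin{align*}
\alpha  &\;=\; |\Pi_+ \cap (A_+ \times (I_+ \setminus A_+))|, \\
\alpha' &\;=\; |\Pi_- \cap (A_- \times (I_- \setminus A_-))|, \\
\beta   &\;=\; |\Xi  \cap (A_+ \times A_-)|, \\
\gamma  &\;=\; |\Xi  \cap (A_+ \times (I_- \setminus A_-))|, \\
\gamma' &\;=\; |\Xi  \cap (A_- \times (I_+ \setminus A_+))|.
\end{align*}
Then $A_+$ is $\Pi_+$-invariant iff $\alpha = 0$, $A_-$ is $\Pi_-$-invariant iff $\alpha' = 0$, and (i) is equivalent to $\gamma = p(n-q)$ together with $\gamma' = q(n-p)$, since $\gamma \leq p(n-q)$ and $\gamma' \leq q(n-p)$ always.

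Next I would pull the game axiom through. For $a \in I_+$, $0 \in \Pi(a)$ gives $|\Pi_+(a)| + |\Xi(a)| = n - 1$; for $a \in I_-$, $0 \notin \Pi(a)$ gives $|\Pi_-(a)| + |\Xi(a)| = n$. Summing the first identity over $a \in A_+$, using $\sum_{a\in A_+} |\Pi_+(a)| = \binom{p}{2} + \alpha$ (since $\Pi_+|A_+$ is a complete tournament on $p$ vertices) and $\sum_{a\in A_+} |\Xi(a)| = \beta + \gamma$, produces
\begin{equation*}
\alpha + \beta + \gamma \;=\; \tfrac{1}{2}\,p(2n - p - 1). \tag{A}
\end{equation*}
Summing the second identity over $a \in A_-$, with $\sum_{a \in A_-} |\Xi(a)| = (pq - \beta) + \gamma'$, yields
\begin{equation*}
\alpha' - \beta + \gamma' \;=\; \tfrac{1}{2}\,q(2n - q - 2p + 1). \tag{B}
\end{equation*}

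For (i)$\Rightarrow$(ii), I would substitute $\gamma = p(n-q)$ into (A) and $\gamma' = q(n-p)$ into (B), then add; after cancellation this reduces to
\begin{equation*}
\alpha + \alpha' \;=\; -\tfrac{1}{2}(p-q)(p-q+1).
\end{equation*}
Non-negativity of the left side forces $(p-q)(p-q+1) \leq 0$, i.e.\ $p \in \{q-1, q\}$, and then the sum vanishes, giving $\alpha = \alpha' = 0$. For (ii)$\Rightarrow$(i), set $\alpha = \alpha' = 0$ in (A) and (B); combining with the inequalities $\gamma \leq p(n-q)$ and $\gamma' \leq q(n-p)$ yields
\begin{equation*}
\tfrac{1}{2}\,p(2q - p - 1) \;\leq\; \beta \;\leq\; \tfrac{1}{2}\,q(q - 1),
\end{equation*}
and a short algebraic check shows these bounds coincide to a common value precisely when $p = q$ or $p = q - 1$. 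In both balance cases $\beta$ is thereby pinned down, the inequalities on $\gamma$ and $\gamma'$ collapse to equalities, and (i) follows. The one point of delicacy is verifying that the two bounds on $\beta$ collapse exactly in the two balance cases; everything else is routine double counting driven by the Eulerian/game condition on $\Pi$.
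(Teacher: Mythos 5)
Your proposal is correct and is essentially the paper's own argument in exact-identity form: both proofs double-count the edges of $\Pi|A$ against the score-$n$ condition and reduce the matter to the quadratic dichotomy $(p-q)(p-q+1)\leq 0$ versus $\geq 0$, your identity $\alpha+\alpha' = -\tfrac{1}{2}(p-q)(p-q+1)$ being precisely the paper's inequality $(a_--a_+)^2\leq a_--a_+$ with the slack made explicit. The only difference is cosmetic bookkeeping (named slack variables and two exact degree-sum identities instead of per-vertex upper bounds with a strictness argument), and all your computations, including the collapse of the bounds on $\beta$ when $p=q$ or $p=q-1$, check out.
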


 \begin{proof} Let $|I| = 2n + 1$. With $B = I \setminus (A \cup \{ 0 \})$
 let $a_{\pm} = |A_{\pm}|$ and $b_{\pm} = |B_{\pm}|$ with $B_{\pm} = B \cap I_{\pm}$.
 If $2n + 1 = |I|$, then $n = |I_{\pm}|$ and so $a_{\pm} + b_{\pm} = n$.
 Furthermore, $|A| = a_+ + a_-$ and $|B| = b_+ + b_-$.  Let $\Pi|A$ be the restriction of $\Pi$ to $A$.

 (i) $\Rightarrow$ (ii): Let $i \in A_+$. By assumption, every $\Pi$ edge between $i$ and an element of $B_-$, i. e. every $\Xi$ edge
 between $i$ and an element of $B$, is an output from $i$, accounting for $b_-$ outputs from $i$. There is also one output to $0$.
 Because $\Pi$ is a game there are a total of $n$ outputs from $i$.  Thus, in $\Pi|A$, the element $i$ has at most $n - b_- - 1 = a_- - 1$
 outputs. Thus, the total number of outputs in $\Pi|A$ from elements of $A_+$ is bounded by $a_+ \cdot (a_- - 1)$.

 Similarly, if $i \in A_-$, then every $\Pi$ edge between $i$ and an element of $B_+$ is an output. Thus, in $\Pi|A$ the element $i$ has at
 most $n - b_+ =  a_+$ outputs. Thus, the total number of outputs in $\Pi|A$ from elements of $A_-$ is bounded by $a_- \cdot a_+$.

 Every $\Pi|A$ output comes from an element of $A_+$ or $A_-$.  The total number of outputs is the total number of edges which is
 $\frac{1}{2}(a_+ + a_-)(a_+ + a_- - 1)$. Thus, we obtain
 \begin{equation}\label{eqsplit1}
 \frac{1}{2}(a_+ + a_-)(a_+ + a_- - 1) \quad \leq \quad a_+ (a_- - 1) \ + \ a_- a_+.
 \end{equation}

 Furthermore, if any $i \in A_{\pm}$ has an output to $B_{\pm}$, i.e. if either $A_+$ is not $\Pi_+$ invariant or $A_-$ is not $\Pi_-$ invariant,
 then the inequality is strict.  But this inequality can be rewritten as:
 \begin{equation}\label{eqsplit2}
 (a_- - a_+)^2 \quad \leq \quad (a_- - a_+).
 \end{equation}
 Because $a_+$ and $a_-$ are integers this equation cannot hold strictly and the only way it can hold is if $a_- - a_+$ equals $0$ or $1$.

 (ii) $\Rightarrow$ (i): If $i \in A_+$, then every edge from $B_+$ is an input. In $\Pi|A$ $i$ has at most $n - b_+ = a_+$ inputs.
 Similarly, if $i \in A_-$, then every edge from $B_-$ is an input and it has an input from $0$. In $\Pi|A$ it has at most
 $n - b_- -1 = a_- - 1$ inputs. Again the total number of inputs is the total number of edges.  This time we get
  \begin{equation}\label{eqsplit3}
 \frac{1}{2}(a_+ + a_-)(a_+ + a_- - 1) \quad \leq \quad (a_+)^2 \ + \ (a_-)^2 \ - \ a_-
 \end{equation}
 and if $A$ is not $\Xi^{-1}$ invariant then this inequality is strict.

 This time the inequality can be rewritten as the reverse of (\ref{eqsplit2})
  \begin{equation}\label{eqsplit4}
 (a_- - a_+)^2 \quad \geq \quad (a_- - a_+).
 \end{equation}
 This is always true and can, of course, be strict. We will see below that $A_{\pm}$ can both be $\Pi_{\pm}$ invariant without $\Xi^{-1}$ invariance of
 $A$.

 But if, in addition, $a_- - a_+$ equals $0$ or $1$, then this inequality is not strict and so $A$ must be $\Xi^{-1}$ invariant.

 \end{proof}\vspace{.5cm}

 For a pointed game $\Pi$ on $2n + 1$ vertices, $\Pi_+$ and $\Pi_-$ are tournaments on $n$ vertices.  We now show that these tournaments can be chosen
 arbitrarily.

 \begin{theo}\label{theopoint} Let $\Gamma_+$ and $\Gamma_-$ be tournaments on the set $J$ of size $n$. Let
  $I_+ = J \times \{ +1 \}, I_- = J \times \{ -1 \}$ and $I = \{ 0 \} \cup I_- \cup I_+$.

 There exists a pointed game $\Pi$ on the pair $(I_+,I_-)$ such that with the identification $j \mapsto (j, -1)$
 $\Pi_- = \Gamma_-$ and with the identification $j \mapsto (j, +1)$ $\Pi_+ = \Gamma_+$. \end{theo}

  \begin{proof} Let $\Delta = \Delta(\Gamma_-,\Gamma_+)$.  We prove the result by induction on $k = |\Delta|$. Notice that we make no
  assumption about the scores and so $\Delta$ need not be Eulerian.

  $(k = 0)$ In this case, $\Gamma_- = \Gamma_+$ and we use $\Pi = 2\Gamma_-$.

  $(k > 0)$ Choose $(i,j) \in \Delta$. Let $\hat \Delta = \Delta \setminus \{ (i,j) \}$ and $\hat \Gamma = \Gamma_-/\hat \Delta$.
  By induction hypothesis there exists a pointed game $\hat \Pi$ with $\hat \Pi_- = \Gamma_-$ and  $\hat \Pi_+ = \hat \Gamma$.
  Let $i+ = (i,+1), j+ = (j,+1)$. Then $(i+,j+)$ is an edge of $\hat \Pi_+$ and with the identification above $j \mapsto (j, +1)$
  we have $\hat \Pi_+/\{ (i+,j+) \} = \Gamma_+$.

  Let $\hat \Xi$ be the bipartite tournament of connections between $I_+$ and $I_-$ in $\hat \Pi$. We first show that there exists a
  $\hat \Xi$ path from $j+$ to $i+$.

  Let $A = \{ i+ \} \cup \O \hat \Xi^{-1}(i+)$. This is a
  $\hat \Xi^{-1}$ invariant set which contains $i+$. The Splitting Lemma \ref{lemsplit} implies that
  $A_+ = A \cap I_+$ is $\hat \Pi_+$ invariant. Since $(i+,j+) \in \hat \Pi_+$ it follows that $j+ \in A_+ \subset A$. This exactly says that
  there is a $\hat \Xi$ path from $j+$ to $i+$.

  By eliminating repeated vertices as usual we get a simple $\hat \Xi$ path. Concatenating
  with the edge $(i+,j+)$ we obtain a cycle $C$ in $\hat \Pi$. The cycle is disjoint from $\hat \Pi_-$ and does not contain the vertex
  $0$.  Furthermore, it intersects $\Pi_+$ only in the edge $(i+,j+)$.  It follows that $\Pi = \hat \Pi/C$ is a pointed game with
   $\Pi_- = \Gamma_-$ and  $\Pi_+ = \Gamma_+$ as required.

 \end{proof}\vspace{.5cm}

 While the input and output tournaments in a game can be arbitrary, this is not true for group games.

 \begin{lem}\label{lempointgame} Let $A$ be a game subset for the group $\Z_{2n+1}$ with $\Gamma[A]$ the associated game.
 If  $i \in A$ with $i$ relatively prime to $2n + 1$  and the score of $i$ in the tournament $\Gamma[A]|A$ is $n - 1$, i.e. $|A \cap (i + A)| = n - 1$,
 then $A = m_i([1,n])$. \end{lem}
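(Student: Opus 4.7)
The plan is to use the hypothesis $|A \cap (i+A)| = n-1$ to extract a strong closure property of $A$ under subtraction by $i$, and then to iterate this property, exploiting the fact that $i$ generates the additive group $\Z_{2n+1}$.

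First I would unpack the score hypothesis in additive notation. An edge $i \to j$ in $\Gamma[A]$ means $j - i \in A$, so the set of outputs of $i$ in the restriction $\Gamma[A]|A$ equals $A \cap (i+A)$. Since $0 \notin A$, we have $i \notin A \cap (i+A)$, hence $A \cap (i+A) \subset A \setminus \{i\}$, and comparing the cardinalities $n-1 = n-1$ forces equality. This yields the key closure property: for every $a \in A$ with $a \neq i$, the element $a - i$ lies in $A$.

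Next I would iterate. Starting from any $a \in A$, the sequence $a,\ a - i,\ a - 2i,\ \ldots$ remains in $A$ by the closure property, and the only way to leave $A$ is through the step following $i$ itself, since $i - i = 0 \notin A$ while for every other element of $A$ the closure property forbids exit. Consequently every $a \in A$ is of the form $a = ji$ for some $j \geq 1$; conversely, reading the sequence backwards, if $ji \in A$ with $j \geq 2$, then each of $i, 2i, \ldots, ji$ belongs to $A$.

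Finally I would pin down the cardinality. Because $i$ is relatively prime to $2n+1$, the element $i$ has additive order $2n+1$, so the multiples $i, 2i, \ldots, ni$ are pairwise distinct in $\Z_{2n+1}$. Let $m$ be the largest integer with $mi \in A$; the downward closure from the previous step gives $A = \{i, 2i, \ldots, mi\}$, and matching $|A| = n$ forces $m = n$, whence $A = m_i([1,n])$. I do not anticipate a genuine obstacle here; the only delicate point is verifying that the backward sequence from an arbitrary $a \in A$ terminates precisely at $i$ rather than being expelled elsewhere, and this is exactly what the closure property (with $i$ as its sole exception) guarantees.
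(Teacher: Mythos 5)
Your proof is correct and is essentially the paper's argument: both rest on the observation that score $n-1$ forces $A\setminus\{i\}=A\cap(i+A)$, so $A$ is closed under subtracting $i$ except at $i$ itself, and a descent then shows $A=\{i,2i,\dots,ni\}$. The only cosmetic difference is that the paper first normalizes to $i=1$ via the automorphism $m_i$ and descends in steps of $1$, whereas you run the same descent directly in steps of $i$, using that $i$ generates $\Z_{2n+1}$.
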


   \begin{proof}  First, assume that $i = 1$. Every element of $A$ other than $1$ itself is an output of $1$.  That is, if $a \not= 1$ is an
   element of $A$, then $a - 1 \in A$. So if
   $m = \max \{ i \in A : i = 1,2,\dots,2n \}$
   then, inductively, $m-1, m-2, \dots, 1 \in A$ and $m+1,\dots,2n \not\in A$. Since $|A| = n$, $m = n$.

   In general, if $i$ is relatively prime to $2n + 1$, then multiplication $m_i$ is a group isomorphism on $\Z_{2n+1}$ and with
   $B = (m_i)^{-1}(A)$ it is an isomorphism from $\Gamma[B]$ to $\Gamma[A]$ taking $B$ to $A$. So $1 \in B$ with score $n-1$ for the restriction
   to $B$.  Hence, $B = [1,n]$ and so $A = m_i([1,n])$.

  \end{proof}\vspace{.5cm}

 \begin{theo}\label{theopointgame} Let $A$ be a game subset for a group $G$ with $|G| = 2n + 1$ prime. If there exists $i \in A$ such that
  the score of $i$ in the tournament $\Gamma[A]|A$ is $n - 1$, then there is an isomorphism of $G$ with $\Z_{2n+1}$ mapping $i$ to $1$ and
  $A$ to $[1,n]$. In particular, the score vector for the tournament $\Gamma[A]|A$ is $(0,1,2,\dots,n-1)$. \end{theo}

  \begin{proof} A group of prime order is cyclic and so $G$ is isomorphic to $\Z_{2n+1}$ by an isomorphism which maps $i$ to some element
  $j \in [1,2n]$ and so is relatively prime to $2n+1$. Apply the previous lemma and compose with $(m_j)^{-1}$. So the score vector of $\Gamma[A]|A$
  is the score vector of the restriction to $[1,n]$ of the associated game on $\Z_{2n+1}$.

  \end{proof}

  {\bfseries Remarks:} (a) If $2n + 1$ is not prime there can be other games on $\Z_{2n+1}$ with an element of
  having score $n-1$ in the tournament $\Gamma[A]|A$.  For example, with $2n + 1 = 9$ and $A = \{ 1, 3, 4, 7 \}$ the element
  $3$ has score $3$ and the restricted tournament has score vector $(1,1,1,3)$.

  (b) It follows that if $2n + 1$ is a prime then a tournament $\Pi$ with score vector $(1,1,2,\dots,n-3,n-3,n-1)$ cannot occur as
  the restriction $\Gamma[A]|A$ for a game subset $A$ of a group of order $2n + 1$. \vspace{.5cm}

  From Theorem \ref{theopoint} it follows that the number of pointed games $Games(I_+,I_-)$ on the pair $(I_+,I_-)$ with  $|I| = 2n + 1$ is
  bounded below by the square of the number of tournaments on a set of size $n$,
 It is bounded above by the number of tournaments on a set of size $2n$, i. e.  by
 \begin{equation}\label{pointest}
 2^{n(n-1)} \ \leq \ |Games(I_+,I_-)| \ \leq \ 2^{n(2n-1)}.
 \end{equation}

  It is clear than neither estimate is sharp.

  If $\Gamma_+$ and $\Gamma_-$ are tournaments of size $n$, and $\Pi$ is a pointed game with $\Pi_- = \Gamma_-$ and  $\Pi_+ = \Gamma_+$ then
  the scores of the elements of the associated bipartite tournament $\Xi$ are clearly determined by the score vectors of $\Gamma_-$ and  $ \Gamma_+$.
  Thus, the number of pointed games $\Pi$ which similarly satisfy $\Pi_- = \Gamma_-$ and  $\Pi_+ = \Gamma_+$ is the number of Eulerian
  subgraphs of $\Xi$ for any one of them.

  \begin{ex}\label{expoint} If $\Gamma = \Gamma_-  = \Gamma_+$ and $\Pi = 2 \Gamma$, then from the cycles in $\Gamma$ we can obtain
  cycles in $\Xi$. \end{ex}

  \begin{proof} If $\langle i_1,i_2\dots,i_{2\ell}\rangle$ is a
  cycle in $\Gamma$ of even length, then $\langle i_{2\ell}+,i_{2\ell-1}-,$ $\dots,i_{1}-\rangle$ and $\langle i_{2\ell}-,i_{2\ell-1}+,\dots,i_{1}+\rangle$
  are disjoint cycles of this same length in $\Xi$.

  If $\langle i_1,i_2\dots,i_{2\ell+1}\rangle$ is a cycle of odd length in $\Gamma$, then
  $$\langle i_{2\ell+1}+,i_{2\ell}-,\dots,i_{1}+, i_{2\ell+1}-,i_{2\ell}+,\dots,i_{1}-\rangle$$ is a cycle in $\Xi$ of double the length.
  In addition, $$\langle i_{2\ell+1}-, i_{2\ell+1}+,i_{2\ell}-,i_{2\ell}+,\dots,i_{1}-,i_{1}+  \rangle$$ is a cycle in $\Xi$ of double the
  length. Furthermore,
   for each $1 \leq k \leq \ell$
  $$\langle i_{2\ell+1}+,i_{2\ell}-,\dots,i_{2k}-,i_{2k}+,i_{2k-1}-,\dots,i_1- \rangle$$ and
  $$\langle i_{2\ell+1}-,i_{2\ell}+,\dots,i_{2k-1}-,i_{2k-1}+,i_{2k-2}-,\dots,i_1+ \rangle$$
  are cycles of length $2\ell + 2$ in $\Xi$. Observe that each of these has $\ell + 1$ edges of the form $(i_p+,i_{p-1}-)$ (mod $2 \ell + 1$). It follows
  that no pair of theses cycles associated with $\langle i_1,i_2\dots,i_{2\ell+1}\rangle$ are disjoint.

 If $n = 3$ and $\Gamma = \langle i_1,i_2,i_3\rangle$, then then for $\Pi = 2 \Gamma$ these are the only cycles in $\Xi$.  That is, $\Xi$ contains
 two 6-cycles and three 4-cycles, no two of which are disjoint.  Thus, $\Xi$ contains six Eulerian subgraphs (including the empty subgraph).
   \end{proof} \vspace{.5cm}

 When a pointed game is the double of $\Pi$ on $I$ then $i- \to i+$ for all $i \in I$. This convenient pairing need not be possible for all
 pointed games.

 \begin{ex}\label{exmarriage} There is a pointed game $\Pi$ on the pair $(I_+,I_-)$ for which there does not exist a bijection
 $\r : I_- \tto I_+$ such that $i \to \r(i)$ for all $i \in I_-$. \end{ex}

 \begin{proof}
 Let $\Gamma$ and $\Theta$ be games on disjoint sets $J$ and $K$ with $|J| = 3$ and $|K| = 5$. With $I = J \cup K$  let $I_{\pm} = I \times \{ \pm 1 \}$
 with similar notation for $J_{\pm}, K_{\pm} \subset I_{\pm}$.

 Let $\Gamma_{\pm}, \Theta_{\pm}$ be copies of the games $\Gamma$ and $\Theta$ on $J_{\pm}$ and
 $K_{\pm}$. Define the pointed game $\Pi$ on $\{ 0 \} \cup I_+ \cup I_-$ to be $I_+ \times \{ 0 \} \cup \{ 0 \} \times I_-$ together with
 \begin{align}\label{eqmarriage01}
 \begin{split}
2 \Gamma \ \cup \ \Theta_+ \ \cup \ \Theta_- \ \cup & \ [K_+ \times K_-] \\
\cup \ [(J_- \cup J_+) \times K_+]  \ \cup & \ [K_- \times (J_- \cup J_+)].
\end{split}\end{align}

We can represent $\Pi$ by the diagram

\begin{equation}\label{cdmarriage}
\begin{tikzcd}
& |[alias=T]| J_+ \arrow[dl]\arrow[r]\arrow[d,from=T,to=O]   & |[alias=S]| K_+\arrow[d,from=S,to=W]  \\
0\arrow[dr]  \\
& |[alias=O]| J_- \arrow[u,from=O,to=T]\arrow[ur,from=O,to=S] & |[alias=W]| K_- \arrow[l]\arrow[ul,from=W,to=T]
 \end{tikzcd}
\end{equation}

Since the five elements of $K_-$ have upward outputs only among the three elements of $J_+$, the required bijection cannot exist.

   \end{proof}

 On the other hand, a strengthening of this condition characterizes the games which are doubles.

 \begin{theo}\label{theomarriage} Let $\Pi$ be a pointed game on the $(I_+,I_-)$. If there exists a bijection
 $\r : I_- \to I_+$ such that $i \to \r(i)$  and $\Pi$ is reducible via $i \to \r(i)$ for all $i \in I_-$, then as a pointed game
 $\Pi$ is isomorphic to the double $2 (\Pi|I_-)$. \end{theo}

 \begin{proof}  By relabeling, we may assume that $I_{\pm} = J \times \{\pm 1 \}$ and $\r(i-) = i+$. In that case, we will show that
 $\Pi = 2 \Pi_-$ with $\Pi_-$ the game on $J$ which is identified with $\Pi|I_-$ via the identification $i \mapsto i-$.

 Let $\Delta = \Delta(2 \Pi_-,\Pi)$. Since the two games agree on $I_-$, $\Delta$ is disjoint from $\Pi|I_-$. Furthermore,
 $i- \to i+$ for all $i \in J$ in both games and so no such edge is in $\Delta$. Furthermore, no edge containing $0$ is in $\Delta$.
 We will show that if $\Delta$ is nonempty, then $\Pi$ is not reducible via $j- \to j+$ for some $j \in J$.
 \vspace{.25cm}

\textbf{ Case 1 }(\ $\Delta \cap \Xi \ \not= \ \emptyset$ \ ): That is, there exists $i \to j$ in $\Pi_-$ such that either $j+ \to i-$  or
 $j- \to i+$ is in $\Delta$. If $j+ \to i-$ in $\Delta$ then $i- \to j+$ in $\Pi$. Since $i- \to j-$ in $\Pi$
  it follows from Proposition \ref{prop07} that $\Pi$ is not reducible via $j- \to j+$. If $j- \to i+$ is in $\Delta$,
  then, since $\Delta$ is Eulerian,
 there exists an edge to $j-$ in $\Delta$. Since $0$ is not a vertex of $\Delta$ and $\Delta$ is disjoint from $\Pi|I_-$,
 it follows that for some $k \in J$, $k+ \to j-$ in $\Delta$. As before, $\Pi$ is not reducible via $k- \to k+$.
  \vspace{.25cm}

 \textbf{ Case 2 } (\ $\Delta \ \subset \ \Pi|I_+$ \ ): There  exists $i \to j$ in $\Pi_-$ such that  $i+ \to j+$ in $\Delta$ and
 so $j+ \to i+$ in $\Pi$. Since $\Delta$ is disjoint from $\Xi$, $j+ \to i-$ in $\Pi$. Hence, $\Pi$ is not reducible via
 $i- \to i+$.

 \end{proof}

 Finally, we have some  observations about reducibility, extending Lemma \ref{lem29aab}(b),(c) and (d).

 \begin{prop}\label{propmarriage2} Let $\Pi$ be a pointed game on the $(I_+,I_-)$. \begin{itemize}
 \item[(a)] If $i,j \in I_+$ or $i,j \in I_-$, then
 $\Pi$ is not reducible via $\{ i, j \}$.
 \item[(b)]If $i \in I_+$, then $\Pi$ is reducible via $i \to 0$ if and only if $i \to k$ for all
 $k \in I_+ \setminus \{ i \}$.
 \item[(c)]If $i \in I_-$, then $\Pi$ is reducible via $0 \to i$ if and only if $k \to i$ for all
 $k \in I_+ \setminus \{ i \}$.
 \end{itemize}\end{prop}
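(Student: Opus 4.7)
The plan is to reduce everything to the seven-fold equivalence in Proposition \ref{prop07}(d), specifically the characterization of reducibility via conditions (iv) and (v) there: the game $\Pi$ is reducible via a pair $\{u,v\}$ precisely when no vertex $k$ beats both $u$ and $v$, and equivalently when no vertex is beaten by both. The definitions of $I_+ = \Pi^{-1}(0)$ and $I_- = \Pi(0)$ mean that the chosen basepoint $0$ is itself a witness that rules out reducibility whenever both elements of the pair lie on the same side, and a careful bookkeeping of who beats/is beaten by what will give the other two equivalences.

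For the first assertion, I would simply point out that if $i, j \in I_+$ then both $i \to 0$ and $j \to 0$, so taking $k = 0$ gives a common output target of $i$ and $j$, which violates Proposition \ref{prop07}(d)(v); hence $\Pi$ is not reducible via $\{i,j\}$. Symmetrically, if $i, j \in I_-$ then $0 \to i$ and $0 \to j$, so $k = 0$ is a common input source, violating Proposition \ref{prop07}(d)(iv). This takes care of both same-side cases at once.

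For the second assertion, with $i \in I_+$ so that $i \to 0$, applying Proposition \ref{prop07}(d)(iv) with $u = i, v = 0$, reducibility via $i \to 0$ is equivalent to the nonexistence of $k \in I \setminus \{i, 0\}$ with $k \to i$ and $k \to 0$. But $k \to 0$ is exactly the statement $k \in I_+$, so the condition becomes: no $k \in I_+ \setminus \{i\}$ satisfies $k \to i$, i.e., $i \to k$ for every $k \in I_+ \setminus \{i\}$. The third assertion is entirely parallel: with $i \in I_-$ so that $0 \to i$, apply Proposition \ref{prop07}(d) — condition (v) gives equivalently that no $k$ with $k \to 0$ and $k \to i$ exists, i.e., $i \to k$ for every $k \in I_+$; condition (iv) gives equivalently that no $k \in I_- \setminus \{i\}$ satisfies $i \to k$, i.e., $k \to i$ for every $k \in I_- \setminus \{i\}$.

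There is no genuine obstacle here; the proposition is a direct translation of the reducibility characterization of Proposition \ref{prop07}(d) into the language of $I_\pm$. The only point requiring attention is keeping straight which of (iv) and (v) is being invoked so that the stated condition comes out on the desired side (inside $I_+$ versus inside $I_-$), and confirming that the two equivalent forms produced by (iv) and (v) for $i \in I_-$ are indeed logically equivalent via the fact that $\Pi$ is a tournament.
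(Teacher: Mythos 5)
Your proof is correct and is essentially the paper's own argument: both reduce each claim to the characterization of reducibility in Proposition \ref{prop07}(d), using the basepoint $0$ as the common witness in the same-side case and translating ``$k \to 0$'' as ``$k \in I_+$'' (resp.\ ``$0 \to k$'' as ``$k \in I_-$'') in the other two. Two small remarks: in your last paragraph the labels (iv) and (v) are swapped relative to the paper's numbering (condition (iv) is the one about common \emph{inputs} $k \to u, v$), though this is harmless since the two conditions are equivalent; and your derivation of the third clause correctly yields ``$k \to i$ for all $k \in I_- \setminus \{i\}$'' (equivalently $i \to k$ for all $k \in I_+$), which shows that the ``$I_+$'' in the third clause of the stated proposition is a typo for ``$I_-$'' --- as written it would force $i$ to have $n+1$ inputs, which is impossible in a game.
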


  \begin{proof} If $i, j \in I_+$, then $i, j \to 0$ and so $\Pi$ is not reducible via $\{ i, j \}$ by Proposition \ref{prop07}(b). If
  $i, j \in I_-$, then $0 \to i, j$. If $i, j \in I_+$ and $j \to i$, then since $j \to 0$, $\Pi$ is not reducible via $\{ 0, i \}$.
  Conversely, if $i \to k$ for all $k \in I_+ \setminus \{ i \}$, then $\Pi(i) = \{ 0 \} \cup (I_+ \setminus \{ i \})$ while
  $\Pi(0) = I_-$. So $\Pi$ is reducible via $\{ i, 0 \}$ by Proposition \ref{prop07}(b) again. Similarly, for $i \in I_-$.
  \end{proof}  \vspace{.5cm}

  \begin{cor}\label{cormarriage3} If $\Pi$ is a game, then $\Gamma = (2 \Pi)/\Pi_+$ is a non-reducible game. \end{cor}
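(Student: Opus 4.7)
The plan is to first verify that $\Gamma$ is a game and then rule out reducibility via every possible pair of vertices. Since $\Pi_+$ is the restriction of $2\Pi$ to $J \times \{+1\}$ and is isomorphic to $\Pi$, it is an Eulerian subgraph of $2\Pi$, so by Proposition \ref{prop25} the tournament $\Gamma = (2\Pi)/\Pi_+$ has the same scores as $2\Pi$ and is a game. I assume $|\Pi| \geq 3$; if $\Pi$ is the trivial game of size $1$, then $\Pi_+ = \emptyset$ and $\Gamma = 2\Pi$ is the 3-cycle, which is reducible, so the statement implicitly requires $\Pi$ to be non-trivial.

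The edges of $\Gamma$ are easy to enumerate: all edges from or to $0$, all edges in $\Pi_-$, and all bipartite edges between $I_+$ and $I_-$ agree with $2\Pi$, while inside $I_+ = J \times \{+1\}$ the orientations are reversed, so $j+ \to i+$ in $\Gamma$ iff $i \to j$ in $\Pi$. Viewing $\Gamma$ as a pointed game at $0$, Proposition \ref{propmarriage2} disposes of all same-side pairs $\{i\pm, j\pm\}$ ($0$ is a common input or common output) and of pairs containing $0$: reducibility via $\{i+, 0\}$ or $\{0, i-\}$ would force, respectively, $i+$ to output to every other $k+$, or every $k-$ to output to $i-$, which under the adjacency rules above both translate into $i$ having out-degree $0$ in $\Pi$, impossible in a non-trivial game. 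For a diagonal pair $\{i-, i+\}$, pick any $j$ with $i \to j$ in $\Pi$ (such $j$ exists since $|\Pi| \geq 3$); then $j+ \to i+$ by the reversal and $j+ \to i-$ from the bipartite part, so $j+$ is a common input and Proposition \ref{prop07}(d) forbids reducibility.

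The remaining case, and the main step, is a mixed pair $\{j+, i-\}$ with $i \neq j$ in $J$; I split on the orientation of the edge $ij$ in $\Pi$. If $i \to j$ in $\Pi$, then $j+ \to i-$ in $\Gamma$, and $i+$ is a common output of $j+$ and $i-$, for $j+ \to i+$ comes from the reversal of $\Pi_+$ while $i- \to i+$ always holds. If instead $j \to i$ in $\Pi$, then $i- \to j+$ in $\Gamma$, and $j-$ is a common input, since $\Pi_-$ retains the orientation of $\Pi$ (so $j \to i$ gives $j- \to i-$) and $j- \to j+$ always holds. In either case Proposition \ref{prop07}(d) denies reducibility via $\{j+, i-\}$. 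This exhausts all unordered pairs of vertices of $\Gamma$; the only real obstacle is the careful bookkeeping of which edges are reversed and which are unchanged when passing from $2\Pi$ to $\Gamma$ in the mixed case.
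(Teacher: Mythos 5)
Your proof is correct and follows essentially the same route as the paper's: a case analysis over all unordered pairs, excluding each via the common-input/common-output criterion of Proposition \ref{prop07}(d) together with Proposition \ref{propmarriage2}, and your witnesses for the diagonal and mixed pairs ($j+$ as common input to $\{i-,i+\}$, $i+$ as common output of $\{j+,i-\}$, $j-$ as common input in the other orientation) match the paper's list $j+ \to i-, i+$; \ $i- \to j-, i+$; \ $j+, i- \to i+$ up to relabeling. Your observation that the statement needs $\Pi$ non-trivial (otherwise $\Gamma$ is the reducible $3$-cycle) is a legitimate caveat that the paper's proof silently passes over.
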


  \begin{proof}  If $i \to j$ in $\Pi$ then
  $$i- \to j-, \ \ j- \to i+, \ \ j+ \to i-, \ \ j+ \to i+, \ \ i- \to i+, \ \ j- \to j+$$
  in $\Gamma$. Hence,
  $$ j+ \to i-, i+, \ \ i- \to j-, i+, \ \ j+, i- \to i+,$$
  and so $\Gamma$ is not reducible via any of these pairs. The remaining pairs are excluded by Proposition \ref{propmarriage2}.
    \end{proof}  \vspace{.5cm}

     We close the section with an improvement of Proposition \ref{prop29aa}.

     \begin{prop}\label{prop29aaimprove} If $\Pi$ is a digraph with $n$ vertices, then $\Pi$ is a subgraph of
a game of size $2n - 1$. \end{prop}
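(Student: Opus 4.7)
My plan is to argue by induction on $n$, using the extension construction from Section \ref{secgames} to enlarge a game of size $2(n-1) - 1$ to one of size $2n - 1$ at each inductive step.

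The base case $n = 1$ is immediate, since the trivial game of size one contains the empty digraph on one vertex. For the inductive step, I take $\Pi$ on $V = \{v_1, \ldots, v_n\}$, pick any $v^* \in V$, and apply the induction hypothesis to the restriction $\Pi|V'$ where $V' = V \setminus \{v^*\}$; this yields an embedding $\iota : V' \hookrightarrow J$ into the vertex set of a game $\Gamma'$ with $|J| = 2n - 3$. I then extend $\Gamma'$ via $u \to v$ and a subset $K \subset J$ with $|K| = n - 1$, obtaining a game $\Gamma$ of size $2n - 1$ on $J \cup \{u, v\}$, and identify $v^*$ with either $u$ or $v$.

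The crux of the argument is that $\Pi(v^*)$ and $\Pi^{-1}(v^*)$ are disjoint subsets of $V'$, hence
\[
\min(|\Pi(v^*)|, |\Pi^{-1}(v^*)|) \ \leq \ \lfloor (n-1)/2 \rfloor \ \leq \ n - 2
\]
for every $n \geq 2$. In the case $|\Pi^{-1}(v^*)| \leq n - 2$, I will identify $v^*$ with $v$ and select $K$ so that $\iota(\Pi(v^*)) \subset K = \Gamma(v)$ (matching the outgoing edges of $v^*$) and $\iota(\Pi^{-1}(v^*)) \subset J \setminus K$ (matching the incoming edges from $V'$, since in $\Gamma$ the inputs of $v$ from $J$ are exactly $J \setminus K$). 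Such a $K$ exists because $\iota(\Pi(v^*))$ and $\iota(\Pi^{-1}(v^*))$ are disjoint and $|J| - |\iota(\Pi^{-1}(v^*))| \geq (2n-3) - (n-2) = n-1 = |K|$. The symmetric case $|\Pi(v^*)| \leq n - 2$ is handled by instead identifying $v^*$ with $u$, which in $\Gamma$ has outputs $(J \setminus K) \cup \{v\}$ and inputs $K$.

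The main thing to confirm is that these numerical bounds remain feasible in the tight case $n = 2$, where the bound $n - 2 = 0$ forces the identification according to the direction of the unique possible edge of $\Pi$, but the argument still goes through. Once $K$ is in place, the extended map sending $v^*$ to $v$ (or $u$) is an embedding of $\Pi$ into $\Gamma$: the edges of $\Pi|V'$ already lie in $\Gamma' \subset \Gamma$ by induction, and every edge incident to $v^*$ has been matched by the construction of $K$.
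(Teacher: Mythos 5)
Your proof is correct, and it takes a genuinely different route from the paper's. The paper first completes $\Pi$ to a tournament, then invokes Theorem \ref{theopoint} to realize it as $\Gamma_-$ of a pointed game of size $2n+1$ whose input tournament $\Gamma_+$ is an order; since the order has a vertex $u$ of score $n-1$, Proposition \ref{propmarriage2} makes the game reducible via $u \to 0$, and the reduction is the desired game of size $2n-1$. You instead induct directly with the extension construction of Section \ref{secgames}, and your one genuinely new observation — that the in-degree and out-degree of $v^*$ are disjoint counts summing to at most $n-1$, so at least one is at most $n-2$, which is exactly the slack needed to choose $K$ separating $\iota(\Pi(v^*))$ from $\iota(\Pi^{-1}(v^*))$ — is what makes the step go through; your degree bookkeeping, including the tight case $n=2$, checks out, and since $\Gamma(v)=K$, $\Gamma^{-1}(u)=K$ in the extension, the identification of $v^*$ with $v$ or $u$ preserves all edges at $v^*$. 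What each approach buys: the paper's argument is two lines once Theorem \ref{theopoint} (itself resting on the Splitting Lemma) is available, whereas yours is self-contained modulo Proposition \ref{prop07} and could appear immediately after Section \ref{secgames}, needing neither the pointed-game realization theorem nor the preliminary completion of $\Pi$ to a tournament. Both yield the bound $2n-1$, which the paper notes is sharp when $\Pi$ has a vertex of score $0$ or $n-1$.
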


  \begin{proof} As in the proof of Proposition \ref{prop29aa} we may assume that $\Pi$ is a tournament. Let $\Pi'$ be an order on $n$ vertices
   or, more generally, a tournament on $n$ vertices with a vertex of score $n - 1$.  By Theorem \ref{theopoint} there is a pointed game
   $\Gamma$ with $\Gamma_+$ isomorphic to $\Pi'$ and with $\Gamma_-$ isomorphic to $\Pi$. If $u$ is the vertex of $\Gamma^{-1}(0)$ with
   score $n - 1$ in $\Gamma_+$, then Proposition \ref{propmarriage2} implies that $\Gamma$ is reducible via $u \to 0$. The restriction of $\Gamma$
   to the vertices excluding $u$ and $0$ is a subgame of size $2n - 1$ which contains $\Gamma_-$.

       \end{proof}  \vspace{.5cm}

       Notice that if $\Pi$ itself contains a vertex with score $n - 1$ or $0$, then the smallest possible size for a game which contains
       $\Pi$ is $2(n - 1) + 1 = 2n - 1$.  Thus, if one does not restrict the score vector of the tournament $\Pi$, Proposition \ref{prop29aaimprove}
       is the best possible result.

  \vspace{1cm}

  \section{Interchange Graphs, Again}\label{secinterchangeagain}

  Fix $I = \{ 0 \} \cup I_+ \cup I_-$, disjoint sets with $|I_{\pm}| = n$. Define $I_0 = I \setminus \{ 0 \} = I_+\cup I_-$. Let
  $n[I_0]$ denote the set of subsets of $I_0$ of cardinality $n$ so that $|n[I_0]| = {2n \choose n}$. If $Games(I)$ is the set of
  games on $I$ then $\Gamma \mapsto \Gamma(0)$ is a mapping  $\pi: Games(I) \to n(I_0)$. If $J_- \in n(I_0)$, and $J_+ = I_0 \setminus J_-$, then
  a game $\Gamma$ has $\pi(\Gamma) = J_-$ exactly when $\Gamma$ is a pointed game on $(J_+,J_-)$. Furthermore, if $\rho$ is a bijection of $I$ which
  fixes $0$ and maps $I_-$ to $J_-$, then $\rho$ is an isomorphism of any pointed game on $(I_+,I_-)$ onto a pointed game on $(J_+,J_-)$. In
  particular, for every $J_- \in n[I_0]$ the cardinality of $\pi^{-1}(J_-)$ is that of $Games(I_+,I_-)$.  In particular, we have from (\ref{pointest})
  \begin{equation}\label{interag01}
  |Games(I)| \ = \ {2n \choose n} \cdot |Games(I_+,I_-)| \ \geq \ {2n \choose n} \cdot 2^{n(n-1)}.
  \end{equation}

  Observing that ${2n \choose n} \cdot 2^{n(n-1)} = {2n \choose n} \cdot \prod_{j=1}^{n-1} \ 2^{2j}$ we see that this is an improvement on
  the bound $\prod_{j=1}^{n} {2j \choose j}$ given in Theorem 4 of \cite{HQ}. Note that ${2j \choose j} < 2^{2j}$ since the number of subsets of
  size $j$ is less than the total number of subsets.


  Using this inequality we can obtain a lower bound for the number of isomorphism classes of games of a fixed size.

  \begin{prop}\label{isoest} Let $IS(n)$ denote the cardinality of the set of isomorphism classes of games on a set of size $2n + 1$.
  \begin{equation}\label{iso01}
  IS(n) \ \geq \ 2^{n(n-1)} \div [(2n + 1)\cdot (n!)^2].
  \end{equation}
  If $n \geq 7$, then
    \begin{equation}\label{iso02}
  \ln (IS(n)) \ \geq \ n \cdot [ n \ln 2 \ - \ 2 \ln n ]
  \end{equation}
  \end{prop}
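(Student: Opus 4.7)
The plan is to combine the orbit-counting principle with the lower bound on $|Games(I)|$ from (\ref{interag01}), and then use Stirling's inequality to obtain (\ref{iso02}) from (\ref{iso01}).

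First, I would observe that the symmetric group $S(I)$ of order $(2n+1)!$ acts on $Games(I)$ via $\Gamma \mapsto \bar\r(\Gamma)$, and the orbits of this action are precisely the isomorphism classes. Since each orbit has cardinality at most $(2n+1)!$, we obtain
$$ IS(n) \ \geq \ \frac{|Games(I)|}{(2n+1)!}. $$
Substituting the bound $|Games(I)| \geq \binom{2n}{n} \cdot 2^{n(n-1)}$ from (\ref{interag01}) and using the identity
$$ \frac{\binom{2n}{n}}{(2n+1)!} \ = \ \frac{(2n)!}{(n!)^2 \cdot (2n+1)!} \ = \ \frac{1}{(2n+1) \cdot (n!)^2} $$
immediately yields (\ref{iso01}).

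For (\ref{iso02}), I would take logarithms in (\ref{iso01}) to obtain
$$ \ln(IS(n)) \ \geq \ n(n-1)\ln 2 \ - \ \ln(2n+1) \ - \ 2 \ln(n!). $$
Next I would apply the Stirling-type bound $\ln(n!) \leq (n + \tfrac{1}{2})\ln n - n + 1$, valid for all $n \geq 1$, to conclude
$$ \ln(IS(n)) \ \geq \ n(n-1)\ln 2 \ - \ \ln(2n+1) \ - \ (2n+1)\ln n \ + \ 2n \ - \ 2. $$
Subtracting the desired lower bound $n^2 \ln 2 - 2n \ln n$ from the right-hand side and simplifying reduces the required inequality to
$$ n(2 - \ln 2) \ \geq \ 2 \ + \ \ln n \ + \ \ln(2n+1). $$
Since $2 - \ln 2 > 1.3$ while the right-hand side grows only logarithmically, this holds for all sufficiently large $n$; a direct check at $n = 7$ gives LHS $\approx 9.15$ and RHS $\approx 6.65$, and the gap only widens as $n$ grows (the derivative of LHS$-$RHS in $n$ is positive for $n \geq 2$).

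The main obstacle is cosmetic rather than conceptual: picking a form of Stirling's inequality that is both easy to cite and sharp enough to make the threshold come out at $n = 7$ as stated. Looser forms such as $\ln(n!) \leq n \ln n$ are insufficient, while sharper asymptotic expansions give a better threshold but require more justification. The bound $\ln(n!) \leq (n + \tfrac{1}{2})\ln n - n + 1$ is the sweet spot.
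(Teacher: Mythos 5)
Your proposal is correct and follows essentially the same route as the paper: divide the lower bound (\ref{interag01}) by $(2n+1)!$, simplify $\binom{2n}{n}/(2n+1)!$ to $1/[(2n+1)(n!)^2]$, then take logarithms and bound $\ln(n!)$ to reduce (\ref{iso02}) to an elementary inequality checked at the threshold and shown monotone. The only difference is cosmetic: the paper bounds $\ln(n!)$ by $\int_2^{n+1}\ln t\,dt$ while you invoke the Stirling bound $\ln(n!) \leq (n+\tfrac12)\ln n - n + 1$, which is indeed valid for $n \geq 1$ and in fact yields the inequality already for $n \geq 5$.
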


    \begin{proof} We obtain (\ref{iso01}) by dividing (\ref{interag01}) by the order of the permutation group which is $(2n + 1)!$.
    Now we observe that
    \begin{align}\label{iso03}
    \begin{split}
    n(n - 1) \ln 2 \ = &\ n^2 \ln 2 \ - (n + 1) \ln 2 \ + \ \ln 2, \\
    \ln (2n + 1) \ &\leq \  \ln 2 \ + \ \ln (n + 1), \\
    \ln (n + 1) \ = \ \ln n &\ + \ \ln( 1 + \frac{1}{n}) \ \leq \ \ln n \ + \ \frac{1}{n}.
    \end{split}
    \end{align}
    Furthermore,
    \begin{equation}\label{iso04}
    \begin{split}
    \ln (n!) \ \leq \ \int_2^{n + 1} \ \ln t \ dt \ = \  (n + 1)\ln(n + 1) - (n + 1) - 2 \ln 2 + 2 \ \leq \\
    n \ln n + 1 + \ln(n + 1) - (n + 1) - 2 \ln 2 + 2. \hspace{2cm}
    \end{split}
    \end{equation}

    Putting these together we obtain
    \begin{equation}\label{iso05}
     \ln (IS(n)) \ \geq \ n \cdot [ n \ln 2 \ - \ 2 \ln n ] +  [ (2 - \ln 2)(n + 1) - 3 \ln (n + 1) + 4 \ln 2 - 6].
     \end{equation}
     The function $t \mapsto (2 - \ln 2)t - 3 \ln t + 4 \ln 2 - 6$ is increasing for $t \geq 3$ and it is positive for
     $t = 8$ and so for $t \geq 8$.

  \end{proof}\vspace{.5cm}

  The set $n[I_0]$ has a natural undirected graph structure, with $(J_1,J_2)$ an edge if $|J_1 \cap J_2| = n - 1$.  That is, $J_2$ is obtained from
  $J_1$ by exchanging the element of $J_1 \setminus J_1 \cap J_2$ with the element of  $J_2 \setminus J_1 \cap J_2$ which is in the complement of
  $J_1$.  Each element of $J_1$ can be paired up with an element of its complement and each of these $n^2$ choices yields a different set $J_2$. Thus,
  $n[I_0]$ is an $n^2$ regular graph. The distance from $J_1$ to $J_2$ is $k$ when $|J_1 \cap J_2| = n - k$. In that case, there are $(k!)^2$
  geodesics between $J_1$ and $J_2$.  These are obtained by choosing an ordering on $J_1 \setminus (J_1 \cap J_2)$ and on  $J_2 \setminus (J_1 \cap J_2)$
  for the $k$ exchanges.

  Now suppose $(\Gamma_1,\Gamma_2)$ is an edge in the interchange graph on $Games(I)$.  That is, $\Gamma_2$ is obtained from $\Gamma_1$ by reversing
  some $3$-cycle $\langle i, j, k \rangle$. Assume that  $\Gamma_1$ is a pointed graph on $(I_+,I_-)$. If $0$ is not a vertex of the cycle then
  the cycle intersects either $\Pi_+$ or $\Pi_-$ but not both.  It must meet one of them because the bipartite tournament $\Xi$ contains no $3$-cycle.
 It cannot meet both because a cycle which intersects both $\Pi_+$ and $\Pi_-$ contains at least four vertices. In this case, $\Gamma_2$ is also a pointed graph on $(I_+,I_-)$.
  Thus, $\pi(\Gamma_1) = \pi(\Gamma_2)$.

  If $0$ is a vertex of the cycle then the cycle is $\langle i, 0, k \rangle$ with $i \in I_+, k \in I_-$ and $(k,i)$ an edge in $\Xi$.
  In that case, $\Gamma_2(0) = I_- \cup \{ i \} \setminus \{ k \}$. That is, $\pi(\Gamma_2)$ is connected by the edge in $n[I_0]$ with
  $\pi(\Gamma_1)$ via the interchange of $k$ with $i$.  By Theorem \ref{theo08} the vertex $0$ is contained in $n(n+1)/2$ $3$-cycles. Each of these leads to
  a different element of $n[I_0]$. Thus, the edges from $\Gamma_1$ project to $n(n+1)/2$ of the $n^2$ edges from $I_-$ in $n[I_0]$. On the other hand,
  if $J = I_- \cup \{ i_1 \} \setminus \{ k_1 \}$ for arbitrary $i_1 \in I_-, k_1 \in I_+$ then the product of transpositions $(i_1,i)$ and $(k_1,k)$ is
  a permutation $\rho$ which induces an isomorphism from $\Gamma_1$ to $\hat \Gamma_1$ which has $\pi(\Gamma_1) = \pi(\hat \Gamma_1)$ and
  $\langle i_1, 0, k_1 \rangle$ is a cycle of $\hat \Gamma_1$.

  For an undirected graph $G$ we will call a set $T$ of vertices \emph{convex}\index{convex} when for all
  $t_1, t_2 \in T$ there exists a geodesic of $G$ between $t_1$ and $t_2$ and for every geodesic
  of $G$ between $t_1$ and $t_2$ the vertices are all contained in $T$.

\begin{theo}\label{theoconvex}  Let $\Pi$ be a game on a set $I$ of vertices and let $Q \subset I$. Define $Q^* \subset \Pi$ by
$(i,j) \in Q^*$ if and only if $i \in Q$ or $j \in Q$ (or both). Let $Games(Q^*)$ be the set of games $\Gamma$ on $I$ such that $Q^* \subset \Gamma$.
That is, every edge which connects to a vertex of $Q$ has the same orientation in $\Gamma$ as in $\Pi$.  The set $Games(Q^*)$ is a convex subset
of the interchange graph of all games on $I$.
 \end{theo}

 \begin{proof}: Clearly a game $\Gamma$ lies in $Games(Q^*)$ if and only if the Eulerian subgraph $\Delta(\Gamma,\Pi)$ is disjoint from $Q^*$.
 Recall that by Corollary \ref{cor29x} the distance between $\Gamma$ and $\Pi$ is $\b(\Delta(\Gamma,\Pi))$.

 Assume that $\Delta(\Gamma,\Pi)$ is disjoint from $Q^*$ and that we reverse a $3$-cycle which meets $Q^*$ to obtain $\Gamma'$ which is
 not in $Games(Q^*)$.  It suffices to show that $\b(\Delta(\Gamma',\Pi)) = \b(\Delta(\Gamma,\Pi)) + 1$ for then $\Gamma'$ cannot lie on
 a geodesic from $\Gamma$ to $\Pi$.  We consider the cases from  Theorem \ref{theo28x}.

 Let the reverse of the given $3$-cycle be $\langle i_1,i_2,i_3 \rangle$ so that $\langle i_3,i_2,i_1 \rangle$ is in $\Gamma$.
 By assumption at least one of the vertices, say $i_2$, is in $Q$. Hence, $(i_3,i_2), (i_2,i_1) \in Q^*$ and the vertex $i_2$ does not
 occur in $\Delta(\Gamma,\Pi)$. Hence, $(i_2,i_3),(i_1,i_2) \in \Delta(\Gamma',\Pi)$ and these are the only edges of $\Delta(\Gamma',\Pi)$
 which contain the vertex $i_2$. Hence, a cycle which contains $i_2$ from any decomposition for $\Delta(\Gamma',\Pi)$
 must contain both these edges.

 In the notation of the proof of Theorem \ref{theo28x} we first consider Case 1, with the cycle $\langle i_3,i_2,i_1 \rangle$ disjoint from
 $\Delta(\Gamma,\Pi)$ and with  $(i_2,i_3),(i_1,i_2)$  in a single cycle of the maximum decomposition for $\Delta(\Gamma',\Pi)$.  As shown there,
 $\b(\Delta(\Gamma',\Pi)) = \b(\Delta(\Gamma,\Pi)) + 1$.

 Alternatively, we could be in Case 3, with $(i_1,i_3) \in \Delta(\Gamma,\Pi)$ and so neither $i_1$ nor $i_3$ is in $Q$.  Again since $(i_2,i_3),(i_1,i_2)$
 are in a single cycle of the decomposition of $\Delta(\Gamma',\Pi)$ we again get $\b(\Delta(\Gamma',\Pi)) = \b(\Delta(\Gamma,\Pi)) + 1$.

 \end{proof}  \vspace{.5cm}

 \begin{cor}\label{corconvex} With $I = \{ 0 \} \cup I_+ \cup I_-$, the set $Games(I_+,I_-)$ is a convex subset of the interchange graph of games
 on $I$. For each $J \in n[I_0]$ the set $\pi^{-1}(J)$  is a convex subset of the interchange graph of games
 on $I$. \end{cor}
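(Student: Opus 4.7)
The plan is to realize each of the two sets as a set of the form $Games(Q^*)$ from Theorem \ref{theoconvex} with the distinguished subset $Q = \{0\}$, and then quote that theorem.

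First I would observe that for any game $\Pi$ on $I$ and for $Q = \{0\}$, the subgraph $Q^*$ as defined in Theorem \ref{theoconvex} is precisely the set of all edges of $\Pi$ which contain the vertex $0$, namely
\begin{equation*}
Q^* \ = \ (\{0\} \times \Pi(0)) \ \cup \ (\Pi^{-1}(0) \times \{0\}).
\end{equation*}
Thus a game $\Gamma$ on $I$ lies in $Games(Q^*)$ if and only if $\Gamma(0) = \Pi(0)$ and $\Gamma^{-1}(0) = \Pi^{-1}(0)$, i.e. $\Gamma$ and $\Pi$ agree on the orientation of every edge incident to $0$.

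For the first assertion, by Theorem \ref{theopoint} the set $Games(I_+,I_-)$ is non-empty; pick any $\Pi \in Games(I_+,I_-)$, so that $\Pi(0) = I_-$ and $\Pi^{-1}(0) = I_+$. With $Q = \{0\}$ and this choice of $\Pi$, the observation above shows $Games(Q^*) = Games(I_+,I_-)$. Theorem \ref{theoconvex} then yields convexity. For the second assertion, given $J \in n[I_0]$, pick any $\Pi \in \pi^{-1}(J)$, which again is non-empty by Theorem \ref{theopoint} (using $I_- = J$, $I_+ = I_0 \setminus J$). With $Q = \{0\}$ this choice gives $Games(Q^*) = \pi^{-1}(J)$, and Theorem \ref{theoconvex} applies.

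There is really no obstacle here: the corollary is just a translation of Theorem \ref{theoconvex} via the dictionary that fixing the orientation of all edges at $0$ is equivalent to fixing both $\Pi(0)$ and $\Pi^{-1}(0)$, and hence equivalent to fixing the splitting of $I_0$ into $\Pi(0)$ and $\Pi^{-1}(0)$. The only thing to verify carefully is the identity $Games(Q^*) = Games(I_+,I_-)$ (respectively $\pi^{-1}(J)$), which is immediate from the definitions of $Q^*$ and of $\pi$.
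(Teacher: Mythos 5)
Your proof is correct and is essentially the paper's own argument: the paper's entire proof is the single line ``The above theorem applies with $Q = \{0\}$,'' and you have simply spelled out the (immediate) identification of $Games(Q^*)$ with $Games(I_+,I_-)$ and with $\pi^{-1}(J)$ that this application relies on.
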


 \begin{proof} The above theorem applies with $Q = \{ 0 \}$.
 \end{proof}

\vspace{1cm}

\section{Coset Space Games and Actions on Games}\label{sechomogeneous}

For a subgroup $H$ of a group $G$ the \emph{double coset}\index{double coset} of $i \in G$ is the set $HiH$.
Clearly, $\{ (i,j) \in G \times G : i \in HjH \}$ is
an equivalence relation with equivalence classes the double cosets. Of course, $H$ itself is the double coset of the identity element $e$.

\begin{lem}\label{lem18} Let $G$ be a finite group with odd order and let $H$ be a subgroup of $G$. Let $i \in G$.

(a) If $i^2 \in H$ then $i \in H$.

(b) If $i \not\in H$ then $i^{-1} \not\in HiH$.
\end{lem}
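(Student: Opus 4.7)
The plan is to prove part (a) first using the fact that every element of an odd-order group has odd order, and then leverage (a) to prove (b) by a short manipulation that extracts an element whose square lies in $H$.

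For (a), I would note that since $|G|$ is odd, the order $m$ of $i$ divides $|G|$ and hence is odd. Thus $(i^2)^{(m+1)/2} = i^{m+1} = i$. Since $H$ is a subgroup and $i^2 \in H$, we get $i = (i^2)^{(m+1)/2} \in H$. This is essentially the observation that squaring is invertible on any cyclic subgroup of an odd-order group.

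For (b), I would argue by contrapositive: assume $i^{-1} \in HiH$, write $i^{-1} = h_1 i h_2$ for some $h_1, h_2 \in H$, and deduce $i \in H$. From the defining equation, rearranging gives $h_1 i = i^{-1} h_2^{-1}$. Set $x = i h_1$ and compute
\begin{equation*}
x^2 \ = \ (ih_1)(ih_1) \ = \ i (h_1 i) h_1 \ = \ i \cdot i^{-1} h_2^{-1} \cdot h_1 \ = \ h_2^{-1} h_1 \ \in \ H.
\end{equation*}
By part (a), $x \in H$, and then $i = x h_1^{-1} \in H$, which is the desired conclusion.

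There is really no main obstacle here: both parts are short algebraic manipulations, with the only substantive input being the oddness of $|G|$. The only mildly delicate step is spotting the right element $x = ih_1$ whose square lies in $H$, so that part (a) can be invoked; once one writes down the relation $h_1 i = i^{-1} h_2^{-1}$, the choice is essentially forced.
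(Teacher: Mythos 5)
Your proof is correct and follows essentially the same route as the paper: part (a) is the observation that squaring is invertible on the cyclic group generated by $i$ (the paper phrases this as "$i^2$ generates $\langle i \rangle$" while you exhibit the explicit power $(i^2)^{(m+1)/2} = i$), and part (b) uses exactly the paper's key computation $(ih_1)^2 = h_2^{-1}h_1 \in H$ followed by an appeal to (a).
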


\begin{proof} By Lagrange's Theorem $H$ and $i$ have odd order.

(a) Since $2$ is relatively prime to the order of $i$, $i^2$ is a generator of the cyclic group generated by $i$.  Hence, if $i^2 \in H$,
then $i$ is in the subgroup generated by $i^2$ which is contained in the subgroup $H$.

(b) Assume $i^{-1} \in HiH$ and so $i^{-1} = h_1ih_2$ for some $h_1, h_2 \in H$.  Then $(ih_1)(ih_1) = h_2^{-1}h_1 \in H$. So by (a),
$ih_1 \in H$ and $i = (ih_1)h_1^{-1} \in H$.

\end{proof} \vspace{.5cm}

\begin{df}\label{def19}  Let $G$ be a finite group with odd order and let $H$ be a subgroup of $G$. A subset $A$ of $G$ is a
game subset for the pair $(G,H)$\index{game subset for $(G,H)$} if it is a game subset of $G$
such that $i \in A \setminus H$ implies $HiH \subset A$. \end{df}
 \vspace{.5cm}

\begin{theo}\label{theo20} If $G$ is a finite group with odd order and  $H$ is a subgroup of $G$, then there exist
game subsets for $(G,H)$.\end{theo}

\begin{proof} By Lemma \ref{lem18} (b) the double cosets $HiH$ and $Hi^{-1}H$ are distinct for all $i \in G \setminus H$.
Let $T$ be the set of double
cosets other than $H$. We can partition $T$ by pairs $\{ \{HiH, Hi^{-1}H \}: i \in G \setminus H \}$. Choose $i_1, \dots, i_k \in G \setminus H$
so that $\{Hi_1H,\dots, Hi_kH \}$ includes exactly one double coset from each pair. Let $A_0 \subset H$ be a game subset for the odd order
group $H$.  Let $A = A_0 \ \cup \ \bigcup_{p = 1}^k \ Hi_pH $. Since $H = A_0 \cup A_0^{-1} \cup \{e \}$ and since $Hi^{-1}H$ consists of the
inverses of the elements of $HiH$, it follows that $A$ is a game subset. Clearly, $A \setminus H$ is a union of double cosets.

\end{proof} \vspace{.5cm}

{\bfseries Remark:} It is clear that this construction yields all the game subsets for $(G,H)$. Hence, if $2d$ is the number of double cosets
in $G \setminus H$ and $2k + 1$ is the order of $H$, then there are $2^{d + k}$ game subsets for $(G,H)$.
\vspace{.5cm}

\begin{theo}\label{theo21}  Let $G$ be a finite group with odd order, $H$ be a subgroup of $G$ and $A$ be a
game subset for $(G,H)$. Let $G/H$\index{$G/H$} be the coset space\index{coset space} of the left cosets, i.e. $G/H = \{ iH : i \in G \}$.
Define $A/H = \{ iH : iH \subset A \}$. The set $\Gamma[A/H] = \{ (iH, jH) : i^{-1}jH \in A/H \}$\index{$\Gamma[A/H]$} is a game on $G/H$.

For each $k \in G$ the bijection $\ell_k$ on $G/H$ given by $iH \mapsto kiH$ is an automorphism of $\Gamma[A/H]$ and so
there is a group homomorphism from $G$ to $Aut(\Gamma[A/H])$.

The surjection $\pi : G \tto G/H$ given by $i \mapsto iH$ is a morphism from $\Gamma[A]$ to $\Gamma[A/H]$.
\end{theo}

\begin{proof} Notice first that $iH \subset A$ requires $i \in G \setminus H$ since $A \cap H$ is a proper subset of $H$ (e.g. $e \not\in A$).
Hence if $i^{-1}jH \subset A$ then with $\hat i = ih_1, \hat j = jh_2$ then $\hat i^{-1}\hat j H \subset A$ because $A \setminus H$ is
a union double cosets.  Thus, $iH \to jH$ if and only if $i^{-1}j \in A \setminus H$.
Notice that $iH = \pi(i) = \pi(j) = jH$ if and only if $i^{-1}j \in H$.
Thus, we see that $\Gamma[A/H]$ is a tournament and that $\pi$ is a morphism from $\Gamma[A]$ to $\Gamma[A/H]$.

We see that $iH \to jH$ if and only if $j \in i(A \setminus H)$ if and only if $i \in j(A \setminus H)^{-1}$. Hence, the set of inputs and the set of
outputs of $iH$ with respect to $\Gamma[A/H]$ both have cardinality $|A \setminus H|/|H|$. Hence, $\Gamma[A/H]$ is Eulerian and so is
a game.

Finally, it is clear that $\ell_k$ is an automorphism of $\Gamma[A/H]$.

\end{proof} \vspace{.5cm}

{\bfseries Remark:} Clearly, $\ell_k$ acts as the identity on $G/H$ if and only if $iki^{-1} \in H$ for all $i \in G$. So the action of
$G$ on $G/H$ is \emph{effective}\index{action!effective}\index{effective action}, i.e. $\ell_k$ acts as the
identity only for $k = e$, exactly when $\hat H \ = \ \bigcap_{i \in G} iHi^{-1}$
is the trivial subgroup, or, equivalently, when $\{ e \}$ is the only subgroup of $H$ which is normal in $G$. In general, $G/\hat H$
acts effectively on $G/H$ and so injects into $Aut(\Gamma[A/H])$.
\vspace{.5cm}

We call the game $\Gamma[A/H]$ a \emph{coset space game}\index{coset space game}\index{game!coset space}. Of course, a group
game is a special case of a coset space game with $H$ the trivial subgroup. A game subset for $G$ is a game subset
for $(G,\{ e \})$.

\begin{prop}\label{prop21aa} Let $\pi: G \to T$ be a surjective group homomorphism with kernel $H  = \{a \in G : \pi(a) = e \}$.
If $A \subset G$ and $B \subset T$ are game subsets with
associated games $\Gamma[A]$ on $G$ and $\Gamma[B]$ on $T$, then the following are equivalent.
\begin{itemize}
\item[(i)] $\pi$ is a tournament morphism from $\Gamma[A]$ to $\Gamma[B]$.
\item[(ii)] $\pi(A) \subset B \cup \{e \}$.
\item[(iii)] $\pi^{-1}(B) \subset A$.
\item[(iv)] $A = \pi^{-1}(B) \cup (A \cap H)$.\end{itemize}
\end{prop}

\begin{proof} Because $\pi$ is a surjective group homomorphism, it is clear that $\pi$ is a tournament
morphism if and only if for all $a \in G$   $a \in A\setminus H \Leftrightarrow \pi(a) \in B$.

Notice that $H = \pi^{-1}(e)$ and $ \{ A \setminus H, A^{-1} \setminus H \}$ partition $G \setminus H$ and $\{ B, B^{-1} \}$
partition $T \setminus \{ e \}$.  Furthermore, $\pi(a^{-1}) = \pi(a)^{-1}$. Using these facts, it is easy to check that
each of (ii), (iii) and (iv) is equivalent to (i).

\end{proof} \vspace{.5cm}

\begin{theo}\label{theo21a}  Let $G$ be a finite group with odd order and $H$ be a normal subgroup of $G$ so that $\pi : G \tto G/H$ is
a group homomorphism onto the quotient group. A subset $A$ of $G$ is a game subset for $(G,H)$ if and only if there exist $B$ a game subset for
$G/H$ and $A_0$ a game subset of $H$ so that $A = A_0 \cup \pi^{-1}(B)$. In that case, the games $\Gamma[A/H]$ and $\Gamma[B]$ are equal.
\end{theo}

\begin{proof} When $H$ is normal, a double coset is just a coset. Thus, $A$ is a game subset for $(G,H)$ if and only if $A_0 \cap A$ is a game
subset for $H$ and $A \setminus H$ is a union of cosets. Normality of $H$ implies that $(iH)^{-1}jH = i^{-1}jH$. So $iH \to jH$
in $\Gamma[A/H]$ if and only if $i^{-1}jH \in \pi(A) \setminus \{ e \}$. Thus, $\pi(A)\setminus \{ e \} = B$ is a game subset with
$A \setminus H = \pi^{-1}(B)$.

\end{proof} \vspace{.5cm}

Recall that $A$ is a normal game subset when it is invariant with respect to the adjoint action of $G$ on itself via inner automorphisms.

\begin{add}\label{add21b} Assume that  $A = A_0 \cup \pi^{-1}(B)$ is a game subset for the pair $(G,H)$ with $H$ a normal subgroup.
The game subset $A$ is normal for $G$ if and only if $B$ is a normal game subset for $G/H$ and $A_0$ is invariant with respect
to the adjoint action of $G$. In particular, $A_0$ is a normal game subset for $H$.\end{add}

\begin{proof} Because the projection $\pi$ is a surjective group homomorphism, it is clear that $B$ is invariant with respect to
the adjoint action of $G/H$, i.e. $B$ is a normal game subset for $G/H$, if and only if $\pi^{-1}(B)$ is invariant with respect
to the adjoint action of $G$.

The normal subgroup $H $ of $G$  is invariant with respect to the adjoint action. Hence, $A$ is adjoint invariant if and only if
both $A_0 = A \cap H$ and $ \pi^{-1}(B) = A \cap (G \setminus H)$ are. Notice that invariance with respect to the $G$ adjoint action
is a stronger demand than invariance for the $H$ adjoint action. Nonetheless, since $H \setminus \{ e \}$ is partitioned by orbits of
the $G$ adjoint action, and the orbit of $h$ is disjoint from the orbit of $h^{-1}$ (see Lemma \ref{lem11norma}(b)) for $h \not= e$, it
follows that there always exist game subsets $A_0$ for $H$ which are invariant with respect to the $G$ adjoint action.

\end{proof} \vspace{.5cm}

Now let $\Pi$ be a game on $I$ and let $a \in I$. An action of a group $G$ on $\Pi$ is an action $\Phi: G \times I \tto I$ on $I$ such that each
$\Phi^g$ is an automorphism of $\Pi$. That is, $i \to j$ implies $gi \to gj$ for all $g \in G$.  Thus, an action of $G$ on
$\Pi$ is given by a group homomorphism $\Phi^{\#}:  G \tto Aut(\Pi)$.

For $i \in I$ the evaluation map $\Phi_i : G \tto I$ is defined by $\Phi_i(g) = g \cdot i$.
$Iso_i = \{ g : g \cdot i = i \} = \Phi_i^{-1}( \{ i \})$\index{$Iso_i$} is a subgroup of $G$
called the \emph{isotropy subgroup}\index{isotropy subgroup} of $i$.
We write $Gi = \Phi_i(G)$\index{$Gi$} for the $G$ orbit of $i$ so that
$\Pi|Gi = \Pi \cap (Gi \times Gi)$ is the restriction of $\Pi$ to the orbit of $i$. Of course,
$G$ acts transitively on $I$ exactly when $Gi = I$ in which case $\Pi|Gi = \Pi$.

Assume that $A$ is a game subset for $G$ with $\Gamma[A]$ the associated group game on $G$. $\Phi$ is an action of $\Gamma[A]$
\index{action!$\Gamma[A]$} on $\Pi$ when it is
an action of $G$ on $\Pi$ and, in addition, $\Phi_i : G \tto I$ is a morphism from $\Gamma[A]$ to $\Pi$ for every $i \in G$.   That is, for $i, j \in I$
and $g, h \in G$
\begin{equation}\label{eqact1} \begin{split}
i \to j \ \Rightarrow \ gi \to gj  \qquad [\Phi^g : \Pi \tto \Pi \ \ \text{is an automorphism}], \hspace{1cm}\\
  gi \not= hi,  \ g \to h \ \Rightarrow \ gi \to hi  \qquad [\Phi_i : \Gamma[A] \tto \Pi \ \ \text{is a morphism}].
\end{split}\end{equation}

Notice that if the action is free, then $g \to h$ implies  $gi \not= hi$ and so the above implies $gi \to hi$ for all $i \in I$.

At the other extreme, if $\Phi: G \times I \to I$ is the trivial action\index{trivial action} \index{action!trivial} given by $gi = i$ for all $(g,i)$,
then $\Phi$ is an action of $\Gamma[A]$ on $\Pi$ for any game subset $A$.

\begin{prop}\label{prop21d} If $A$ is a game subset for $G$, then the left translation action of $G$ on itself is an action $G$ on $\Gamma[A]$.
It is an action of $\Gamma[A]$ on itself
if and only if $A$ is a normal game subset. \end{prop}

\begin{proof} The first statement just recalls the structure of a group game. The second is immediate from Proposition \ref{prop11normb}.

\end{proof} \vspace{.5cm}


\begin{theo}\label{theo22}  Let $\Pi$ be a game on $I$ and $a$ be a fixed element of $I$. Assume that $G$ is a finite group of odd order
and $\Phi: G \times I \tto I$ is an action of $G$  on $\Pi$.
For example, $G$ can be any subgroup of $Aut(\Pi)$ with $\Phi$ the evaluation map so that $\Phi^{\#}$ is the inclusion map.   Let
$H = Iso_a = \Phi_a^{-1}( \{ a \})$. Choose $A_0$ a game subset for $H$ and let $A = A_0 \cup \Phi_a^{-1}(\Pi(a))$.

The set $A \subset G$ is a game subset for $(G,H)$. Let $\pi : G \tto G/H$ be the canonical projection. The map $\Phi_a$ is a
morphism from $\Gamma[A]$ to $\Pi$ and it factors through $\pi$ to define $\theta_a : G/H \tto I$ which is an embedding
from $\Gamma[A/H]$ to $\Pi$.  The restriction $\Pi|Ga$ of $\Pi$ to $Ga$ is a subgame of $\Pi$ and the bijection $\theta_a : G/H \tto Ga$ is
an isomorphism from $\Gamma[A/H]$ to $\Pi|Ga$. \end{theo}

\begin{proof} Observe first that by Proposition \ref{prop04} every element of $Aut(\Pi)$ has odd order and so by the first Sylow Theorem,
$Aut(G)$ itself has odd order. By replacing $G$ by  $\Phi^{\#}(G) \to Aut(\Pi)$ we may assume that $G$ is a subgroup of $Aut(\Pi)$.
Technically, we use Theorem \ref{theo21} because, instead of $G$, we are using its quotient by the kernel of this map.

Clearly, for $b \in I$, if $g(a) = b$ then $\Phi_a^{-1}(\{ b \}) = gH$. Now suppose that $a \to b$ in $\Pi$. For $h \in H$,
$h^{-1}(a) = a \to b = g(a)$. Since $h$ acts as an automorphism of $\Pi$, $a = h h^{-1}(a) \to h(b) = h g(a)$. Hence,
$h g \in \Phi_a^{-1}(\Pi(a))$.  It follows that $\Phi_a^{-1}(\Pi(a))$ is a union of $H$ double cosets. Since $g$ is an automorphism,
$g^{-1}(a) \to g^{-1}g(a) = a$. Hence, $g^{-1} \not\in A$. Finally, if $g \not\in H$, i.e. $g(a) \not= a$ then either
$a \to g(a)$ and so $g \in A$ or $g(a) \to a$ and, as before, $a \to g^{-1}(a)$ which implies $g^{-1} \in A$. It follows that
$A$ is a  game subset for $(G,H)$. For $g, k \in G$, $k \to g$ in $\Gamma[A]$ if and only if $ k^{-1}g \in A$. So, when $k(a) \not= g(a)$,
\begin{equation}\label{eq3}
k \to g \  \Leftrightarrow \ k^{-1}g \in \Phi_a^{-1}(\Pi(a)) \ \Leftrightarrow \ a \to k^{-1}g(a) \ \Leftrightarrow \  k(a) \to g(a).
\end{equation}

This says that $\Phi_a$ is a morphism from $\Gamma[A]$ to $\Pi$.

Since  $g(a) = b$ implies $\Phi_a^{-1}(\{ b \}) = g H$, it follows that $\Phi_a$ factors through $\pi$ to define the injection $\theta_a$.

Since $\pi$ is a surjective morphism from $\Gamma[A]$ to $\Gamma[A/H]$ and $\Phi_a$  is a morphism from $\Gamma[A]$ to $\Pi$, it easily follows
that $\theta_a$ is a morphism from $\Gamma[A/H]$ to $\Pi$. Since $\Gamma[A/H]$ is Eulerian and $\bar \theta_a : \Gamma[A/H] \tto \Pi|Ga$
is a bijection, it follows that $\Pi_a$ is Eulerian and so is a subgame of $\Pi$ with $\theta_a : G/H \tto Ga$
defining an isomorphism from $\Gamma[A/H]$ to $\Pi|Ga$.

\end{proof} \vspace{.5cm}

We immediately obtain the following.

\begin{cor}\label{cor23} A game  is point-symmetric, if and only if it is isomorphic to a coset space game. \end{cor}

$\Box$ \vspace{.5cm}

Any finite partially ordered set has minimal elements and so if  $\Pi$ is point-symmetric, there is a subgroup $G \subset Aut(\Pi)$
which acts transitively the vertices of $\Pi$ and which is minimal among such subgroups.

In general, for a game $\Pi$ on $I$ the restriction of $\Pi$ to each orbit of the action of $Aut(\Pi)$ on $I$ is a subgame
isomorphic to a coset space game.

If $\Phi: G \times I \to I$ is a free action of $G$ on $\Pi$, for each $i \in I$, $\Phi_i : G \tto Gi$ is an isomorphism of
a group game on $G$ with $\Pi|Gi$. While this follows from  Theorem \ref{theo22},  it also follows directly from Sabidussi's Theorem
\ref{theo11}(a). If $\Phi$ is a free action of $\Gamma[A]$ on $\Pi$, then each $\Phi_i$ is an isomorphism from $\Gamma[A]$ onto $\Pi|Gi$.

\begin{cor}\label{cor23aaa} Assume that $\xi$ is an automorphism of a game $\Pi$ on $I$, so that $\xi$ is a permutation of $I$.
Assume that $(a_0,\dots,a_{2n})$ is a nontrivial permutation cycle in the permutation $\xi$, so that $n > 1$.
The restriction $\Pi|\{ a_0,\dots, a_{2n} \}$ is
a subgame of $\Pi$ which is isomorphic to a group game on $\Z_{2n+1}$. \end{cor}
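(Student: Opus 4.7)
The plan is to apply Theorem~\ref{theo22} to the cyclic subgroup $G = \langle \xi \rangle$ of $Aut(\Pi)$ acting on $\Pi$, with the distinguished base point $a = a_0$. First I would observe that by Proposition~\ref{prop04} the permutation $\xi$ has odd order $N$, and since its cycle decomposition on $I$ includes the cycle $(a_0,\dots,a_{2n})$ of length $2n+1$, this length divides $N$. Set $G = \langle \xi \rangle$, a cyclic group of odd order $N$ sitting inside $Aut(\Pi)$.

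Next I would identify the isotropy subgroup $H = Iso_{a_0}$: the powers of $\xi$ that fix $a_0$ are precisely those whose exponent is a multiple of $2n+1$, so $H = \langle \xi^{2n+1}\rangle$, the orbit $Ga_0$ equals $\{a_0,\dots,a_{2n}\}$, and $|G/H| = 2n+1$. Because $G$ is cyclic, $G/H$ is a cyclic group of order $2n+1$, hence isomorphic to $\Z_{2n+1}$. Since $|H|$ divides $N$ it is odd, so a game subset $A_0 \subset H$ exists (if $H$ is trivial, take $A_0 = \emptyset$). Form $A = A_0 \cup \iota_{a_0}^{-1}(\Pi(a_0))$; by Theorem~\ref{theo22}, $A$ is a game subset for $(G,H)$, the restriction $\Pi|\{a_0,\dots,a_{2n}\} = \Pi|Ga_0$ is a subgame of $\Pi$, and $\theta_{a_0}\colon G/H \to Ga_0$ is an isomorphism from the homogeneous game $\Gamma[A/H]$ onto $\Pi|Ga_0$.

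To conclude that the target is actually a \emph{group} game on $\Z_{2n+1}$, I use that $G$ is abelian, so $H$ is normal in $G$ and $G/H$ is the genuine quotient group. Theorem~\ref{theo21a} then provides a game subset $B$ of $G/H$ with $\Gamma[A/H] = \Gamma[B]$, so $\Pi|\{a_0,\dots,a_{2n}\}$ is isomorphic to the group game $\Gamma[B]$ on $G/H \cong \Z_{2n+1}$, as required. The only mildly delicate point—really the one place to be careful—is the identification of $H$ as $\langle \xi^{2n+1}\rangle$ and the resulting cyclic structure of $G/H$; once that is in place, everything reduces to invoking the already-established machinery of Theorems~\ref{theo22} and~\ref{theo21a}. (The hypothesis $n > 1$ is not needed for the argument itself; it merely rules out the trivial case $n = 0$ where the ``cycle'' is a fixed point.)
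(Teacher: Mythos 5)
Your argument is correct, but the second half takes a different route from the paper's. Both proofs invoke Theorem~\ref{theo22} with $G=\langle\xi\rangle$ to get that the restriction to the orbit is a subgame; where you diverge is in identifying that subgame as a group game on $\Z_{2n+1}$. You stay entirely inside the homogeneous-game machinery: you compute $H=Iso_{a_0}=\langle\xi^{2n+1}\rangle$, note that $G$ is abelian so $H$ is normal and $G/H$ is cyclic of order $2n+1$, and then apply Theorem~\ref{theo21a} to convert the homogeneous game $\Gamma[A/H]$ into a genuine group game $\Gamma[B]$ on the quotient group. The paper instead short-circuits this: it defines the explicit bijection $i\mapsto a_i$ from $\Z_{2n+1}$ to the orbit, observes that this conjugates the translation $\ell_1$ to $\xi$, pulls the tournament back to $\Z_{2n+1}$, and concludes from Theorem~\ref{theo11}(a) that a tournament admitting all translations as automorphisms is a group game. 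The paper's version is more elementary and self-contained for this step (it needs only Theorem~\ref{theo11}(a), not the double-coset and quotient apparatus), while yours has the merit of exhibiting the result as a direct specialization of the general orbit theorem and of making the role of the isotropy subgroup explicit; the one point you rightly flag as delicate, the identification $H=\langle\xi^{2n+1}\rangle$, is exactly what the paper avoids having to discuss. Your closing remark about the hypothesis $n>1$ is also fair: the argument only needs the cycle to be nontrivial, i.e. $2n+1\geq 3$.
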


\begin{proof} By Proposition \ref{prop04}, $\xi$ has odd order and so every
cycle contained in it has odd length. Let $G$ be the cyclic subgroup of $Aut(\Pi)$ generated by $\xi$ so that the orbit $Ga_0 = \{ a_0,\dots, a_{2n} \}$.

It follows from
Theorem \ref{theo22} that the restriction $\Pi|Ga_0$ is isomorphic to a coset space game on $G/Iso_{a_0}$. Since $G$ is cyclic, the coset space
is a quotient group isomorphic to  $\Z_{2n+1}$ and by Theorem \ref{theo21a} such a coset space game is a group game on the quotient group.

This can also be proved directly by mapping $\Z_{2n+1}$ onto $(a_0,\dots,a_{2n})$ by $i \mapsto a_i$ and applying Theorem \ref{theo11} (a)
to the game pulled back to $\Z_{2n+1}$.
%

\end{proof} \vspace{.5cm}

\begin{theo}\label{theo23a}  Let $G$ be a finite group with odd order, $H$ be a subgroup of $G$ and $A$ be a
game subset for $(G,H)$ with $A_0 = A \cap H$ the game subset of $H$.
The game $\Gamma[A]$ is isomorphic to the lexicographic product $\Gamma[A/H] \ltimes \Gamma[A_0]$.
In particular, $Aut(\Gamma[A])$ is isomorphic to the semi-direct product $Aut(\Gamma[A/H]) \ltimes Aut(\Gamma[A_0])^{G/H}$.
\end{theo}

\begin{proof} Let $j : G/H \tto G$ be a map such that $\pi \circ j = 1_{G/H}$. So if $ x \in G/H$ then
$x$ is the coset $j(x)H$. We identify $G$ with the product $G/H \times H$ by the bijection $(x,h) \mapsto j(x)h$. This identifies
$\pi : G \tto G/H$ with the first coordinate projection. Notice that we are not assuming that $H$ is normal and so $G/H$ need not be a group.
Even if it is normal, a group homomorphism splitting $j$ need not exist.  However, we do not need any algebraic conditions on $j$.

As was observed in the proof of Theorem \ref{theo21}, if $x_1 \not= x_2$ then
$x_1 \to x_2$ in $\Gamma[A/H]$ if and only if $h_1^{-1}j(x_1)^{-1}j(x_2)h_2 \in A \setminus H$ for some $h_1, h_2 \in H$ and so for all $h_1, h_2 \in H$
since $A \setminus H$ is a union of double cosets. Hence,
\begin{equation}\label{h01}
(x_1, h_1) \to (x_2, h_2) \ \Leftrightarrow \ x_1 \to x_2 \qquad \text{when} \quad x_1 \not= x_2.
\end{equation}

On the other hand, if $x_1 = x_2$ then $(j(x_1)h_1)^{-1}(j(x_2)h_2) = h_1^{-1}h_2$. So $(j(x_1)h_1)^{-1}(j(x_2)h_2) \in A$ if and only if
$h_1^{-1}h_2 \in A_0$. Thus
\begin{equation}\label{h02}
(x_1, h_1) \to (x_2, h_2) \ \Leftrightarrow \ h_1 \to h_2 \qquad \text{when} \quad x_1 = x_2.
\end{equation}

From (\ref{eq18}) we see that this is exactly the lexicographic product $\Gamma[A/H] \ltimes \Gamma[A_0]$.
The automorphism result then follows from (\ref{eq21}).

\end{proof} \vspace{.5cm}

\begin{cor}\label{cor23bb} (a) The lexicographic product of two group games is isomorphic to a group game.

(b) The lexicographic product of two point-symmetric games is point-symmetric

(c) The lexicographic product of two coset space games is isomorphic to a coset space game. \end{cor}

\begin{proof} (a): Let $A_0$ be a game subset of a group $G$ and $B$ be a game subset of a group $T$. Let $R$ be any extension of $G$ by $T$. That is,
there is a short exact sequence
$\begin{tikzcd}
G \arrow[r,"i"] & R \arrow[r,"p"] & T.
\end{tikzcd}$ Let $A = i(A_0) \cup p^{-1}(B)$.
By Theorems \ref{theo21a}  and \ref{theo23a} $A$ is a game subset of $R$ and
$\Gamma[A]$ is isomorphic to $ \Gamma[B] \ltimes \Gamma[A_0]$.

For example, when $R$ is the product group $T \times G$, then $A = (\{ e_T \} \times A_0) \cup (B \times G)$. We have
\begin{equation}\label{h02a}
(t_1,g_1) \to (t_2,g_2) \ \Longleftrightarrow \ \begin{cases} \ \ t_1 \to t_2 \\ t_1 = t_2, g_1 \to g_2. \end{cases}
\end{equation}
and so $\Gamma[A] = \Gamma[B] \ltimes \Gamma[A_0]$.

(b): If $p, q \in \Gamma \ltimes \Pi$ and $\rho \in Aut(\Gamma)$, $\phi \in Aut(\Pi)$ satisfy $\rho(p_1) = q_1$ and $\phi(p_2) = q_2$ then
with $\gamma_i = \phi$ for all $i \in I$, $\rho \ltimes \gamma(p) = q$. Thus, $Aut(\Gamma \ltimes \Pi)$ acts transitively when
 $Aut(\Gamma)$, and $Aut(\Pi)$ do.

 (c): By Corollary \ref{cor23} a game  is isomorphic to a coset space game if and only if it is point-symmetric.

 Directly, if $A_0$ is a game subset for the pair $(G,H)$ and $B$ is a game subset for the pair $(T,K)$, then
 $A = (\{ e_T \} \times A_0) \cup (B \times G)$ is a game subset for the pair $(T \times G, K \times H)$ and $\Gamma[A/(K \times H)] =
 \Gamma[B/K] \ltimes \Gamma[A_0/H]$.

\end{proof} \vspace{.5cm}

{\bfseries Remark:} If $G, T$ are groups of odd order and $\begin{tikzcd}
G \arrow[r,"i_1"] & R_1 \arrow[r,"p_1"] & T
\end{tikzcd}$ and $\begin{tikzcd}
G \arrow[r,"i_2"] & R_2 \arrow[r,"p_2"] & T
\end{tikzcd}$ are possibly different group extensions, it follows
from the above proof that the game $\Gamma[A_1]$ is isomorphic to $\Gamma[A_2]$  when $A_{\ep} = i_{\ep}(A_0) \cup p_{\ep}^{-1}(B)$
for $\ep = 1,2$. \vspace{.5cm}

\begin{ex}\label{excomm} Commutative group examples. \end{ex}

With $G = \Z_{(2a+1)(2b+1)}$ we define the injection $\theta: \Z_{2b+1} \tto G$  by $\theta(j) = j(2a+1)$ for $j = 0, \dots, 2b$ and let
 $\pi : \Z_{(2a+1)(2b+1)} \tto \Z_{(2a+1)}$ be the surjection with $\pi(j(2a + 1) + i) = i$ for $i = 0,\dots,2a, \ j  = 0,\dots,2b$.

We  identify $\Z_{2b+1}$ with the subgroup $H = \theta(\Z_{2b+1})$ generated by $2a +1$ in $\Z_{(2a+1)(2b+1)}$ and we
 identify $\Z_{2a+1}$ with the quotient group $G/H$.

If $B \subset \Z_{2a+1}, A_0 \subset \Z_{2b+1}$ are game subsets then $A = A_0 \cup \pi^{-1}(B)$ satisfies
\begin{equation}\label{eq26}
j(2a + 1) + i \in A \quad \Longleftrightarrow \quad
\begin{cases} i \in B \quad \text{or}\\ i = 0 \ \text{and} \   j \ \in A_0. \end{cases}.
\end{equation}

By Theorems \ref{theo23a} and \ref{theo21a}  $\Gamma[A]$ is the lexicographic product $\Gamma[B] \ltimes \Gamma[A_0]$.

Furthermore, the translation map $\ell_1$ on $\Z_{(2a+1)(2b+1)}$ is given by $\r \triangleleft \gamma$ with
$\r = \ell_1$ on $\Z_{2a+1}$ and $\gamma_i$ equal to $ \ell_1$ on $\Z_{2b+1}$ for $i = 2n$ and equal to the identity on
$\Z_{2b+1}$ for the remaining $i$.

It follows that if $2n+1$ is  composite $= (2a+1)(2b+1)$, then there exist game subsets
such that the automorphism group of the associated game is non-abelian and so
contains $\Z_{2n+1}$ as proper subgroup. Notice that by considering
translations alone for $\r$ and the $\gamma_i$'s we see that the order of
$Aut(\Gamma[B] \ltimes  \Gamma[A_0])$
is at least $(2a + 1)(2b + 1)^{2a+1}$.

The affine group on $\Z_p$ is generated by translations and multiplications by units, i.e. $i \mapsto a \cdot i + k$ for
$k \in \Z_p, a \in \Z_p^*$.

The entire affine group for $\Z_{(2a+1)(2b+1)}$ has
order $(2a+1)(2b+1)\cdot \phi((2a+1)(2b+1)) \ < \ (2a+1)^2(2b+1)^2$. Note that if $b \geq 1$ and $a \geq 2$
then $2 a + 1 < 3^a \leq (2b + 1)^a < (2b + 1)^{2a - 1}$. Hence, in these examples
there are always automorphisms which are
not affine. Observe that if $2a+1$ and $2b+1$ are distinct Fermat primes, e.g. $3$ and $5$, then by  Theorem \ref{theo16}
the translations are the only affine automorphisms of any $\Gamma[B] \ltimes \Gamma[A_0]$.

If $2a + 1$ and $2b +1$ are relatively prime then the product group $\Z_{2a+1} \times \Z_{2b+1}$ is isomorphic as a group to
$\Z_{(2a+1)(2b+1)}$.  If $2a + 1$ and $2b +1$ are not relatively prime, e.g. if they are equal, then the product group $G = \Z_{2a+1} \times \Z_{2b+1}$
is not isomorphic to $\Z_{(2a+1)(2b+1)}$.  Nonetheless, it is an extension of  $\Z_{2b+1}$ by $\Z_{2a+1}$ and so has game subsets
isomorphic to the lexicographic product $\Gamma[B] \ltimes  \Gamma[A_0]$.

Finally, we note that, by induction, the Steiner game $\Gamma_k$ described at the
end of Section \ref{secdoublelex} is isomorphic to a group game on $\Z_{3^k}$. Define
$L_1$ to be the set of natural numbers such that the first nonzero digit in the base three expansion is $1$ (rather than $2$).
Using induction again, one can show that $A = \{ i \in L_1 : 0 < i < 3^k \} \subset \Z_{3^k}$ is an example with $\Gamma[A]$ isomorphic to $\Gamma_k$.

\begin{prop}\label{prop23bc}  For $\ep = 1, 2$ assume that $\Pi_{\ep}$ is a tournament on $I_{\ep}$
and that  $\Phi_{\ep}: G \times I_{\ep} \tto I_{\ep}$ are actions. The \emph{diagonal action}\index{diagonal action}\index{action!diagonal}
 $\Phi : G \times I_1 \times I_2 \tto I_1 \times I_2$ is given by $g (i,j) = (gi,gj)$. It is effective (or free) if either of the factor actions
 is effective (resp. free).

 If $\Phi_{\ep}$ is an action of $G$ on $\Pi_{\ep}$ for $\ep = 1, 2$, then $\Phi$ is an action of $G$ on $\Pi_1 \ltimes \Pi_2$.

 If $A$ is a game subset of $G$ with associated group game $\Gamma[A]$ and $\Phi_{\ep}$ is an action of $\Gamma[A]$ on $\Pi_{\ep}$ for $\ep = 1, 2$,
 then $\Phi$ is an action of $\Gamma[A]$ on $\Pi_1 \ltimes \Pi_2$. \end{prop}

 \begin{proof} The easy checks are left to the reader.

 \end{proof} \vspace{.5cm}

 For example, if $G$ acts trivially on a tournament $\Pi$ and acts by left translation on $\Gamma[A]$ with $A$ a game subset of $G$ then
 the diagonal action is a free action of $G$ on $\Gamma[A] \ltimes \Pi$.  If $A$ is a normal game subset then the diagonal action
 is an action of $\Gamma[A]$ on $\Gamma[A] \ltimes \Pi$.

 Now suppose that for a tournament $\Pi_0$ on $I_0$, the odd order group $G$ acts on $\Pi_0$. Assume that $J$ and $G \times J$ are sets disjoint from
 $I_0$ and that $\Pi_1$ is a tournament on $I_0 \cup J$ with $\Pi_1|I_0 = \Pi_0$. Let $A$ be a game subset of $G$ with associated group game $\Gamma[A]$.

 Define the tournament $\Pi$ on $I_0 \cup (G \times J)$ so that
 \begin{equation}\label{eq26a}
 \begin{split}
 \Pi|I_0 \ = \ \Pi_0, \quad \Pi|(G \times J) = \Gamma[A] \ltimes (\Pi_1|J), \hspace{1cm} \\
 \text{For} \ \ (g,j) \in G \times J, i \in I_0, \quad (g,j) \to i \ \Leftrightarrow j \to g^{-1}i.
 \end{split}
 \end{equation}
 Notice that identifying $J$ with $\{ e \} \times J$ by $j \mapsto (e,j)$ identifies $\Pi_1|J$ with $\Pi|(\{ e \} \times J)$.

 Let $G$ act trivially on $\Pi_1|J$, by left translation on $\Gamma[A]$ and by the diagonal action on $\Gamma[A] \ltimes (\Pi_1|J)$.

 \begin{prop}\label{prop23bd} The concatenated action of $G$ on $I_0 \cup (G \times J)$ is effective and is free if the
 action of $G$ on $I_0$ is free. The concatenated action is an action of $G$ on $\Pi$. If $\Gamma[A]$ acts on $\Pi_0$ and
 $A$ is a normal game subset, then $\Gamma[A]$ acts via the concatenated action on $\Pi$. \end{prop}

  \begin{proof} The action on $(G \times J)$ is free and so the action on $I_0 \cup (G \times J)$ is effective. It is clearly free if the
  action on $I_0$ is free.

  If $(g,j) \to i$ so that $j \to g^{-1}i$, then $g^{-1}i = (hg)^{-1}hi$ implies that $h(g,j) = (hg,j) \to hi$.  Thus, $G$ acts on $\Pi$.

  If $A$ is a normal game subset, then $\Gamma[A]$ acts on $\Gamma[A] \ltimes (\Pi_1|J)$. Since the sets $I_0$ and $G \times J$ are $G$ invariant
  sets, it follows that if $\Gamma[A]$ acts on $\Pi_0$, then it acts on $\Pi$.

 \end{proof}

\vspace{1cm}

\section{Games of Size Seven}\label{secseven}

Now we consider the case $7 = 2 \cdot 3 + 1$.
\vspace{.5cm}

{\bfseries TYPE I}- $\Gamma_I = \Gamma[[1,2,3]]$ has $Aut(\Gamma[[1,2,3]]) = \Z_7$
acting via translation and is reducible via each pair $i, i+3$.
The collection $\{ m_a( [1,2,3]) : a \in \Z_7^* \}$ are the $6 = \phi(7)$ Type I game
subsets of $\Z_7$ whose games are isomorphic to
$\Gamma[[1,2,3]]$. See Theorem \ref{theo28} and Corollary \ref{cor14}.

The group game $\Gamma_I$ is isomorphic to the
double $2 \Pi$ with $\Pi$ the 3-order on $[1,2,3]$, see Example \ref{ex29ab}.

{\bfseries Type II}- $\Gamma_{II} = \Gamma[[1,2,4]]$ can be described by the following diagram:
\begin{equation}\label{cd2}
\begin{tikzcd}
& |[alias=T]|\langle 3 \arrow[dl]   & |[alias=S]|6\arrow[dl,from=S,to=O] \arrow[l]  & |[alias=F]|5 \rangle \arrow[l]\arrow[dl,from=F,to=W] \\
0\arrow[dr]  \\
& |[alias=O]|\langle 1 \arrow[u,from=O,to=T] \arrow[r] & |[alias=W]|2 \arrow[u,from=W,to=S]
\arrow[ul,from=W,to=T] \arrow[r]  & |[alias=R]|4 \rangle \arrow[u,from=R,to=F]\arrow[ul,from=R,to=S]
\end{tikzcd}
\end{equation}
Clearly, $m_a$ is an automorphism of $\Gamma_{II}$ for
$a \in \{ 1, 2, 4 \} \subset \Z_7^*$. This is a Quadratic Residue Game of Example \ref{ex17aa} with $p = 7$, $k = 1$ and $H = \{ m_1, m_2, m_4 \}$.

Let $\r$ be an automorphism of $\Gamma_{II}$.
By composing with a translation we may assume that $\r(0) = 0$. Then
$\{ 1, 2, 4 \} =  \Gamma_{II}(0)$ is $\r$ invariant. By composing with
an element of $H$ we may assume $\r(1) = 1$. Then Proposition \ref{prop04} implies that
$\r$ fixes $2$ and $4$ as well. From the
diagram it then follows that $\r$ is the identity. Thus, every automorphism is affine, i.e.
a composition of a translation and a multiplication by an  element of $\{ 1, 2, 4 \} \subset \Z_7^*$.

From Theorem \ref{theo28} it follows that $\Gamma_{II}$ is not reducible.

 The two Type II
game subsets are $[1,2,4]$ and $[6,5,3] = m_6([1,2,4])$. $\Gamma[[6,5,3]]$ is the reversed game of $\Gamma[[1,2,4]]$ and
is isomorphic to it via $m_6 = m_{-1}$.

With $\Pi$ the $3$-cycle $\langle 1, 2, 4 \rangle$ game, $\Gamma_{II}$ is isomorphic
to $2 \Pi/\Pi_+$. That is, the double with the cycle $\langle 3, 6, 5\rangle$ of the double reversed.


{\bfseries Type III}- $\Gamma_{III}$ can be described by the following diagram:
\begin{equation}\label{cd3}
\begin{tikzcd}
& |[alias=T]|\langle 3 \arrow[dl]\arrow[r]   & |[alias=S]|6\arrow[dl,from=S,to=O]\arrow[r]  & |[alias=F]|5 \rangle \arrow[dl,from=F,to=W] \\
0\arrow[dr]  \\
& |[alias=O]|\langle 1 \arrow[u,from=O,to=T] \arrow[r] & |[alias=W]|2 \arrow[u,from=W,to=S]
\arrow[ul,from=W,to=T] \arrow[r]  & |[alias=R]|4 \rangle \arrow[u,from=R,to=F]\arrow[ul,from=R,to=S]
\end{tikzcd}
\end{equation}

With $\Pi$ the $3$-cycle $\langle 1, 2, 4 \rangle$ game, $\Gamma_{III}$ is isomorphic
to $2 \Pi$. Proposition \ref{prop30}
implies that
$Aut(\Gamma_{III}) = \{ m_1, m_2, m_4 \}$ with
$0$ as a fixed point.  Thus, $Aut(\Gamma_{III})$ does not act transitively on
$\Z_7$.

Since $\Pi$ is isomorphic to its reversed game, it follows that $\Gamma_{III}$ is isomorphic to its reversed game as well.

$\Gamma_{III}$ is reducible but is not
reducible via any pair which includes $0$.

\begin{theo}\label{theo29} If $\Gamma$ is a game with 7 vertices then $\Gamma$ is
isomorphic to exactly one of $\Gamma_{I}, \Gamma_{II}$ or $ \Gamma_{III}$.
\end{theo}

\begin{proof} The three types are distinguished by their automorphism groups and so no two are isomorphic.

We use the labeling procedure as in  Theorem \ref{theo05}. Choose a vertex and label it $0$.
The three output vertices in $\Gamma(0)$ form either a
$3$-cycle or a 3-order. Similarly for the three input vertices of $\Gamma^{-1}(0)$.

{\bfseries Case 1} [The inputs and outputs both form 3-orders]: Label the output vertices
$1, 2, 3$ with $1 \to 2, 3$ and $2 \to 3$. Label the
input vertices so that $4 \to 5, 6$ with $5 \to 6$. The remaining arrows are now
determined.
\begin{itemize}
\item $0, 1, 2 \to 3 \quad \Rightarrow \quad 3 \to 4, 5, 6$.
\item $ 4 \to 0, 5, 6 \quad \Rightarrow \quad 1, 2, 3 \to 4$.
\item $1 \to 2, 3, 4 \quad \Rightarrow \quad 0, 5, 6 \to 1$.
\item $5 \to 6, 0, 1 \quad \Rightarrow \quad  2, 3, 4 \to 5$.
\item $3, 4, 5 \to 6 \quad \Rightarrow \quad 6 \to 0, 1, 2$.
\end{itemize} This is Type I.

{\bfseries Case 2} [The inputs and outputs both form $3$-cycles]: Label the output vertices
$1, 2, 4$ with $\langle 1 \to 2 \to 4 \rangle $. Each of these receives one
input from one of the vertices in $\Gamma^{-1}(0)$. Label by $3$ the vertex such
that $3 \to 4$. $3$ now has three outputs and so
$1, 2 \to 3$. Label $5$ so that $5 \to 2$ and then $6 \to 1$. Now there are
two possibilities. Either $3 \to 5$ which is Type II or
$5 \to 3$ which is Type III.

{\bfseries Case 3} [The inputs form a $3$-cycle and the outputs form a 3-order, or vice-versa]:
By replacing the game by its reverse if necessary we
may assume that the outputs form a 3-order.  Notice that for $\Gamma_{III}$ the
inputs $\Gamma_{III}^{-1}(1)$ form a $3$-cycle and the outputs
$\Gamma_{III}(1)$ form a 3-order. Relabel the vertex $0$ of $\Gamma$, calling it
$1$. Label the vertices of $\Gamma(1)$
as $2, 3, 5$ with $5 \to 2, 3$ and $2 \to 3$. Now $5 \to 2, 3$ and $1 \to 5$. Hence,
there is one output vertex from $5$ among the $\Gamma^{-1}(1)$.
Label it $0$, so that $ 5 \to 0$ and choose the remaining two labels so that
$\langle 6 \to 0 \to 4 \rangle$ is the input $3$-cycle for $1$. It suffices to show that
the remaining connections are determined by these choices. We began with $0, 4, 6 \to 1 \to 2, 3, 5$.
\begin{itemize}
\item $1, 5, 2 \to 3 \quad \Rightarrow \quad 3 \to 6, 0, 4$.
\item $3, 5, 6 \to 0 \quad \Rightarrow \quad  0 \to 1, 2, 4$.
\item $0, 1, 5 \to 2 \quad \Rightarrow \quad 2 \to 3, 4, 6$.
\item $5 \to 0, 2, 3 \quad \Rightarrow \quad 1, 4, 6 \to 5$.
\item $4 \to 5, 6, 1 \quad \Rightarrow \quad 0, 2, 3 \to 4$.
\end{itemize}
This is Type III.

Since $\Gamma_{III}$ is isomorphic to its reversed game,
it follows that if the inputs form a 3-order and the
outputs form a $3$-cycle then it is Type III as well.

\end{proof}\vspace{.5cm}

From the proof we obtain the following corollary.

\begin{cor}\label{cor29a} Let $\Gamma$ be a game on $I$ with $|I| = 7$.
\begin{itemize}
\item[(i)] If for some $i \in I$ both the input set $\Gamma^{-1}(i)$ and the output set $\Gamma(i)$ form 3-orders
then $\Gamma$ is of Type I, isomorphic to $\Gamma_I$. In that case, for every $j \in I$ the input set and the output set
are 3-orders.

 \item[(ii)] The game $\Gamma$ is of Type II, isomorphic to $\Gamma_{II}$, if and only if for every $j \in I$ the input set and the output set
are $3$-cycles.

\item[(iii)] If for some $i \in I$ either the input set or the output set forms a 3-order while the
other is a $3$-cycle, then $\Gamma$ is of Type III, isomorphic to $\Gamma_{III}$.
\end{itemize}
\end{cor}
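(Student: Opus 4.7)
The plan is to reorganize the case analysis already carried out in the proof of Theorem \ref{theo29}. The starting observation is that in a game of size $7$, each vertex $i$ satisfies $|\Gamma(i)| = |\Gamma^{-1}(i)| = 3$, so each of $\Gamma|\Gamma(i)$ and $\Gamma|\Gamma^{-1}(i)$ is a tournament of size $3$ and hence is either a $3$-cycle or a straddle. At any fixed vertex there are thus exactly three combinations, and these are precisely the three cases considered in the proof of Theorem \ref{theo29} after letting an arbitrary vertex play the role of $0$. The conclusions there read: Case 1 at some vertex forces Type I, Case 3 at some vertex forces Type III, and Case 2 at some vertex is compatible with either Type II or Type III.

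The first assertion of (i) is then just the Case 1 implication, and (iii) is just the Case 3 implication. For the second assertion of (i), I would appeal to the fact that $Aut(\Gamma_I) = \Z_7$ acts transitively on its vertex set by translation (Theorem \ref{theo13}); since at the vertex $0$ both $\Gamma_I(0) = \{1,2,3\}$ and $\Gamma_I^{-1}(0) = \{4,5,6\}$ are straddles, the same holds at every vertex after translation. The forward direction of (ii) is completely analogous: $\Z_7 \subset Aut(\Gamma_{II})$ also acts transitively, and a direct check within $\Z_7$ shows that both $\Gamma_{II}(0) = \{1,2,4\}$ and $\Gamma_{II}^{-1}(0) = \{3,5,6\}$ are $3$-cycles in $\Gamma[[1,2,4]]$.

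The only step that is more than bookkeeping is the converse direction of (ii): under the assumption that every input set and every output set is a $3$-cycle, both Type I and Type III must be excluded. Type I is excluded by the contrapositive of the second assertion of (i), since the hypothesis furnishes a vertex whose input is not a straddle. To exclude Type III, I would work directly inside $\Gamma_{III} = 2\Pi$ for $\Pi$ a $3$-cycle $\langle a, b, c \rangle$. The formulas (\ref{eq12}), (\ref{eq13}), (\ref{eq13aa}) give $2\Pi(a-) = \{b-, c+, a+\}$, and a direct check produces the edges $c+ \to b-$, $b- \to a+$, and $c+ \to a+$, so the restriction of $2\Pi$ to this output set is a straddle. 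Hence $\Gamma_{III}$ has a vertex whose output is not a $3$-cycle, Type III is excluded, and $\Gamma$ is of Type II by the trichotomy of Theorem \ref{theo29}. I expect this small computation to be the only real obstacle; everything else is careful rereading of Theorem \ref{theo29}.
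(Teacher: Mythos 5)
Your proposal is correct and follows essentially the same route as the paper, which derives the corollary directly from the case analysis in the proof of Theorem \ref{theo29}; indeed, Case 3 of that proof already records that $\Gamma_{III}$ has a vertex whose inputs form a $3$-cycle and whose outputs form a straddle, which is exactly the fact you recompute inside $2\Pi$ to settle the converse of (ii). The transitivity arguments for the ``every vertex'' claims and the explicit exclusion of Type III are precisely the details the paper leaves implicit.
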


$\Box$ \vspace{.5cm}

Observe that $\Gamma_{II}$ is obtained from $\Gamma_{III}$ by reversing the upper $3$-cycle.  It follows from Theorem
\ref{theo30aa} that the games of Type II are Steiner games.  In fact, the games of type III are Steiner games as well.
To obtain a decomposition by $3$-cycles for $\Gamma_{II}$ or $\Gamma_{III}$ we may use the upper $3$-cycle and, in addition,
\begin{equation}
\begin{split}
\langle 1, 2, 6 \rangle, \ \langle 2, 4, 5 \rangle, \ \langle 4, 1, 3 \rangle, \\
\langle 2, 3, 0 \rangle, \ \langle 4, 6, 0 \rangle, \ \langle 1, 5, 0 \rangle.
\end{split}
\end{equation}
\vspace{.5cm}

Using the results from Section \ref{secpointed} we can compute the number of games on a set $I$ of size seven. Using (\ref{interag01}) it
suffices to compute $|Games(I_+,I_-)|$ with $I$ decomposed as $I_+ \cup \{ 0 \} \cup I_-$. If either $\Pi_+$ or $\Pi_-$ is a 3-order
then it follows from the Splitting Lemma, and is easy to check directly, that the bipartite tournament $\Xi$ contains no cycles. If both
$\Pi_+$ and $\Pi_-$ are $3$-cycles, then the number of Eulerian subgraphs of $\Xi$ is the same as the number in the special case when
$\Pi$ is the double of a $3$-cycle. In Example \ref{expoint} it was shown that $\Xi$ then contains six distinct Eulerian subgraphs (including
the empty one). Thus, our lower bound $|Games(I_+,I_-)| \geq 2^{3 \cdot 2} = 64$ has to be corrected to account for each of the four
cases where $\Pi_+$ and $\Pi_-$ are $3$-cycles and so there are six pointed games $\Pi$ instead of one each. That is,
$|Games(I_+,I_-)|  = 64 \ + \ 6 \cdot 4 \ - \ 4 = 84$. Finally, from (\ref{interag01}) it follows that when $|I| = 7$
\begin{equation}
|Games(I)| \ = \ {6 \choose 3} \cdot 84 \ = \ 1680.
\end{equation}
 \vspace{1cm}

\section{Isomorphism Examples}\label{seciso}

If $\r : \Pi_1 \tto \Pi_2$ is an isomorphism between digraphs, and $[i_1, \dots, i_k]$ is a simple  path in
$\Pi_1$,  then $[\r(i_1), \dots, \r(i_k)]$ is a simple  path in $\Pi_2$.

If  $[j_1, \dots, j_{k}]$ is a simple path in $\Pi_2$, then (recall that $\bar \r = \r \times \r$)
\begin{equation}\label{eqreduce}
\bar \r([i_1, \dots, i_k]) \ \subset \ [j_1, \dots, j_{k}] \ \Longrightarrow \ \r(i_p) \ = j_p \ \text{for} \ p = 1, \dots, k.
\end{equation}
Recall that, regarded as a digraph, $$[i_1, \dots, i_{k}] = \{ (i_p,i_{p+1}): p = 1, \dots k-1 \}.$$

The implication (\ref{eqreduce}) is proved by induction on $k$. Observe that
$\r : \{i_1, \dots, i_k \} \tto \{j_1, \dots, j_k \}$ is injective and so is bijective.
The vertex $j_k$ is the unique vertex with no output in the digraph $[j_1, \dots, j_{k}]$ and similarly for $i_k$ in $[i_1, \dots, i_{k}]$. So
$\r(i_k) = j_k$.

Recall that the domination graph of a game $\Pi$ is given by $$dom(\Pi) = \{ (i,j) \in \Pi : \Pi \ \ \text{is reducible via} \ \ i \to j \}.$$

\begin{prop}\label{propreduce} (a) If $\r : \Pi_1 \tto \Pi_2$ is an isomorphism of games, then it restricts to an isomorphism between the
domination digraphs $\r : dom(\Pi_1) \tto dom(\Pi_2)$. Furthermore, it maps each maximal simple path in $dom(\Pi_1)$ to a maximal simple path
in $dom(\Pi_2)$.

(b) Assume $\r$ is an automorphism of a game $\Pi$.
\begin{itemize}
\item[(i)] If $[i_1, \dots, i_k]$ is a simple path in $\Pi$ and $\bar \r([i_1, \dots, i_k]) \ \subset \ [i_1, \dots, i_{k}]$ then
$\r$ fixes $i_p$ for $p = 1, \dots, k$. In particular, if $\r$ maps a maximal simple path of $dom(\Pi)$ to itself, then it fixes
every vertex of the path.  If $\r$ maps some vertex of a maximal simple path $[i_1, \dots, i_k]$ of $dom(\Pi)$ into $[i_1, \dots, i_k]$,
then it fixes every vertex of the path.
\item[(ii)] If $\Pi$ is reducible via $i \to j$ and either $i$ or $j$ is fixed by $\r$ then the other is as well.
\end{itemize}
\end{prop}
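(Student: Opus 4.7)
The plan is to exploit Proposition \ref{prop07c}(a) heavily: in $r\Pi$ each vertex has at most one outgoing and at most one incoming edge, so the only possible configurations are a disjoint union of simple non-closed paths or a single Hamiltonian cycle, and inside such a graph simple paths are uniquely determined by any one of their edges via forward/backward extension.

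For part (a), I would first show that reducibility is an isomorphism invariant. The characterization in Proposition \ref{prop07}(d) (e.g. conditions (vi)--(vii): $\Gamma^{-1}(u) \cap \Gamma^{-1}(v) = \emptyset$ or $\Gamma(u) \cap \Gamma(v) = \emptyset$) is purely digraph-theoretic, so if $\rho : \Pi_1 \to \Pi_2$ is an isomorphism of games then $(i,j) \in r\Pi_1$ iff $(\rho(i),\rho(j)) \in r\Pi_2$, i.e.\ $\bar\rho(r\Pi_1) = r\Pi_2$. Thus $\rho$ restricts to an isomorphism between the reducibility digraphs. For the maximal simple path statement, Proposition \ref{prop07c}(a) says that in both $r\Pi_1$ and $r\Pi_2$ each vertex has in-degree and out-degree at most one, so a maximal simple path $[i_1,\dots,i_k]$ is maximal precisely because $i_1$ has no input and $i_k$ has no output in $r\Pi_1$ (or the path closes into the Hamiltonian cycle). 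Since $\rho$ is an iso of reducibility digraphs, it preserves in- and out-degrees, so the image path has the same maximality property.

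For part (b)(i), the displayed implication (\ref{eqreduce}) is exactly what we need: $\bar\rho([i_1,\dots,i_k]) \subset [i_1,\dots,i_k]$ together with the fact that $[\rho(i_1),\dots,\rho(i_k)]$ is itself a simple path (being the image of one under an isomorphism) yields $\rho(i_p) = i_p$ for all $p$. For the first ``in particular'', if $\rho$ sends a maximal simple path $P$ of $r\Pi$ to itself, then by part (a) the image is a path in $r\Pi$ of the same length contained in $P$, so $\bar\rho(P) \subset P$ and the previous sentence applies. For the second ``in particular'', suppose $\rho$ fixes some vertex $i_p$ lying on a maximal simple path $P$ of $r\Pi$. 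By part (a) $\rho(P)$ is again a maximal simple path of $r\Pi$ through $i_p$; but Proposition \ref{prop07c}(a) plus uniqueness of extension shows there is only one maximal simple path through any vertex of $r\Pi$, so $\rho(P) = P$, and the first ``in particular'' finishes it.

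For part (b)(ii), assume $\Pi$ is reducible via $i \to j$ and, say, $\rho(i) = i$. Then $(i,j) \in r\Pi$ and by part (a), $(\rho(i),\rho(j)) = (i,\rho(j)) \in r\Pi$. By Proposition \ref{prop07c}(a), $i$ has at most one outgoing edge in $r\Pi$, forcing $\rho(j) = j$. The case where $j$ is the fixed vertex is symmetric, using that $j$ has at most one incoming edge in $r\Pi$.

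I do not anticipate a serious obstacle: the whole proposition is essentially a bookkeeping consequence of Proposition \ref{prop07c}(a) together with the implication (\ref{eqreduce}). The only mildly subtle point is making sure the ``Hamiltonian cycle'' alternative in Proposition \ref{prop07c}(b)--(c) is handled; in that case $r\Pi$ has no maximal simple paths in the strict sense, so the statements about maximal simple paths are vacuous, and (b)(ii) still follows directly from the in-/out-degree bound.
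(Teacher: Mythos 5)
Your proof is correct and follows essentially the same route as the paper's (which is much terser): isomorphism-invariance of reducibility for (a), the displayed implication (\ref{eqreduce}) for (b)(i), and the in-/out-degree bound from Proposition \ref{prop07c}(a) for (b)(ii). The only small caveat is your dismissal of the Hamiltonian-cycle case for the second ``in particular'' of (b)(i) as vacuous — a cycle does contain maximal simple paths traversing it once, and several of them pass through any given vertex — but the conclusion there still follows immediately from your own degree argument, so nothing is lost.
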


\begin{proof} (a): This is clear since $\Pi_1$ is reducible via $i \to j$ if and only if $\Pi_2$ is reducible via $\r(i) \to \r(j)$.

(b)(i): The first assertion is a special case of (\ref{eqreduce}).

Observe that if $[i_1, \dots, i_k]$ is a maximal simple path in $dom(\Pi)$ then Proposition \ref{prop07c}
implies that $[i_1, \dots, i_k] = dom(\Pi)|\{i_1, \dots, i_k \}$.
It follows that if $\r(i_p) \in \{i_1, \dots, i_k \}$, then $\bar \r([i_1, \dots, i_k]) \ \subset \ [i_1, \dots, i_{k}]$ and so $\r$ fixes
every vertex on the path.

(ii): If $\Pi$ is reducible via $i \to j$, and $\r(i) = i$, then $\Pi$ is reducible via $i = \r(i) \to \r(j)$ and so by
Proposition \ref{prop07}(f)  $j = \r(j)$.

\end{proof} \vspace{.5cm}

If two tournaments are isomorphic, then of course their doubles are isomorphic. If $\r : 2 \Pi \to 2 \Gamma$ is an isomorphism with $\r(0) = 0$, then
by Proposition \ref{prop30} $\r$ is itself the double of an isomorphism from $\Pi$ to $\Gamma$.
What happens when $\r$ is not an isomorphism of pointed games on $0$ ?

Let $\Pi$ be a tournament on $I$ with $|I| = n$. The double $2\Pi$ is a game on $\{ 0 \} \cup I_+ \cup I_-$ with $I_{\pm} = I \times \{ \pm 1 \}$.
Let $i,j \in I$ with $i \to j$. We recall the following reducibility results which follow from Proposition \ref{prop07}.
\vspace{.5cm}

(i) The game $2\Pi$ is reducible via $i- \to i+$ and via $j- \to j+$. By uniqueness in Proposition \ref{prop07}(f) it is
not reducible via $j- \to i+$.   Observe, for example, that $i- \to j-$ and $i- \to i+$. \vspace{.25cm}

(ii) Similarly, $2 \Pi$ is not
reducible via $i-\to j- $ because $0 \to i_-, j_- $, nor is it  reducible via $i+ \to j+$.\vspace{.25cm}

(iii) The game is reducible via
$j+ \to i-$ if and only if for every $k \in I \setminus \{ i,j \}$ either $i, j \to k$ or $k \to i,j$.\vspace{.25cm}

(iv) The game is reducible via $i+ \to 0$ if and only if $i \to k$ for all $k \in I \setminus \{ i \}$ and it reducible via
$0 \to i-$ if and only if $k \to i$ for all $k \in I \setminus \{ i \}$.
\vspace{.5cm}

Now let $\Gamma$ be a tournament on $K$ with $|K| = n$.  $2 \Gamma$ is a game on $\{ \bar 0 \} \cup K_+ \cup K_-$.

We now describe how $\r : 2 \Pi \tto 2 \Gamma$ can be an isomorphism which is not an isomorphism of pointed games, i.e. $\r(0) \not= \bar 0$.
Assume that $\r(0) = k_1+$. Since $\r^{-1}(k_1-) \to 0$ we have that $\r(i_1+) = k_1-$ for some $i_1 \in I$.
Now  $2 \Gamma$ is reducible via $\r(i_1-) \to \r(i_1+)$ and so $\r(i_1-) \in \{ \bar 0 \} \cup K_+$. We build parallel sequences of
distinct elements in the domain and range which are mapped across by $\r$.

 \begin{equation}\label{eqiso01} \begin{CD}
 0 @>>> k_1+ \\
@AAA      @AAA \\
i_1+ @>>> k_1- \\
@AAA     @AAA \\
i_1- @>>> k_2+ \\
@AAA      @AAA \\
i_2+ @>>> k_2- \\
@AAA      @AAA \\
i_2- @>>> k_3+ \\
\end{CD}  \end{equation}

The way the sequences terminate, as of course they must, is when for some $m \geq 1$, $\r(i_m-)$ is equal to $\bar 0$ instead of an element of $K_+$.
That is,

\begin{equation}\label{eqiso02}  \begin{CD}
i_m+ @>>> k_m- \\
@AAA      @AAA \\
i_m- @>>> \bar 0 \\
\end{CD}  \end{equation}

Notice that $\r^{-1}(\bar 0) \in I_-$. If we had begun with $\r(0) \in I_-$ we would have built the analogous sequence upward and would have obtained
$\r^{-1}(\bar 0) \in I_+$.

Since $2 \Pi$ is reducible via $i_1+ \to 0$, we have $i_1 \to j$ for
all $j \in I \setminus \{ i_i \}$. We now prove, inductively, that
for $p = 2, \dots, m$, $i_p \to j$ for all $j \in I \setminus \{ i_1, \dots, i_p \}$. This is because $2 \Pi$ is reducible via $i_p + \to i_{p-1}-$.
By induction hypothesis, $i_{p-1} \to j$ for all such $j$ and so by (iii) above, $i_p \to j$.

Similarly, $\bar j \to k_m$ for all
$\bar j \in K \setminus \{ k_m \}$ and so, inductively, for all $p = 1, \dots, m-1$, $\bar j \to k_p $ for all $\bar j \in K \setminus \{k_p, \dots, k_m \}$.

Let $J = I \setminus \{i_1,\dots,i_m \}$ and $\bar J  = K \setminus \{ k_1, \dots , k_m \}$. Notice that $J_- \subset (2 \Pi)^{-1}(i_m+)$ and
$J_+ \subset 2 \Pi(i_m+)$ while $\bar J_-  \subset (2 \Pi)^{-1}(k_m-)$ and $\bar J_+  \subset 2 \Pi(k_m-)$.

Thus, $\r$ restricts to an isomorphism from the tournament $2 \Pi|(J_+ \cup J_-)$ to the tournament $2 \Gamma|(\bar J_+ \cup \bar J_-)$
and it takes $J_+$ to $\bar J_+$ and $J_-$ to $\bar J_-$. If $\gamma : J \to \bar J$ is defined by $\r(j-) = \gamma(j)-$, then
since $2 \Gamma$ is reducible by $\r(j-) \to \r(j+)$ and by $\gamma(j)- \to \gamma(j)+ $ it follows
that $\r(j\pm) = \gamma(j)\pm$ for all $j \in J$. In particular, $\gamma : \Pi|J \to \Gamma|\bar J$ is an isomorphism.

Furthermore, $\Xi  = \{ i_1, \dots, i_m \} \times J \subset \Pi$ and $\bar \Xi = \bar J \times \{ k_1, \dots, k_m \} \subset \Gamma$.

If we define $\theta : I \to K$ by $\theta(i_p) = k_{p}$ for $p = 1, \dots, m$ and  $\theta(j) = \gamma(j)$ for $j \in J$, then, reversing $\Xi$ in
$\Pi$, we obtain an isomorphism
\begin{equation}\label{eqiso03}
\theta : \Pi/\Xi \ \to \Gamma.
\end{equation}

We can diagram this as follows:
\begin{align}\label{eqiso02a}
\begin{split}
  &\Pi : \quad  \begin{CD} i_1 @>>> i_2 @>>> \dots @>>> i_m @>>> J  \\
\end{CD}\\
  &\Gamma : \quad  \begin{CD} k_1 @>>> k_2 @>>> \dots @>>> k_m @<<< \bar J  \\
\end{CD}
\end{split}
\end{align}

Now we use from Proposition \ref{prop06order} the equivalent descriptions of an order, i.e. a transitive tournament.

\begin{lem}\label{lemiso01} Assume there exists an isomorphism $\r : 2 \Pi \tto 2 \Gamma$ with $\r(0) \not= \bar 0$.
The tournaments $\Pi$ and $\Gamma$ are isomorphic if and only if $\Pi$ is an order. \end{lem}

\begin{proof} If necessary we may reverse the roles of $\Pi$ and $\Gamma$ and use $\r^{-1}$ instead of $\r$. Thus, we may
assume that $\r(0) \in K_+$ as above.

If $m = |I|$, or, equivalently, $J$ is empty, then $\Pi$ is an order and $\theta$ is an isomorphism from $\Pi$ to $\Gamma$.

Now assume $n = |I| > m$.

If  $\Pi$ is an order, then by Proposition \ref{prop06order} (f) we can continue the numbering $i_1, \dots, i_m$ to $i_{m+1}, \dots, i_n$
so that $i_p \to i_q$ when
$p < q$. If we reverse $\Xi = \{i_1, \dots, i_m \} \times \{i_{m+1}, \dots, i_n \}$, then the result is again an order
with the vertices ordered as  $i_{m+1}, \dots, i_n, i_1, \dots, i_m$. By Proposition \ref{prop06order} again
 an order of size $n$ is unique up to isomorphism. Hence, $\Gamma$ is isomorphic to $\Pi$.


Assume instead that $\Pi$ is not an order. At least one of the  $\O (1_I \cup \Pi) \cap \O (1_I \cup \Pi)^{-1}$ equivalence
classes is not a singleton. These are the \emph{fat equivalence classes}.
Recall that $\O \Pi$ induces an order on the set of $\O (1_I \cup \Pi) \cap \O (1_I \cup \Pi)^{-1}$ equivalence classes.
Obviously the equivalence class of each of the $i_1, \dots, i_m$ vertices is a singleton and each lies above all the other
classes. Count the classes as in Proposition \ref{prop06order} (f) and let $k^*(\Pi) > m$ be the label of
the first fat equivalence class. When we reverse $\Xi$  all of the $i_1, \dots, i_m$ classes are moved below
all the other classes in the ordering and the ordering
among the remaining classes is unchanged. Hence,
$k^*(\Gamma) = k^*(\Pi) - m$. Since this number is an isomorphism invariant, it follows that $\Pi$ is not isomorphic to $\Gamma$.

 \end{proof}

 \begin{ex}\label{exiso02}  There exist non-isomorphic tournaments $\Pi$ and $\Gamma$ such that $2 \Pi$ is isomorphic to $2 \Gamma$. \end{ex}

 \begin{proof} Let $\Theta$ be a tournament on $J$ which is not an order. Let $I = \{i_1,\dots, i_m \} \cup J$ and on it let
 $\Xi$ be the digraph $\{ i_1, \dots, i_m \} \times J$.
 \begin{align}\label{eqiso04}
 \begin{split}
 \Pi \ = \ \{ (i_p,i_{q}) : 1 \leq &p < q \leq m \} \ \cup \ \Xi \ \cup \ \Theta, \\
 \Gamma \ = \ &\Pi/\Xi.
 \end{split}
 \end{align}
 Define $\r : 2 \Pi \to 2 \Gamma$ according to the patterns of (\ref{eqiso01}) and (\ref{eqiso02}) with $k_p = i_p$, and with $\r(j\pm) = j\pm$ for
 $j \in J$. By Lemma \ref{lemiso01}, $\Pi$ is not isomorphic to $\Gamma$.

 \end{proof} \vspace{.5cm}

 We also obtain the following from Lemma \ref{lemiso01}.

 \begin{theo}\label{theoiso03} Let $\Pi$ be a tournament which is not an order. Any automorphism of $2 \Pi$ fixes $0$ and so the
 injection $2: Aut(\Pi) \tto Aut(2 \Pi)$ is an isomorphism. \end{theo}

 $\Box$ \vspace{.5cm}

 \begin{cor}\label{coriso03a} Let $\Pi$ be a tournament which is not an order. If $\Pi$ is a rigid tournament, then $2 \Pi$ is a rigid game. \end{cor}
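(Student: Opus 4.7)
The plan is to read off the corollary almost immediately from Theorem \ref{theoiso03}, so the work is really in verifying that the hypothesis of that theorem is satisfied and in checking that rigidity transfers across the isomorphism $2 : Aut(\Pi) \to Aut(2\Pi)$.

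First I would invoke Theorem \ref{theoiso03} directly. Because $\Pi$ is not an order, that theorem applies and gives us that the canonical map $2 : Aut(\Pi) \to Aut(2\Pi)$ defined by $2\gamma(0) = 0$ and $2\gamma(j\pm) = \gamma(j)\pm$ is an \emph{isomorphism} of groups, not merely an injection. Concretely: every automorphism $\rho$ of $2\Pi$ automatically fixes $0$ (this is the content of Theorem \ref{theoiso03}), hence by Proposition \ref{prop30} (or rather the argument that produces $\gamma$ from $\rho$) $\rho$ is of the form $2\gamma$ for some uniquely determined $\gamma \in Aut(\Pi)$.

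Now I apply the rigidity hypothesis. Rigidity of $\Pi$ means $Aut(\Pi) = \{1_I\}$. Since $2 : Aut(\Pi) \to Aut(2\Pi)$ is a group isomorphism, we conclude $Aut(2\Pi) = \{1_{\{0\} \cup I \times \{-1,+1\}}\}$; equivalently, the only automorphism of $2\Pi$ is the identity. Thus $2\Pi$ is rigid.

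There is essentially no obstacle: both inputs needed — the fact that $0$ is forced to be fixed, and the fact that an automorphism fixing $0$ is the double of an automorphism of $\Pi$ — are already packaged in Theorem \ref{theoiso03}. One small sanity check worth including explicitly in the write-up is that $\Pi$ not being an order is genuinely needed: if $\Pi$ were an order on $n \geq 2$ vertices then $\Pi$ would still be rigid (an order has trivial automorphism group, cf.\ Proposition \ref{prop06order}), but $2\Pi$ would be isomorphic to $\Gamma[[1,n]]$ on $\Z_{2n+1}$ by Example \ref{ex29ab}, whose automorphism group is $\Z_{2n+1}$ and hence highly non-trivial; so the hypothesis ``not an order'' cannot be dropped.
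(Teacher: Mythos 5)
Your proof is correct and is essentially identical to the paper's: both simply invoke Theorem \ref{theoiso03} to conclude that $2 : Aut(\Pi) \tto Aut(2\Pi)$ is an isomorphism, so triviality of $Aut(\Pi)$ forces triviality of $Aut(2\Pi)$. Your closing remark on the necessity of the ``not an order'' hypothesis (via Example \ref{ex29ab}) is a pleasant addition but not part of the paper's argument.
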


  \begin{proof} By assumption $Aut(\Pi)$ is trivial and so $Aut(2 \Pi)$ is trivial by Theorem \ref{theoiso03}.

  \end{proof} \vspace{.5cm}

 Recall that if $\Pi$ is the standard order on $A = [1,n]$, then the double $2 \Pi$ is the group game $\Gamma[A]$ on $\Z_{2n + 1}$. Since an order is a rigid
 tournament by Lemma \ref{lemrigid}, the
 identity is the only automorphism which fixes $0$. On the other hand, the group $\Z_{2n + 1}$ acts transitively on $2 \Pi = \Gamma[A]$ by translation.
\vspace{.5cm}

Next we consider the possibility of non-isomorphic extensions of a game.

Observe that the games $\Gamma_{III}$ and $\Gamma_I$ of Section \ref{secseven} are non-isomorphic games of size $7$ and both are reducible.
Since both reduce to the unique
game of size $5$, we see that a game can admit non-isomorphic extensions. This phenomenon is quite general.

\begin{prop}\label{propiso04} Any game $\Pi$ with size greater than $3$ admits non-isomorphic extensions.\end{prop}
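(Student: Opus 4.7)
The plan is to construct two extensions of $\Pi$ whose reducibility digraphs have non-isomorphic maximal-path decompositions; by Propositions \ref{prop07c} and \ref{propreduce}(a) this forces the underlying games to be non-isomorphic. First I would fix $v^* \in J$ and set $K_1 = \{v^*\} \cup \Pi(v^*)$, a subset of size $n$. In the resulting extension $\Gamma_1$, the edge $v^* \to u$ is present because $v^* \in K_1 = \Gamma_1^{-1}(u)$, and by Corollary \ref{cor07b}(a) the game $\Gamma_1$ is reducible via this edge; hence $r\Gamma_1$ contains the directed path $[v^*, u, v]$, and the maximal simple path of $r\Gamma_1$ through $(u,v)$ has length at least $2$.

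Next I would choose $K_2 \subset J$ of size $n$ avoiding the forbidden forms $\{i\} \cup \Pi(i)$ and $\{i\} \cup \Pi^{-1}(i)$ for every $i \in J$; Corollary \ref{cor07b}(a)--(c) then guarantees that in the extension $\Gamma_2$ the only reducing pair meeting $\{u,v\}$ is $\{u,v\}$ itself, so the edge $(u, v)$ is an isolated maximal path of $r\Gamma_2$ of length $1$. Existence of such $K_2$ is a counting check: at most $2(2n-1)$ subsets are of forbidden form, which is strictly less than $\binom{2n-1}{n}$ once $n \geq 4$; and for $n = 3$ the unique game of size $5$ has the two forbidden families coinciding (each consisting of the five ``consecutive'' triples in $\Z_5$), leaving five admissible $K_2$ among the ten $n$-subsets.

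The hard part will be to strengthen this to a genuine non-isomorphism, since $r\Gamma_2$ could a priori still contain maximal paths of length $\geq 2$ arising from pairs of consecutive $r\Pi$-edges that both single-intersect $K_2$. To rule this out I would further require that along each component of $r\Pi$---which by Proposition \ref{prop07c} is either a non-closed simple path or a single Hamiltonian cycle---the sequence of ``in/out of $K_2$'' membership contains neither the subpattern $0\,1\,0$ nor $1\,0\,1$; this is exactly the condition that the set of $r\Pi$-edges carried into $r\Gamma_2$ is a matching within each component. Securing such a $K_2$ of the correct cardinality amounts to partitioning the vertices of $r\Pi$ into runs of length $\geq 2$ in a way compatible with the size constraint $|K_2| = n$, and is verified by an elementary case analysis on the shape of $r\Pi$ (using that its total length is at most $2n - 1$ and checking the $n = 3$ case against the explicit list above). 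Once obtained, every maximal path of $r\Gamma_2$ has length exactly $1$, so the multiset of maximal-path lengths of $r\Gamma_2$ differs from that of $r\Gamma_1$, and therefore $\Gamma_1 \not\cong \Gamma_2$ by Proposition \ref{propreduce}(a).
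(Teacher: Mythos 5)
Your overall strategy is the same as the paper's: build two extensions and distinguish them by their reducibility graphs, using Corollary \ref{cor07b} to compute $r\Gamma$ and Proposition \ref{propreduce}(a) to make $r\Gamma$ an isomorphism invariant. Your first extension (via $K_1=\{v^*\}\cup\Pi(v^*)$, forcing $[v^*,u,v]\subset r\Gamma_1$) and your second (via a $K_2$ avoiding the forms $\{i\}\cup\Pi(i)$ and $\{i\}\cup\Pi^{-1}(i)$, isolating the edge $(u,v)$) are both sound, including the counting for $n\geq 4$ and the explicit check for the size-$5$ game. The problem is the step you yourself flag as the hard part. Because you chose the invariant ``existence of a maximal path of length $\geq 2$ in $r\Gamma$,'' you must also kill every length-$\geq 2$ path inside $r\Gamma_2\cap(J\times J)$, which by Corollary \ref{cor07b}(d) forces the run condition (no interior pattern $010$ or $101$) along every component of $r\Pi$. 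You assert that a $K_2$ of cardinality $n$ satisfying simultaneously the run condition and the non-forbidden-form condition exists ``by an elementary case analysis,'' but you never carry it out, and it is not routine: when $r\Pi$ is a Hamiltonian cycle the run condition must hold cyclically with exactly $n$ ones among $2n-1$ vertices; when $r\Pi$ consists of many single-edge components the run condition is vacuous there and you fall back on a pure count against the $\leq 2(2n-1)$ forbidden sets; and in mixed cases you must check that the run-compatible sets of the right cardinality are not exhausted by the forbidden family. This unproved existence claim is a genuine gap --- it is exactly where the work lives in your version of the argument.

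The paper avoids the issue entirely by using the cruder invariant $|r\Gamma|$. For the first extension it takes $K=I\setminus\Pi(i_0)$ with $i_0$ the \emph{initial} vertex of a maximal path of $r\Pi$ (or any vertex when $r\Pi=\emptyset$); by Proposition \ref{prop07c}(b) this $K$ meets every edge of $r\Pi$ in exactly one vertex, so \emph{all} of $r\Pi$ survives and $r\Gamma=r\Pi\cup[u,v,i_0]$, giving $|r\Gamma|=|r\Pi|+2$ exactly. For the second extension one only needs $r\Gamma\subset r\Pi\cup\{(u,v)\}$, hence $|r\Gamma|\leq|r\Pi|+1$, with no control whatsoever over which $r\Pi$-edges survive; the Hamiltonian case is handled by the explicit choice $K=\{i_0,\dots,i_n\}$. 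If you want to salvage your write-up with minimal change, replace your arbitrary $v^*$ by the initial vertex of a maximal path of $r\Pi$ and switch to comparing $|r\Gamma_1|$ with $|r\Gamma_2|$; the run condition then becomes unnecessary.
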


\begin{proof} Assume that $\Pi$ is a game on $I$ with $|I| = 2n + 1$. We have seen that the game of size $5$ has non-isomorphic extensions and
so we may assume that $2n + 1 \geq 7$ and so $n \geq 3$.

Choosing $K \subset I$ a subset of size $n + 1$ we extend via $u \to v$ to obtain
the game $\Gamma$. Recall that if $i, j \in I$, then  Proposition \ref{prop07} implies that
$\Pi^{-1}(i) = \Pi(j)$ if and only if $i \to j$ and $\Pi$ is reducible via $i \to j$.

Notice that each $\Pi(i)$ and $\Pi^{-1}(i)$ is a subset of $I$ of size $n$ and there are at most $2(2n + 1)$ of them. On the other hand, there
are a total of ${2n + 1 \choose n}$ subsets of size $n$ and for $n \geq 3$,  ${2n + 1 \choose n} > 4n + 2$.
Hence, for $n \geq 3$ there exists a subset $L$ of size $n$ which is not equal to $\Pi(i)$ or $\Pi^{-1}(i)$ for any $i \in I$. We let
$K = I \setminus L$ to obtain $\Gamma$. By Corollary \ref{cor07b}, $\Gamma$ is not reducible via  $i \to u$ or $v \to i$  for any $i \in I$ and
reducibility via $i \to j$ for $i, j \in I$ requires $\Pi$ be reducible via $i \to j$. Hence,

\begin{equation}\label{eqiso06a}
\begin{split}
dom(\Gamma) \ \subset \ dom(\Pi) \ \cup \ \{ (u,v) \} \\
|dom(\Gamma)| \ \leq \ |dom(\Pi)| + 1.
\end{split}
\end{equation}

In particular, if $\Pi$ is not reducible and so $dom(\Pi) = \emptyset$,

\begin{equation}\label{eqiso06b}
dom(\Gamma) \ = \ \{ (u,v) \}
\end{equation}

\vspace{.25cm}

{\bfseries Case 1:} [$\Pi$ is not reducible, i.e. $dom(\Pi) = \emptyset$] \ \ If for $i \in I$
we use $K = I \setminus \Pi(i)$, to obtain $\Gamma_1$, then by Corollary \ref{cor07b} (b)
$\Gamma_1$ is reducible via $v \to i$ as well as $u \to v$. Since $\Pi$ is not reducible,
$\Pi(i) \not= \Pi^{-1}(j)$ for any $j \in I$. So $\Gamma_1$ is not reducible via $j \to u$ by Corollary \ref{cor07b} (b) again.
Since $\Pi$ is not reducible,
$\Gamma_1$ is not reducible via any pair $j_1, j_2$ in $I$. Thus, with this choice of $K$,
\begin{equation}\label{eqiso05a}
dom(\Gamma_1) \ = \ [u, v, i].
\end{equation}

Similarly, if we use $K = I \setminus \Pi^{-1}(i)$, we obtain $\Gamma_1$ with
\begin{equation}\label{eqiso05b}
dom(\Gamma_1) \ = \ [i, u, v].
\end{equation}

\vspace{.25cm}

{\bfseries Case 2:} [$\Pi$ is  reducible, but $dom(\Pi)$ is not a Hamiltonian cycle] \ \
By Proposition \ref{prop07c} the domination graph
 is the union of separate maximal simple paths $[i_0, \dots, i_m]$ with $(i_m,i_0) \not\in dom(\Pi)$.
Call $i_0, i_2, \dots $ the \emph{even vertices} of the path and $i_1, i_3, \dots $
the \emph{odd vertices} of the path. By Proposition \ref{prop07c} again
$\Pi(i_0)$ contains only the odd vertices of the path and it intersects each of the other maximal paths either in the set of its odd vertices
or its even vertices.

If we use $K = I \setminus \Pi(i_0)$ to obtain $\Gamma_1$, then from Corollary \ref{cor07b} (d),
we see that $\Gamma_1$ is reducible via every edge of $dom(\Pi)$. In addition,
as above it is reducible via $v \to i_0$ as well as  via $u \to v$. By maximality for no $j \in I$ is $(j,i_0) \in dom(\Pi)$ and so
$\Pi(i_0) \not= \Pi^{-1}(j)$ for any $j \in I$. So, as in Case 1, $\Gamma_1$ is not reducible via $j \to u$ by Corollary \ref{cor07b} (b) again.
Thus, we have
\begin{equation}\label{eqiso05c}
dom(\Gamma_1) \ = \ dom(\Pi) \ \cup \ [u, v, i_0].
\end{equation}

Similarly, if we use $K = I \setminus \Pi^{-1}(i_m)$, then
\begin{equation}\label{eqiso05d}
dom(\Gamma_1) \ = \ dom(\Pi) \ \cup \ [i_m, u, v].
\end{equation}

\vspace{.25cm}

{\bfseries Case 3:} [ $dom(\Pi)$] is a Hamiltonian cycle] \ \
In the Hamiltonian case, $dom(\Pi) = \langle i_0, \dots, i_{2n} \rangle$ and $\Pi(i_0) = \Pi^{-1}(i_{2n})$ is the set of odd vertices.  So with
$K = I \setminus \Pi(i_0) =  I \setminus \Pi^{-1}(i_{2n})$ we obtain $\Gamma_1$ with
\begin{equation}\label{eqiso05e}
dom(\Gamma_1) \ = \ \langle i_0, \dots, i_{2n}, u, v \rangle.
\end{equation}
\vspace{.25cm}

In all of these cases $|dom(\Gamma_1)| = | dom(\Pi)| + 2$. Thus, in each case, $\Gamma_1$ is not isomorphic to $\Gamma$.

In the Hamiltonian cycle case, which includes the case $n = 2$, we can choose $K = \{ i_0, \dots, i_n \}$. With $n \geq 2$ the complement
$\{ i_{n + 1}, \dots i_{2n} \}$  is not equal to any $\Pi(i_p)$ or $\Pi^{-1}(i_p)$ since it contains both even and odd vertices. Hence, for this extension
$dom(\Gamma) = \{ (i_n, i_{n + 1}), (i_{2n}, i_0), (u,v) \}$ with $|dom(\Gamma)| = 3 < 2n + 1 = |dom(\Pi)|$.

\end{proof}

If $\Pi$ is not reducible, then from the proof of Proposition \ref{propiso04} we have the following possibilities for
the domination graph of $\Gamma$ the extension of $\Pi$ on $I$ via $K \subset I$ and $u \to v$.
\vspace{1cm}

\begin{equation}\label{eqiso06c}
\begin{array}{cr}
dom(\Gamma) \quad &  I \setminus K\\ [0.5ex]
(u,v),(v,i)  &  \Pi(i)\\
 (i,u),(u,v)   &  \Pi^{-1}(i)\\
 (u,v)  &  \text{otherwise}
 \end{array}
 \end{equation}
 \vspace{.5cm}

In Cases 1-3 of the above proof, we constructed examples which enlarge $dom(\Pi)$. We pause to consider the opposite extreme.

\begin{df}\label{defiso05} A game $\Pi$ on a set $I$ with $|I| = 2n + 1$ is called \emph{uniquely reducible}\index{uniquely reducible}
\index{reducible!uniquely} when there is a unique subset $J \subset I$ with $|I| = 2n - 1$ such that the restriction $\Pi|J$ is a subgame. \end{df}
\vspace{.5cm}

Thus, $\Pi$ is uniquely reducible when $dom(\Pi)$ consists of a single edge.  For example, a double of a tournament of size at least $2$ is never
uniquely reducible. In particular, the unique game of size $5$, which is the double of a single edge, is not uniquely reducible.
Both of the types of reducible games of size $7$ are isomorphic to doubles and so are not uniquely reducible.

\begin{prop}\label{propiso06} Let $\Pi$ be a  game on  $I$ with $|I| = 2n + 1$ and let $K $ be a subset of $I$ with $|K| = n + 1$. The extension
$\Gamma$ of $\Pi$ via $K$ and $u \to v$ is uniquely reducible if and only if the following
conditions hold.
\begin{itemize}
\item[(i)] For every path $[i_0, \dots, i_m]$ in $dom(\Pi)$ either
$\{ i_0, \dots, i_m \}$ is a subset of $K$ or is disjoint from $K$.
\item[(ii)] $I \setminus K$ is not equal to $\Pi(j)$ or $\Pi^{-1}(j)$ for any $j \in I$.
\end{itemize}

Assume $(i_0, i_1) \in dom(\Pi)$ and so $\Pi$ is reducible. If $\Pi$ is uniquely reducible and
$K = I \setminus \Pi(i_1) =  I \setminus \Pi^{-1}(i_0)$, then $K$ satisfies condition (i) but not
condition (ii). For all other cases with $\Pi$ reducible, condition (ii) follows from condition (i).

\end{prop}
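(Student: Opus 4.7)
The plan is to reduce the main equivalence to an edge-by-edge analysis of $r\Gamma$ via Corollary \ref{cor07b}, and then to settle the final paragraph by a case-split driven by Propositions \ref{prop07}(d) and \ref{prop07c}(a).

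The edge $(u,v)$ always lies in $r\Gamma$ by construction, so $\Gamma$ is uniquely reducible iff $r\Gamma=\{(u,v)\}$. First I would march through Corollary \ref{cor07b}: part (c) rules out any edge of $r\Gamma$ of the form $(u,i)$ or $(i,v)$ with $i\in I$; parts (a),(b) translate the nonexistence of edges of the form $(i,u)$ and $(v,i)$ into precisely condition (ii); and part (d) translates the nonexistence of interior edges $(i,j)$ with $i,j\in I$ into the edgewise statement ``no edge of $r\Pi$ has exactly one endpoint in $K$''. A short induction on the length of a path $[i_0,\ldots,i_m]\subset r\Pi$, applied to successive edges $(i_{k-1},i_k)$, shows that this edgewise statement is equivalent to (i). This delivers the ``if and only if''.

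For the final paragraph, the specific case is a direct verification: if $r\Pi=\{(i_0,i_1)\}$ and $K=I\setminus\Pi(i_1)=I\setminus\Pi^{-1}(i_0)$, then neither $i_0$ nor $i_1$ lies in $\Pi(i_1)$, so both are in $K$ and (i) holds, while $I\setminus K=\Pi(i_1)$ shows (ii) fails. The converse is what I expect to be the main obstacle. Assuming $\Pi$ is reducible, (i) holds, and (ii) fails, I will split on which alternative of (ii) fails; say $I\setminus K=\Pi(j)$, so $K=\{j\}\cup\Pi^{-1}(j)$. For any $(a,b)\in r\Pi$, condition (i) forces $\{a,b\}$ entirely into $\Pi(j)$ or entirely into $K$. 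Both endpoints in $\Pi(j)$ would give $j\in\Pi^{-1}(a)\cap\Pi^{-1}(b)$, contradicting Proposition \ref{prop07}(d)(vi); having both in $\Pi^{-1}(j)\subset K$ would give $j\in\Pi(a)\cap\Pi(b)$, contradicting (d)(vii); and $a=j$ would force $b\in\Pi(j)=I\setminus K$, contradicting $b\in K$. The only surviving possibility is $b=j$ with $a\in\Pi^{-1}(j)$. By Proposition \ref{prop07c}(a) such an $a$ is unique, so $r\Pi$ consists of the single edge $(a,j)$ and $\Pi$ is uniquely reducible; Proposition \ref{prop07}(d)(viii) then identifies $\Pi^{-1}(a)=\Pi(j)$, so $K=I\setminus\Pi(j)=I\setminus\Pi^{-1}(a)$ is of the special form with $i_0=a$, $i_1=j$. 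The symmetric case $I\setminus K=\Pi^{-1}(j)$ is handled identically by reversing inputs and outputs throughout.
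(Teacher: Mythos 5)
Your proof is correct and follows essentially the same route as the paper: the equivalence is reduced via Corollary \ref{cor07b} to the absence of edges of $r\Gamma$ other than $(u,v)$, and the final paragraph rests on the reducibility criteria of Proposition \ref{prop07}(d) together with the uniqueness statements in Proposition \ref{prop07}(f)/\ref{prop07c}(a). The only (immaterial) difference is in the organization of the last step: you fix the vertex $j$ witnessing the failure of (ii) and classify the edges of $r\Pi$ against the partition $\{j\}\cup\Pi^{-1}(j)\cup\Pi(j)$, whereas the paper fixes an edge $(i_0,i_1)\in r\Pi$ and classifies the candidate sets $\Pi(j)$, $\Pi^{-1}(j)$ against $\{i_0,i_1\}$; both arguments land on the same special form of $K$.
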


\begin{proof} By Corollary \ref{cor07b}  condition (i)  is equivalent to non-reducibility via $i \to j$ for $(i,j) \in \Pi$.
Condition (ii) is equivalent to non-reducibility via $v \to j$ or $j \to u$ for
$j \in K$.

Now assume $(i_0,i_1) \in dom(\Pi)$. For any $j \in I \setminus \{ i_0, i_1 \}$ either
$i_0 \to j \to i_1$ or $i_1 \to j \to i_0$ by Proposition \ref{prop07}. Hence, both $\{ i_0, i_1 \} \cap \Pi(j)$ and
$\{ i_0, i_1 \} \cap \Pi^{-1}(j)$ are singletons as are
$\{ i_0, i_1 \} \cap \Pi(i_0)$ and $\{ i_0, i_1 \} \cap \Pi^{-1}(i_1)$. Hence, given condition (i)
the only possibility with $I \setminus K$ equal to $\Pi(j)$ or $\Pi^{-1}(j)$ is when
$K = I \setminus \Pi(i_1) =  I \setminus \Pi^{-1}(i_0)$.

Furthermore, if $dom(\Pi)$ contains another edge $(j_0, j_1)$, then $\{ j_0, j_1 \} \cap \Pi^{-1}(i_0)$
is a singleton (even if $j_1 = i_0$ and $j_0 = i_0$ can't happen). So if $\Pi$ is not
uniquely reducible, then $K = I \setminus \Pi(i_1) =  I \setminus \Pi^{-1}(i_0)$ violates condition (i).

If $\{ (i_0,i_1) \} =  dom(\Pi)$ and $K = I \setminus \Pi(i_1) =  I \setminus \Pi^{-1}(i_0)$, then condition (i) is satisfied, but
$dom(\Gamma) = [i_0, u, v, i_1]$.

\end{proof} \vspace{.5cm}

 \begin{prop}\label{propiso07} If $\Pi$ is a game on $I$, then the double $2\Pi$ has a uniquely reducible extension.
 In particular, a game
 of type $\Gamma_{III}$ from Section \ref{secseven},
 has a uniquely reducible extension.
 \end{prop}

 \begin{proof} By Proposition \ref{prop29bbbb} $dom(2\Pi) = \{ (i-,i+) : i \in I \}$. With $2k + 1 = |I|$ choose
 $\hat K \subset I$ with $|\hat K| = k + 1$. Use $K = \bigcup \{ \{ i-, i+ \} : i \in \hat K \},$ with $|K| = 2k + 2$,
 to define the extension $\Gamma$.

Condition (i) of Proposition \ref{propiso06} is obvious and since
$2 \Pi$ is not uniquely reducible, condition (ii) holds as well. The proposition implies
 that $\Gamma$ is uniquely reducible.

 The type III game of Section \ref{secseven} is the double of  a $3$-cycle.

\end{proof} \vspace{.5cm}

\begin{prop}\label{propiso08} If $\Pi$ is a game which is either not reducible or uniquely reducible,
then $\Pi$ has a uniquely reducible extension. \end{prop}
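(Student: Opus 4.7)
The plan is to apply the characterization in Proposition \ref{propiso06}: an extension $\Gamma$ of $\Pi$ via $K \subset I$ (with $|K| = n+1$) and $u \to v$ is uniquely reducible precisely when (i) every path in $r\Pi$ is wholly inside or wholly outside $K$, and (ii) $I \setminus K$ is never of the form $\Pi(j)$ or $\Pi^{-1}(j)$ for any $j \in I$. So the task reduces to exhibiting such a $K$ in each of the two hypotheses on $\Pi$.

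If $\Pi$ is not reducible, then $r\Pi = \emptyset$ and condition (i) is vacuous. Since all games of size $3$ and $5$ are reducible (the latter is forced by Proposition \ref{prop13a}, whose reducibility graph is a Hamiltonian $5$-cycle), non-reducibility forces $n \geq 3$. The number of $n$-subsets of $I$ is $\binom{2n+1}{n}$, which dwarfs the at-most $2(2n+1)$ subsets of the form $\Pi(j)$ or $\Pi^{-1}(j)$ (already $\binom{7}{3} = 35 > 14 = 2 \cdot 7$ at $n = 3$, with the binomial growing exponentially while $4n+2$ grows linearly). So there exists an $n$-subset $L$ not of either form, and $K = I \setminus L$ does the job.

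If $\Pi$ is uniquely reducible with $r\Pi = \{(i_0, i_1)\}$, the decisive observation is that Proposition \ref{prop07}(d) gives the disjointness
$\Pi(i_0) \cap \Pi(i_1) = \emptyset = \Pi^{-1}(i_0) \cap \Pi^{-1}(i_1)$,
so no output set $\Pi(j)$ and no input set $\Pi^{-1}(j)$ can contain both $i_0$ and $i_1$ simultaneously (else $j$ would lie in one of these empty intersections). Since unique reducibility forces $n \geq 2$, I choose any $n$-subset $L \subset I$ containing $\{i_0, i_1\}$ (filling out with $n-2$ arbitrary additional vertices) and set $K = I \setminus L$. Then condition (i) is immediate because $\{i_0, i_1\}$ sits outside $K$, and condition (ii) follows because $L = I \setminus K$ contains both $i_0$ and $i_1$, so by the observation above $L$ cannot coincide with any $\Pi(j)$ or $\Pi^{-1}(j)$.

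I expect no serious obstacle. The only delicate point is the exception flagged at the end of Proposition \ref{propiso06} -- the ``bad'' choice $K = I \setminus \Pi(i_1) = I \setminus \Pi^{-1}(i_0)$, which satisfies (i) but fails (ii) in the uniquely reducible case. That bad $K$ necessarily has $i_0, i_1 \in K$, so any $K$ whose complement contains $\{i_0, i_1\}$ automatically avoids it, which is exactly the construction proposed above.
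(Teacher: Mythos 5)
Your proof is correct and takes essentially the same route as the paper: both cases come down to producing a $K$ satisfying the two conditions of Proposition \ref{propiso06}, with the identical counting argument ($\binom{2n+1}{n} > 4n+2$ for $n \geq 3$) in the non-reducible case. In the uniquely reducible case you put $\{i_0,i_1\}$ into $I\setminus K$ and check condition (ii) directly from the disjointness in Proposition \ref{prop07}(d); this is a marginally cleaner way to dodge the single bad choice $K = I\setminus\Pi(i_1)$ than the paper's auxiliary-vertex device, but the substance is the same.
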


\begin{proof} Assume $\Pi$ is a game on $I$.

If $\Pi$ is not reducible, then as in the proof of Proposition \ref{propiso04}
we choose $K$  such that $I \setminus K$
is not equal to $\Pi(i)$ or $\Pi^{-1}(i)$ for any $i \in I$.

If $\Pi$ is uniquely reducible then its size is at least $9$.

If $dom(\Pi) = \{ (i_0,i_1) \}$ and $k $ is a vertex of $I \setminus \Pi(i_1) =  I \setminus \Pi^{-1}(i_0)$.
Then we choose $K$ so that it contains or is disjoint from $\{ i_0, i_1, k \}$. Condition (i) of
Proposition \ref{propiso06} is obvious and since $I \setminus K$ is not equal to
$\Pi(i_1) =   \Pi^{-1}(i_0)$, condition (ii) follows. Again the proposition implies that the extension via $K$ and $u \to v$
is uniquely reducible.

\end{proof} \vspace{.5cm}

Thus, beginning with the double of a three cycle we can build a totally reducible game $\Pi$ on $I$ with $I_1 \subset I_2 \dots \subset I_n = I$ with
$|I_k| = 2k + 1$ such that when $2k + 1 > 7$ the subgame $\Pi|I_k$ is uniquely reducible.
\vspace{.5cm}

Above we saw that games have non-isomorphic extensions.  Now we consider the reverse question.
Can non-isomorphic games have isomorphic extensions? Equivalently, can a game $\Gamma$ be reduced to two non-isomorphic games.
This, of course, requires that the game be reducible via different pairs and so the obvious places to look are at doubles $\Gamma = 2 \Pi$ with
$\Pi$ a tournament on $I$ with $|I| = n$. For any vertex $i \in I$, $2 \Pi$ is an extension of $2 (\Pi|I \setminus \{ i \})$.  So we want a tournament $\Pi$
such that for $i_1, i_2 \in I$, $2 (\Pi|I \setminus \{ i_1 \})$ is not isomorphic to $2 (\Pi|I \setminus \{ i_2 \})$.

Let us first consider when this fails.  If $\Pi$ is a point-symmetric game, then
$\Pi|I \setminus \{ i_1 \}$ and $\Pi|I \setminus \{ i_2 \} $ are isomorphic
for any pair $i_1$ and $i_2$ since there is an automorphism taking $i_1$ to $i_2$. If $\Pi$ is an order, then $\Pi|I \setminus \{ i \}$ is an order
for every $i \in I$ and so, despite the rigidity of $\Pi$, all of the $\Pi|I \setminus \{ i \}$'s are isomorphic. Isomorphic tournaments have
isomorphic doubles.

A more interesting example, is $\Pi$ on $I = \{ 1, 2, 3, 4, 5, 6 \}$ with
 \begin{equation}\label{eqiso07a}
 \Pi \ = \ \langle 1, 2, 3 \rangle \ \cup \  \langle 4, 5, 6 \rangle \ \cup \ \{ 1, 2, 3 \} \times \{ 4, 5, 6 \}.
 \end{equation}
 The tournaments $\Pi|I \setminus \{ 1 \}$ and $\Pi|I \setminus \{ 4 \}$ are not isomorphic. The former has score vector $(1, 1, 1, 3, 4)$ and the
 latter has score vector $(0, 1, 3, 3, 3)$. However, as Example \ref{exiso02} shows, they have isomorphic doubles.  Hence,
 all of the $2 (\Pi|I \setminus \{ i \})$'s are isomorphic.

 On the other hand, if $\Pi|I \setminus \{ i_1 \}$ and $\Pi|I \setminus \{ i_2 \}$ have different score vectors and neither $0$ nor $n - 2$ occur among
 the scores, then, by Proposition \ref{prop30}, $2(\Pi|I \setminus \{ i_1 \})$ is not isomorphic to $2(\Pi|I \setminus \{ i_2 \})$. It is not hard to
 construct such examples.

 More interesting is the case when $n = 2k + 1$ and $\Pi$ is itself a game.  In that case, each $\Pi|I \setminus \{ i \}$ has
 score vector $(k-1, \dots, k-1, k, \dots, k)$ with $k$ each of the scores $k-1$ and $k$. However, if the further restriction of
 $\Pi|I \setminus \{ i_1 \}$ and $\Pi|I \setminus \{ i_2 \}$ to the vertices
 with score $k$ are not isomorphic, then  $\Pi|I \setminus \{ i_1 \}$ and $\Pi|I \setminus \{ i_2 \}$ cannot be isomorphic.

 \begin{ex}\label{exiso05} Let $\Pi$ be the game $\Gamma_{III}$ on $\{ 0, 1, 2, 3, 4, 5, 6 \}$  of Section \ref{secseven} so that it is
 the double of the three cycle $\langle 1, 2, 3 \rangle$. In  $\Pi|I \setminus \{ 0 \}$
 the vertices $1, 2, 3$  have score $3$ and form a $3$-cycle.  In $\Pi|I \setminus \{ 4 \}$ the vertices $6, 5, 1$  have score $3$ and form a
 3-order. The non-isomorphic games $2(\Pi|I \setminus \{ 0 \})$ and
$2(\Pi|I \setminus \{ 4 \})$ extend via $(0,-) \to (0,+)$ and via $(4,-) \to (4,+)$, respectively, to $2 \Pi$. \end{ex}

$\Box$

\vspace{1cm}

\section{Games of Size Nine}\label{secnine}

For the case $9 = 2 \cdot 4 + 1$ we will  first describe the isomorphism classes of the group games. There are $2^4 = 16$ game subsets. These are
naturally pointed games with tournaments $\Pi_+, \Pi_-$ each of size $4$.

\begin{prop}\label{propfour} Each tournament of size $4$ is uniquely determined up to isomorphism by its score vector. \end{prop}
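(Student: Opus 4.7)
The plan is to proceed by exhaustive case analysis on the score vector. Since a tournament on four vertices has $\binom{4}{2} = 6$ edges, the entries of any non-decreasing score vector $(s_1,s_2,s_3,s_4)$ must lie in $\{0,1,2,3\}$ and sum to $6$. A brief enumeration shows that the only candidates are
\[ (0,1,2,3), \quad (1,1,1,3), \quad (0,2,2,2), \quad (1,1,2,2), \]
and each is easily realized. Since reversing all edges sends a tournament with score vector $(s_1,\dots,s_4)$ to one with score vector $(3-s_4,\dots,3-s_1)$, the cases $(1,1,1,3)$ and $(0,2,2,2)$ are interchanged by reversal, so handling one yields the other. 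Thus I need only treat three cases.

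For $(0,1,2,3)$ I would apply Proposition \ref{prop06order} (condition (vi) $\Rightarrow$ (vii)) directly: this vector forces $\Pi$ to be an order, and any two orders on the same finite set are isomorphic.

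For $(1,1,1,3)$ I would let $v$ denote the unique vertex of score $3$. Then $v$ beats the other three vertices, and in the restriction $\Pi|(I \setminus \{v\})$ each remaining vertex has lost exactly one edge (to $v$), so its restricted score drops by $1$ to give the vector $(0,0,0)$... wait, more carefully, each had score $1$ and lost to $v$, so the restricted scores are each $1$. That makes $\Pi|(I \setminus \{v\})$ a regular tournament of size $3$, i.e.\ a game of size $3$, which by Theorem \ref{theo05}'s size-$3$ version is a $3$-cycle and is unique up to isomorphism. Reassembling gives a unique tournament up to isomorphism, and the $(0,2,2,2)$ case follows by reversal.

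For $(1,1,2,2)$, the main case, I would label the two score-$1$ vertices $a,b$ and the two score-$2$ vertices $c,d$. Consider the edge between $a$ and $b$; by the symmetry $a \leftrightarrow b$ we may assume $a \to b$. Then $a$ has used its unique output, forcing $c \to a$ and $d \to a$. Now $b$ must beat exactly one of $c,d$; by the symmetry $c \leftrightarrow d$ we may assume $b \to c$ and $d \to b$. At this point $d$ already has its two outputs $\{a,b\}$, so $c \to d$ is forced. Every edge is determined, and the resulting tournament is unique up to isomorphism. The only thing to verify is that the two symmetries I invoked are genuine isomorphisms between the potential cases, which is straightforward relabeling.

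The main obstacle is mild: it is the $(1,1,2,2)$ case, where one must be careful to use the $c \leftrightarrow d$ symmetry after the $a \leftrightarrow b$ symmetry in order to avoid producing what looks like a second tournament. Once that is observed, every remaining step is forced.
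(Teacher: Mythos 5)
Your proof is correct, and for three of the four score vectors it coincides with the paper's argument: the enumeration of possible vectors, the use of Proposition \ref{prop06order} for $(0,1,2,3)$, the observation that the score-$3$ vertex in the $(1,1,1,3)$ case dominates a $3$-cycle (your ``restriction to the complement of $v$'' is literally the output set of $v$, so this is the same observation), and the passage to reverse tournaments for $(0,2,2,2)$. Where you genuinely diverge is the main case $(1,1,2,2)$. You settle it by a direct forced-edge analysis: after the two WLOG relabelings ($a \leftrightarrow b$ and $c \leftrightarrow d$) every one of the six edges is determined, so any two such tournaments are isomorphic. The paper instead adjoins a fifth vertex beating exactly the two score-$2$ vertices to embed each such tournament into a game of size $5$, invokes the uniqueness of that game (Theorem \ref{theo05}) together with the transitivity of its $\Z_5$ automorphism group to move one adjoined vertex to the other, and then restricts. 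Your route is more elementary and self-contained; the paper's buys a slightly more general principle (isomorphisms of size-$2n$ tournaments with score vector $(n-1,\dots,n-1,n,\dots,n)$ correspond to isomorphisms of their size-$(2n+1)$ game extensions) that it reuses in the subsequent Remark. Both are sound.
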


\begin{proof}  The possible score vectors are:
\begin{equation}\label{eqfour}
s_1 = (0,1,2,3), \quad s_2 = (1,1,1,3), \quad \bar s_2 = (0,2,2,2), \quad s_3 = (1,1,2,2).
\end{equation}

Let $\Theta_p$ be a tournament of size $4$ with score vector $s_p$ for $p = 1,2,3$. So $\Theta_2^{-1}$ has score vector $\bar s_2$.

By Proposition \ref{prop06order} a tournament with score vector $s_1$ is an order and the order of size $4$ is unique up to isomorphism.

If the score vector is $s_2$, then the output set of the vertex with score $3$ is a $3$-cycle. It is obvious that any two such are isomorphic.
By using the reverse tournaments we obtain the result for $\bar s_2$.

Next observe that if $\Theta$ is a tournament on $J$ with $|J| = 2n$ and score vector
$(n-1,\dots,n-1,n,\dots,n)$, then we there is a game $\Pi$ of size $2n + 1$
which contains $\Pi$.
If $u$ is the additional vertex, then $\Pi(u)$ is the set of vertices of $J$ with $\Theta$ score $n$. Conversely, if we remove a vertex from a game
of size $2n+1$, then we are left with a tournament of size $2n$ and with score vector $(n-1, \dots, n-1, n, \dots, n)$.

When $n = 4$ we apply uniqueness of the game of size $5$.
If $\Theta$ and $\bar \Theta$ are tournaments of size $4$ with score vectors $s_3$, we can adjoin vertices $u$ and $\bar u$ to obtain
games $\Pi$ and $\bar \Pi$ of size $5$.  By Theorem \ref{theo05} there exists a isomorphism between them.
Since the game is a group game, $\Z_5$ acts transitively
on the vertices and so we may assume that the isomorphism takes $u$ to $\bar u$. It then restricts to an isomorphism from $\Theta$ to $\bar \Theta$.

\end{proof}

{\bfseries Remark:} Notice that an isomorphism
between two  tournaments $\Theta_1$ and $\Theta_2$ with score vectors $(n-1,\dots,n-1,n,\dots,n)$
extends uniquely to an isomorphism between the games $\Pi_1$ and $\Pi_2$.
Thus, if we begin with non-isomorphic games of size $2n + 1$ and we remove an
arbitrary vertex from each we obtain non-isomorphic tournaments of size $2n$
each with score vector $(n-1, \dots, n-1, n, \dots, n)$. Thus, with $n > 2$, there are  always non-isomorphic tournaments of size $2n$
each with score vector $(n-1, \dots, n-1, n, \dots, n)$.
\vspace{.5cm}

In particular, we see that $\Theta_1$ and $\Theta_3$ are each isomorphic to its reverse tournament.

Let $\Gamma_3$ denote the game on $\Z_3$ with $\langle 0, 1, 2 \rangle$.
\vspace{.5cm}

\begin{theo}\label{theonine01} For $G = \Z_9$ there are three types of group games. \end{theo}
\vspace{.5cm}

{\bfseries TYPE I}( $A = \{ 1, 3, 4, 7 \}$ or $ = [1,4]$)- The game $\Gamma[A]$ is reducible, with
$Aut(\Gamma[A]) = \Z_9$.  The six Type I game subsets $B$
such that $\Gamma[B]$ is isomorphic to $\Gamma[A]$ are the elements of $\{ m_a(A) : a \in \Z_9^* \}$.

{\bfseries TYPE II}( $A = \{ 1, 5, 6, 7 \} $) - The game $\Gamma[A]$ is not
reducible, but has $Aut(\Gamma[A]) = \Z_9$. The six Type II game subsets $B$
such that $\Gamma[B]$ is isomorphic to $\Gamma[A]$ are the elements of $\{ m_a(A) : a \in \Z_9^* \}$.

{\bfseries TYPE III}( $A_1 = \{ 1, 3, 4, 7 \}, A_2 = \{ 1, 4, 6, 7 \}$) - There is a non-affine isomorphism
between $\Gamma[A_1]$ and $\Gamma[A_2]$. The Type III games are isomorphic to
the lexicographic product $\Gamma_3 \ltimes \Gamma_3$ with automorphism
group the semi-direct product  $\Z_3 \ltimes (\Z_3)^3$.
The  four Type III game subsets  are $A_1, -A_1, A_2, -A_2$.
\vspace{.25cm}

\begin{proof} Each group game is a pointed game on the pair $(-A,A)$. We let $s_A$ and $s_{-A}$ denote the score
vectors of the tournaments $\Gamma[A]|A$ and $\Gamma[A]|(-A)$. Since the group is commutative, the tournament
$\Gamma[A]|(-A)$ is the reverse of $\Gamma[A]|A$.

TYPE I- With $A = [1,4]$ or $Odd_4$ the score vectors are $s_A = s_{-A} = s_1 = (0,1,2,3)$. These are rigid tournaments and so
the only automorphism which fixes $0$ is the identity. Hence, as we saw in Theorem \ref{theo13}, $Aut(\Gamma[A]) = \Z_9$.

TYPE II- With $A = \{ 1, 5, 6, 7 \} $, $s_A = s_{-A} = s_3 = (1,1,2,2)$. By Lemma \ref{lemrigid} and Theorem \ref{theo12rigid} again have
$Aut(\Gamma[A]) = \Z_9$.

By Corollary \ref{cor14} $\{m_a(A) : a \in \Z_9^* \}$ are the $\phi(9) = 6$ distinct game subsets whose games are isomorphic to $\Gamma[A]$ for
each of these two types.

Thus, Type I and Type II games are non-isomorphic tournament regular representations of the group $\Z_9$.

%
%
%
%
%

TYPE III- With $A = \{ 1, 3, 4, 7 \}$ or $ \{ 1, 4, 6, 7 \}$ we have $s_A = s_2 = (1,1,1,3),$\\ $ s_{-A} = \bar s_2 = (0,2,2,2)$.

Type III is a special case of Example \ref{excomm}. We have \\ $\begin{tikzcd}
\Z_3 \arrow[r,"\theta"] & \Z_9 \arrow[r,"\pi"] & \Z_3
\end{tikzcd}$ with $\theta(j) = 3j$, and $\pi(3j+i) = i$ for $i,j = 0,1,2$. Let $B = \{ 1 \} \subset \Z_3$.
With $A_{0,1} = \{ 1 \}$, $A_1 = A_{0,1} \cup \pi^{-1}(B) $ and with $A_{0,2} = \{ 2 \}$, $A_2 = A_{0,2} \cup \pi^{-1}(B) $.
Thus, $A_1$ and $A_2$ are game subsets for the pair $(\Z_9,\theta(\Z_3))$ and so their inverses $-A_1, -A_2$ are also game subsets for the
pair. Thus, the associated games are all isomorphic to $\Gamma_3 \ltimes \Gamma_3$. By Theorem \ref{theo31} the automorphism groups
are isomorphic to $\Z_3 \ltimes [\Z_3]^3$.

We can describe $\Gamma[A_1]$  as a $3$-cycle of $3$-cycles.
$$ \langle \ \langle 2, 5, 8 \rangle \ \Rightarrow \ \langle 0, 3,  6 \rangle \ \Rightarrow \ \langle 1, 4, 7 \rangle \ \rangle.$$

On the other hand, $\Gamma[A_2]$ is  a   $3$-cycle of $3$-cycles.
$$ \langle \ \langle 2, 8, 5 \rangle \ \Rightarrow \ \langle 0, 6, 3 \rangle \ \Rightarrow \ \langle 1, 7, 4 \rangle \ \rangle.$$

Clearly, the product of transpositions $\r = (8, 5)(6, 3)(7, 4)$ is an isomorphism
between them which fixes $0$.  With $\phi = m_2 = m_{-1}$ on $\Z_3$ and $\gamma_i = \phi$ for $i = 0,1,2$,
$\rho = 1_{\Z_3} \ltimes \gamma$.

The group $\Z_9^* = \{ 1, 2, 4, 8 = -1, -2 = 7, -4 = 5, -8 = 1 \}$ is cyclic of order $6$ with generator $m_2$.
The subgroup  $[m_4]$ is contained in the automorphism groups of
$\Gamma[A_1]$ and $\Gamma[A_2]$ while $m_{2}$  maps
each game to its reverse. In particular, $\r$ is not affine.

We can also view Type III by using the construction of Theorem \ref{theo17}. The set $H = [m_4] = \{ 1, 4, 7 \}$ is the multiplicative subgroup of
$\Z_9^*$  of order $3$. The action of $H$ fixes $3$ and $-3 = 6$. The four game subsets are obtained by choosing one from each pair of $H$ orbits:
$\{H, -H \}, \{ \{3 \}, \{ 6 \} \}$, e.g. $A_1 = H \cup \{ 3 \}, A_2 = H \cup \{ 6 \}$.

\end{proof} \vspace{.5cm}

The Type III cases provide examples of game subsets $A_1, A_2$ of $\Z_9$ with isomorphic associated games but which are not related by
the action of an element of $Z_9^*$.  This is a special case of the following.

\begin{ex}\label{exisogames} Let $p$ be an odd prime. There are $p - 1$ game subsets $$A_1, \dots A_{p-1} \subset \Z_{p^2}$$
with isomorphic associated
games but with no two related by an element of $\Z_{p^2}^*$.\end{ex}

\begin{proof} Define $\xi : \Z_p \tto \Z_{p^2}$ by $\xi(j) = jp$ and $\pi : \Z_{p^2} \tto \Z_p$ by $\pi(j) = j$, or, equivalently, $\pi(j + kp) = j$.
Thus, $\pi$ is a surjective ring homomorphism with kernel $H = \xi(\Z_p)$ an ideal in the ring.
Let $K = \pi^{-1}(1) = \{ 1 + kp \}$, the unique subgroup of $ \Z_{p^2}^*$ of
order $p$, generated by $1 + p$. Each coset $i + H$ is $K$ invariant. If $i \not\in H$, then $i + H$ is a single $K$ orbit.
On the other hand, each element of
$H$ is fixed by $K$. It follows that a $\Z_{p^2}$ game subset $A$ is $K$ invariant if and only if it is a game subset for the pair $(\Z_{p^2},H)$, i.e. if
and only if there are game subsets $A_0, B$ of $\Z_{p}$ such that $A = \xi(A_0) \cup \pi^{-1}(B)$. See Theorem \ref{theo21a}.

Let $A_{0,1} = B = [1,(p-1)/2]$ and for $a = 1, \dots, p-1$ let $A_{0,k} = m_a(A_{0,1})$. Let $\Pi$ be the game on $\Z_{p}$ associated with
$[1,(p-1)/2]$. Thus, $\Pi$ is isomorphic to each of the games on $\Z_{p}$ associated to $B, A_{0,1}, \dots, A_{0,p-1}$.
Define $A_k = A_{0,k} \cup \pi^{-1}(B)$. By Theorem \ref{theo23a} the game $\Gamma[A_k]$ on $\Z_{p^2}$ is isomorphic to the
lexicographic product $\Gamma[B] \ltimes \Gamma[A_{0,k}]$ and so is isomorphic to $\Pi \ltimes \Pi$ for all $k$. If we use
$\r$ the identity on $\Gamma[B]$ and $\gamma_i = m_k$ for all $i \in \Z_{p}$ then $\r \ltimes \gamma$ is an isomorphism from
$\Gamma[A_1] = \Gamma[B] \ltimes \Gamma[A_{0,1}]$ to $\Gamma[A_k] = \Gamma[B] \ltimes \Gamma[A_{0,k}]$.

 By Theorem \ref{theo13} the only automorphisms of $\Pi$ are translations by elements of $\Z_p$. This implies that the game subsets
 $A_{0,1}, \dots, A_{0,p-1}$ are distinct. Furthermore,  $m_a(A_{0,k}) = A_{0,k}$ for $a \in Z_p^*$ only for $a = 1$.

 Now assume that $u \in \Z_{p^2}^*$ and that $m_u(A_{k_1}) = A_{k_2}$. The ideal $H$ is invariant with respect to multiplication by $u$
 and so $m_u(\xi(A_{0,k_1})) = \xi(A_{0,k_2})$.
Because $\Z_{p^2} \setminus H$ is invariant as well, $\pi^{-1}(B) = A_{k_1} \cap (\Z_{p^2} \setminus H) = A_{k_2} \cap (\Z_{p^2} \setminus H)$
 is $m_u$ invariant. This implies that $B$ is invariant with respect multiplication by $\pi(u) \in \Z_{p}^*$. It follows that $\pi(u) = 1 $ and so
 $u \in K$. Since $K$ fixes every element of $H$, it follows that $\xi(A_{0,k_1}) = \xi(A_{0,k_2})$ and so $k_1 = k_2$.

 It follows that distinct subsets $A_{k_1}$ and $A_{k_2}$ are not related by the action of $\Z_{p^2}^*$.

 Now consider the special case when $p$ is a Fermat prime so that $p - 1$ is a power of $2$ and so
 every element of $\Z_{p}^* \setminus \{ 1 \}$ has  even order. It follows that every element of
 $\Z_{p^2}^* \setminus K$ has even order. It then follows from  Theorem \ref{theo16} and its proof that
 $\Z_{p}^*$ acts freely on the game subsets of $\Z_{p}$ and $\Z_{p^2}^*$ acts freely on
 the game subsets of $\Z_{p^2}$ which are not $K$ invariant.

 If $A$ is $K$ invariant then $A = \xi(A_0) \cup \pi^{-1}(B)$ with $A_0$ and $B$ game subsets of $\Z_{p}$. We obtain $(p - 1)^2$ game subsets $A$
 by replacing $A_0, B$ by $m_a(A_0), m_b(B)$ for $a, b \in \Z_{p}^*$. All of the associated games on
 $\Z_{p^2}$ are isomorphic to $\Gamma[B] \ltimes \Gamma[A_0]$. On the other hand, only when $a = b$ is $\xi(A_0) \cup \pi^{-1}(B)$ related to
 $\xi(m_a(A_0)) \cup \pi^{-1}(m_b(B))$ by an element of $\Z_{p^2}^*$.

 Since there are $2^{(p-1)/2}$ game subsets of $\Z_{p}$, it follows that there are $2^{p-1}$ game subsets of $\Z_{p^2}$ which are $K$ invariant.
 These are partitioned into classes of size $(p - 1)^2$ all members of which have isomorphic games. Each of these is in turn partitioned into
 $p - 1$ sets of size $p - 1$ by the action of $\Z_{p^2}^*$.

\end{proof} \vspace{.5cm}

The following question remains open, as far as I know.

\begin{ques}\label{quesisogames} Does there exists a collection of more than $\phi(2n + 1)$ game subsets $A$ of $\Z_{2n + 1}$ all of whose
associated games are isomorphic? In particular, with $2n + 1$ a square-free product of Fermat primes, do there exist game subsets $A$ and $B$ of
$\Z_{2n + 1}$ which are not related by an element of $\Z_{2n + 1}^*$ but which have isomorphic associated games? \end{ques}

$\Box$ \vspace{.5cm}

Returning to the case with $2n + 1 = 9$ we observe the following.

\begin{theo}\label{theonine02} For $G =
\Z_3 \times \Z_3$, $\Gamma[A]$ is isomorphic to $\Gamma_3 \ltimes \Gamma_3$ for every game subset $A$ of $G$. \end{theo}

\begin{proof} The group $\Z_3 \times \Z_3$ is a two-dimensional vector space over the
field $\Z_3$. The group $G^*$ of $2 \times 2$ invertible matrices on $\Z_3$ acts transitively on the set of nonzero vectors.
If $H$ is a one-dimensional subspace, then there are four game subsets for the pair $(G,H)$.
Such a game subset contains a unique  affine subspace and it is parallel to $H$.  Thus, the game subsets of $(G,H)$ determine $H$.
  The subgroup of matrices which
fix $H$ acts transitively on these four game subsets. If a matrix maps $H_1$ to $H_2$ then it maps the game subsets for $(G,H_1)$ to the
game subsets for $(G,H_2)$. There are four subspaces $H$ and so there are 16 game subsets of
 $(G,H)$ for a suitably chosen one-dimensional
subspace $H$. As there are $2^4 = 16$ game subsets for $G$ it follows that all are isomorphic and the associated games are lexicographic products.

\end{proof}
{\bfseries Remark:} By Theorem \ref{theo31} the automorphism group
is isomorphic to $\Z_3 \ltimes [\Z_3]^3$ for each $\Gamma[A]$.
It follows that $\Z_3 \times \Z_3$ does not have a tournament regular representation.

\vspace{.25cm}

Finally, we consider a non-trivial automorphism $\r$ on a game $\Gamma$ of size $9$. If $\r$ has a fixed point, then by Proposition \ref{prop04} and
Proposition \ref{prop31af} every non-trivial cycle in the permutation $\r$ must have length an odd number at most $4$ and greater than $1$.
That is, it has length $3$ and so $\r$ has order $3$. If $\r$ consists of a single cycle of length $9$, then we can identify the set of
vertices with $\Z_9$ so that $\r$ is translation by a generator. In that case, Theorem \ref{theo11} implies that $\Gamma$ is isomorphic with one
of the group games on $\Z_9$ described above. If the permutation has no fixed points and does not consist of a single cycle, then by Proposition
\ref{prop10} (g), it contains an odd number of cycles whose lengths sum to $9$. So the remaining possibility has $\r$ with three $3$-cycles and
so again $\r$ has order $3$. For the games of Type III isomorphic to $\Gamma_3 \ltimes \Gamma_3$ with automorphism group $\Z_3 \ltimes [\Z_3]^3$, it
is easy to check that the automorphism group contains permutations of all four types: i.e. of order $9$ and of order $3$ with exactly one, two or
three $3$-cycles.

In \cite{CH} Chamberland and Herman compute the number of isomorphism classes  and associated automorphism groups for games of size $9, 11$ and $13$.
In addition, they provide a beautiful geometric description of the three games of size $7$. For size $9$, in addition to the three group games, they
find seven rigid games
and five with automorphism group $\Z_3$.

We have seen in Example \ref{ex31a} a rigid example of size $9$. We begin with the tournament $\Theta_3$ of size $4$ with score vector $(1,1,2,2)$, for example,
we may use $\Theta_3 = \Gamma[A]|A$ where $\Gamma[A]$ is a Type II group game.
The tournament $\Theta_3 $ is rigid and the scores $0$ and $3$ do not occur.  It follows that the double $2 \Theta_3$ is a rigid game of size $9$.

Let $\Theta$ be the tournament on $J = \{ 1, 2, 3, 4 \}$ with score vector $(1,1,1,3)$ such that $\langle 2, 3, 4 \rangle$ is the $3$-cycle in $\Theta$.
It is clear that $Aut(\Theta)$ is isomorphic to $\Z_3$. Let $\Gamma = 2 \Pi$ and  $\bar \Gamma = \Gamma/ \langle 2+, 3+, 4+ \rangle$.

In Example \ref{ex29ac} we observed that
\begin{align}\label{eqnine02}
\begin{split}
dom(\Gamma) \ = \ [1-, 1+, 0]  \ &\cup \  \{ (p-, p+) : p = 2, 3, 4 \}, \\
dom(\bar \Gamma) \ = &\ [1-, 1+, 0]. \\
\end{split}
\end{align}

\begin{ex}\label{exnine03} The games $\Gamma$ and $\bar \Gamma$ are non-isomorphic games of size $9$. Each has automorphism group isomorphic to
$\Z_3$. \end{ex}

\begin{proof} Since an isomorphism associates the domination graphs, it follows from (\ref{eqnine02}) that $\Gamma$ and $\bar \Gamma$ are not
isomorphic. Since  the domination graph is invariant with respect to an automorphism, it is clear that an automorphism of either must
map $[1-, 1+, 0]$ to itself and so is fixed on $[1-, 1+, 0]$ by Proposition \ref{propreduce}. In particular, it must
fix $0$. For the double, $\Gamma = 2 \Theta$, Proposition \ref{prop30} implies
that the inclusion $2: Aut(\Theta) \tto Aut(\Gamma)$ is an isomorphism and so
$\Gamma$ has automorphism group isomorphic to $\Z_3$.

Because the cycle  $ \langle 2+, 3+, 4+ \rangle$ is invariant with respect to the $\Z_3$ action,
it follows from Proposition \ref{prop31ah} that $\Z_3$ acts on
$\bar \Gamma$. Since an automorphism
 fixes $0$, it restricts to an automorphism of $\Gamma|J_-$ which is isomorphic to $\Theta$.  Hence, the $\Z_3$ action includes all of the
 automorphisms of $\bar \Gamma$.

\end{proof}

We note that by using the construction of Exercise \ref{exiso02} we obtain an isomorphism from $\Gamma$ to $\Gamma^{-1}$.  The same map
induces an isomorphism from $\bar \Gamma$ to $\bar \Gamma^{-1}$. Finally, we observe that $\Gamma|\Gamma(1+)$ has score $\bar s_2 = (0,2,2,2)$
and $\Gamma|\Gamma^{-1}(1+)$ has score $s_2 = (1,1,1,3)$. Thus, using $1+$ as a base point we obtain a different view of $\Gamma$ as a
pointed game.

 \vspace{1cm}

\section{Universal Tournaments}\label{unitour}
\vspace{.5cm}

In this section, we will consider infinite as well as finite tournaments. We will write $(S,\Pi)$ for a tournament $\Pi$ on a set $S$ or just use $\Pi$ when
$S$ is understood.
For a set $S$ we write $|S|$ for the cardinality of $S$.

Let $(S_1,\Pi_1)$ and $(S_2,\Pi_2)$ be tournaments. Generalizing from the finite case, a
\emph{tournament morphism}\index{tournament morphism} $\r : \Pi_1 \tto \Pi_2$ is
a mapping $\r : S_1 \tto S_2$ such that $\bar \r^{-1}(\Pi_2) \subset \Pi_1$. That is, $\r(i) \to \r(j)$ in $\Pi_2$ implies
$i \to j$ in $\Pi_1$. Because $\Pi_1$ and $\Pi_2$ are tournaments, $i \to j$ in $\Pi_1$ implies $\r(i) \to \r(j)$ in $\Pi_2$ unless $\r(i) = \r(j)$.
An injective morphism is called an \emph{embedding}\index{tournament embedding} in which case $i \to j$ in $\Pi_1$ if and only if $\r(i) \to \r(j)$ in $\Pi_2$.
A bijective morphism is called an \emph{isomorphism}\index{tournament isomorphism}, in which case, the inverse map $\r^{-1} : S_2 \tto S_i$ defines the
inverse isomorphism $\r^{-1} : \Pi_2 \tto \Pi_1$. An isomorphism from $\Pi$ to itself is called an \emph{automorphism}\index{tournament automorphism} of $\Pi$.

When a tournament morphism $\pi : \Pi_1 \tto \Pi_2$ is surjective we call it a \emph{tournament projection}.
\index{tournament projection} We may then choose for each $j \in S_2$ an
element $\r(j) \in \pi^{-1}(j) \subset S_1$. This defines a \emph{splitting}\index{splitting} of $\pi$ which is an embedding
$\r : \Pi_2 \tto \Pi_1$ such that $\pi \circ \r = 1_{S_2}$, the identity on $S_2$.

Recall that for a tournament  $(S,\Pi)$ and a subset $S_1$ of $S$,  the tournament $\Pi|S_1 = \Pi \cap (S_1 \times S_1)$
 on $S_1$ is the \emph{restriction} of $\Pi$ to $S_1$.
The inclusion map $inc : S_1 \tto S$ defines an embedding of $\Pi|S_1$ into $\Pi$.  On the other hand, if $\r : \Pi_1 \tto \Pi_2$ is
an embedding and $S_3 = \r(S_1) \subset S_2$, then $\r : S_1 \to S_3$ defines an isomorphism of $\Pi_1$ onto the restriction
$\Pi_2|S_3$.

Clearly, the composition of morphisms is a morphism and so we obtain the category of tournaments.  For a tournament $\Pi$ on $S$ we let
$Aut(\Pi)$ denote the group of automorphisms of $\Pi$ with identity $1_S$.

For tournaments $(S,\Pi)$ and $(T,\Gamma)$ if $S_0 \subset S$ and $\r : \Pi|S_0$ $ \tto \Gamma$ is an embedding, we say that
$\r$ \emph{extends} to $\Pi$ if there exists an embedding $\t : \Pi \tto \Gamma$ such that $\t = \r$ on $S_0$.

\begin{df}\label{unidef00} If $(T,\Gamma)$ is a tournament and $T_0 \subset T$, then we say that
$T_0$ satisfies the \emph{simple extension property }\index{simple extension property}\index{extension property!simple}
in $\Gamma$ if for every  subset $J \subset T_0$ which is either finite or cofinite there exists $v_J \in T$ such that in $\Gamma$
\begin{equation}\label{unieq00}
v_J \to j \quad \text{for all} \ j \in J, \qquad \text{and} \qquad j \to v_J  \quad \text{for all} \ j \in T_0 \setminus J.
\end{equation}
We will describe this by saying $v_J$ \emph{chooses} $J \subset T_0$ for $\Gamma$.\index{$v_J$ chooses $J$}
\end{df}
\vspace{.5cm}%

Since a tournament contains no diagonal pairs, it follows that the set $\{ v_J : J \subset T_0 \}$ consists of $|P_f(T_0)|$ vertices
disjoint from $T_0$ where $P_f(T_0)$ is the set  subsets of $T_0$ which are finite or cofinite. So if $T_0$ is finite $|P_f(T_0)| =  2^{|T_0|}$ and if
$T_0$ is countably infinite, then $P_f(T_0)$ is countably infinite.

\begin{lem}\label{unilem00a} (a) Assume $(S,\Pi)$ is a finite tournament, and $S_0 \subset S$ with $|S \setminus S_0| = 1$, i.e.
$S$ contains a single additional vertex. If $\r : \Pi|S_0 \tto \Gamma$ is an embedding and $\r(S_0)$ satisfies the simple
extension property in $\Gamma$, then $\r$ extends to $\Pi$.

(b) If $(S_0,\Pi_0), (S_1,\Pi_1)$ are tournaments with $S_0 \cap S_1 = \emptyset$ and with $|S_1| \geq |P_f(S_0)|$, then there exists a
tournament $\Pi$ on  set $S_0 \cup S_1$ with
$\Pi_0 = \Pi|S_0, \Pi_1 = \Pi|S_1$ and such that $S_0$ satisfies the simple extension property in $\Pi$.

\end{lem}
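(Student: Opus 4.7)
The plan for part (a) is to apply the simple extension hypothesis to a subset chosen so as to mirror the way the extra vertex of $S$ sits inside $\Pi$. Let $u$ denote the unique element of $S \setminus S_0$ and set $J = \r(\Pi(u) \cap S_0) \subset \r(S_0)$. By the simple extension property applied to $T_0 = \r(S_0)$, there is a vertex $v_J$ in the underlying set of $\Gamma$ that chooses $J$; note that necessarily $v_J \notin \r(S_0)$, since no vertex of a tournament can simultaneously be an input and an output of itself. Extend $\r$ to $\t : S \tto$ (the vertex set of $\Gamma)$ by declaring $\t(u) = v_J$. Since $\r$ is already an embedding on $S_0$, I only need to check the edges between $u$ and $S_0$: if $u \to s$ in $\Pi$ then $\r(s) \in J$, so $\t(u) = v_J \to \r(s) = \t(s)$ in $\Gamma$; if $s \to u$ then $\r(s) \in \r(S_0) \setminus J$, so $\t(s) \to \t(u)$. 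Injectivity follows from $v_J \notin \r(S_0)$.

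For part (b), I build $S$ by adjoining to $S_0$ a fresh vertex $v_J$ for each subset $J \subset S_0$, yielding $|S| = |S_0| + 2^{|S_0|}$. The orientations are forced on edges between old and new vertices: $\Pi$ contains $\Pi_0$ together with $v_J \to j$ for $j \in J$ and $j \to v_J$ for $j \in S_0 \setminus J$. The only remaining freedom lies in orienting pairs $\{v_J, v_{J'}\}$ with $J \ne J'$, and any tournament on the collection $\{ v_J : J \subset S_0 \}$ will do — for instance, fix a total order on the power set $\P(S_0)$ and orient $v_J \to v_{J'}$ whenever $J$ precedes $J'$. By construction $\Pi|S_0 = \Pi_0$, and every subset $J \subset S_0$ is chosen by its corresponding $v_J$, which is exactly the simple extension property.

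Neither part presents a genuine obstacle; the only real insight is in part (a), namely that the correct target subset for the simple extension property is the $\r$-image of the out-neighborhood of $u$. This is what guarantees that the orientations of edges between $\t(u)$ and $\t(S_0)$ match those between $u$ and $S_0$, so that $\t$ is an embedding rather than merely a morphism.
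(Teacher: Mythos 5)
Your proof is correct and follows essentially the same route as the paper: in (a) you take $J$ to be the $\r$-image of the out-neighborhood of the new vertex and send that vertex to $v_J$, and in (b) you adjoin one fresh vertex per subset of $S_0$ with forced orientations toward $S_0$ and arbitrary orientations among the new vertices. The extra details you supply (the observation that $v_J \notin \r(S_0)$, and the explicit edge check) are fine and match remarks the paper makes just before the lemma.
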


\begin{proof} (a) If $S = \{ v \} \cup S_0$, then we let $J = \r(\Pi(v)) \subset \r(S_0)$ and we obtain the extension by mapping
$v$ to $v_J$.

(b) Let $J \mapsto v_J$ be an injective map from the set $P_f(S_0)$ to $S_1$. Define $\Pi$ so that $\Pi_0 \cup \Pi_1  \subset  \Pi$
and for all $J \in P_f(S_0)$
\begin{equation}\label{unieq01}
\{ (v_J,j) : \ j \in J, \} \ \cup \ \{(j,v_J) :   \ j \in S_0 \setminus J \}  \ \subset \ \Pi, \hspace{1cm}
\end{equation}
with the orientation of the edges between the vertices in $S_0$ and the remaining vertices in $S_1$ chosen arbitrarily.

Clearly, $v_J $ chooses $J \subset S_0$ for $\Pi$ and so  $S_0$ satisfies the simple extension property in $\Pi$.

\end{proof} \vspace{.5cm}

\begin{df}\label{unidef01} A tournament $(T,\Gamma)$  is called \emph{universal}\index{universal tournament}\index{tournament!universal}  when it satisfies

{\bfseries The Extension Property} If $\Pi$ is a  tournament on a countable set $S$, $S_0$ is a finite subset of $S$ and
$\r : \Pi|S_0 \tto \Gamma$ is an embedding, then $\r$ extends to $\Pi$.
\end{df}
\vspace{.5cm}

In \cite{C} Cherlin defines genericity with respect to a class of finite tournaments. What we are calling a universal tournament is a tournament which is
generic for the class of all finite tournaments.

\begin{prop}\label{uniprop01a} If $(T,\Gamma)$  is a universal tournament and $(S,\Pi)$ is an arbitrary countable tournament then there exists
an embedding $\r : \Pi \tto \Gamma$. \end{prop}

\begin{proof} For $u \in S$ and $v \in T$, let $S_0 = \{ u \}$. A tournament on a singleton set is the trivial, empty, tournament. Hence,
$\r(u) = v$ is an embedding of $\Pi|S_0$ into $\Gamma$. By the extension property $\r$ extends to an embedding of $\Pi$ into $\Gamma$.

\end{proof} \vspace{.5cm}

\begin{prop}\label{uniprop02} In order that a tournament $(T,\Gamma)$  be universal, it is necessary and sufficient that
every finite subset $T_0$ of $T$ satisfies the simple extension property in $\Gamma$. \end{prop}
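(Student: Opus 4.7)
The plan is to prove both implications directly, with the simple extension property serving as the finite building block from which the full extension property for countable tournaments is assembled inductively.

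For necessity, I would suppose $\Gamma$ is universal and fix a finite $T_0 \subset T$ together with a subset $J \subset T_0$. I would then build a one-vertex extension: take a point $v \notin T_0$, set $S = T_0 \cup \{v\}$, and define a tournament $\Pi$ on $S$ by letting $\Pi|T_0 = \Gamma|T_0$ and declaring $v \to j$ for $j \in J$ and $j \to v$ for $j \in T_0 \setminus J$. The inclusion $\r : \Pi|T_0 \hookrightarrow \Gamma$ is an embedding, so by the extension property it extends to an embedding $\t : \Pi \to \Gamma$. Then $v_J := \t(v)$ chooses $J$ for $\Gamma$, establishing the simple extension property at $T_0$.

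For sufficiency, assume every finite subset of $T$ has the simple extension property. Let $\Pi$ be a tournament on a countable set $S$, $S_0 \subset S$ finite, and $\r : \Pi|S_0 \to \Gamma$ an embedding. Enumerate $S \setminus S_0 = \{s_1, s_2, \ldots\}$ (possibly finite). I would construct the required embedding $\t$ by induction, defining $\t$ on $S_n := S_0 \cup \{s_1, \ldots, s_n\}$ so that $\t$ restricts to an embedding $\Pi|S_n \to \Gamma$ agreeing with $\r$ on $S_0$. The base case $n = 0$ is $\r$ itself. For the inductive step, apply Lemma \ref{unilem00a}(a) to the one-vertex extension $S_n \cup \{s_{n+1}\}$: since $\t(S_n)$ is finite, it satisfies the simple extension property in $\Gamma$ by hypothesis, so $\t|(\Pi|S_n)$ extends to an embedding on $\Pi|S_{n+1}$. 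Taking $\t = \bigcup_n \t_n$ (or just $\t_n$ if $S$ is finite) yields the required extension.

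Two points need a brief verification. Injectivity is preserved at each step because the element $v_J \in T$ that chooses $J \subset \t(S_n)$ cannot lie in $\t(S_n)$ itself: if $v_J$ were some $\t(s) \in \t(S_n)$, then the clause $v_J \to v_J$ would be required whenever $s$ lies in $J$, which is impossible in a tournament (and symmetrically in the other case). And the application of Lemma \ref{unilem00a}(a) is legitimate precisely because the lemma only requires the simple extension property for the image $\t(S_n)$ in $\Gamma$, not for anything larger.

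The only real content is the inductive step, and there is no genuine obstacle — the simple extension property is exactly engineered to make the one-vertex extension work, and the countable case follows by iterating. The proof is essentially a back-and-forth-style argument in one direction, modeled on the classical Fraïssé-style construction of universal homogeneous structures, and I would present it in the order above: necessity first (easy, one application of universality), then sufficiency by induction on the enumeration of $S \setminus S_0$.
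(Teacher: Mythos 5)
Your proof is correct and follows essentially the same route as the paper's: sufficiency by enumerating $S \setminus S_0$ and iterating the one-vertex extension of Lemma \ref{unilem00a}(a), and necessity by realizing the choosers inside a finite tournament extending $\Gamma|T_0$ and invoking the extension property. The only (harmless) difference is that for necessity you apply universality once per subset $J$ to a one-vertex extension, whereas the paper applies it a single time to the finite tournament supplied by Lemma \ref{unilem00a}(b) and pushes all the choosers $u_J$ forward simultaneously.
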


\begin{proof} If $\Gamma$ is universal and $T_0$ is a finite subset of $T$, then by Lemma \ref{unilem00a} (b) there exists
a finite set $T_1 \supset T_0$  and a tournament $\Pi_1$ on $T_1$ with $\Pi_1|T_0 = \Gamma|T_0 $ and such that $T_0$
satisfies the simple extension property
in $\Pi_1$.  Let $\t : \Pi_1 \to \Gamma$ be an extension of the inclusion of $\Gamma|S_0$ into $\Gamma$. If $J \subset T_0$ and $u_J \in T_1$ chooses
$J \subset T_0$ for $\Pi_1$, then
 $v_J = \t(u_J)$ chooses $J \subset T_0$ for $\Gamma$.

Now assume that every finite subset of $T$ satisfies the simple extension property in $\Gamma$. Assume that
$\Pi$ is a  tournament on a countable set $S$, $S_0$ is a finite subset of $S$ and
$\r : \Pi|S_0 \tto \Gamma$ is an embedding.

Count the finite or countably infinite set of vertices $v_1, v_2, \dots $ of $S \setminus S_0$.
Let $S_k = S_0 \cup \{ v_1, \dots, v_k \}$. Inductively, with $\t_0 = \r$ we can apply Lemma  \ref{unilem00a} (a) to
define an embedding $\t_k :  \Pi|S_k \tto \Gamma$
which extends $\t_{k-1}$ for $k \geq 1$. If $S$ is finite with $|S \setminus S_0| = N$ then $\t = \t_N$ is the required
extension.  If $S$ is countably infinite then we use $\t = \bigcup_k \t_k$, with $\t(v_i) = \t_k(v_i) $ for all $k \geq i$.

\end{proof} \vspace{.5cm}

\begin{df}\label{unidef03a} For a tournament $(T,\Gamma)$ we call $R \subset T$  a \emph{generic subset}\index{generic subset} for $\Gamma$
when every finite subset $T_0 \subset T$ satisfies the simple extension property in $\Gamma|(R \cup T_0)$. If $T$ is the disjoint union of generic subsets
$R,R'$ then $(R, R')$ is called a \emph{generic partition}\index{generic partition} for $\Gamma$. \end{df}
\vspace{.25cm}

\begin{prop}\label{uniprop03b} Let $(T,\Gamma)$ be a tournament.
\begin{itemize}
\item[(a)]  If $R$ is a generic subset for $\Gamma$, then $(R,\Gamma|R)$ is a universal tournament.

\item[(b)]  A tournament $(T,\Gamma)$ is universal when $T$ is a generic subset for $\Gamma$.

\item[(c)] If $R_1 \subset R_2 \subset T$ and $R_1$ is a generic subset for $\Gamma$, then $R_2$ is a generic subset for $\Gamma$. In particular,
a tournament admits a generic subset if and only if it is a universal tournament.

\item[(d)] If $R$ is a generic subset for $\Gamma$ and $S \subset T$ is finite, then $R \setminus S$ is a generic subset for $\Gamma$.
\end{itemize}
\end{prop}

\begin{proof} (a), (b) and (c) are obvious.

(d) If $T_0 \cup S$ satisfies the simple extension property in $\Gamma|(R \cup T_0 \cup S)$, then $T_0$
satisfies the simple extension property in $\Gamma|((R \setminus S) \cup T_0)$. To see this, observe that if $J \subset T_0$ and
$v_J$ chooses $J \subset T_0 \cup S$ then $v_J \in R \setminus S$.

 \end{proof} \vspace{.5cm}

\begin{prop}\label{uniprop03c} Let $\xi_1, \xi_2$ be distinct automorphisms of a universal tournament $(T,\Gamma)$.
The set $ Eq = \{ i \in T : \xi_1(i) = \xi_2(i) \}$
is not a generic subset
for $\Gamma$, and its complement $Eq'$ is a generic subset for $\Gamma$.\end{prop}

\begin{proof} By composing with $\xi_2^{-1}$ we reduce to the case with $\xi_2$ the identity so that with $\xi = \xi_2^{-1}\xi_1$ the set
 $Eq = \{ i : \xi(i) = i \}$. If  $j \not= \xi(j)$, then for all $i \in T$,
$i \to j$ implies $i = \xi(i) \to \xi(j)$ and similarly, $j \to i$ implies $\xi(j) \to i$. Since there does not exist $i \in Eq$
which chooses $\{ j \} \subset \{ j, \xi(j) \}$ for $\Gamma|(Eq \cup \{ j, \xi(j) \})$ it follows that $Eq$ is not a generic subset for $\Gamma$.

Since $T$ is a generic subset for $\Gamma$, Proposition \ref{uniprop03b}(d) then implies that $Eq'$ is infinite.

Let $J \subset S$ with $S$ a finite subset of $T$. Choose $j \in Eq' \setminus (S \cup \xi^{-1}(S)$.  That is, the pair $\{j, \xi(j) \}$ is disjoint from
$S$. If $v_J$ chooses $J \cup \{ j \} \subset S \cup \{j, \xi(j) \}$ then, as above, $v_J \in Eq'$. Clearly, $v_J$ chooses $J \subset S$.
It follows that $Eq'$ is a generic subset for $\Gamma$.

 \end{proof} \vspace{.5cm}

Now let $\{ (S_k,\Pi_k) \}$ be a sequence of tournaments with $\{ S_k : k \in \N \}$ a pairwise disjoint sequence sets.
Let $T_k = \bigcup_{i=1}^k S_i$. Assume that
$|S_{k+1}| \geq |P_f(T_k)|$ for all $k \in \N$.

 Let $(T_1,\Gamma_1) = (S_1,\Pi_1)$. Proceed inductively. to define $(T_k,\Gamma_k)$.

Use Lemma \ref{unilem00a}(b) to
construct $(T_{k+1},\Gamma_{k+1})$ so that $\Gamma_{k+1}|T_k = \Gamma_k, \Gamma_{k+1}|S_{k+1} = \Pi_{k+1}$,
and so that
 $ T_k$ satisfies the simple extension property in $\Gamma_{k+1}$.

  Finally, let $(T,\Gamma) = \bigcup_k  (T_k,\Gamma_k)$.
 For $K$ any infinite subset of $\N$ let $R_K =  \bigcup_{k \in K} S_k$.

 \begin{lem}\label{unilem03}The tournament $(T,\Gamma)$ is universal and if $K \subset \N$ is infinite,
 then $R_K$ is a generic subset for $\Gamma$. \end{lem}

\begin{proof} Let $J \subset S \subset T$ with $S$ finite. There exists $k$ such that $S \subset T_k$ and $k+1 \in K$.
Because $T_k$ has the simple extension property in $T_{k+1}$ there exists $v_J \in T_{k+1}$ which chooses $J \subset T_k$ for
$\Gamma_{k+1}$. In particular, $v_J \in S_{k+1}$. Since $k+1 \in K$, $v_J \in R_K$.  Clearly, $v_J$ chooses $J \subset S$ for $\Gamma|(R_K \cup S)$.
Hence, $R_K$ is a generic subset and so $\Gamma$ is a universal tournament.

 \end{proof} \vspace{.5cm}

\begin{theo}\label{unitheo04} Assume that $(T_1,\Gamma_1)$ and $(T_2,\Gamma_2)$ are countable, universal tournaments.
\begin{enumerate}
\item[(a)] If $S$ is a finite subset of $T_1$ and
$\r : \Gamma_1|S \tto \Gamma_2$ is an embedding, then $\r$ extends to an isomorphism  $\xi : \Gamma_1 \tto \Gamma_2$.

\item[(b)] If $(R_1,R_1')$ and $(R_2,R_2')$ are generic partitions of $\Gamma_1$ and $\Gamma_2$ respectively, then there exists
an isomorphism  $\xi : \Gamma_1 \tto \Gamma_2$ with $\xi(R_1) = R_2$ and $\xi(R_1') = R_2'$.
\end{enumerate} \end{theo}

\begin{proof}  (a) This is a standard back and forth argument. First note that Proposition \ref{uniprop01a} implies that  a
generic tournament is infinite. Let $u_1,u_2, \dots $ be a counting of the vertices of $T_1 \setminus S_0$ and $v_1, v_2, \dots $ be
a counting of the vertices of $T_2 \setminus \bar S_0$ with $S_0 = S$ and  $\bar S_0 = \r(S)$. Let
$\xi_0: \Gamma_1|S_0 \tto \Gamma_2|\bar S_0$ be the isomorphism
obtained by restricting $\r$.

Inductively, we construct for $k \geq 1$   \begin{itemize}
\item $S_k \supset S_{k-1} \cup \{ u_k \},$
\item $ \bar S_k \supset \bar S_{k-1} \cup \{  v_k \}, $
\item  $\xi_k : \Gamma_1|S_k \tto \Gamma_2|\bar S_k$ an isomorphism which extends $\xi_{k-1}$.
\end{itemize}
  Define $S_{k+.5}$ to be $S_k$ together with the
first vertex of $T_1 \setminus S_0$ which is not in $S_k$. This vertex is $u_{k+1}$ unless $u_{k+1}$ is already an element of
$S_k$. Extend $\xi_k$ to define an embedding $\xi_{k + .5}$ on $S_{k + .5}$.
Let $\bar S_{k + .5} = \xi_{k + .5}(S_{k + .5})$ so that $\xi_{k + .5}^{-1} : \Gamma_2|\bar S_{k + .5} \tto \Gamma_1$ is an embedding.
Define $\bar S_{k+1} $ to be $\bar S_{k +.5}$ together with the first  vertex of $T_2 \setminus \bar S_0$
which is not in $\bar S_{k +.5}$. Extend to define the embedding
$\xi_{k+1}^{-1}: \Gamma_2|\bar S_{k+1} \tto \Gamma_1$ and let $S_{k+1} = \xi_{k+1}^{-1}(\bar S_{k+1})$.

The union $\xi = \bigcup_k \xi_k$ is the required isomorphism.

(b) The argument is similar to that of (a). Instead of going first from $T_1$ to $T_2$ and then back from $T_2$ to $T_1$ we go from $R_1$ to $R_2$
then from $R_1'$ to $R_2'$ and then back.

\end{proof} \vspace{.5cm}

\begin{cor}\label{unicor05} (a) There exist  countable universal tournaments, unique up to isomorphism.  In fact, if
$(T_1,\Gamma_1)$ and $(T_2,\Gamma_2)$ are countable universal tournaments with $i_1 \in T_1, i_2 \in T_2$ then there
exists an isomorphism $\xi : \Gamma_1 \tto \Gamma_2$ with $\xi(i_1) = i_2$.

(b) A countable universal tournament admits uncountably many distinct generic partitions.

(c) For a countable universal tournament the automorphism group is uncountable.
\end{cor}

\begin{proof} (a) From Lemma \ref{unilem03}, it follows that there exist countable universal tournaments.

As in the proof of Proposition \ref{uniprop01a} the
map $i_1 \mapsto i_2$ gives an embedding of trivial tournament $\Gamma_1|\{ i_1 \}$ into $\Gamma_2$. It extends to an isomorphism by
Theorem \ref{unitheo04}(a).

(b) From the uniqueness of part (a), we may assume that the universal tournament is $(T,\Gamma)$ constructed for Lemma \ref{unilem03}.

The set $P_f(\N)$ is countable.  Hence, the subsets  $K \subset \N$ with $K$ and its complement
$K'$ both infinite form an uncountable collection in the power set of $\N$. For such a set $K$, the pair $(R_K,R_{K'})$ is a generic partition
for $\Gamma$ by the lemma. Clearly, distinct sets $K$ yield distinct partitions.

(c) This is clear from (b) and Theorem \ref{unitheo04} (b).

\end{proof} \vspace{.5cm}

\begin{cor}\label{unicor06} Let $(T,\Gamma)$ be a universal tournament and $S$ be a finite subset of  $T$.

(a) If $(i_1,j_1), (i_2,j_2) \in \Gamma$, then there exists an automorphism $\xi$ of $\Gamma$ with $\xi(i_1) = i_2$ and $\xi(j_1) = j_2$.
That is, the tournament $(T,\Gamma)$ is symmetric.

(b) If $\r$ is an embedding  of $\Gamma|S$ into $\Gamma$, then there exists an automorphism $\xi$ of $\Gamma$ which
restricts to $\r$ on $S$.

(c) If $\r$ is an automorphism of $\Gamma|S$, then there exists an automorphism $\xi$ of $\Gamma$ which
restricts to $\r$ on $S$.
\end{cor}

\begin{proof} (a) Let $S_0 = \{ i_1, j_1 \}$ and define $\r : S_0 \to T$ by $\r(i_1) = i_2$ and $\r(j_1) = j_2$. Clearly,
$\r$ is an embedding of $\Gamma|S_0$ into $\Gamma$.
By Theorem \ref{unitheo04}(a) $\r$ extends to an automorphism $\xi$.

(b) follows from Theorem \ref{unitheo04}(a) and (c) is a special case of (b).

\end{proof} \vspace{.5cm}

Let $(T,\Gamma)$ be a tournament and $S$ be a nonempty, finite subset of $T$. For $J \subset S$, let
  \begin{align}\label{unieq02}
  \begin{split}
T_J = \{ i \in T : (i,j) \in \Gamma \ &\text{for all} \ j \in J  \\
\text{and} \ (j,i) \in \Gamma \ &\text{for all} \ j \in S \setminus J \}.
\end{split}
\end{align}
That is, $T_J$ is the set of $i \in T$ which choose $J \subset S$  for $\Gamma$. For example, for $j \in T$ and $S = \{ j \}$,
$T_{ \emptyset} = \Gamma(j), T_{\{ j \}} = \Gamma^{-1}(j)$.

Clearly, $\{ S \} \cup \{ T_J : J \subset S \}$ is a partition of $T$ into $1 + 2^{|S|}$ subsets.

\begin{prop}\label{uniprop07} Assume $(T,\Gamma)$ is a universal tournament, $S$ is a nonempty, finite subset of  $T$ and $J $ is a subset of $S$.
\begin{enumerate}
\item[(a)] $T \setminus S$ is a generic subset for $\Gamma$ and so the restriction $(T \setminus S, \Gamma|(T \setminus S))$ is a universal tournament.
\item[(b)]$T_J$ is a generic subset for $\Gamma|(T \setminus S)$ and so the restriction $(T_J,\Gamma|T_J)$ is a universal tournament.
\item[(c)] For any $j \in S$ the restriction $(T_J \cup \{ j \},\Gamma|(T_J \cup \{ j \}))$ is not a universal tournament.
\item[(d)] The subset $T_J$ is not a generic subset for $\Gamma$.
\end{enumerate} \end{prop}

\begin{proof} (a) $T$ is a generic subset for $\Gamma$ and $S$ is finite. By Proposition \ref{uniprop03b}(d)
 $T \setminus S$ is a generic subset for $\Gamma$. So $(T \setminus S,\Gamma|(T \setminus S))$ is universal by Proposition \ref{uniprop03b}(a).

(b) Assume that $S_1$ is a finite subset of $T \setminus S$ and $K \subset S_1$. Let $S_1^+ = S_1 \cup S$ and $K^+ = K \cup J$. There
exists  $v_{K^+} \in T$ such that $v_{K^+} $ chooses $K^+ \subset S_1^+$  for $\Gamma$.
It follows, first, that $v_{K^+} $ chooses $J \subset S$  for $\Gamma$ and so
$v_{K^+}\in T_J$. But also $v_{K^+} $ chooses $K \subset S_1$  for $\Gamma|(T \setminus S)$. Thus, $S_1$ satisfies
 the simple extension property in $\Gamma|(T_J \cup S_1)$. Thus, $T_J$ is a generic subset for  $\Gamma|(T \setminus S)$.
 So $(T_J, \Gamma|(T_J))$ is a universal tournament by Proposition \ref{uniprop03b}(a)again.

(c) If $j \in J$ there does not exist $i \in T_J \cup \{ j \}$ such that $j \to i$. $j \in S \setminus J$
there does not exist $i \in T_J \cup \{ j \}$ such that $i \to j$. In either case, we see that $\{ j \}$ does not satisfy the
simple extension property in $\Gamma|(T_J \cup \{ j \})$.

(d) If $T_J$ were a generic subset for $\Gamma$, then $T_J \cup \{ j \}$ would be a generic subset for any $j \in S$ and so the restriction
 $\Gamma|(T_J \cup \{ j \})$ would be
a universal tournament, contradicting (c).

\end{proof} \vspace{.5cm}

\begin{ex}\label{uniex08} (a) For $(T,\Gamma)$ a countable universal tournament, there exists $T_0$ a proper infinite subset of $T$
and an embedding of $\Gamma|T_0$ into $\Gamma$ which cannot be extended to an embedding of $\Gamma$ into itself.

(b) There exists a tournament which is not universal but into which every countable tournament can be embedded.
\end{ex}

\begin{proof} Let $i \in T$, $J = S = \{ i \}$, $T_0 = T_J$ and $T_1 = S \cup T_J$. Since $\Gamma|T_0$ is universal by Proposition \ref{uniprop07},
Corollary \ref{unicor05} implies that there exists an isomorphism $\xi : \Gamma|T_0 \tto \Gamma$. Since $\xi$ is surjective,
it cannot be extended to an embedding even of $\Gamma|T_1$ into $\Gamma$.

Since $(T_0,\Gamma|T_0)$ is universal and $T_0 \subset T_1$, it follows that every countable tournament can be embedded into $\Gamma|T_1$.
However, by Proposition \ref{uniprop07}(c) $\Gamma|T_1$ is not universal.

\end{proof} \vspace{.5cm}

Following \cite{L} and \cite{C} we call a tournament homogeneous when it satisfies condition (b) of Corollary \ref{unicor06}.

\begin{df}\label{unidef08a}  A tournament $(T,\Gamma)$ is called \emph{homogeneous}\index{homogeneous}\index{tournament!homogeneous}
if whenever
 $\r$ is an embedding  of $\Gamma|S$ into $\Gamma$ with $S$ a finite subset of $T$, there exists an automorphism $\xi$ of $\Gamma$ which
restricts to $\r$ on $S$. \end{df}\vspace{.5cm}

The trivial tournament on a singleton and the $3$-cycle are the only finite homogeneous tournaments. Among countably infinite tournaments,
Lachlan proved in \cite{L} that there are two isomorphism classes which are homogeneous in addition to the universal tournament.  That is, there
exist countably infinite homogeneous tournaments which are not universal.  For a clear picture see Cherlin's exposition in \cite{C}. We saw above that
there exist tournaments which contain all finite tournaments but which are nonetheless not universal. On the other hand,
these two conditions together
are equivalent to universality. This is Lachlan's observation that the homogeneous tournaments which contain
every finite tournament form a single isomorphism class.

\begin{prop}\label{uniprop08b} If $(T,\Gamma)$ is a homogeneous tournament into which every finite tournament can be embedded, then
$(T,\Gamma)$ is universal. \end{prop}

\begin{proof}  Let $(S,\Pi)$ be a tournament and $\r :\Pi|S_0 \tto \Gamma$ be an embedding with $S_0$ a finite subset of $S$.
If $S$ is countably infinite we can write it as an increasing union of finite subsets beginning with $S_0$ and extend inductively.
Thus, we reduce to the case where $S$ itself is finite. By assumption, there exists some embedding $\phi : \Pi \to \Gamma$. Let
$T_0 = \phi(S_0)$ so that $\r \circ (\phi|T_0)^{-1}$ is an embedding of $\Gamma|T_0$ into $\Gamma$. By homogeneity there exists
an automorphism $\xi$ of $\Gamma$ which extends $\r \circ (\phi|T_0)^{-1}$. On $\Pi|S_0$ the composition $\xi \circ \phi$ agrees with $\r$ and so
is the required extension.

\end{proof} \vspace{.5cm}

Call a group $G$, finite or infinite, an \emph{odd group}\index{odd group} when it contains no element of finite even order. A subset $A \subset G$ for
is a \emph{graph subset}\index{graph subset} when $A \cap A^{-1} = \emptyset$. It is a
  \emph{game subset}\index{game subset} when the three sets $\{ e \}, A, A^{-1}$ partition $G$. The existence of a game subset requires that
the group be odd. Conversely, when a group is odd we can obtain game subsets by using the Axiom of Choice to choose an
element from each pair $\{ a, a^{-1} \}$ with $a \not= e$. A game subset is normal \index{normal game subset}\index{game subset!normal}
when it is invariant with respect to all inner automorphisms.  From Lemma \ref{lem11norma} and its proof we see that $G$ admits a normal
game subset if and only if $a^2 = b^2$ implies $a = b$ in $G$ and that this condition holds if every element of $G$ has odd finite order.
On the other hand, the $1$-relator group  generated by $a$ and $b$ and with the single relation
$a^2b^{-2}$ is a countable group with no elements of finite order and so is an odd group in the above sense.
As my colleague Ben Steinberg pointed out to me, it
 is isomorphic to the fundamental group of the Klein Bottle, thought of as two M\"{o}bius Strips joined along their boundaries.
 So we will call it the \emph{Klein Bottle group}\index{Klein Bottle group}. It is an odd group which does not admit a normal game subset. In general, an
odd group fails to admit a normal game subset if and only if it contains a non-abelian factor of the Klein Bottle group.

If $A$ is a graph subset for $G$ then, as in the finite case, we define the digraph $\Theta[A] = \{ (i,j) \in G \times G : i^{-1}j \in A \}$. When $A$ is
a game subset, we write $\Gamma[A]$ for the associated tournament
which we
call the \emph{group game}\index{group game} associated with $A \subset G$.
In any case, left translation action of $G$ is an action of $G$ on $\Theta[A]$ and if $A$ is a normal game subset, then
the right, as well as the left,  translations on $G$ are isomorphisms of $\Gamma[A]$.

The two isomorphism classes of countably infinite, homogeneous tournaments other than the universal tournament contain group game representatives.
We pause to describe them, leaving to Lachlan \cite{L} and Cherlin \cite{C} the beautiful and intricate proof that there are no other isomorphism classes.

Let $\Q$ be the additive group of rational numbers and let $\Q_+$ be the set of positive rationals, i.e.
$\Q_+  = \{ r \in \Q : r > 0 \}$. Clearly, $\Q_+$ is a game subset with
$\Gamma[\Q_+] = \{ (r_1,r_2) : r_1 < r_2 \}$.
The finite subtournaments of $(\Q,\Gamma[\Q_+])$ are exactly the finite orders. 

\begin{prop}\label{prophomo1} The tournament $(\Q,\Gamma[\Q_+])$  is homogeneous. \end{prop}

\begin{proof} If $D_1, D_2$ are countable, order-dense
ordered sets, with $S \subset D_1$ finite and
$\r : S \to D_2$ an order-preserving injection, then the usual back-and-forth argument shows that $\r$ can be extended to an order isomorphism
$\xi : D_1 \to D_2$. 

This shows that the group game $(\Q,\Gamma[\Q_+])$ is homogeneous.

\end{proof} \vspace{.5cm}

Let $Z$ be the unit circle in the complex plane, a compact abelian topological group under multiplication. Let $Z_+$ be the set of elements of
the circle with positive imaginary part, i.e. $Z_+ = \{ z \in Z : Im(z) > 0 \}$.
If $z \in Z_+$ then its inverse, the conjugate $\bar z \not\in Z_+$. Furthermore, $-z \not\in Z_+$. The digraph $\Theta[Z_+]$
fails to be a tournament because of the omission of edges between antipodal pairs $(z,-z)$. Because the group is abelian, conjugation $z \mapsto \bar z$
is an isomorphism from $\Theta[Z_+]$ to its inverse. Because $-1 \in Z$, $z \mapsto -z$ is an automorphism of $\Theta[Z_+]$. Thus, for $z_1, z_2 \in Z$
\begin{equation}\label{eqcircle}
z_1 \to z_2 \quad \Longrightarrow \quad \bar z_2 \to \bar z_1,\ \ -z_2 \to z_1, \ \ -z_1 \to -z_2.
\end{equation}

If $S \subset Z$,  then the restriction $\Theta[Z_+]|S$ is a tournament if and only if $S$ is disjoint from $-S$. We call such a set
antipode free or an \emph{a-free subset}\index{a-free subset}.

If $\om_{2n+1}$ is the primitive $2n+1$ root of unity $\om_{2n+1}  = exp(\frac{2 \pi i}{2n+1})$,
then $k \to \om_{2n+1}^k$ is a group isomorphism from $\Z_{2n+1}$
onto the subgroup of $[\om_{2n+1}] \subset Z$ of all $2n+1$ roots of unity. It is an isomorphism from $\Gamma[[1,n]]$ onto $\Theta[Z_+]|[\om_{2n+1}]$.

If $S = \{ s_1, ..., s_N \}$ is a finite a-free subset of $Z$ then because $Z_+$ is open, there exists $\ep > 0$ such that for
$T = \{ t_1, ..., t_N \} \subset Z$, $|t_i - s_i| < \ep$ for all $i$ implies that $s_i \mapsto t_i$ is a bijection from
$S$ to $T$ inducing an isomorphism from $\Theta[Z_+]|S$ to $\Theta[Z_+]|T$. There exists $n$ large enough that $S$ is contained in the
$\ep$ neighborhood of $[\om_{2n+1}]$. It follows that a tournament is isomorphic to $\Theta[Z_+]|S$ for some finite a-free subset $S$ if and
only if it is isomorphic to a subtournament of some $(\Z_{2n+1},\Gamma[[1,n]])$. On the other hand, if $\Q^*$ is any countable, dense a-free
subset of $Z$, then  we can choose points of $\Q^*$ $\ep$ close to each point of $S $
and so there exists $T \subset \Q^*$ with $\Theta[Z_+]|T$ isomorphic to $\Theta[Z_+]|S$ .

The tournament $(\Q^*,\Theta[Z_+]|\Q^*)$ for $\Q^*$ is any countable, dense a-free
subset of $Z$, represents the
remaining homogeneous isomorphism class. For example, if $\a = exp(2 \pi i a)$ with $a$ irrational,
then $k \mapsto \a^k$ is a group isomorphism from $\Z$ onto a dense, a-free subgroup of
$Z$.  Hence, with $\Q^*$ equal to the subgroup  $[\a]$,$(\Q^*,\Theta[Z_+]|\Q^*)$ is isomorphic to a group game on $\Z$.

If $z_1, z_2$ are distinct, non-antipodal points of $Z$ then they are the end-points of a unique \emph{short arc}\index{short arc}
$[z_1,z_2]$ which subtends an angle smaller than $\pi$. Clearly, if $z_1, z_2$ lie in some common semi-circle $U$, then, because
$U$ is connected, $[z_1,z_2] \subset U$. Thus, if $z \to z_1$ and either $z \to z_2$ or $z = - z_2$, then $z \to z_3$ for all $z_3 \in (z_1,z_2)$.
Similarly, $ z_1 \to z$ and either $z_2 \to z$ or $z = - z_2$ implies $z_3 \to z$ for all $z_3 \in (z_1,z_2)$.
On the other hand, for every $z \in Z$, either
$z$ or $-z$ lies in the complementary long arc, $Z \setminus [z_1,z_2]$.

\begin{lem}\label{lemsep} For any pair of disjoint closed short arcs in $ Z$,  there exists $z \in Z$ such that
the two arcs lie in distinct semi-circle components of $Z \setminus \{ z, -z \}$. That is, the $\{ z, -z \}$ diameter separates the two arcs.
\end{lem}

\begin{proof} If each of the two arcs is a single point, and $z$ is an interior point of the shorter arc between them, then the
$\{ z, -z \}$ diameter separates the points.

Now assume that $[a_1,a_2]$ and $[b_1,b_2]$ are the two arcs, with the $a_1 \to a_2$ and
with the arclength of $[b_1,b_2]$ less than or equal to that of $[a_1,a_2]$. Because $[a_1,a_2]$ is short, it is entirely contained in one of the
closed semi-circles with end-points $a_1, -a_1$. If $[b_1,b_2]$ is contained in the other closed semi-circle, then with $z \to a_1$ close enough to
$a_1$ the $\{ z, -z \}$ diameter separates the two arcs. This fails only if the arc $[b_1,b_2] $ meets the open arc $(a_2,-a_1)$.
Similarly, we can use $z$ with $a_2 \to z$ and $z$
close to $a_2$ unless $[b_1,b_2] $  meets the open arc $(-a_2,a_1)$. If $[b_1,b_2] $ meets both of these open arcs, then because it is
connected, the open arc $(b_1,b_2)$ must contain the closed arc $[-a_1,-a_2]$. This contradicts the arclength assumption.

\end{proof} \vspace{.5cm}

\begin{prop}\label{propqstar} A tournament $\Pi$ on a finite set $I$ is isomorphic to a subtournament of $(Z,\Theta[Z_+])$ if and
only if $\Pi|\Pi(i)$ and $\Pi|\Pi^{-1}(i)$ are orders for all $i \in I$. \end{prop}

\begin{proof} Because $(\Z_{2n+1},\Gamma[[1,n]])$ and each of its subtournaments satisfies this order condition, it follows that the
condition is necessary. For sufficiency, we use induction on $|I|$.

Select a vertex $i \in I$. We may assume that $\Pi|(\Pi(i)\cup \Pi^{-1}(i))$ is a subtournament of $(Z,\Theta[Z_+])$ so that
$\Pi|\Pi^{-1}(i)$ is the order $\{ a_1 \to a_2 \to \dots a_p \}$ and $\Pi|\Pi(i)$ is the order $\{ b_1 \to b_2 \to \dots b_q \}$.

If $\Pi(i) = \emptyset$, then choose $j \in Z$ so that $a_p \to j$ and close enough that $a_1 \to j$ as well. Similarly,
if $\Pi^{-1}(i) = \emptyset$, then choose $j \in Z$ so that $j \to b_1$ and close enough that $j \to b_q$.

Now assume that neither $\Pi(i)$ nor $\Pi^{-1}(i)$ is empty.

Observe that the short arcs $[a_1,a_p]$ and $[b_1,b_q]$ must be disjoint. For if we had $a \to b \to a'$ with $a,a' \in  \Pi^{-1}(i)$
and $b \in \Pi(i)$, then $\langle i, b, a' \rangle$ is a $3$-cycle in $\Pi(a)$. Similarly, $b \to a \to b'$ would imply that
$\langle i, b, a\rangle$ is a $3$-cycle in $\Pi^{-1}(b')$.

By Lemma \ref{lemsep} there exists $z \in Z$ such that the diameter through $z, -z$ separates the arcs. This implies either for $j = z$ or else for
$j = -z$,  $a \to j \to b$
for all $a \in [a_1,a_p], b\in [b_1,b_q]$.

In each of these cases mapping $i $ to $j$ completes the embedding of $\Pi$ into $\Theta[Z_+]$.

\end{proof} \vspace{.5cm}

\begin{prop}\label{prophomo2} If $\Q^*$ is a countable, dense, a-free subset of $Z$, then the tournament $(\Q^*,\Theta[Z_+]|\Q^*)$  is homogeneous. \end{prop}

\begin{proof} For an a-free  $S \subset Z$
we let $S^{\pm }$ denote the disjoint union  $S \cup -S$. From (\ref{eqcircle}) it is clear that if $\r :S \to T$ is a bijection
of a-free subsets which is a tournament isomorphism from $\Theta[Z_+]|S$ to $\Theta[Z_+]|T$, then $-z \mapsto - \r(z)$ extends
$\r $ to $\r^{\pm } $ a digraph isomorphism from $\Theta[Z_+]|S^{\pm }$ to $\Theta[Z_+]|T^{\pm }$.

Now let $S \subset \Q^*$, $T \subset \tilde \Q^*$ be finite subsets of countable dense a-free subsets of $Z$ and assume that
$\r : S \to T$ is a bijection inducing an isomorphism on the restricted tournaments. Let $\r^{\pm } : S^{\pm } \to T^{\pm }$ be the
extended digraph automorphism. Assume that $|S| = |T| = n \geq 2$. Let $\{ s_1, s_2, \dots, s_{2n} \}$ be a listing of the
points of $S^{\pm }$ in counter-clockwise order beginning from an arbitrary choice for $s_1$. Let $t_i = \r^{\pm }(s_i)$ for $i = 1, \dots, 2n$.
Any open long arc contains some element of $S^{\pm }$ (recall $n \geq 2$). It follows that each arc from $s_i$ to $s_{i+1}$ is short.
The point on the open short arc between them are those points $z$ such that $s_i \to z \to s_{i+1}$. If $s_{i+1} \to z \to s_i $ then
$- z$ lies in the open short arc $(s_i,s_{i+1})$. It follows that for every $s_j \not= \pm s_i, \pm s_{i+1}$ either $s_j \to s_i, s_{i+1}$ or
$s_i, s_{i+1} \to s_j$. Furthermore, it follows that no $t_j$ lies on the arc from $t_i$ to $t_{i+1}$. Hence, $t_1, t_2, \dots, t_{2n}$ lists the points
of $T$ in counter-clockwise order. Finally, for each $i = 1, \dots, 2n$ extend $\r^{\pm }$ by choosing an
order isomorphism from $\Q^* \cap (s_i,s_{i+1})$ to
$\tilde \Q^* \cap (t_i,t_{i+1})$. Finally, extend continuously to get a continuous automorphism of $(Z,\Theta[Z_+])$ which takes
$\Q^*$ to $\tilde \Q^*$ and which extends $\r$. 

In particular, we see that each $\Q^*$ is homogeneous.

\end{proof} \vspace{.5cm}

If $\Phi : G \times I \tto I$ is an action of the group $G$ on $I$ and $\Pi$ is a tournament on $I$, then as before, $G$ acts on
$\Pi$ when  $\Phi^g$ is a tournament automorphism of $\Pi$ for each $g \in G$. For a game subset $A \subset G$, $\Gamma[A]$ acts on
$\Pi$ when, in addition, $\Phi_i$ is a tournament morphism from $\Gamma[A]$ to $\Pi$ for each $i \in G$. As before $\Gamma[A]$ acts on
itself by left translation if and only if $A$ is normal.

If $\{ \pi_k : G_{k+1} \tto G_k \}$ is a sequence of surjections, then the \emph{inverse limit}\index{inverse limit} $G_{\infty}$
is the subset of the product  $\prod_{k=1}^{\infty} G_k$ with $x \in G_{\infty} $ when $\pi_k(x_{k+1}) = x_k$ for all $k \in \N$.
The coordinate projection $p_k : G_{\infty} \tto G_k$ is given by $x \mapsto x_k$. It is a surjection with $p_k = \pi_k \circ p_{k+1}$.
If the $G_k$'s are groups and $\pi_k$'s are group homomorphisms, then $G_{\infty}$ is a subgroup of the product group and the projections
$p_k$ are group homomorphisms.

Even when the $G_k$'s are finite, the inverse limit is usually uncountable. For example, if $\{ n_k \}$ is a strictly increasing positive integers
with $n_k | n_{k+1}$ and $\pi_k : \Z_{n_{k+1}} \tto \Z_{n_k}$ is the canonical projection taking a mod $n_{k+1}$ congruence class to its
$n_k$ congruence class, then the inverse limit group is an uncountable adding machine group. As no element has finite order,
it is an odd group even if  the  $n_k$'s are even.

Assume that each $G_k$ is an odd group with $A_k \subset G_k$ a game subset. By Proposition
\ref{prop21aa} $\pi_k$ is a tournament morphism from $ \Gamma[A_{k+1}]$ to $ \Gamma[A_k]$ if and only if
$\pi_{k}(A_{k+1} \cup \{ e \}) = A_{k} \cup \{ e \}$. We define $A_{\infty}$ to be the inverse limit of the sequence
$\{ \pi_{k}: A_{k+1} \cup \{ e \} \tto  A_{k} \cup \{ e \} \}$ with the identity element removed. Thus, $x \in G_{\infty}$ lies in $A_{\infty}$
 if and only if for some $k$, $x_k \in A_k$ which then implies $x_j \in A_j$ for all $j > k$, by Proposition
\ref{prop21aa} again. It easily follows that $A_{\infty}$ is a game subset for $G_{\infty}$. It is normal if and only if all the $A_k$'s are normal.

  \begin{theo}\label{unitheo08bb} Let $(T,\Gamma)$ be a universal tournament.
  \begin{enumerate}
  \item[(a)] If $G$ is a countable odd group, then there exists a free action of $G$ on $\Gamma$. If $G$ admits a normal game subset $A$, then
  the action can be chosen to be an action of $\Gamma[A]$ on $\Gamma$.
  \item[(b)] If $G$ is an inverse limit of countable odd groups, then there exists an effective action of $G$ on $\Gamma$. In particular, there is
  a subgroup of $Aut(\Gamma)$ which is isomorphic to $G$.
  \end{enumerate}
  \end{theo}

\begin{proof} (a): We adjust to construction leading to Lemma \ref{unilem03}. First, choose a game subset $A \subset G$ with associated group game
$\Gamma[A]$. Choose $A$ to be normal if $G$ admits such.

Let $\{ (S_k,\Pi_k) \}$ be a sequence of tournaments with $\{ G \times S_k : k \in \N \}$ a pairwise disjoint sequence of countably infinite sets, all
disjoint from the $S_k$'s as well.
Let $T_k = \bigcup_{i=1}^k G \times S_i$ and $\widehat{T_{k+1}} = T_k \cup S_{k+1}$.  Let $G$ act trivially on each $\Pi_k$.

 Let $(T_1,\Gamma_1) = (G \times S_1, \Gamma[A] \ltimes \Pi_1)$. Proceed inductively. Assume $(T_k,\Gamma_k)$ is defined so that $G$ acts freely
 on $\Gamma_k$ and as a $\Gamma[A]$ action when $A$ is normal.

Use Lemma \ref{unilem00a}(b) to
construct $(\widehat{T_{k+1}},\widehat{\Gamma_{k+1}})$ so that $\widehat{\Gamma_{k+1}}|T_k = \Gamma_k$,
$\widehat{\Gamma_{k+1}} |S_{k+1} = \Pi_{k+1}$ and so that
 $T_k$ satisfies the simple extension property in $\widehat{\Gamma_{k+1}}$.

Now we use the construction of Proposition \ref{prop23bd} to obtain $(T_{k+1},\Gamma_{k+1})$ so that
$\Gamma_{k+1}|G \times S_{k+1} = \Gamma[A] \ltimes \Pi_{k+1}$ and with the identification of
$S_{k+1}$ with $ \{ e \} \times S_{k+1} \subset G \times S_{k+1}$ we have
$\Gamma_{k+1}|\widehat{T_{k+1}} = \widehat{\Gamma_{k+1}}$. By  Proposition \ref{prop23bd} the free action of $G$
on $\Gamma_k$ extends to a free action of $G$ on $\Gamma_{k+1}$. If $A$ is normal, then the action is a $\Gamma[A]$ action.

  Finally, let $(T,\Gamma) = \bigcup_k  (T_k,\Gamma_k)$.  It is clear that $R = \bigcup_k \{ e \} \times S_k$ is a generic subset for $\Gamma$ and
  so the constructed $(T,\Gamma)$ is universal. By uniqueness, we can use an isomorphism to transfer the action to any other universal tournament.

  (b):  Assume that $G$ is the inverse limit of the sequence of group homomorphisms $\{ \pi_k : G_{k+1} \to G_k \}$ with the $G_k$'s odd groups.
  Inductively we can choose game subsets $A_k \subset G_k$ so that each $\pi_k$ is a tournament morphism from $\Gamma[A_{k+1}]$ to $\Gamma[A_k]$.

  Let $T_k = \bigcup_{i=1}^k G_i \times S_i$ and $\widehat{T_{k+1}} = T_k \cup S_{k+1}$.

  In the above inductive construction assume that $G_k$ acts effectively on $(T_k,\Gamma_k)$. Using the homomorphism $\pi_k$ we obtain
  an action of $G_{k+1}$ on $\Gamma_k$ which is, of course, not effective.

  Construct $(\widehat{T_{k+1}},\widehat{\Gamma_{k+1}})$ as before. This time we construct $(T_{k+1},\Gamma_{k+1})$
  $\Gamma_{k+1}|G_{k+1} \times S_{k+1} = \Gamma[A_{k+1}] \ltimes \Pi_{k+1}$. Since $G_{k+1}$ acts freely on $G_{k+1} \times S_{k+1}$
  it acts effectively on $T_{k+1}$.

  Again,  let $(T,\Gamma) = \bigcup_k  (T_k,\Gamma_k)$. It is easy to check that $G$ acts effectively on $(T,\Gamma)$. On the invariant subset
  $T_k$ the action is obtained by pulling back the action of $G_k$ via $p_k : G \to G_k$.

  An effective action on $(T,\Gamma)$ induces an injection of the group into $Aut(\Gamma)$.

\end{proof} \vspace{.5cm}

The inverse limit of the sequence $\{ \Z_{2^{k+1}} \tto \Z_{2^{k}} \}$ is the group of two-adic integers. Every element has infinite order and
so it is an uncountable odd group.  I do not know whether it injects into the automorphism group of the universal tournament.

Now fix $\Gamma$ a universal tournament on $\N$ and let $Aut$ denote the automorphism group of $\Gamma$. Define the subset $B_{Aut} \subset Aut$ by
  \begin{align}\label{unieq07}
  \begin{split}
\xi \in B_{Aut} \quad \Longleftrightarrow \quad &\text{there exists} \ n \in \N \ \text{such that} \\
 \xi(i) = i \ \text{for all }& \ i < n, \ \text{and} \ n \to \xi(n) \ \text{in} \ \Gamma.
  \end{split}
  \end{align}

  Since $\xi$ is an automorphism, $n \to \xi(n)$ implies $ \xi^{-1}(n) \to n$. It follows that for every $\xi \in Aut \setminus \{ 1_{\N}\}$
  either $\xi \in B_{Aut}$ or $\xi^{-1} \in B_{Aut}$ and not both. That is, $B_{Aut}$ is a game subset of the group $Aut$. By Theorem
  \ref{unitheo08bb} $Aut$ contains copies of the Klein Bottle group and so it does not admit a normal game subset.


  In particular, $\Gamma[B_{Aut}]$ is a group game on $Aut$.
  Define $\pi : Aut \tto \N$ by $\pi(\xi) = \xi(1)$. If $\xi_1(1) \to \xi_2(1)$ in $\Gamma$ then $1 \to \xi_1^{-1}(\xi_2(1))$
  and so $\xi_1 \to \xi_2$ in $\Gamma[B_{Aut}]$.
 It follows that $\pi : \Gamma[B_{Aut}] \tto \Gamma$ is a tournament morphism which is surjective by Corollary \ref{unicor06} (a).
  Let $\r : \N \to Aut$ be a splitting of this projection. That is, for each $i \in \N$ we choose $\xi_i$ such that
  $\xi_i(1) = i$. We may choose $\xi_1$ to be the identity map  on $\N$. Thus, $\r : \Gamma \tto \Gamma[B_{Aut}]$ is an embedding.

  Now let $G$ be the subgroup of $Aut$ which is generated by $\{ \xi_1, \xi_2, \dots \}$. The group $G$ is countable and
  $B = B_{Aut} \cap G$ is a game subset of $G$ with associated group game $\Gamma[B] = \Gamma[B_{Aut}]|G$.
  The restriction $\pi : \Gamma[B] \tto \Gamma$
  is a projection, with splitting $\r :  \Gamma \tto \Gamma[B]$. We thus have:

  \begin{theo}\label{unitheo08a} There exists a countable group $G$ with game subset $B$ such that the associated group game
  $\Gamma[B]$ projects to, and so contains, a universal tournament. \end{theo}

 \vspace{.5cm}

  If for every isomorphism $\r : \Gamma|S_1  \to \Gamma|S_2$ with $S_1, S_2$ finite subsets of $\N$ we choose an element $\xi \in Aut$
  which extends $\r$, then we obtain a countable subset of $Aut$ and so there exists $G_1$ a countable subgroup of $Aut$ which
  contains $G$ and all of the chosen extensions $\xi$.  That is, every $\r$ extends to an element of $G_1$. Again, with $B_1 = B_{Aut} \cap G_1$
  we get a game subset  such that the associated group game
  $(G_1, \Gamma[B_1])$ projects to, and so contains, a universal tournament.

In each of these cases, one can show, as in Theorem \ref{theo23a}, that the group game on the group $G$ is the
lexicographic product of the universal tournament $\Gamma$
with the restricted group game on the subgroup $H = \{ \xi \in G : \xi(1) = 1 \}$.
\vspace{1cm}

{\bfseries Addendum:}  Here we briefly consider uncountable sets. 

Write $|X|$ for the cardinality of a set $X$.  Recall that a cardinal number ${\mathbf a}$ is the minimum
ordinal $i$ with $|i| = |{\mathbf a}|$. In particular, an infinite cardinal is a limit ordinal. Furthermore, if $i < a$, then
$|i| < |a|$.

Write $\bar {\mathbf a}$ for the cardinal of the
power set of ${\mathbf a}$, i.e.
$\bar {\mathbf a} = 2^{{\mathbf a}} > {\mathbf a}$.

Now assume that ${\mathbf a}$ is an infinite cardinal  and write ${\mathbf b}$ for the smallest cardinal
such that $\bar {\mathbf b} > \bar {\mathbf a}$. So ${\mathbf a} < {\mathbf b} \leq \bar {\mathbf a}$.
Hence,
  \begin{equation}\label{unieq12}
  i < {\mathbf b} \qquad \Longrightarrow \qquad 2^{|i|} \leq \bar {\mathbf a}.
  \end{equation}
  Note that the Generalized Continuum Hypothesis
(= GCH) says that there is no cardinal strictly between ${\mathbf a}$ and $\bar {\mathbf a}$ and so if it is assumed to hold, then
${\mathbf b} = \bar {\mathbf a}$.

For a cardinal ${\mathbf d}$ with ${\mathbf a} \leq {\mathbf d} < {\mathbf b}$ we define
  \begin{equation}\label{unieq13}
P_{{\mathbf d}}(X)  \ = \ \{ J : J \subset X, \quad \text{with} \ |J| \leq {\mathbf d} \ \text{or} \
 |X \setminus J| \leq {\mathbf d} \}.
\end{equation}
Since $\bar {\mathbf a}^{{\mathbf d}} = (2^{{\mathbf d}})^{{\mathbf d}} = 2^{{\mathbf d} \cdot {\mathbf d}} = 2^{{\mathbf d}} = \bar {\mathbf a}$,
it follows that
 \begin{equation}\label{unieq14}
|X| \leq \bar {\mathbf a} \qquad \Longrightarrow \qquad |P_{{\mathbf d}}(X)| \leq \bar {\mathbf a}.
\end{equation}

\begin{df}\label{unidef00xx} If $(T,\Gamma)$ is a tournament and $T_0 \subset T$, then we say that
$T_0$ satisfies the \emph{${\mathbf d}$ simple extension property }
\index{${\mathbf d}$ simple extension property}\index{extension property!${\mathbf d}$ simple}
in $\Gamma$ if for every  subset $J \subset T_0$ with either $|J| \leq {\mathbf d} $ or
$ |X \setminus J| \leq {\mathbf d} $ there exists $v_J \in T$ which chooses $J \subset T_0$ for $\Gamma$. \end{df}

\begin{lem}\label{unilem11a} Let $(S_0,\Pi_0), (S_1,\Pi_1)$ be tournaments with $S_0 \cap S_1 = \emptyset$ and with $|S_0| = |S_1| = \bar {\mathbf a}$.
If ${\mathbf d}$ is a cardinal with ${\mathbf d} < {\mathbf b}$,
then there exists a
tournament $\Pi$ on  set $S_0 \cup S_1$ with
$\Pi_0 = \Pi|S_0, \Pi_1 = \Pi|S_1$ and such that $S_0$ satisfies the ${\mathbf d}$ simple extension property in $\Pi$.
\end{lem}

\begin{proof}  Proceed as in the proof of Lemma \ref{unilem00a} (b) using a bijection $J \mapsto v_J$ from $P_{{\mathbf d}}(S_0)$ to $S_1$.

\end{proof} \vspace{.5cm}

%
%

Let $\{ (S_i,\Pi_i) : i < {\mathbf b} \}$ be an indexed family of tournaments on pairwise disjoint sets with $|S_i| = \bar {\mathbf a}$.
Let  $U_i = \bigcup_{j < i} S_j$
Now we use transfinite induction to define tournaments $(U_i,\Gamma_i)$ for every ordinal $i \leq {\mathbf b}$.
\begin{itemize}
\item Let $(U_0, \Gamma_0) = (\emptyset,\emptyset)$.
\item For an ordinal $i < {\mathbf b}$ we use Lemma \ref{unilem11a} to construct
$(U_{i+1},\Gamma_{i+1})$ with  $\Gamma_{i+1}|U_i = \Gamma_i, \ \Gamma_{i+1}|S_i = \Pi_i$ and such that
$U_i$ satisfies the ${\mathbf d}$ simple extension property in $\Gamma_{i+1}$.

\item For $i\leq {\mathbf b}$  a limit ordinal
\begin{equation}\label{unieq16}
 \Gamma_i \ = \ \bigcup_{j<i} \Gamma_j.
\end{equation}
\end{itemize}


The tournament $(U_{{\mathbf b}},\Gamma_{{\mathbf b}})$ with $|U_{{\mathbf b}}| =  \bar {\mathbf a}$ is ${\mathbf b}$-universal in  the following sense.

\begin{theo}\label{unitheo12} Assume $(T,\Pi)$ is a  tournament and $T_0 \subset T$ with
$|T_0| < |T| \leq {\mathbf b}$. If
$\r : \Pi|T_0 \tto \Gamma_{ {\mathbf b}}$ is an embedding, then $\r$ extends to an embedding of
$\Pi$ into $\Gamma_{ {\mathbf b}}$. \end{theo}

\begin{proof} Observe first that if $T_0 \subset U_{{\mathbf b}}$ with $|T_0| < {\mathbf b}$, then there
exists an ordinal $i < {\mathbf b}$ such that $|T_0| \leq |i|$ and $T_0 \subset U_i$. It follows that $T_0$ satisfies the
${\mathbf d}$ simple extension property in
$\Gamma_{i+1}$ and so in $\Gamma_{{\mathbf b}}$.

It follows from Lemma \ref{unilem00a} (a) that the extension exists when  $T\setminus T_0$ consists of a single vertex $v$.

In general, count the vertices of $T \setminus T_0$ putting them in a bijective correspondence
$i \to v_i$ with $1 \leq i < {\mathbf b}' = |T \setminus T_0|$
if $|T \setminus T_0|$ is infinite and ${\mathbf b}' = |T \setminus T_0| + 1$ when $|T \setminus T_0|$ is finite.
Thus, ${\mathbf b}' \leq {\mathbf b}$. For $i < {\mathbf b}'$ let $T_i = T_0 \cup \{ v_j : j \leq i \}$.
Since ${\mathbf b}'$ is a cardinal, $|T_i| < {\mathbf b}$ for all $i < {\mathbf b}'$.
Thus, beginning with $\t_0 = \r$ we can extend $\t_i$ to $\t_{i+1}$ and take the union at limit ordinals.

\end{proof} \vspace{.5cm}

If ${\mathbf b} = \bar {\mathbf a}$, e.g. if the GCH holds, then we can, as in Theorem \ref{unitheo04}, use a back and forth argument to
show that the tournament of cardinality $\bar {\mathbf a}$ which is $\bar {\mathbf a}$-universal in the sense of
Theorem \ref{unitheo12} is unique up to isomorphism.

\vspace{1cm}

\bibliographystyle{amsplain}

\printindex

\end{document}